\theoremstyle{plain}
\newtheorem{thm}{Theorem}
\newtheorem{lem}[thm]{Lemma}
\newtheorem{cor}[thm]{Corollary}
\newtheorem{prop}[thm]{Proposition}
\newtheorem{conj}[thm]{Conjecture}
\theoremstyle{definition}
\newtheorem{ex}[thm]{Example}
\newtheorem{defn}[thm]{Definition}
\newtheorem{claim}[thm]{Claim}
\newtheorem{rmk}[thm]{Remark}
\numberwithin{thm}{section}
\numberwithin{equation}{section}
\newcommand{\eq}[2]{\begin{equation}\label{#1}#2 \end{equation}}
\newcommand{\ml}[2]{\begin{multline}\label{#1}#2 \end{multline}}
\newcommand{\ga}[2]{\begin{gather}\label{#1}#2 \end{gather}}
\newcommand{\gr}{{\rm gr}}
\newcommand{\fil}{{\rm fil}}
\newcommand{\im}{{\rm im}}
\newcommand{\Spec}{{\rm Spec \,}}
\newcommand{\sC}{{\mathsf C}}
\newcommand{\sF}{{\mathsf F}}
\newcommand{\sG}{{\mathsf G}}
\newcommand{\sH}{{\mathcal H}}
\newcommand{\sL}{{\mathsf L}}
\newcommand{\sO}{{\mathcal O}}
\newcommand{\sRZ}{{\mathsf{RZ}}}
\newcommand{\A}{{\mathbb A}}
\newcommand{\F}{{\mathbb F}}
\newcommand{\N}{{\mathbb N}}
\renewcommand{\P}{{\mathbb P}}
\newcommand{\Q}{{\mathbb Q}}
\newcommand{\Z}{{\mathbb Z}}
\newcommand{\hh}{{\rm h}}
\newcommand{\Iw}{{\rm Iw}}
\newcommand{\RZ}{{\rm RZ}}
\newcommand{\nil}{{\rm nil}}
\newcommand{\per}{{{}^{\mathrm p}\!}}
\newcommand{\QQl}{\overline{\mathbb Q}_\ell}
\begin{document}

\title{Semistable Lefschetz Pencils}
\author{H\'el\`ene Esnault \and Moritz Kerz}
\address{Freie Universit\"at Berlin, Berlin,  Germany;
{Harvard} University,  Cambridge, USA
}
\email{esnault@math.fu-berlin.de}
\address{   Fakult\"at f\"ur Mathematik \\
Universit\"at Regensburg \\
93040 Regensburg, Germany}
\email{moritz.kerz@mathematik.uni-regensburg.de}
\thanks{The second author is supported by the SFB 1085 Higher Invariants, Universit\"at Regensburg.}

\begin{abstract}
We study the geometry and cohomology of  Lefschetz pencils for semistable schemes  over a
discrete valuation ring.  We relate the global cohomological properties of the Lefschetz pencil
and the monodromy-weight conjecture, in particular we show that if one assumes the
monodromy-weight conjecture in smaller dimensions  then one can obtain a rather complete
understanding of the relative cohomology of the pencil. This reduces the monodromy-weight
conjecture to an arithmetic variant of a conjecture of Kashiwara for the projective line.
\end{abstract}

\maketitle

\section{Introduction}

\subsection{Background}

The theory of Lefschetz pencils is an important tool in the study of the topology of algebraic varieties
which originates in the work of Picard and Lefschetz~\cite{Lef24}. The theory has been  extended to
positive characteristic and \'etale cohomology by Deligne and Katz~\cite{SGA7.2}. The idea of Lefschetz is to fiber a smooth
projective variety $X$ of dimension $n$ over a field $k$ after a  certain blow-up $\tilde X\to X$ into a pencil $\phi\colon
\tilde X\to \P^1_k$ such that $\phi$ is smooth except for a finite number of quadratic
singular points  located in different fibers, i.e.\ $\phi$ is like a real Morse function in differential topology. The
Leray spectral sequence for the pencil map $\phi$ allows one  to study the cohomology of $X$.
This topological idea was a key ingredient in Deligne's   first proof of the Weil
conjectures~\cite{Del74}.

The geometric theory of Lefschetz pencils was generalized by Jannsen--Saito~\cite{JS12} to smooth
projective schemes over a discrete valuation ring. In this note we study the geometry and
cohomology of Lefschetz pencils of (strict) semistable schemes  over discrete valuation rings.

One motivation for this study is the monodromy-weight conjecture \cite[Section~8.5]{Del70},  \cite[p.23]{RZ82},
which is an analog of the Riemann hypothesis part of the Weil conjectures over a $p$-adic
local field. In~\cite{RZ82}, Rapoport--Zink studied the monodromy-weight conjecture in terms of a
monodromy-weight spectral sequence of a semistable model, which they define. The key unsolved problem is
that the $d_1$-differential  in their spectral sequence is not well-understood in relation
to the monodromy operator $N$. It turns
out that, assuming the monodromy-weight conjecture is known in smaller dimensions,  the
analog of this $d_1$-differential in the  relative cohomology of the semistable Lefschetz pencil is easy to
control. This allows us to reduce the monodromy-weight conjecture to an unsolved problem about the
cohomology of a Picard-Lefschetz sheaf on $\mathbb P^1$, see~\ref{ss:mct}.

\subsection{Geometry of semistable Lefschetz pencils}

Let $\sO$ be a henselian  discrete valuation  ring with perfect residue field
$k$  of characteristic
$\mathrm{ch}(k) \ne 2$.
Define  $K$ to be the field of fractions $K=
\mathrm{frac}(\sO)$, which we assume to be of characteristic zero.
Let $X\subset \P^N_\sO$ be a (strict) semistable projective scheme over
$\sO$, i.e.\ its special fiber $X_k$ is a simple normal  crossings divisor. We endow  $X$
with the usual stratification, see the beginning of Section~\ref{subsec:semistabmor}. A sufficiently
general linear subspace of codimension two  $A\subset \P^N_\sO$  gives rise to a pencil map
\[
\phi\colon \tilde X \to \P^1_\sO,
\]
where $\tilde X= \mathrm{Bl}_{A\cap X}(X)$, see Section~\ref{sec:geolefschetz}.

A point $x\in \tilde X$ is called {\em critical} if either $x$ lives over $K$ and is a non-smooth point of $\phi_K$
or if $x$ lives over $k$ and $\phi|_Z\colon Z\to \P^1_k$ is non-smooth at $x$, where
$Z\subset \tilde X_k$ is the
stratum containing $x$.

Roughly speaking, we say that the pencil $\phi_k\colon \tilde X_k\to \P^1_k  $ is a {\it stratified
Lefschetz pencil} over $k$ if $\phi_k\colon \tilde X_k\to \P^1_k$ is a stratified Morse
function in the usual sense, Definition~\ref{dfn:nondegcp}, with at most one critical
point per geometric fiber; for the precise definition of stratified Lefschetz pencil see
Definition~\ref{def:stratLefp}.
It is not hard to show that after a suitable Veronese embedding a generic
choice of $A_k$ gives rise to a stratified Lefschetz pencil $\phi_k$.

The following theorem has been observed in  \cite[Theorem~1]{JS12}  in the smooth case.

\begin{thm}[see Theorem~\ref{thmmain:lefpen}] Assume that $\mathrm{ch}(k)> \dim(X_K)+1 $
  or $\mathrm{ch}(k)=0$. Then the following properties hold.
\begin{itemize}
 \item[(1)]  If $\phi_k$ is a stratified Lefschetz pencil then $\phi_K$ is a Lefschetz pencil.
 \item[(2)] The subset $S$ of critical points in $X$ is closed and with the reduced subscheme
  structure it maps isomorphically onto its image in $\P^1_\sO$.
  \item[(3)]  Each connected  component of $S$ is a trait (i.e.\ the spectrum of a henselian
  discrete valuation ring) which is finite and of ramification
  index over $\sO$ equal to the number of irreducible components of $X_k$ it meets.
  \end{itemize}
\end{thm}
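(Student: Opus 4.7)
My plan is to reduce everything to an étale-local computation at a single critical point $x \in \tilde X_k$. Because a stratified Lefschetz pencil has at most one critical point per geometric fiber and $\sO$ is henselian, distinct critical points yield disjoint connected components of $S$, so the three statements decouple into independent local assertions. Fix such an $x$ and let $Z$ be the deepest stratum through $x$; by definition it meets exactly $r$ irreducible components of $X_k$. Using strict semistability, I may choose étale-local coordinates identifying a neighborhood of $x$ in $\tilde X$ with $\Spec \sO[x_1,\ldots,x_r,y_1,\ldots,y_m]/(x_1 \cdots x_r - \pi)$, with $Z = V(x_1,\ldots,x_r)$ and $\pi$ a uniformizer of $\sO$. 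A sufficiently generic choice of the linear center $A$ of the pencil may be assumed to respect this local structure of $X$ at $x$.

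Next I would put $\phi$ into normal form. Since $\phi_k|_Z$ has a nondegenerate critical point at $x$, the Morse lemma (valid because $\Char(k) \ne 2$) permits a change of $y$-coordinates so that $\phi|_Z = c + \sum_{j=1}^m y_j^2$ for some $c \in \sO$. Extending, $\phi = c + \sum_j y_j^2 + \sum_{i=1}^r h_i(x,y)\,x_i$ for regular functions $h_i$. The decisive point is that the stratified Lefschetz hypothesis forces each $h_i(0,0)$ to be a unit: evaluating at $x$ gives $d\phi = \sum_i h_i(0,0)\,dx_i$, and the stratified nondegeneracy of $\phi$ at $x$ with respect to every stratum $Z' \supsetneq Z$ translates precisely into the non-vanishing of each $h_i(0,0)$. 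After absorbing these units into the $x_i$, I may assume $\phi = c + \sum_j y_j^2 + \sum_i x_i$ with semistable relation $\prod x_i = u\pi$ for some unit $u$.

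The three claims now follow by direct calculation. Solving $d\phi \in \sO \cdot d(x_1 \cdots x_r - \pi)$ on the generic fiber gives $y = 0$ and $x_1 = \cdots = x_r =: \mu$ with $\mu^r = u\pi$; on the special fiber the only critical point near $x$ on any stratum is $x$ itself, on the deepest stratum. Hence the connected component of $S$ through $x$ equals $\Spec \sO[\mu]/(\mu^r - u\pi)$, a trait of ramification index $r$ over $\sO$ because its defining polynomial is Eisenstein, giving (3). Its projection to $\P^1_\sO$ sends $\mu$ to $c + r\mu$, an isomorphism onto its image because $r \le \dim(X_K)+1$ is invertible in $\sO$ by the characteristic hypothesis, proving (2). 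For (1), a brief Taylor expansion of $\prod(\mu_0 + u_i) = u\pi$ around a $K$-critical point $(x_i = \mu_0,\ y_j = 0)$ shows that $\phi_K$, modulo its critical value, reduces to a quadratic form whose $y$-part is $\sum y_j^2$ and whose $u$-part has matrix $\tfrac{1}{2}(I + J)$ on the tangent hyperplane $\sum du_i = 0$, with $J$ the all-ones matrix; this quadratic form is nondegenerate exactly when $r$ is invertible. Thus $\phi_K$ acquires an ordinary double point, so it is a classical Lefschetz pencil.

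The main obstacle I anticipate is the second step, especially the extraction of the unit property of the $h_i(0,0)$ from the precise definition of the stratified Lefschetz pencil. This is the place where the global hypothesis on $\phi_k$ most directly constrains the local shape of $\phi$, and it requires a careful unpacking of the stratified nondegeneracy condition in the sense of Goresky--MacPherson.
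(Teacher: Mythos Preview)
Your strategy is the same as the paper's: localize at a critical point, put $\phi$ into the semistable Morse normal form $\phi = c + \sum_i x_i + \sum_j y_j^2$ with $\prod_i x_i = u\pi$ (this is exactly Proposition~4.3, and your extraction of the unit condition on the $h_i(0,0)$ from stratified nondegeneracy is precisely how the paper proves the analogous Proposition~3.8), then read off the structure of the critical locus and of $\phi|_S$.

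The one place you move too fast is in treating $u$ as a constant in $\sO$. In the normal form $u$ is only a unit in $\sO[x_1,\dots,x_r]^{\hh}$, so the Jacobian equations do \emph{not} literally give $x_1=\cdots=x_r$; the paper's Lemma~4.5 shows the saturated Jacobian ideal is generated by $x_0-x_i-u_ix_0x_i$ and $\prod x_i - u\pi$, and one then needs the implicit function theorem plus Weierstra\ss\ preparation (Claim~4.6) to conclude that the quotient is a DVR of ramification index $r$ over $\sO$. Your assertion that $S=\Spec\sO[\mu]/(\mu^r-u\pi)$ with Eisenstein defining polynomial is the right first approximation, but making it precise is exactly the content of that claim. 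Your explicit Hessian computation for (1) is a pleasant alternative to the paper's implicit argument via reducedness of the Jacobian scheme; note that since $\mathrm{ch}(K)=0$ the determinant $r$ of $I+J$ is automatically nonzero in $K$, so the characteristic hypothesis is only genuinely used in part~(2), as you say.
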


For
$\mathrm{ch}(k)\ne 2$ one only
 obtains a slightly weaker result which we formulate in
Theorem~\ref{thm:mainpropermorse} for a general stratified Morse function.

\subsection{Cohomology of semistable Lefschetz pencils}

Let $\ell$ be  a  prime number  invertible in $\sO$ and assume that the residue field $k$
is finite. Set $\Lambda= \overline \Q_\ell$ and $n=\dim(X_K)$.
Let $\phi\colon \tilde X\to \P^1_\sO$ be  a semistable Lefschetz pencil, i.e.\ $\phi_k$
is a stratified Lefschetz pencil and $\phi_K$ is a Lefschetz pencil.

We endow $X$ with the
middle perversity, see~\ref{app:subsecperv}, so that $\Lambda[n+1]\in D^b_c(X_K,\Lambda)$
is a perverse sheaf.   In order to understand the relative cohomology of a semistable
Lefschetz pencil one has to study the degeneration of the classical Picard-Lefschetz perverse sheaf $\sL_K = \per R^0
\phi_{K,*} (\Lambda[n+1])$ in terms of its nearby cycle perverse sheaf $\sL_k = R\Uppsi_{X/\sO}(\sL_K)[-1]$.   Note that for $i\ne 0$ the perverse sheaf $ \per R^i
\phi_{K,*} (\Lambda[n+1])$ is geometrically constant, and it can be analyzed by the
Weak Lefschetz theorem~\cite[Th\'eor\`eme 4.1.1]{BBD83}  by cutting with a   hyperplane section.

As a consequence of results of Grothendieck~\cite[Expos\'e I]{SGA7.1} and Rapoport--Zink\ \cite{RZ82} the  monodromy action on $\sL_k$ is
unipotent, see Proposition~\ref{prop:grorzuni} and Lemma~\ref{lem:unipopl}, in particular it is given in terms of a nilpotent
operator $N\colon \sL_k\to \sL_k(-1)$. This operator induces a monodromy filtration $\fil^{\rm
  M}$ on the perverse sheaf $\sL_k$, see~\eqref{eq:convmofil}.

In order to understand the structure of $\sL_k$ we have to assume that the monodromy-weight
conjecture
holds in dimensions smaller  than $\dim(X_K)$, see~\ref{ss:mct}. Recall that
the monodromy-weight  conjecture is known for dimension at most two \cite[Satz~2.13]{RZ82}.

\begin{thm}[see Theorem~\ref{thm:mainmwind}] \label{thm:main}  Assume that the monodromy-weight conjecture is known over $K$ for dimensions
  smaller than $n=\dim(X_K)$. Then  the following properties hold.
  \begin{itemize}
  \item[(1)] For any $a\in \Z$ the monodromy graded piece $\gr^{\rm M}_a \sL_k$ is pure of weight
  $n+a$, in particular it is geometrically semisimple (\cite[Corollaire~5.4.6]{BBD83}).
  \item[(2)] The non-constant part of  $\gr^{\rm M}_a \sL_{\overline k}$ satisfies multiplicity one.
  \end{itemize}
\end{thm}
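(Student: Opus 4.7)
The plan is to reduce to the monodromy--weight hypothesis in dimension $n-1$ by using the pencil $\phi$: every smooth fiber of $\phi$ is strict semistable of dimension $n-1$, so the induction controls its nearby cycles, and the information is then transported to $\sL_k$ via the intermediate-extension formalism.

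First I would recall that by classical Picard--Lefschetz theory, over $\overline K$ the perverse sheaf $\sL_K|_{\P^1_{\overline K}}$ decomposes as a direct sum of a geometrically constant summand and a primitive summand, the latter being the intermediate extension from the smooth locus $V_K \subset \P^1_K$ of $\phi_K$ of an irreducible (vanishing-cycle) local system. In particular the primitive summand of $\sL_K$ is simple as a perverse sheaf, which already supplies multiplicity one at the generic level.

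Next, since $\phi$ is proper, proper base change for nearby cycles gives $R\Uppsi \circ R\phi_{K,*} \cong R\phi_{k,*} \circ R\Uppsi$, and $R\Uppsi[-1]$ is perverse $t$-exact. Together these yield an identification of the form $\sL_k \cong \per R^0 \phi_{k,*}\bigl(R\Uppsi(\Lambda[n+1])[-1]\bigr)$. For a geometric point $\bar s \in V_k$ the fiber $\phi^{-1}(\bar s)$ is strict semistable of dimension $n-1$, so by the inductive monodromy-weight hypothesis the stalk of $\gr^{\rm M}_a \sL_k$ at $\bar s$ is pure of weight $n+a$. Hence the restriction of $\gr^{\rm M}_a \sL_k$ to $V_k$ is pure of weight $n+a$ and, by \cite[Corollaire~5.4.6]{BBD83}, semisimple; its non-constant part is simple by the previous paragraph.

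Finally, purity and multiplicity one must be extended across the finite set $\phi(S) \subset \P^1_k$ of critical images. Using the intermediate-extension mechanism of \cite[Section~5.3]{BBD83}, a simple pure perverse sheaf on $V_k$ admits a unique simple pure extension to $\P^1_k$ of the same weight, and $\sL_k$ differs from this extension only by a geometrically constant summand, which contributes at weight $n$ (i.e.\ at $a=0$) and causes no difficulty for either statement. The main obstacle is the analysis at the critical locus $S$: by Theorem~\ref{thmmain:lefpen}(3), $S$ is a disjoint union of ramified traits over $\sO$, so the inertia of $\sO$ couples nontrivially with the Picard--Lefschetz monodromy of $\phi_K$. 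Checking that no spurious non-constant simple subfactors appear in $\gr^{\rm M}_a \sL_k$ when one forms nearby cycles across these ramified points, and that the constant summand is concentrated at the correct weight, is precisely where the detailed local model of Theorem~\ref{thmmain:lefpen} is indispensable.
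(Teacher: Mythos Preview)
Your sketch correctly isolates the inductive input but does not close the argument, and the route you propose is not the paper's.

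There are two genuine gaps. First, your claim that $\sL_k$ differs from the intermediate extension of $\sL_k|_{V_k}$ only by a geometrically constant summand is unproven; nothing you have written rules out skyscraper subquotients of the wrong weight in $\gr^{\rm M}_a\sL_k$ at the critical values, and your final paragraph essentially concedes this. Second, your argument for multiplicity one is faulty: simplicity of $\sL_K^{\rm nc}$ over $\P^1_{\overline K}$ says nothing about the non-constant part of $\gr^{\rm M}_a\sL_k$ on $\P^1_k$. These are perverse sheaves on different schemes, and the monodromy graded pieces of $\sL_k$ are governed by the nilpotent $N$ coming from the inertia of $\sO$, not by the geometric decomposition of $\sL_K$. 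There is also a smaller omission on $V_k$: identifying $\sRZ|_{\phi^{-1}(\bar s)}$ with the nearby cycle sheaf of a semistable $\sO$-fiber requires the stratified base change of Proposition~\ref{sec:bc:mainprop}, which you do not invoke.

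The paper avoids both difficulties by never attempting to extend from an open. It works with the Rapoport--Zink sheaf $\sRZ=\uppsi(\Lambda[n+1])$ on $\tilde X_k$ and verifies criterion~(iii) of Proposition~\ref{prop:critpwprop} for the monodromy spectral sequence of $R\phi_{k,*}\sRZ$. By Remark~\ref{rmk:rapzink}, $\gr^{\rm M}_a\sRZ$ is an explicit sum of shifted constant sheaves on the smooth strata, so each $E_1^{p,q}$ is a direct sum of classical Picard--Lefschetz sheaves for the strata and Theorem~\ref{thm:glcllefschetz}(ii) forces $E_1^{p,q}$ to be geometrically constant for $p+q\ne 0$. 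Hence $d_1$ vanishes on the non-constant part and the $N^a$-isomorphism already present on $E_1$ persists to $E_2$ there. On the geometrically constant part one restricts to a single non-critical $k$-point, lifts it to an $\sO$-point via Proposition~\ref{sec:bc:mainprop}, and identifies the restricted spectral sequence with the Rapoport--Zink spectral sequence of a semistable fiber of relative dimension $n-1$; the hypothesis $({\bf mw})_{n-1}$ then supplies the $N^a$-isomorphism. This yields mw purity of $\sL_k=\per R^0\phi_{k,*}\sRZ$ globally, with no extension step. Multiplicity one follows because the non-constant parts of the stratum-wise Picard--Lefschetz sheaves have pairwise disjoint non-smooth loci (different strata contribute critical values at different points) and each individually satisfies multiplicity one by Theorem~\ref{thm:glcllefschetz}(iii).
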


In Corollary~\ref{cor:mainredmw} we show that in order to prove the monodromy-weight conjecture by
induction on $n=\dim(X_K)$, one would have to prove that the monodromy filtration of
$H^0(\P^1_{\overline k}, \sL_{\overline k})$ agrees with the filtration induced by the
spectral sequence of the filtered perverse sheaf $(\sL_{\overline k},\fil^{\rm M})$. We call this the
{\em monodromy property}, see Definition~\ref{def:mwproperty}. The monodromy property is
of ``purely topological''
nature and is known to hold in our setting for $\sO$ of equal characteristic. It plays an
important role in the theory of Hodge modules~\cite{Sai88}  and   twistor   $\mathcal D$-modules~\cite{Moc07}.
In mixed characteristic it fits into what we like to call the arithmetic Kashiwara
conjecture, see Conjecture~\ref{conj:arkash}, motivated by the Kashiwara conjecture in complex geometry \cite{Kas98}. We defer the study to a   forthcoming work.

Because the monodromy action on $\sL_k$ is unipotent,   it is  tame. This tameness
generalizes to all models of $\P^1_K$ in the following sense.

\begin{thm}[see Corollary~\ref{cor:tamecor}]\label{thmint:3}
Assume $\mathrm{ch}(k)>n+1$. For any closed point $x\in \P^1_K$ and any $i\in \Z$,  the $\mathrm{Gal}(\overline x/x)$-action on
$H^i(\phi^{-1}(\overline x),\Lambda)$ is tame.
\end{thm}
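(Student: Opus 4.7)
The plan is to construct, over $\sO':=\sO_{\kappa(x)}$, a strict semistable model $Z$ of the fibre $\phi^{-1}(x)$, and then to invoke Grothendieck's monodromy theorem \cite[Expos\'e I]{SGA7.1}: a strict semistable scheme over a henselian trait has unipotent --- in particular tame --- action of inertia on the $\ell$-adic cohomology of its geometric generic fibre.

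The candidate is $W:=\tilde X_{\sO'}\times_{\P^1_{\sO'}}\Spec\sO'$, where $\Spec\sO'\to\P^1_{\sO'}$ is the horizontal section extending $x$. Then $W$ is proper over $\sO'$ with generic fibre $\phi^{-1}(x)$, so that
\[
H^i(W_{\overline{\kappa(x)}},\Lambda)=H^i(\phi^{-1}(\overline x),\Lambda)
\]
as $\mathrm{Gal}(\overline x/x)$-modules. The special-fibre singularities of $W$ are of two controlled types: (a) toric singularities of the form $\Spec\sO'[\underline{x},y_1,\ldots,y_r]/(y_1\cdots y_r-u\pi_{\sO'}^e)$, coming from the ramification index $e$ of $\sO'/\sO$ (the strict semistable equation $y_1\cdots y_r=\pi_\sO$ of $\tilde X/\sO$ becoming $y_1\cdots y_r=u\pi_{\sO'}^e$); and (b) an additional quadric singularity superimposed on (a) at points where the section meets the critical locus $S$ of $\phi$, whose normal form follows from the stratified Morse condition, Theorem~\ref{thmmain:lefpen}(3), and the hypothesis $\mathrm{ch}(k)>n+1>2$ guaranteeing that the quadratic form is non-degenerate and split. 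Both types admit canonical strict semistable resolutions by blow-ups supported entirely in the special fibre --- iterated toric subdivisions for (a), one further blow-up of the singular point for (b) --- and these local resolutions glue to a proper birational modification $Z\to W$ with $Z$ strict semistable over $\sO'$ and $Z_{\kappa(x)}\xrightarrow{\sim}\phi^{-1}(x)$. Grothendieck's theorem applied to $Z/\sO'$ then yields the desired tameness.

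The main obstacle will be the global assembly of the local toric / quadric resolutions into a single projective strict semistable $Z$, which requires following the combinatorics of how the section meets the stratification of $\tilde X_k$ and the critical locus $S$; here the hypothesis $\mathrm{ch}(k)>n+1$ is essential both to apply Theorem~\ref{thmmain:lefpen} and to ensure that every local quadratic form encountered is tame, so that a one-step blow-up suffices. A minor additional subtlety arises when $x$ is itself a critical value of $\phi_K$, so that $\phi^{-1}(x)$ already carries an ordinary double point in the generic fibre; one then resolves $Z$ further along this singularity and uses $R\pi_*\Lambda\simeq\Lambda$ for the ordinary double point in dimension $\geq 2$, or a direct Picard--Lefschetz argument otherwise, to transfer tameness from the smoothening back to $\phi^{-1}(x)$.
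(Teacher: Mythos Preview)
Your strategy --- directly constructing a strict semistable model of $\phi^{-1}(x)$ over $\sO'=\sO_{k(x)}$ and invoking Rapoport--Zink --- is conceptually different from the paper's. The paper does \emph{not} resolve the fibre; instead it proves tameness of the sheaf $\sL=R\phi_{K,*}\Lambda$ on $\P^1_K$ by passing to a semistable model $Z$ of $\P^1_{K'}$ (where $K'$ splits the critical values), and then showing that for each maximal point $z$ of $Z_k$ the scheme $\tilde X_R$ over $R=\sO^\hh_{Z,z}$ is either log smooth or ``nearly semistable'' in the sense of Section~\ref{subsec:nearlysemist}. Tameness then comes from Nakayama's theorem~\cite{Nak98} on nearby cycles of log-smooth families, combined with an explicit blow-up of the \emph{base} and a Kummer covering argument; at critical values of $\phi_K$ the paper uses the Picard--Lefschetz exact sequence and Proposition~\ref{prop:picardlefschetzform}. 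The hypothesis $\mathrm{ch}(k)>n+1$ enters only to guarantee (via Theorem~\ref{thm:mainpropermorse}(iii)) that $K'/K$ can be chosen tame, so that tameness descends by Lemma~\ref{lem:tamepullb}.

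Your proposal has a genuine gap at step~(b). The claim that ``one further blow-up of the singular point'' resolves the quadric-over-toric singularity is already false in the simplest case $m=0$: there the local equation of $W$ is $X_1^2+\cdots+X_n^2=c-u(\pi')^e$ with $c\in\mathfrak m_{\sO'}$, an $A$-type singularity whose semistable resolution requires on the order of $\mathrm{val}(c-u(\pi')^e)$ blow-ups. For $m>0$ the local equation
\[
\bigl(c-X_1-\cdots-X_m-X_{m+1}^2-\cdots-X_n^2\bigr)\,X_1\cdots X_m \;=\; u(\pi')^e
\]
genuinely mixes toric and quadric behaviour, and the existence of a strict semistable resolution over a mixed-characteristic trait is not established in the literature; this is precisely why the paper works with log-smoothness and Nakayama rather than attempting resolution. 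The hypothesis $\mathrm{ch}(k)>n+1$ gives no direct leverage on this resolution problem. Finally, your treatment of the case where $x$ is itself a critical value is incorrect as written: for the blow-up $\pi$ of an ordinary double point in dimension $\ge 2$ the exceptional divisor is a smooth quadric with nontrivial cohomology, so $R\pi_*\Lambda\not\simeq\Lambda$; one must argue via the Picard--Lefschetz sequence, as the paper does.
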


In fact, we prove a slightly weaker result for $\mathrm{ch}(k)\ne 2$ in Theorem~\ref{tame:tamethm}.

For $x$ specializing to a regular value in $\P^1_k,$ Theorem~\ref{thmint:3} is an immediate
consequence of the aforementioned results  of Rapoport--Zink  about semistable reduction.
The proof of Theorem~\ref{thmint:3} uses the Grothendieck-Murre criterion of tameness and
Nakayama's generalization  \cite[Theorem~0.1]{Nak98}  of  the work of  Rapoport--Zink
  to the log-smooth case applied to a
certain blow-up. It also relies on the tameness statement in the Picard-Lefschetz formula,
see Proposition~\ref{prop:picardlefschetzform}.

In forthcoming work we will study the monodromy property in terms of a tilting to equal
characteristic zero, for which  the tameness as formulated in Theorem~\ref{thmint:3} plays
a role.

\subsection{Content}

As a technical ingredient, which is not well-documented
in the literature and which is necessary for our study of semistable Lefschetz pencils,  we develop the algebraic theory of stratified Morse functions for
special algebraic analogs of Whitney stratifications, which we call regular
stratifications, in Sections~\ref{sec:1} and~\ref{sec:regstrat}.

In Section~\ref{subsec:semistabmor} we prove the main theorem about Morse functions for
semistable schemes.
 In Section~\ref{sec:geolefschetz} we generalize without difficulty the usual geometric theory of Lefschetz pencils to the
 stratified and semistable context.

In Section~\ref{sec:coholefcl} we recast the classical cohomology theory of Lefschetz
pencils in terms of perverse sheaves. The only  new result is the generalization of a
central observation of Katz~\cite[Expos\'e~XVIII, Th\'eor\`eme 5.7]{SGA7.2}   for type (A) pencils to the general case.

Section~\ref{subsec:mofil} summarizes properties of the monodromy filtration in the
context of perverse sheaves.

Our presentation of the theory of Rapoport--Zink in Section~\ref{sec:rapzi} is novel in that we make
full use of the duality theory of the nearby cycle functor. This  allows us to give a
clear-cut axiomatic description of  their  construction,
and  of its perverse
formulation in~\cite[Section~2.2]{Sai03}. In order to give a  coordinate-free presentation in the  case of
$\Z/\ell^\nu \Z$-coefficients, we use Beilinson's Iwasawa twist. As this formalism, which
is extremely useful for unipotent nearby cycles, is not
well-documented in the literature, we summarize it in Appendix~\ref{sec:appendix}.

Our main cohomological results about semistable Lefschetz pencils and the relation to the
monodromy-weight conjecture are studied in Section~\ref{sec:cohosemistlef}.

Section~\ref{sec:tamepl} discusses tameness of the Picard-Lefschetz sheaf.

\subsection*{Notation} By  $k$ we denote  a  field of characteristic $\ne 2$, which is assumed to be
perfect if not stated otherwise.
By $\sO$ we denote a henselian discrete valuation ring
with  residue field $k$. We always assume that $K=\mathrm{frac}(\sO)$ has
characteristic $0$.
 When mentioning the dimension  of a scheme, we assume that
it is equidimensional.
By $\ell$ we denote a prime number invertible in $k$.

\subsection*{Acknowledgement} We thank Alexander Beilinson for the discussions we had
since July 2021 on and around the topic of this note. In particular, our understanding of
vanishing cycles profited substantially from his many explanations. This note is written
using in part his language.
We thank Takuro Mochizuki for explaining to us the scope of Kashiwara's conjecture and for
communicating
some counterexamples.  We thank the referee for their precise work which helped us to
improve the exposition of the article.

Our note was finalized at the Department of Mathematics of Harvard
University, which we thank for the excellent working conditions.

\section{Morse morphisms and Morse functions}\label{sec:1}

\subsection{Morse morphisms}

 Let $X$ be a   scheme locally of finite type over  a field
$F$ with $\mathrm{ch}(F)\ne 2$.
For $F$ algebraically closed we say that $X\to \Spec F$ is a {\em Morse  morphism} or
simply  that
$X$  {\it is Morse } if for any closed point $x\in X$, either $X$ is regular at $x$
or there is an isomorphism of
$F$-algebras
\[
 \sO_{X,x}^\hh\cong
F [ X_0,\ldots, X_n]^\hh /(X_0^2 + \ldots + X_n^2)
\]
for some $n\ge 0$. Here  $^\hh$ denotes  the  henselization
with respect to the maximal ideal
$(X_0 , \ldots , X_n )$.
Over a general field $F$  we say that $X\to \Spec F$ is a
{\em Morse morphism}  if $X\otimes_F \overline F\to \Spec \overline F$ is a Morse morphism, where $\overline F/F$
is an algebraic closure. A non-smooth   point of a Morse morphisms $X\to \Spec F$ is called a {\em
  non-degenerate quadratic singularity}.

Let now $X$ and $Y$ be schemes on which $2$ is invertible.
A morphism of schemes $\phi\colon X\to Y$ is called a  {\em Morse morphism}  if it is locally
finitely presented, flat and for
any point $y\in Y$ the fiber $X_y$ is Morse over $k(y)$.
Clearly, Morse morphisms are  preserved by base change. Observe that in
 \cite[Section~(3.6)]{Del80},  it is called  ``essentiellement lisse'', in \cite[Section~4]{JS12}  the terminology
  ``almost good reduction'' is used.

We recall the deformation theory of non-degenerate quadratic singularities~\cite[Expos\'e~XV, Th\'eor\`eme~1.1.4]{SGA7.2}.

\begin{prop}\label{prop1:struc_almsm}
Let $\phi\colon X\to Y$ be flat and locally of finite presentation such that $\phi^{-1}(y)$ is
Morse over $k(y)$ for a $y\in Y$ with $k(y)$ separably closed. Let $x\in \phi^{-1}(y)$ be a singular point of
$\phi^{-1}(y)$. Then there exists  an isomorphism of $\sO_{Y,y}^{\rm h}$-algebras
\[
\sO_{X,x}^{\rm h}\cong \sO_{Y,y}[X_0,\ldots , X_n]^{\rm h}/(X_0^2 + \ldots + X_n^2-\alpha),
\]
where $\alpha$ is in the maximal ideal $\mathfrak m_y\subset\sO_{Y,y}^{\rm h}$.
\end{prop}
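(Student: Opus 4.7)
The plan is to exhibit a henselian presentation $R \twoheadrightarrow \sO_{X,x}^{\rm h}$ with principal kernel, and then bring its defining element into Morse normal form by a two-step change of variables: first translating the critical point to the origin via a multivariable Hensel, then diagonalizing the resulting quadratic form by a relative Morse lemma.

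For the presentation, since $k(y)$ is separably closed with $\mathrm{ch}(k(y)) \ne 2$, the Morse hypothesis on the fiber gives coordinates $t_0, \ldots, t_n$ realizing $\sO_{X_y,x}^{\rm h} \cong k(y)[t_0, \ldots, t_n]^{\rm h}/(t_0^2 + \cdots + t_n^2)$. I would pick lifts $T_0, \ldots, T_n \in \mathfrak m_{X,x}^{\rm h}$ of the $t_i$. A Nakayama argument, readily justified after passing to the completion and descending along the faithfully flat extension $R \to \widehat R$, yields a surjection of $\sO_{Y,y}^{\rm h}$-algebras
\[
R := \sO_{Y,y}^{\rm h}[T_0, \ldots, T_n]^{\rm h} \twoheadrightarrow \sO_{X,x}^{\rm h},
\]
where $R$ is henselized at $(\mathfrak m_y, T_0, \ldots, T_n)$. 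The ring $R$ is regular local of dimension $\dim \sO_{Y,y}^{\rm h} + n + 1$, while flatness of $\phi$ forces $\dim \sO_{X,x}^{\rm h} = \dim \sO_{Y,y}^{\rm h} + n$, so the kernel has height one. Since $R$ is factorial (and a final Nakayama, applied to the mod-$\mathfrak m_y$ reduction of the kernel which is principal in $k(y)[t_0, \ldots, t_n]^{\rm h}$, controls its number of generators), the kernel is principal, say generated by $f \in R$, and after rescaling by a unit I may assume $\bar f = t_0^2 + \cdots + t_n^2$ in $R/\mathfrak m_y R$.

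To move the critical point to the origin, I evaluate the partials at $T = 0$: modulo $\mathfrak m_y$ they vanish, so $(\partial_i f)(0) \in \mathfrak m_y$, while the Hessian at $0$ reduces to $2 I$ and is therefore invertible as a matrix over the henselian ring $\sO_{Y,y}^{\rm h}$ (this is where $\mathrm{ch} \ne 2$ enters). Applying the multivariable Hensel's lemma to the system $\partial_i f = 0$, with non-degenerate Jacobian at the origin, produces a unique critical point $T^\ast \in (\mathfrak m_y)^{n+1}$. Setting $\alpha := -f(T^\ast) \in \mathfrak m_y$ (the membership because $\bar f(0) = 0$) and substituting $T_i \mapsto T_i + T_i^\ast$, I reduce to the case in which $f + \alpha$ lies in $(T_0, \ldots, T_n)^2 R$ and its order-two part is congruent to $\sum T_i^2$ modulo $\mathfrak m_y$.

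The final step is a relative algebraic Morse lemma: diagonalize $f + \alpha$ to $\sum X_i^2$ over $R$ via a substitution $T_i \mapsto X_i$ with invertible Jacobian. I would iterate completion of squares one variable at a time: the coefficient of $T_0^2$ is a unit of $R$ congruent to $1$ mod $\mathfrak m_y$, Hensel's lemma (in characteristic $\ne 2$) extracts its square root, and completing the square yields a new coordinate $X_0$ such that $f + \alpha - X_0^2$ is a quadratic form in the remaining variables whose leading matrix is still the identity modulo $\mathfrak m_y$; iterating $n$ more times gives the desired normal form. Equivalently, one may first perform the diagonalization in the completion $\widehat R$ via the classical formal Morse lemma and descend. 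This diagonalization is the technical heart of the proof; every earlier step is routine bookkeeping with Nakayama, dimension, and Hensel, and the hypothesis $\mathrm{ch}(k(y)) \ne 2$ is used essentially in the square-root extractions.
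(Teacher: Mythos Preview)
The paper does not give its own proof of this proposition: it is stated as a recalled fact, with the citation \cite[Expos\'e~XV, Th\'eor\`eme~1.1.4]{SGA7.2} immediately preceding it. Your argument is correct and is essentially the standard proof one finds there --- lift a presentation from the fiber, use flatness plus Nakayama to see the kernel is principal, Hensel the critical point to the origin, then diagonalize by iterated completion of squares.

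One small remark on exposition: the mention of factoriality of $R$ is a red herring, since a height-one ideal in a UFD need not be principal unless it is prime. The real argument for principality is the parenthetical Nakayama step you give: flatness of $\sO_{X,x}^{\rm h}$ over $\sO_{Y,y}^{\rm h}$ implies $I \otimes_{\sO_{Y,y}^{\rm h}} k(y)$ injects into $R \otimes k(y)$ and hence equals the principal ideal $(\bar f)$ there, so $I/\mathfrak m_R I$ is one-dimensional and $I$ is principal. You might simply drop the factoriality remark.
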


Here on the right the  henselization is with respect to the maximal ideal generated by
$\mathfrak m_y$  and $ (X_0,\ldots  , X_n)$.

\subsection{Morse functions}

Let in the following $X$  be a regular noetherian scheme  and let $D$ be a
one-dimensional, regular, noetherian scheme. We always assume that $2$ is
invertible on $X$ and on $D$.
  Let $\phi\colon X\to D$  be a morphism of finite type. A non-smooth point $x\in X$ of $\phi$ is
called a {\em critical point} of $\phi$, the image $\phi(x)$ of a critical point $x\in X$ is called a
{\em critical value} of $\phi$.  In particular the closed points $x$ with ${\rm dim}_xX=0$ are critical.
 We say that $\phi\colon X\to D$ is a {\em
  Morse function} if $\phi$ is a Morse morphism around any point $x\in X$ with $\dim_x(X)>0$.
We call a critical point $x\in X$ {\em non-degenerate} if $\phi$ is
a Morse function in a neighborhood of $x$.

Note that in Morse theory one sometimes asks that additionally a Morse function has at most one critical
point per fiber.

\begin{rmk}\label{sec:mo:rmkiso}
The set of critical points of a Morse function
$\phi$ consists of finitely many closed points, compare proof of
Corollary~\ref{sec:morse_ml} below.
\end{rmk}

\medskip

For us $k$ will be a perfect field of characteristic different from two. For the rest of
this section we assume that $k$ is algebraically closed, that $X$ and $D$ are of finite
type over $k$ and  that $\phi\colon X \to D$ is a $k$-morphism, $n=\dim(X)$.
Proposition~~\ref{prop1:struc_almsm} has the following corollary.

\begin{cor}[Morse lemma]\label{sec:morse_ml}
For   $x\in
X$ a critical point of a Morse function $\phi\colon X\to D$   with $n\ge 1$  there exist
$k$-isomorphisms
\[
\sO_{X,x}^{\rm h}\cong k[X_1 , \ldots , X_n]^{\rm h},\quad \sO_{D,\phi(x)}^{\rm h}\cong k[T]^{\rm h}
\]
such that $\phi^\hh_x\colon \sO_{D,\phi(x)}^{\rm h} \to \sO_{X,x}^{\rm h}$ maps $T$
to $X_1^2 + \ldots + X_n^2$.

  If $n=0,$ then $\sO_{X,x}^{\rm h}=k$  and $\phi(x)^{\rm h}(T)=0$.
\end{cor}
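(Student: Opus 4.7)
I would dispatch the $n=0$ case first as a formality: since $X$ is zero-dimensional, regular and of finite type over the algebraically closed field $k$, one has $\sO_{X,x}^{\rm h}=k$, while $\phi(x)$ is closed in $D$ (as $\phi$ is of finite type between Jacobson schemes), so $\sO_{D,\phi(x)}^{\rm h}\cong k[T]^{\rm h}$; any $k$-algebra map to $k$ must kill the uniformizer, giving $\phi_x^{\rm h}(T)=0$.

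For $n\geq 1$, the plan is to reduce directly to Proposition~\ref{prop1:struc_almsm} and then exploit the regularity of $X$. By Remark~\ref{sec:mo:rmkiso} the critical point $x$ is closed, hence so is $\phi(x)\in D$, and thus $\sO_{D,\phi(x)}^{\rm h}\cong k[T]^{\rm h}$ for some uniformizer $T$. Since $\phi$ is a Morse morphism around $x$, it is flat and locally of finite presentation near $x$ and the fiber $\phi^{-1}(\phi(x))$ is Morse over $k$ at $x$; so Proposition~\ref{prop1:struc_almsm} applies and supplies an isomorphism of $\sO_{D,\phi(x)}^{\rm h}$-algebras
\[
\sO_{X,x}^{\rm h}\ \cong\ \sO_{D,\phi(x)}^{\rm h}[X_0,\ldots,X_{n-1}]^{\rm h}\big/\bigl(X_0^2+\cdots+X_{n-1}^2-\alpha\bigr)
\]
for some $\alpha\in\mathfrak{m}_{\phi(x)}^{\rm h}=(T)$.

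The crux will be to show that $\alpha$ is itself a uniformizer, i.e.\ $\alpha\notin(T^2)$; the rest is a change of variables. For this I would compute $\mathfrak{m}/\mathfrak{m}^2$ on the right-hand side, where $\mathfrak{m}=(T,X_0,\ldots,X_{n-1})$: the ambient henselian local ring has $\dim_k\mathfrak{m}/\mathfrak{m}^2=n+1$, and the relation $f:=X_0^2+\cdots+X_{n-1}^2-\alpha$ satisfies $f\equiv -\alpha\pmod{\mathfrak{m}^2}$, so it cuts the cotangent dimension by one precisely when $\alpha\notin(T^2)$. Regularity of $\sO_{X,x}^{\rm h}$ (of Krull dimension $n$) therefore forces $\alpha=Tu$ with $u$ a unit. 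Replacing $T$ by the new uniformizer $T':=Tu$ turns the defining relation into $T'=X_0^2+\cdots+X_{n-1}^2$; eliminating $T'$ yields $\sO_{X,x}^{\rm h}\cong k[X_0,\ldots,X_{n-1}]^{\rm h}$ with $\phi_x^{\rm h}(T')=X_0^2+\cdots+X_{n-1}^2$, and a relabeling $X_{i-1}\mapsto X_i$ produces the form stated in the Corollary. The only real obstacle is the uniformizer claim for $\alpha$; everything else is bookkeeping.
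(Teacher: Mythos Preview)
Your argument is correct and mirrors the paper's: apply Proposition~\ref{prop1:struc_almsm} and use regularity of $\sO_{X,x}^{\rm h}$ (via the cotangent-space count) to force $\alpha$ to be a uniformizer of $\sO_{D,\phi(x)}^{\rm h}$, then absorb the unit into $T$. One small circularity to repair: Remark~\ref{sec:mo:rmkiso} explicitly defers its justification to the proof of this very corollary, so you cannot invoke it to conclude that $x$ is closed. Instead do as the paper does---first establish the normal form assuming $x$ is closed, and then observe \emph{a posteriori} that in these coordinates $x$ is the unique critical point in its henselian neighborhood; since the critical locus is closed, any non-closed critical point would specialize to a closed one and contradict this.
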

\noindent
 Here the henselizations $^\hh$ are with respect to the maximal ideals $(X_1,\ldots, X_n)$ and $(T)$.
\begin{proof}
It suffices to show the corollary for $x$ a closed point as then a posteriori it follows that there
are no non-closed critical points, using that the set of critical points is closed.
Looking at the henselian local presentation of $\sO_{X,x}$ as an
$\sO_{D,\phi(x)}$-algebra from Proposition~\ref{prop1:struc_almsm} we see that necessarily
$\alpha\in \sO_{D,\phi(x)}$ is a uniformizer, as otherwise  $\sO_{X,x}$ would be singular.
\end{proof}

\smallskip

Let $J_\phi$ be the Jacobian ideal of $\phi$, see~\cite[Section 4.4]{HS06}.

\begin{lem}\mbox{}
\begin{itemize}
\item[(i)] $J_\phi=\sO_X$ $\Leftrightarrow $ $\phi$ is smooth;
 \item[(ii)] $V(J_\phi)$ is a finite disjoint union of copies of $\Spec k$  $\Leftrightarrow$
  $\phi$ is a Morse function.
  \end{itemize}
\end{lem}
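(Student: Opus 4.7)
The plan is to reduce both statements to an explicit local computation of the Jacobian ideal in regular parameters. Since $k$ is algebraically closed and $X$ is regular of finite type over $k$, $X$ is smooth over $k$; first I would observe that around any closed point $x\in X$ one may choose regular parameters $X_1,\ldots,X_n\in\sO_{X,x}$ together with a parameter $T$ of $\sO_{D,\phi(x)}$, so that $\phi$ is locally given by $\phi^*T = f$ for a single function $f\in\sO_{X,x}$. A direct computation of Fitting ideals then identifies
\[
J_\phi\cdot\sO_{X,x} \;=\; \bigl(\partial f/\partial X_1,\ldots,\partial f/\partial X_n\bigr),
\]
so set-theoretically $V(J_\phi)$ is the critical locus of $\phi$. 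Part (i) is then immediate from the Jacobian criterion for smoothness: $J_\phi=\sO_X$ iff $\phi$ has no critical point iff $\phi$ is smooth.

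Next, for the forward direction of (ii), I would invoke Corollary~\ref{sec:morse_ml} at each critical point $x$ to arrange $f = X_1^2+\ldots+X_n^2$ in suitable henselian coordinates; since $\mathrm{ch}(k)\ne 2$ one reads off
\[
J_\phi\cdot\sO_{X,x}^\hh \;=\; (X_1,\ldots,X_n)\cdot\sO_{X,x}^\hh \;=\; \mathfrak{m}_x\cdot\sO_{X,x}^\hh,
\]
so $V(J_\phi)$ is reduced and of length one at $x$. Faithful flatness of the henselization recovers $J_\phi\cdot\sO_{X,x}=\mathfrak{m}_x$. Combining this with Remark~\ref{sec:mo:rmkiso} (finiteness of the critical set of a Morse function) yields the required scheme-theoretic description of $V(J_\phi)$.

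For the converse of (ii), I would pick $x\in V(J_\phi)$ and unwind the hypothesis $J_\phi\cdot\sO_{X,x}=\mathfrak{m}_x$. After normalising via $T\mapsto T-f(x)$ we may assume $f(x)=0$; the hypothesis then forces $f\in\mathfrak{m}_x^2$, and, by Nakayama applied to the partial derivatives, it also forces the quadratic term of $f$ at $x$ to be a non-degenerate quadratic form in the $X_i$ over $k$. The final step is to use this non-degeneracy, together with the henselian approximation property and $\mathrm{ch}(k)\ne 2$, to change variables in $\sO_{X,x}^\hh$ and arrive at $f=Y_1^2+\ldots+Y_n^2$, thereby verifying the Morse condition at $x$.

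The main obstacle, and the genuine heart of the argument, is this last henselian diagonalisation: one must iteratively \emph{complete the square} in a henselian regular local $k$-algebra, using Hensel's lemma to extract square roots of the diagonal entries that appear after each successive linear change of coordinates. This is a classical piece of quadratic form theory, but it is the only place where the hypotheses on $\mathrm{ch}(k)$ and on $k$ being algebraically closed enter in a serious way. The remaining steps, namely the local description of $J_\phi$ as the ideal of partials and its use in the forward direction of (ii), are purely formal.
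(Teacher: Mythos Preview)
Your proposal is correct and is essentially the argument the paper is pointing to: the paper gives no proof of its own but cites \cite[Expos\'e~XV, Section~1.2]{SGA7.2}, whose content is exactly your local identification $J_\phi\cdot\sO_{X,x}=(\partial_1 f,\ldots,\partial_n f)$, the equivalence between $J_\phi\cdot\sO_{X,x}=\mathfrak m_x$ and invertibility of the Hessian, and the henselian diagonalisation (``splitting off squares''). Your use of Corollary~\ref{sec:morse_ml} for the forward direction is a legitimate shortcut, since that corollary is already established via Proposition~\ref{prop1:struc_almsm} and does not depend on the present lemma.
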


The proof of part (ii) of the lemma is explained in~\cite[Expos\'e XV, Section 1.2]{SGA7.2}.

\section{Regular stratifications}\label{sec:regstrat}

\subsection{Regular stratifications and critical points}
Let $X$  be a noetherian scheme  and let $D$ be a
one-dimensional, regular, noetherian scheme. We always assume that $2$ is
invertible on $X$ and on $D$.

\begin{defn} A {\em stratification}  of $X$ is a finite set $\bf Z$ consisting of disjoint locally
closed subsets $Z\subset X$ called {\em strata} such that  $X$ is the  disjoint union of the strata and such that the Zariski closure
$\overline Z$ of a stratum $Z \in \bf Z$ is a union of
strata.  If $X$ is a scheme and $\bf Z$ a stratification of
$X$ the pair $(X, \bf Z)$ is called a {\it stratified scheme.}
 \end{defn}

For $(X,\mathbf Z)$ a stratified scheme and $x\in X$ we denote by $Z_x\subset X $ the
stratum of $x$.  We usually endow a stratum with its reduced subscheme structure.
What we call a stratification is  called a good stratification in \cite[Definition~09XZ]{StPr}. We call the stratification $\bf Z$ {\em regular} if the closure
$\overline Z$ with the reduced subscheme structure is regular for any stratum $Z\in \bf
Z$.

Note that for a morphism of finite type $f\colon Y\to X$, the  set-theoretic  pullback $f^{-1}({\bf Z})$ of
$\bf Z$ to $Y$ is
in general not a stratification but just a  partition of $Y$ into locally
closed subsets which we call the {\em pullback partition}.
If $f$ is
flat,  then $f^{-1}({\bf Z})$ is automatically a stratification.

\smallskip

Now assume that $X$ is endowed with a regular stratification $\bf Z$. Consider a morphism
$\phi\colon X\to D$ of finite type.

A point $x\in X$ is called a {\em critical point} of $\phi$ (with respect  to the stratification $\bf Z$) if
 $ \phi|_Z\colon Z \to D$  is non-smooth at $x$, where $Z$ is the stratum
containing $x$ endowed with the reduced subscheme structure. Otherwise, $x$ is called {\it non-critical}.
For $x\in X$ a critical point we call $\phi(x)\in D$ a {\em critical value}.

\begin{lem}\label{lem:crtilocclos}
   The set of critical points of $\phi$ is closed in $X$.
\end{lem}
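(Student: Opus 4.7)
The plan is to write the set of critical points as a finite union
\[
R \;=\; \bigcup_{Z\in\mathbf Z} C_Z,
\]
where $C_Z$ denotes the non-smooth locus of the restriction $\phi|_Z\colon Z\to D$, viewed inside the stratum $Z$ with its reduced locally closed subscheme structure. Each $C_Z$ is closed in $Z$ -- that is the easy part -- but the stratum $Z$ is only locally closed in $X$, so the content of the lemma is that the closure in $X$ of each $C_Z$ remains inside $R$. Since $\mathbf Z$ is finite, it suffices to show: whenever $x\in\overline{C_Z}$ for some fixed stratum $Z$, the point $x$ is critical, i.e.\ $\phi|_{Z_x}$ is non-smooth at $x$, with $Z_x$ the stratum of $x$.

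The easy case is $x\in Z$, which is handled by closedness of $C_Z$ in $Z$. The substantive case is $x\in\overline Z\setminus Z$; then by the definition of a stratification one has $\overline{Z_x}\subseteq\overline Z$. My strategy is to pass to these closures, both regular by hypothesis on the stratification, and compare smoothness using tangent spaces. First I would introduce the non-smooth locus $C'\subseteq\overline Z$ of the extension $\phi|_{\overline Z}\colon\overline Z\to D$, which is closed in $\overline Z$. Because $Z$ is open in $\overline Z$ and smoothness is local, $C_Z=C'\cap Z$, hence $\overline{C_Z}\subseteq C'$, and so $\phi|_{\overline Z}$ is non-smooth at $x$.

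The key step is then to transfer non-smoothness of $\phi|_{\overline Z}$ at $x$ to non-smoothness of $\phi|_{\overline{Z_x}}$ at $x$. The closed immersion $\overline{Z_x}\hookrightarrow\overline Z$ of regular noetherian schemes is a regular immersion, so its conormal sequence produces an injection on tangent spaces $T_x\overline{Z_x}\hookrightarrow T_x\overline Z$. If $\phi|_{\overline{Z_x}}$ were smooth at $x$, the Jacobian criterion would give surjectivity of its differential onto the one-dimensional $T_{\phi(x)}D$; but this differential is the restriction of $d(\phi|_{\overline Z})_x$ to the subspace $T_x\overline{Z_x}$, so $d(\phi|_{\overline Z})_x$ would itself be surjective, forcing $\phi|_{\overline Z}$ to be smooth at $x$ -- contradicting $x\in C'$. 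Hence $\phi|_{\overline{Z_x}}$, and consequently $\phi|_{Z_x}$ by openness of $Z_x$ in $\overline{Z_x}$, is non-smooth at $x$. The degenerate case $\dim_x Z_x=0$ is automatic, since a morphism from a zero-dimensional scheme to the one-dimensional regular $D$ is never smooth.

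The main obstacle will be this last tangent-space comparison, which is where the regularity of the stratification enters in an essential way: without it, $\overline Z$ could be singular along $\overline{Z_x}$, the tangent injection $T_x\overline{Z_x}\hookrightarrow T_x\overline Z$ could fail, and non-smoothness of $\phi|_{\overline Z}$ at $x$ would not propagate to $\phi|_{Z_x}$. The rest of the argument is bookkeeping about closures and open inclusions of strata into their closures.
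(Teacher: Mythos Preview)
Your approach is essentially the paper's. The paper phrases the reduction as ``constructible and stable under specialization'' while you take closures directly, but the core step is identical: for $x\in\overline Z\setminus Z$ with $\phi|_{\overline Z}$ non-smooth at $x$, show that $\phi|_{\overline{Z_x}}$ is non-smooth at $x$, using that $\overline{Z_x}\hookrightarrow\overline Z$ is a closed immersion of regular schemes (hence cut out by a regular sequence). Arguing the contrapositive as you do even lets you skip the paper's separate treatment of the non-flat case, since smoothness of $\phi|_{\overline{Z_x}}$ forces flatness automatically.

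There is, however, one gap in your tangent-space execution. The implication ``$d(\phi|_{\overline Z})_x$ surjective $\Rightarrow$ $\phi|_{\overline Z}$ smooth at $x$'' is not valid in the generality of the lemma (arbitrary regular noetherian $D$, residue fields possibly imperfect): surjectivity of the Zariski tangent map only gives \emph{regularity} of the fiber at $x$, not geometric regularity. For instance, with $\kappa=\F_p(t)$, $\overline Z=\Spec\kappa[v]$, $D=\Spec\kappa[u]$, $u\mapsto v^p-t$, the tangent map at the closed point $x=(v^p-t)$ is an isomorphism of one-dimensional spaces, yet $\phi$ is not smooth there. The paper sidesteps this by passing at once to the geometric fiber over $\overline{k(\phi(x))}$ and using that a singular local ring modulo a regular sequence remains singular (compare embedding dimension to Krull dimension). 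Your argument is repaired in the same spirit: smoothness of $\phi|_{\overline{Z_x}}$ at $x$ means its fiber is \emph{geometrically} regular, and since the regular sequence cutting out $\overline{Z_x}$ in $\overline Z$ stays regular after the flat base change to $\overline{k(\phi(x))}$, one deduces that the geometric fiber of $\phi|_{\overline Z}$ is regular at every point over $x$ as well --- which is the smoothness you need.
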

\begin{proof}
  Let $C\subset X$ be the subset of critical points. As for any stratum $Z \in \bf Z$ the
  subset $C\cap Z$ is the non-smooth locus of  $\phi|_{Z}\colon Z \to D$,  we see that
  $C\cap Z\subset Z$ is closed. Therefore, $C$ is constructible in $X$. So by \cite[Lemma~0903 (2)]{StPr} we just
  have to show that $C$ is closed under specialization. So consider points
  $x_2\in \overline{\{ x_1\}}$ with $x_1\in Z_1\in \bf Z$ and $x_2\in Z_2\in \bf Z$.
  Assume $x_1\in C$ and $Z_1\ne Z_2$.

  \medskip

  {\em 1st case}   $Z_2$ is non-flat over $D$ at $x_2$.\\
  Then of course  $Z_2$  is not smooth over $D$ at $x_2$, so $x_2\in C$.

   \medskip

   {\em 2nd case}:  $Z_2 $  is flat over $D$ at $x_2$.\\
   Then also   $\overline Z_1$  is flat over $D$ at $x_2$, because flatness is equivalent to
   being dominant over $D$. As   $Z_2 \hookrightarrow \overline{
   Z}_1 $  is a closed immersion of regular schemes there exists a regular sequence
   $\underline a$ in $\sO_{\overline Z_1, x_2}$ with $ \sO_{\overline Z_1, x_2}/(\underline
   a)=\sO_{Z_2, x_2}$. Let $\pi\in \sO_{D,\phi(x_2)}$ be a uniformizer. As $\pi$ is a
   non-zero divisor on $\sO_{Z_2, x_2}$ we see that $\underline a,\pi$ is also a regular
   sequence in $\sO_{\overline Z_1, x_2}$, so the image of $\underline a$ in
   $\sO_{\overline Z_1, x_2}/(\pi)$ is also a regular sequence, and it remains a regular
   sequence after the base change by the algebraic closure
   $\overline{k(\phi(x_2))}/k(\phi(x_2))$.
   So as $\sO_{\overline Z_1, x_2} \otimes_{\sO_{D,\phi(x_2)}} \overline{k(\phi(x_2))}$ is
   singular, since $x_1\in C$ and since the non-smooth locus of the map $\overline Z_1 \to D$ is
   closed.
Then also   $\sO_{ Z_2, x_2} \otimes_{\sO_{D,\phi(x_2)}} \overline{k(\phi(x_2))}$ is
singular, because it is a quotient of the ring $\sO_{\overline Z_1, x_2}
\otimes_{\sO_{D,\phi(x_2)}}$ modulo a regular sequence.
   We have shown  $x_2\in C$. This finishes the proof.
\end{proof}

The following definition is copied from stratified Morse theory~\cite[2.0]{GM88}.

\begin{defn}[Non-degenerate critical points]  \label{dfn:nondegcp}
A critical point $x$ of $\phi$ is called {\it non-degenerate} if for $Z\subset X$ the
stratum containing $x$ and for any stratum $Z'\ne Z$ with $Z\subset \overline Z'$ the following holds:

\begin{itemize}
\item[(1)] $\phi|_{Z}$ has a non-degenerate critical point at $x$ in the sense of Section~\ref{sec:1};
\item[(2)]  $\phi|_{\overline Z'}$ is smooth at $x$.
\end{itemize}

If every critical point of $\phi$ is non-degenerate we say that $\phi$ is a  {\it stratified Morse function}.
\end{defn}

\begin{lem}
  The set of critical points of a stratified Morse function $\phi$ consists of finitely
   many closed points.
\end{lem}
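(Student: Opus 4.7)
My plan is to combine Lemma~\ref{lem:crtilocclos} (which says that the critical set $C$ is closed in $X$) with the fact that on each stratum the restriction of $\phi$ is a genuine Morse function in the sense of Section~\ref{sec:1}.

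Fix a stratum $Z$. By Definition~\ref{dfn:nondegcp}(1), every critical point of $\phi|_Z$ is non-degenerate in the sense of Section~\ref{sec:1}, so $\phi|_Z\colon Z\to D$ is a Morse function (it is smooth away from its critical points, and Morse near each of them). By Remark~\ref{sec:mo:rmkiso} its critical set, which is exactly $C\cap Z$, is a finite set of closed points of $Z$. Since the stratification is finite, summing over strata gives that $C$ is finite.

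It remains to show that each $x\in C$ is closed in $X$ itself, not merely in the stratum $Z=Z_x$. Suppose otherwise, so that there exists a proper specialization $y\in \overline{\{x\}}\setminus\{x\}$. Since $x$ is closed in $Z$, such a $y$ lies in $\overline Z\setminus Z$, hence in some stratum $Z''\neq Z$ with $Z''\subset \overline Z$. Because $C$ is closed, $y\in C$, so $y$ is a critical point of $\phi|_{Z''}$. Now apply Definition~\ref{dfn:nondegcp}(2) at the critical point $y\in Z''$, taking the larger stratum to be $Z$ (we do have $Z\neq Z''$ and $Z''\subset \overline Z$): the restriction $\phi|_{\overline Z}$ is smooth at $y$, and so smooth on an open neighborhood $U\subset \overline Z$ of $y$. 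Since $y\in \overline{\{x\}}$, any open set containing $y$ must also contain the generization $x$, so $x\in U$. Because $Z$ is open in $\overline Z$ (its complement being the finite union of the closures of the strictly smaller strata contained in $\overline Z$), we conclude that $\phi|_Z$ is smooth at $x$, contradicting $x\in C$.

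The main subtlety I anticipate is verifying cleanly that $Z$ is open in $\overline Z$, which uses the stratification axioms (closures of strata are unions of strata, strata are disjoint and locally closed) together with the finiteness of the stratification. Once that is granted, the argument is a direct synthesis of the closedness of $C$ from Lemma~\ref{lem:crtilocclos} with the local structure of non-degenerate critical points from Section~\ref{sec:1}.
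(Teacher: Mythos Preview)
Your proof is correct and takes essentially the same approach as the paper's: both arguments combine Remark~\ref{sec:mo:rmkiso} for $\phi|_Z$, the closedness of $C$ from Lemma~\ref{lem:crtilocclos}, and condition~(2) of Definition~\ref{dfn:nondegcp} at a specialization to derive a contradiction. The only cosmetic difference is that you deduce finiteness first (stratum by stratum) and then closedness in $X$, whereas the paper shows closedness in $X$ first and then gets finiteness from $C$ being a closed zero-dimensional subset of a noetherian scheme; also, your worry about $Z$ being open in $\overline Z$ is unnecessary, since any locally closed subset is automatically open in its closure.
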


\begin{proof}
  Any  critical point $x$ of $\phi$ is closed.
Indeed, let $Z$ be the stratum of $x$.
Assume $x$ is not closed in $X$. Then there exists a proper specialization $x'$ of $x$. If
$x'$ lies in $Z$ then this contradicts Remark~\ref{sec:mo:rmkiso} applied to
$\phi|_Z\colon Z\to D$. If $x'$ lies
in a different stratum $Z'$ then $\phi|_{Z'}\colon Z'\to D$ is non-smooth at $x'$ by
Lemma~\ref{lem:crtilocclos}. As also   $\phi|_{\overline Z} \colon \overline Z \to D$ is
non-smooth at $x'$ this would contradict the condition that $\phi$ is  a stratified Morse function around $x'$.

 To conclude use that the set of critical points is
closed by Lemma~\ref{lem:crtilocclos}.
\end{proof}

\subsection{Stratified regular immersions}\label{subsec:stratregim}

Let $(X,\mathbf Z_X)$ and $(Y,\mathbf Z_Y)$ be two noetherian schemes with regular
stratifications. Let $i\colon Y\hookrightarrow X$ be a regular closed immersion of
codimension $c$. We say that $i$ is {\em stratified regular} if for any point $y\in
Y$
after replacing $X$ by an open neighborhood of $i(y)$ and $Y$ by its preimage it holds:
\begin{itemize}
\item[ (1)]  the schematic pullback $i^{-1}(Z_{i(y)})$ is reduced and equal to $Z_y$;
 \item[ (2)]  $i|_{Z_y}\colon Z_y\to Z_{i(y)} $ has  codimension $c$.
\end{itemize}
Here $Z_y$
is the stratum of $y\in Y$ and $Z_{i(y)}$ is the stratum of $i(y)\in X$.

 For a stratified regular closed immersion $i\colon Y\hookrightarrow X$ and
  for a stratum $Z\subset X$,   each connected component  of
  $i^{-1}(Z)$ is a connected component of a stratum  of $Y$.
Note that locally around $x=i(y)$ a regular sequence for $i$  restricts to a
regular sequence for $Z_y\hookrightarrow Z_x$ by~\cite[Theorem 17.4]{Mat86}.

If $Y$ is not endowed with a stratification but $(X,\mathbf Z_X)$ has a regular stratification,  we call
the closed immersion $i\colon Y\hookrightarrow X$ {\it stratified regular} if the partition $i^{-1}( \mathbf
Z_X)$ is a regular stratification and with this stratification $i$ is stratified regular.

\begin{lem}\label{lem:strregpull}
For a stratified regular immersion $i\colon( Y,\mathbf Z_Y)\hookrightarrow (X,\mathbf
Z_X)$  and a stratum $Z\in \mathbf Z_X$ the preimage $i^{-1}(\overline Z)$ is regular.
\end{lem}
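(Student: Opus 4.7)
The plan is to verify regularity of $i^{-1}(\overline Z)$ locally at each point $y\in i^{-1}(\overline Z)$, setting $x=i(y)\in \overline Z$, by producing a regular sequence of length $c$ in the regular local ring $\sO_{\overline Z,x}$ whose vanishing defines $i^{-1}(\overline Z)$ near $y$. We replace $X$ by a small enough affine open neighborhood of $x$ so that (a) the closed immersion $i$ is globally cut out by a regular sequence $f_1,\dots,f_c\in \Gamma(X,\sO_X)$, and (b) $Z_x$ is closed in this neighborhood; the latter is possible because the stratum $Z_x$ is open in its closure $\overline{Z_x}$. On this neighborhood $Z_x=\overline{Z_x}$ is a regular closed subscheme of the regular scheme $\overline Z$.

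Using the stratified regularity of $i$ together with the remark at the end of Section~\ref{subsec:stratregim} (based on \cite[Theorem~17.4]{Mat86}), the restrictions of $f_1,\dots,f_c$ to $\sO_{Z_x,x}$ form a regular sequence of length $c$ cutting out $Z_y$ in $Z_x$. In a regular local ring, a regular sequence of length $c$ automatically has $k(x)$-linearly independent images in the Zariski cotangent space; hence the images of $f_1,\dots,f_c$ in $\mathfrak m_{Z_x,x}/\mathfrak m_{Z_x,x}^2$ are linearly independent.

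The closed immersion $Z_x\hookrightarrow \overline Z$ of regular schemes induces a surjection
\[
\mathfrak m_{\overline Z,x}/\mathfrak m_{\overline Z,x}^2 \twoheadrightarrow \mathfrak m_{Z_x,x}/\mathfrak m_{Z_x,x}^2,
\]
so the images of $f_1,\dots,f_c$ in $\mathfrak m_{\overline Z,x}/\mathfrak m_{\overline Z,x}^2$ are again linearly independent. They therefore extend to a regular system of parameters of $\sO_{\overline Z,x}$ and in particular form a regular sequence there, whence the quotient $\sO_{\overline Z,x}/(f_1,\dots,f_c)=\sO_{i^{-1}(\overline Z),y}$ is regular. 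The main delicate point, and the one that uses the full strength of the hypothesis, is the transfer of the regular-sequence property from $\sO_{X,x}$ down to $\sO_{Z_x,x}$ via stratified regularity; the cotangent-space comparison with $\overline Z$ then upgrades the conclusion from the small stratum $Z_x$ to the ambient regular stratum closure $\overline Z$.
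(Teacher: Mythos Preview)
Your proof is correct and follows essentially the same approach as the paper's: both reduce to showing that a regular sequence $\underline a$ for $i$ in $\sO_{X,x}$ remains part of a regular system of parameters in $\sO_{\overline Z,x}$, arguing via the fact that this holds in $\sO_{Z_x,x}$ by stratified regularity and then lifting through the surjection $\sO_{\overline Z,x}\twoheadrightarrow\sO_{Z_x,x}$. The paper's proof is simply a terse one-line version of your argument, leaving the cotangent-space comparison implicit.
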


\begin{proof}
Consider a point  $y\in i^{-1}(\overline Z)$, $x=i(y)$ and
 a regular sequence
$\underline a\in \sO_{X,x}$  generating the ideal of $i$. Then the image of $\underline
a$ in $\sO_{\overline Z,x}$ is part
of a regular parameter system as its image in $\sO_{Z_{x},x}$ has this property.
\end{proof}

The proof of the following lemma is similar.

\begin{lem}\label{lem:stratregtransv}
 For a regularly stratified scheme $(X,\mathbf Z_X)$ and  for a regular closed immersion
  $i\colon Y\hookrightarrow X$ and a point $y\in Y$ the following are equivalent:
  \begin{itemize}
 \item[ (1)]  $i$ is a stratified regular immersion in a neighborhood of $y$;
 \item[ (2)]
for a regular sequence
$\underline a\in \sO_{X,i(y)}$ locally generating the ideal of $i$,  the sequence
$\underline a|_{Z_{i(y)}}$ is part of a  regular parameter system in $\sO_{Z_{i(y)},i(y)}$.
\end{itemize}
\end{lem}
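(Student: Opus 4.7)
The plan is to establish both implications using the regular parameter system criterion, with the bulk of the work in $(2)\Rightarrow(1)$ consisting of propagating the parameter system from the minimal stratum through $i(y)$ to all closures containing it, and then appealing to openness.

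For $(1)\Rightarrow(2)$, I would argue directly from the definition. Assuming $i$ is stratified regular in a neighborhood of $y$, one has $i^{-1}(Z_{i(y)})=Z_y$ locally as reduced subschemes and $i|_{Z_y}\colon Z_y\to Z_{i(y)}$ is of codimension $c$. Therefore
\[
\sO_{Z_{i(y)},i(y)}\big/(\underline a|_{Z_{i(y)}})\;\cong\;\sO_{Z_y,y},
\]
and the right-hand side is regular because $\mathbf Z_X$ is regular (so $\overline{Z_y}$, and its open $Z_y$, is regular at $y$). Since $\sO_{Z_{i(y)},i(y)}$ is itself a regular local ring (the same reasoning applied to $Z_{i(y)}$) and the quotient by $\underline a|_{Z_{i(y)}}$ is regular of codimension $c$, the sequence $\underline a|_{Z_{i(y)}}$ must be part of a regular parameter system.

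For $(2)\Rightarrow(1)$, the central step is to show that for every stratum $Z\in\mathbf Z_X$ with $i(y)\in\overline Z$, the restriction $\underline a|_{\overline Z}$ is part of a regular parameter system in $\sO_{\overline Z,i(y)}$. Here I would mimic the argument of Lemma~\ref{lem:strregpull}: from $i(y)\in\overline Z$ it follows that $Z_{i(y)}\subset\overline Z$, and $\overline{Z_{i(y)}}\hookrightarrow\overline Z$ is a regular closed immersion of regular local schemes at $i(y)$. Under the surjection of cotangent spaces induced by $\sO_{\overline Z,i(y)}\twoheadrightarrow\sO_{\overline{Z_{i(y)}},i(y)}=\sO_{Z_{i(y)},i(y)}$, the linear independence of the classes of $a_1,\ldots,a_c$ in the target (provided by (2)) lifts to linear independence in the source, so $\underline a|_{\overline Z}$ is part of a regular parameter system as claimed. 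Consequently $i^{-1}(\overline Z)$ is regular at $y$, which is exactly the regularity condition needed for $i^{-1}(\mathbf Z_X)$ to be a regular stratification near $y$. Applying this to $Z=Z_{i(y)}$ shows moreover that $i^{-1}(Z_{i(y)})$ is reduced at $y$ and, by counting cotangent dimensions, of codimension $c$ in $Z_{i(y)}$, so conditions (1) and (2) of stratified regular immersion hold at $y$.

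Finally, I would upgrade these pointwise assertions to a neighborhood of $y$ by openness: the non-regular locus of each $V(\underline a|_{\overline Z})\subset\overline Z$ is closed and avoids $i(y)$, and only finitely many strata $Z$ satisfy $i(y)\in\overline Z$, so a common open neighborhood of $i(y)$ works. The main obstacle I expect is the bookkeeping in this second implication: condition (2) only gives information about the single stratum $Z_{i(y)}$, whereas the conclusion (1) requires control over every stratum whose closure passes through $i(y)$. The cotangent-space argument along $\overline{Z_{i(y)}}\hookrightarrow\overline Z$ is exactly the mechanism that transports the hypothesis to all such strata, and it is the essential technical point.
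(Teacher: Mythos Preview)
Your proposal is correct and follows essentially the same approach as the paper, which simply says the proof is similar to that of Lemma~\ref{lem:strregpull}. The key mechanism you identify---lifting the regular parameter system from $\sO_{Z_{i(y)},i(y)}$ to $\sO_{\overline Z,i(y)}$ via the surjection of cotangent spaces---is precisely the argument used there, and your added details on openness and on checking the two conditions in the definition of stratified regular immersion are the natural way to complete it.
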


Let now $k$ be an infinite perfect field and $X\hookrightarrow  \P^N_k$  be an immersion.
Recall the following  Bertini type  theorem.

\begin{prop}\label{prop:bertini} For a regularly stratified scheme $(X,\mathbf Z_X)$  the following properties are verified.
\begin{itemize}
\item[(i)] For  a generic hypersurface  $H\hookrightarrow \P^N_k$ the map
  $i\colon X\cap H \hookrightarrow X$ is a stratified regular immersion.
 \item[ (ii)]  Assume given a closed point $x\in X$ contained in the stratum $Z$.
   Then
  there exists a hypersurface $H\hookrightarrow \P^N_k $ of large degree such that
  $i\colon X\cap H \hookrightarrow X$ is stratified regular away from $x$ and such that
  the schematic intersection $H\cap Z$
  is singular at $x$, i.e.\ $H$ contains the tangent space to $Z$ at $x$.
  \end{itemize}
\end{prop}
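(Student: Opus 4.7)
The plan is to deduce both assertions from the classical Bertini theorem over the infinite field $k$, applied individually to each of the finitely many stratum closures $\overline Z \subset \P^N_k$, all of which are regular by hypothesis on the stratification.

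For (i), I would invoke the regularity version of Bertini: for any regular quasi-projective subscheme $V \subset \P^N_k$ over an infinite field, a generic hypersurface $H$ of fixed degree intersects $V$ transversally, so that $V \cap H$ is regular and the local equation of $H$ is part of a regular parameter system of $\sO_{V,y}$ at every $y \in V \cap H$. Intersecting the finitely many open dense loci in the parameter space of hypersurfaces, one per stratum, yields a nonempty open set of hypersurfaces $H$ transverse to every $\overline Z$ simultaneously. For such $H$, the partition $i^{-1}(\mathbf Z_X) = \{Z \cap H\}$ of $Y = X \cap H$ is a stratification with regular closures $\overline{Z \cap H} = \overline Z \cap H$, hence a regular stratification, and Lemma~\ref{lem:stratregtransv} applied to the local equation of $H$ then shows that $i\colon Y \hookrightarrow X$ is a stratified regular immersion.

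For (ii), the condition that $H \cap Z$ be singular at the given closed point $x$ translates to $h|_Z \in \mathfrak m_{Z,x}^2$, where $h$ is a local equation of $H$ at $x$; equivalently, $H$ contains the projective embedded tangent space to $Z$ at $x$. This is a finite set of linear conditions cutting out a linear subspace $\mathcal L_d \subset H^0(\P^N_k, \sO(d))$ of codimension at most $1 + \dim(Z)$, independent of $d$. Adapting the proof of (i) to the linear system $\mathcal L_d$ restricted to $\overline Z' \setminus \{x\}$ for each stratum $Z' \in \mathbf Z_X$ will yield the stratified regularity of $i$ away from $x$, provided $\mathcal L_d$ is base-point-free and tangent-separating on $\overline Z' \setminus \{x\}$. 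Both are open density conditions that hold for $d$ sufficiently large by very ampleness of $\sO(d)$ on $\P^N_k$, together with the fact that $\dim H^0(\P^N_k, \sO(d)) \to \infty$ while the codimension of $\mathcal L_d$ remains bounded. The condition that $H \cap Z$ be singular at $x$ is automatic from the construction.

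The main technical step, and essentially the only nontrivial point, is the ampleness/separation assertion in (ii): one must verify that for $d$ large the finite-codimension condition ``$H$ contains the tangent space to $Z$ at $x$'' does not destroy the base-point-freeness or tangent separation of the linear system on the stratum closures away from $x$. This is standard once one observes that the codimension of $\mathcal L_d$ in $|\sO(d)|$ remains bounded while $\dim |\sO(d)|$ grows without bound, so any finite collection of additional open conditions needed for the Bertini argument (finitely many points, finitely many tangent directions at each) can be imposed compatibly with membership in $\mathcal L_d$ for $d$ large enough.
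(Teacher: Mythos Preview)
Your proposal is correct and, for part~(i), matches the paper's proof exactly: apply classical Bertini to each stratum (or stratum closure) and invoke Lemma~\ref{lem:stratregtransv}. For part~(ii), the paper takes a shortcut and simply cites the Altman--Kleiman Bertini theorem for hypersurfaces containing a prescribed subscheme \cite[Theorem~1]{AK79}, whereas you sketch the argument that underlies that result (bounded-codimension linear system $\mathcal L_d$, base-point-freeness and tangent separation on $\overline{Z'}\setminus\{x\}$ for $d\gg 0$, then Bertini for the constrained linear system). The two are the same mathematics at different levels of detail; your version has the advantage of being self-contained, the paper's of being shorter.
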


\begin{proof}
Part (i) holds by classical theorem of Bertini. Indeed, $i$ is a stratified regular immersion if and only if $H$ intersects each
stratum transversally by Lemma~\ref{lem:stratregtransv}.

Part (ii) follows from  Bertini theorems for hypersurface sections containing a subscheme, see \cite[Theorem~1]{AK79}. \end{proof}

\subsection{Stratified local complete intersection (lci) morphisms}

Let $(X,\mathbf Z_X)$ and $(Y,\mathbf Z_Y)$ be two noetherian schemes with regular
stratifications. We call a morphism which is of finite type $f\colon Y\to X$ {\em stratified lci} if for any
locally given factorization
\begin{equation}\label{sec:stratlci:eq1}
  f=[W\xrightarrow{g} X]\circ [Y\xrightarrow i W]
\end{equation}
with $g$ smooth and $i$ a closed immersion   we have that $i$  is stratified regular once $W$  is endowed with the stratification $g^{-1}(\mathbf Z_X)$.
If $Y$ is not endowed with a stratification a priori but becomes regularly stratified by
$f^{-1}(\mathbf Z_X)$, and, with this stratification, $f$ is stratified lci,  we call $f$
{\it stratified lci.}

\begin{lem}\label{lem:stratlci}
\begin{itemize}
\item[(i)] If for one factorization~\eqref{sec:stratlci:eq1} with $g$ smooth and $i$ a closed
immersion, $i\colon (Y,\mathbf Z_Y) \to (W,g^{-1}(\mathbf Z_X
))$ is a stratified regular immersion,  then $f$ is stratified lci.
\item[(ii)] The composition of stratified lci morphisms is stratified lci.
\item[(iii)] Let $(X,\mathbf Z_X)$ be a regularly stratified scheme and let
$Y\hookrightarrow X$  be a stratified regular immersion.  Then the blow-up $\mathrm{Bl}_Y(X)\to
X$ is stratified lci.
\end{itemize}
\end{lem}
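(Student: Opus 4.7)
My plan is to treat the three parts in order, each time reducing to the notion of a stratified regular immersion from Section~\ref{subsec:stratregim}. Throughout I freely use the easily checked fact that the composition of two stratified regular immersions is again stratified regular, which follows by combining codimensions on strata with the observation that iterated schematic pullback of reduced stratum ideals stays reduced. The hardest part is (i), whose proof rests on a smooth base change statement for stratified regular immersions. Given two local factorizations $f = g\circ i = g'\circ i'$ with $g,g'$ smooth and $i,i'$ closed immersions, and assuming $i$ is stratified regular, I compare them via the fiber product $W'' = W\times_X W'$ with its two smooth projections $p\colon W''\to W$ and $p'\colon W''\to W'$. The two pullback stratifications on $W''$ agree (since $g\circ p = g'\circ p'$), and the closed immersion $j = (i,i')\colon Y\hookrightarrow W''$ satisfies $p\circ j = i$ and $p'\circ j = i'$. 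It then suffices to establish: for a smooth morphism $q\colon V\to W$ of regularly stratified schemes (with $V$ carrying the pullback stratification) and a closed immersion $j\colon Y\hookrightarrow V$ with $i := q\circ j$ a closed immersion, $j$ is stratified regular if and only if $i$ is. Locally at $y\in Y$, I choose étale coordinates $t_1,\ldots,t_d$ for $q$ at $j(y)$ normalized so that $j^*(t_\alpha) = 0$; then the ideal of $j$ in $\sO_{V,j(y)}$ is generated by $q^*\underline a$ together with the $t_\alpha$, where $\underline a$ generates the ideal of $i$, and on the stratum $Z^V_{j(y)}=q^{-1}(Z^W_{i(y)})$ the $t_\alpha$ remain étale coordinates, so Lemma~\ref{lem:stratregtransv} transfers the regular-parameter-system condition in both directions.

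For part (ii), let $f\colon Y\to X$ and $g\colon X\to Z$ be stratified lci, and work locally on $Y$. Choose factorizations $f = g_1\circ i_1$ and $g = h_1\circ j_1$ with $g_1\colon W_1\to X$ and $h_1\colon V_1\to Z$ smooth and $i_1, j_1$ stratified regular. Since $g_1$ is smooth of finite type, after shrinking $W_1$ there is a closed $X$-immersion $\iota\colon W_1\hookrightarrow \A^N_X$. Composing with the closed immersion $\A^N_X\hookrightarrow \A^N_{V_1}$ induced by $j_1$ gives a closed immersion $W_1\hookrightarrow \A^N_{V_1}$, and $\A^N_{V_1}\to V_1\to Z$ is smooth; hence $g\circ f$ factors locally as $Y\hookrightarrow W_1\hookrightarrow \A^N_{V_1}\to Z$, a closed immersion followed by a smooth morphism. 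The immersion $\A^N_X\hookrightarrow \A^N_{V_1}$ is stratified regular by Lemma~\ref{lem:stratregtransv} applied to $j_1$. The immersion $\iota\colon W_1\hookrightarrow \A^N_X$ is stratified regular by part (i) applied to the two factorizations of $g_1$, namely the tautological $g_1 = g_1\circ \id_{W_1}$ and the new $g_1 = \mathrm{pr}_X\circ \iota$. Combining with $Y\hookrightarrow W_1$ (stratified regular by hypothesis) yields the required stratified regular composite immersion.

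For part (iii), let $i\colon Y\hookrightarrow X$ be stratified regular of codimension $c$ and $\pi\colon \mathrm{Bl}_Y(X)\to X$ the blow-up. Picking a local regular sequence $a_1,\ldots,a_c$ generating $I_Y$, the standard surjection $\sO_X[T_1,\ldots,T_c]\twoheadrightarrow \bigoplus_{n\geq 0} I_Y^n$ yields a closed embedding $\mathrm{Bl}_Y(X)\hookrightarrow \P^{c-1}_X$; since $\P^{c-1}_X\to X$ is smooth, by part (i) it suffices to show this embedding is stratified regular. In the standard chart $T_1\neq 0$ with affine coordinates $b_i = T_i/T_1$, the ideal of $\mathrm{Bl}_Y(X)$ is the regular sequence $(a_i - b_i a_1)_{i=2,\ldots,c}$. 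By stratified regularity of $i$, the images $\bar a_1,\ldots,\bar a_c$ in $\sO_{Z_x,x}$ are part of a regular parameter system for each stratum $Z_x$; a direct computation in the cotangent space at a closed point $(x,\beta)$ of the chart above $x\in Z_x$ shows that the $c-1$ restricted elements $\bar a_i - b_i \bar a_1$ remain linearly independent in the cotangent space of $\A^{c-1}_{Z_x}$ at $(x,\beta)$, hence form part of a regular parameter system, and Lemma~\ref{lem:stratregtransv} applies.
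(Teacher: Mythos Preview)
Your proof is correct and substantially more complete than the paper's, which explicitly treats only part~(iii) and leaves (i) and (ii) to the reader. Your arguments for (i) and (ii)---comparing two factorizations through the fiber product $W\times_X W'$ together with the auxiliary equivalence ``$j$ stratified regular $\Leftrightarrow$ $q\circ j$ stratified regular'' for smooth $q$, and then building an explicit $\A^N$-factorization for a composite---are the standard way to fill these in and are carried out cleanly.

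For part~(iii) your route genuinely differs from the paper's. You work in the affine charts of $\P^{c-1}_X$: writing the defining regular sequence $(a_i - b_i a_1)_{i\ge 2}$ and verifying directly in the cotangent space of $\A^{c-1}_{Z_x}$ at $(x,\beta)$ that these elements remain part of a regular parameter system, so that Lemma~\ref{lem:stratregtransv} applies. The paper instead argues conceptually: blow-up along a regularly immersed center commutes with base change that preserves the normal bundle, and since $Y\cap\overline Z\hookrightarrow \overline Z$ is again a regular immersion between regular schemes by Lemma~\ref{lem:strregpull}, one gets $\pi^{-1}(\overline Z)\cong \mathrm{Bl}_{Y\cap\overline Z}(\overline Z)$ regular for every stratum $Z$, from which stratified regularity of the embedding follows without opening the charts. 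Your approach is self-contained; the paper's is shorter but leans on the cited structural result of Micali on blow-ups.
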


\begin{proof}
We only prove part (iii), since we use it below. The blow-up along a regular immersed center is
lci   and commutes with base change which preserves the normal bundle of the
 regular immersion of the center, see
 ~\cite[Chapitre~I, Th\'eor\`eme~1]{Mic64}.
 So we just have to observe that the blow-up
 $\mathrm{Bl}_{Y\cap \overline Z} (\overline Z)$ is regular for any stratum $Z\in \mathbf
 Z_X$.  Here we use Lemma~\ref{lem:strregpull} and that the blow-up of a regular scheme in a regular
center is regular~\cite[Chapitre~I, Th\'eor\`eme~1]{Mic64}.
\end{proof}

\subsection{Simple normal crossings varieties} \label{subsec:morselem}

 Consider a reduced, separated, equidimensional scheme $X $ of finite type over a perfect field
$k$ of characteristic different from two. Let $X(1),\ldots ,  X(s)$ be the different irreducible components   of $X$, endowed with the reduced
subscheme structure. For   $I=\{i_1,\ldots , i_r\}\subset \{ 1, \ldots , s\}$ a non-empty
subset, we  denote by $$X(I)= X({i_1)}\cap \ldots \cap X({i_r})$$
the schematic intersection.

We call $X$ a {\em simple normal  crossings variety} or {\em strict  normal  crossings variety} ({\em snc variety}) of dimension $n$ if
$X(1),\ldots ,   X(s)$ are smooth  over $k$  of dimension $n$ and for an algebraic closure
$\overline k$ of $k$ and any closed point $x\in X_{\overline k}$ there exists an
isomorphism of $\overline k$-algebras

\begin{equation}\label{sec:sncvar_eq1}
\sO_{X_{\overline k},x}^{\rm h} \cong \overline k[X_0,\ldots ,  X_n]^{\rm h}/(X_0 \cdots X_m)
\end{equation}
 Here $^{\rm h}$ denotes the henselization with
respect to the ideal   $(X_0,\ldots, X_n)$. Note that for an snc variety $X$ the  $X(I)$   are
smooth and equidimensional over $k$ for any  $\varnothing\ne I\subset \{ 1, \ldots , n\}$.

For an snc variety $X$ over $k$ let $\kappa\colon X\to \Z$
be defined by
\[
\kappa(x)= \# \{ i\,|\, x\in X(i) \}.
\]
Then $\kappa$ is upper semi-continuous. We let $\bf Z$ be the set of all connected
components of
$\kappa^{-1}(j) $ for all $j\in \Z$.
 Then $\bf Z$ is a regular stratification of
$X$, which we will always use in the following. For each stratum $Z \in \bf Z$ there exists
a canonical non-empty subset $I_Z\subset \{1, \ldots , n\}$ such that $\overline Z$ is a
connected component of  $X({I_Z})$.
Set $$X^{(a)}= \kappa^{-1}([a+1,\infty)).$$

In the following let  $D$ be smooth of
dimension one  over $k$ and let $\phi\colon X\to D$ be a $k$-morphism, where $X$ is an snc
variety over $k$.

\begin{prop}[Normal crossings Morse lemma]\label{prop:normcrmorsefield}
  Assume $k$ is algebraically closed.   For a stratified Morse function $\phi\colon X\to D$
with a critical point $x\in X$,   there exists a $k$-algebra isomorphism as in~\eqref{sec:sncvar_eq1} and a
$k$-algebra isomorphism $\sO^{\rm h}_{D,\phi(x)}\cong k[T]^{\rm h}$
such that $\phi^\hh_x\colon \sO^{\rm h}_{D,\phi(x)} \to \sO_{X,x}^{\rm h}$ maps $T$
to
\[
X_0 + \ldots + X_m + X_{m+1}^2 + \ldots + X_n^2.
\]

\end{prop}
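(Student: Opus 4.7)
The plan is to bring $f := \phi^{\rm h}_x(T)$ into the normal form $F$ through three successive snc-preserving automorphisms of $R := k[X_0,\ldots,X_n]^{\rm h}/(X_0\cdots X_m)$. Fix any initial identifications $\sO^{\rm h}_{X,x}\cong R$ of the shape~\eqref{sec:sncvar_eq1} and $\sO^{\rm h}_{D,\phi(x)}\cong k[T]^{\rm h}$; locally around $x$ the stratum $Z$ corresponds to $V(X_0,\ldots,X_m)\cong\Spec k[X_{m+1},\ldots,X_n]^{\rm h}$, and for each $i\le m$ the closure of the adjacent face stratum $Z'_i$ (with $I_{Z'_i}=I_Z\setminus\{i\}$) corresponds to $V(X_j:j\le m,\,j\ne i)\cong\Spec k[X_i,X_{m+1},\ldots,X_n]^{\rm h}$.

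The first reduction applies Corollary~\ref{sec:morse_ml} to $\phi|_Z$: its non-degenerate critical point at $x$ yields coordinates making $\phi|_Z(T)=X_{m+1}^2+\cdots+X_n^2$ (understood to be $0$ when $n=m$). Lift this change to $R$ by using the same formulas on $X_{m+1},\ldots,X_n$ and keeping $X_0,\ldots,X_m$ fixed; the resulting Jacobian at the origin is block-diagonal with invertible blocks, hence this defines an automorphism of $R$ preserving each $(X_i)$, $i\le m$. The second reduction invokes the hypothesis that $\phi|_{\overline{Z'_i}}$ is smooth at $x$ for each $i\le m$: this forces $f$ to have nonzero differential along $\overline{Z'_i}$, and since setting $X_i=0$ on this face recovers $\phi|_Z$ whose differential vanishes, the $X_i$-linear coefficient $c_i$ of $f$ must be nonzero. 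The linear rescaling $X_i\mapsto X_i/c_i$ (for $i\le m$) is snc-preserving and achieves $c_i=1$. At this stage $f\equiv F\pmod{(X_0,\ldots,X_m)}$, where $F:=\sum_{i=0}^m X_i+\sum_{j=m+1}^n X_j^2$.

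For the third reduction, write $f-F=\sum_{i=0}^m X_iw_i$ with $w_i\in R$; since $f-F\in\mathfrak m^2$ and $\overline{X_0},\ldots,\overline{X_m}$ are linearly independent in $\mathfrak m/\mathfrak m^2$ when $m\ge 1$ (the case $m=0$ being trivial since then $X_0=0$ in $R$ and already $f=F$), one may choose $w_i\in\mathfrak m$. Define $\Phi\colon R\to R$ by $\Phi(X_i)=X_i(1+w_i)$ for $i\le m$ and $\Phi(X_j)=X_j$ for $j>m$. Since each $1+w_i$ is a unit, $\prod_{i=0}^m X_i(1+w_i)=0$ in $R$, so $\Phi$ is well-defined; its Jacobian at the origin is the identity, and it maps each $(X_i)$, $i\le m$, to itself, so it is an snc-preserving automorphism. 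A direct substitution yields $\Phi(F)=F+\sum_{i=0}^m X_iw_i=f$, and precomposing the identification $\sO^{\rm h}_{X,x}\cong R$ with $\Phi^{-1}$ produces the desired normal form. The main subtlety is ensuring throughout that each intermediate change is an automorphism of $R$ preserving the snc divisor rather than merely of the ambient smooth local ring $k[X_0,\ldots,X_n]^{\rm h}$; this is handled in each step by the Jacobian criterion together with the observation that the changes either scale each divisor coordinate $X_i$ ($i\le m$) by a unit or mix only the transverse coordinates $X_j$ ($j>m$).
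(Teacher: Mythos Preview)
Your proof is correct, but it follows a genuinely different route from the paper's.

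The paper introduces an auxiliary smooth map $h\colon X\to Y=\Spec k[X_0,\ldots,X_m]/(X_0\cdots X_m)$ and considers $g=(h,\phi)\colon X\to Y\times_k D$. After checking that $g$ is flat (via the fiberwise criterion), it observes that the fiber $g^{-1}(y,z)$ is Morse and applies Proposition~\ref{prop1:struc_almsm} (deformation theory of non-degenerate quadratic singularities) over the two-dimensional base $Y\times D$. This yields in one stroke the presentation
\[
\sO_{X,x}^{\rm h}\cong k[X_0,\ldots,X_n,T]^{\rm h}/(X_0\cdots X_m,\; X_{m+1}^2+\cdots+X_n^2-\alpha)
\]
with $\alpha=c_0X_0+\cdots+c_mX_m+bT$, and then only the scalar-like quantities $b,c_0,\ldots,c_m$ need to be normalized (shown to be units and rescaled to $1$) using exactly the regularity and smoothness hypotheses you also invoke.

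You bypass the fibration and deformation theory entirely: you apply the ordinary Morse lemma (Corollary~\ref{sec:morse_ml}) only to $\phi|_Z$, lift the resulting transverse coordinate change to $R$, normalize the linear $X_i$-coefficients, and then absorb the remaining discrepancy $f-F=\sum X_iw_i$ via the explicit automorphism $X_i\mapsto X_i(1+w_i)$. This last step is the substantive replacement for what the paper gets from Proposition~\ref{prop1:struc_almsm}. Your approach is more elementary and self-contained; the paper's is more structural and, since it treats $Y\times D$ as a base, visibly generalizes verbatim to the semistable setting of Proposition~\ref{prop:semistmorselem} (where the relation $X_0\cdots X_m=u\pi$ is less forgiving under arbitrary unit-scalings of the $X_i$).
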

Here the henselization $^{\rm h}$ of $ k[T]$ is with respect to the ideal $(T)$.

\begin{proof}
  Set $z=\phi(x)$.
  Say $X$  is presented by  \eqref{sec:sncvar_eq1} at $x$. Then after replacing $X$ by an
  \'etale neighborhood of $x$  there exists a smooth map
  \[  h  \colon X\to \Spec k[X_0 , \ldots , X_m]/(X_0 \cdots X_m)=:Y\]
  sending $x$ to the origin $y\in Y$. Together with $\phi$ this defines
  $$g= (h,\phi) \colon X\to Y\times_k D.$$
 We assume $n>m$ and leave the
  similar but easier case $n=m$ to the reader. Then the map $g$ is flat.
  Indeed, by the fiberwise criterion for flatness  \cite[Lemma~00MP]{StPr}
   and   as
  $X$ and
  $Y\times_k D$ are both flat (even smooth)  over $Y$,
we just have to check  that    $g|_{h^{-1}(y)} \colon h^{-1}(y) \to \{y\}\times D$ is flat,
  which holds as this map is  the restriction of the stratified morphism to the stratum $(X_0=\ldots=X_m=0$).

  By assumption the fiber $\phi^{-1}(z)$ is Morse over $k(z)=k$, thus its  restriction to  the stratum $(X_0=\ldots=X_m=0)$ is still Morse. In other words,  the fiber
   $g^{-1}(y,z)$  is Morse over $k=k(y)\otimes_k k(z)$.
    So by Proposition~\ref{prop1:struc_almsm} there
  exists an $\sO_{Y\times D,y\times z}^\hh$-algebra isomorphism
  \[
\sO_{X,x}^\hh\cong k[X_0,\ldots, X_n,T]^\hh/(X_0\cdots X_m, X_{m+1}^2 + \ldots + X_n^2-
\alpha ) (=: A)
\]
with $\alpha $ a non-unit in  $\sO_{Y\times D,y \times z}^\hh\cong k[X_0,\ldots , X_m,T]^\hh/(X_0\ldots
X_m) (=:B)$.    In $A$ the henselization is with respect to   $(X_0,\ldots, X_n,T)$,  in $B$ with respect to  $(X_0,\ldots, X_m,T)$.
Write \[ \alpha= c_0 X_0 + \ldots +c_m X_m + b T \text{ with } c_0, \ldots ,c_m,b \in B. \]

As    $A/(X_0 , \ldots , X_m)$  is the henselian local ring at $x$ of the stratum containing
$x$, this ring is regular.
This means $b\in B^\times$. Reparametrizing   $Y_{i} \leadsto b^{\frac 1 2} Y_{i}$
for $m<i\le n$ and replacing $c_j$ by $b^{-1}c_j$,  we can assume without loss of generality that $b=1$. As
\[ A/(T,X_1,\ldots , X_m)\]   is the henselization at $x$ of $\phi^{-1}(z)\cap
\overline Z$, where $Z$ is a stratum with $x\in \overline Z\setminus Z$, this ring is
regular, since $\phi$ is a stratified Morse function. This means $c_0 \in B^\times$. By a reparametrization we can assume without loss
of generality that $c_0=1$. Similarly, we can assume that $c_1= \ldots =c_m=1$. This finishes the proof.
  \end{proof}

\subsection{Stratified base change for unipotent nearby cycles}\label{sec:stratbciso}

Let the notation be as in~\ref{app_subsec_unip}. Consider  regular schemes $(X,\mathbf
Z_X)$ and $(Y,\mathbf Z_Y)$ of finite type over $\sO$ endowed with regular
stratifications. We assume that the  (connected components of) $X_K$ and $Y_K$ are
strata.

\begin{prop}\label{sec:bc:mainprop}
 For a stratified lci morphism $f\colon Y\to X$ over $\sO$,  the base change map of unipotent nearby cycles
  \[
f^* \uppsi(\Lambda_{X_K} )  \to \uppsi(f^* \Lambda_{X_K}) \in D^\nil(Y,\Lambda)
\]
is an isomorphism.
\end{prop}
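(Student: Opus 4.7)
The claim is local on $Y$, so we work around a fixed point $y \in Y$. By Lemma~\ref{lem:stratlci}(i), $f$ admits a local factorization $f = g \circ i$ with $g \colon W \to X$ smooth and $i \colon Y \hookrightarrow W$ a stratified regular closed immersion, where $W$ carries the stratification $g^{-1}(\mathbf Z_X)$. By functoriality of the base change morphism with respect to composition, it suffices to handle separately the case of $f$ smooth and the case of $f$ a stratified regular closed immersion. The smooth case is standard: the unipotent nearby cycles functor commutes with smooth base change, as reviewed in Appendix~\ref{sec:appendix}, since smooth pullback is exact, commutes with $Rj_{X_K,*}$, and preserves the monodromy operator, so it restricts to the unipotent summand.

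For a stratified regular closed immersion $i \colon Y \hookrightarrow X$ of codimension $c$, the plan is induction on $c$. Choose a regular sequence $a_1, \ldots, a_c \in \sO_{X,i(y)}$ locally generating the ideal of $i$ whose image in $\sO_{Z_{i(y)}, i(y)}$ is part of a regular parameter system, as furnished by Lemma~\ref{lem:stratregtransv}. The intermediate subschemes form a chain
\[
Y = V(a_1, \ldots, a_c) \subset V(a_1, \ldots, a_{c-1}) \subset \cdots \subset V(a_1) \subset X,
\]
and each successive inclusion is a codimension one stratified regular closed immersion with respect to the pullback stratifications; this is verified by applying Lemma~\ref{lem:stratregtransv} at each step, using that a truncation of a regular parameter system remains regular after quotienting by its initial part. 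Composing the resulting base change isomorphisms reduces the statement to the case $c = 1$.

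In the codimension one case, $Y = V(a)$ locally, with $a$ restricting to a regular parameter on each stratum of $X$ meeting $Y$. Since $X_K$ is a stratum and $Y_K \subset X_K$ is a nonempty stratum of codimension one, the function $a$ is not divisible by a uniformizer of $\sO$, and $Y$ meets the special fiber $X_k$ properly in strata of codimension one. The base change isomorphism then follows from Beilinson's description of $\uppsi$ via the Iwasawa twist recalled in Appendix~\ref{sec:appendix}: $\uppsi \Lambda_{X_K}$ is computed as an inverse limit of constructions involving $R j_{X_K, *}$ and tensor products with the Iwasawa sheaf, all of which commute with $i^*$ by proper base change (applicable thanks to the transversality of $i$ to $X_k$) and by the locality of the Iwasawa formalism, and hence so does their limit.

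The hard part will be precisely this codimension one verification: the compatibility between $i^*$ and Beilinson's Iwasawa-twisted presentation of $\uppsi$. This rests on the structural consequence of stratified regularity that $i$ is transverse to the log structure on $X$ determined by the stratification of $X_k$, which allows proper base change to apply at each stage of the pro-system computing $\uppsi$.
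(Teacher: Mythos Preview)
Your reduction to the case of a stratified regular closed immersion via a local factorization through a smooth morphism is correct and matches the paper. The further reduction to codimension one by chaining intermediate divisors is valid as well, though the paper does not bother with it.

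The genuine gap is in your treatment of the codimension one case. You write that the constructions computing $\uppsi$ ``commute with $i^*$ by proper base change (applicable thanks to the transversality of $i$ to $X_k$)''. But $j\colon X_K \hookrightarrow X$ is an open immersion, not a proper morphism, so proper base change is simply not the relevant theorem for $i^* Rj_* \to Rj_* i^*$. Transversality of $i$ to the special fiber does not by itself force this base change map to be an isomorphism; what is actually required is a \emph{purity} statement. Your final paragraph concedes that this is the hard part, but the mechanism you propose (``proper base change at each stage of the pro-system'') is not a proof.

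The paper proceeds differently and more directly. It first uses the derived Nakayama lemma (Lemma~\ref{app:lem:derivednaka}) to reduce to checking the isomorphism on $G$-invariants, which amounts to showing that $f^* i^* j_* \Lambda_{X_K} \to i^* j_* f^* \Lambda_{X_K}$ is an isomorphism. Via the exact triangle~\eqref{eq_app1:fundncf1} this reduces to showing that $f^* i^! \Lambda_X \to i^! f^* \Lambda_X$ is an isomorphism, where $i$ is now the inclusion of the special fiber. The paper then proves the more general claim that $f^* \tilde i^! \Lambda_X \to \tilde i^! f^* \Lambda_X$ is an isomorphism for any closed immersion $\tilde i$ of a union of strata, by noetherian induction on the strata and Gabber's absolute purity theorem. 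The stratified regularity of $f$ enters exactly here: it guarantees, via Lemma~\ref{lem:strregpull}, that $f^{-1}(\overline Z)$ is regular for every stratum closure $\overline Z$, which is what absolute purity needs. This is the content your argument is missing.
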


\begin{proof}
 By smooth base change we can assume without loss of generality that $f$ is a stratified
 regular immersion.
By Lemma~\ref{app:lem:derivednaka} it is sufficient to show the  isomorphism  in restriction to the  $G$-invariants. This means we have to show
 that
\[
f^* i^* j_* \Lambda_{X_K} \to  i^* j_*  f^*\Lambda_{X_K}
\]
is an isomorphism. Here $i$ is the immersion of the closed fiber and $j$ the immersion of
the generic fiber.  By
the exact triangle~\eqref{eq_app1:fundncf1}   and the fact that the closed fiber is a union of strata,  we only have to show that the
map
\[
  f^* i^! \Lambda_{X} \to i^! f^* \Lambda_{X}
\]
is an isomorphism.
\begin{claim}
  For any closed immersion $\tilde i\colon \tilde X\to X$, where $\tilde X$ is a union of strata, the map
  \eq{eq:proofbc1}{
  f^* \tilde i^! \Lambda_{X} \to \tilde  i^! f^* \Lambda_{X}
}
is an isomorphism.
\end{claim}

Consider an open stratum $Z\subset \tilde X$ and set $Z_1=\overline Z$. Set $Z_2=\tilde X\setminus Z$.
Let $i_1\colon  Z_1\hookrightarrow \tilde X$,  $i_2\colon  Z_2\hookrightarrow \tilde X$,
$i_{12}\colon Z_1\cap Z_2\hookrightarrow \tilde X$ be the closed immersions.
By the exact triangle
\[
i_{12,*}i_{12}^! \tilde i^! \Lambda_X \to i_{1,*}i_{1}^! \tilde i^! \Lambda_X \oplus
i_{2,*}i_{2}^! \tilde i^! \Lambda_X \to    \tilde i^! \Lambda_X \to \cdots ,
\]
 the corresponding triangle for $Y$ and noetherian induction,  we have to show that
\[
 f^* (\tilde i\,  \tilde i_1)^! \Lambda_{X} \to   (\tilde i \, \tilde i_1)^!  f^* \Lambda_{X}
\]
is an isomorphism, which follows from Gabber's absolute purity  theorem
\cite[Theorem~2.1.1]{Fuj02}  as $Z_1 $ and the schematic pullback  $  f^{-1}( Z_1)$ are regular by
Lemma~\ref{lem:strregpull}.
\end{proof}

\section{Semistable  Morse functions}\label{subsec:semistabmor}

   Let $\sO$ be a henselian
discrete valuation ring. We assume  that  $K=\mathrm{frac}(\sO)$ has characteristic zero
and that the residue field $k=\sO/(\pi)$ of $\sO$ is perfect of characteristic different from two.
 A regular  scheme $X$ separated, flat and of finite type
over $\sO$ is called {\em semistable}  if $X_k$ is an snc variety
over $k$. In this section $X$ is assumed to be semistable.  Let $D$ be a
 scheme which is separated, smooth and of finite type   over $\sO$ of relative
dimension one.

Note that our semistable concept  is sometimes called strictly semistable in the
literature.
We always endow a semistable scheme $X$ with the standard stratification with strata being the
connected components of $X_K$ and the strata of $X_k$ as in Section~\ref{subsec:morselem}.

\subsection{Main geometric theorem}

Recall the local structure of semistable schemes.

\begin{lem}
If $k$ is algebraically closed and $x\in X_k$ is a closed point,  there exists an
isomorphism of $\sO$-algebras
\[
\sO_{X,x}^{\rm h}\cong \sO[X_0 , \ldots , X_n]^\hh/(X_0 \cdots X_m-\pi)
\]
for some $0\le m\le n$.
\end{lem}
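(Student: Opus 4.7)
The plan is to construct an explicit regular system of parameters $t_0,\ldots,t_n$ of $\sO_{X,x}^{\hh}$ in which $t_0,\ldots,t_m$ are local equations for the branches of $X_k$ at $x$ satisfying $t_0\cdots t_m=\pi$, and then to verify that the resulting $\sO$-algebra map from $\sO[X_0,\ldots,X_n]^{\hh}/(X_0\cdots X_m-\pi)$ to $\sO_{X,x}^{\hh}$ is an isomorphism by standard commutative algebra.

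For the construction, the snc condition on $X_k$ provides a local henselian identification $\sO_{X_k,x}^{\hh}\cong k[X_0,\ldots,X_n]^{\hh}/(X_0\cdots X_m)$, exhibiting $m+1$ irreducible components $D_0,\ldots,D_m$ of $X_k$ passing through $x$. Since $X$ is regular, each $D_i$ is an effective Cartier divisor, locally cut out by a single element $t_i\in \sO_{X,x}^{\hh}$ whose reduction modulo $\pi$ is a unit multiple of $X_i$. Since $X_k$ is reduced and locally equals $D_0+\cdots+D_m$ as a Cartier divisor, the two local equations $\pi$ and $t_0\cdots t_m$ of $X_k$ at $x$ differ by a unit $u\in \sO_{X,x}^{\hh}$, and replacing $t_0$ by $ut_0$ yields $t_0\cdots t_m=\pi$. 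A short cotangent-space check, treating separately the cases $m=0$ (where $\pi\notin \mathfrak m^2$) and $m\ge 1$ (where $\pi\in \mathfrak m^2$), shows that $t_0,\ldots,t_m$ are linearly independent in $\mathfrak m_{X,x}/\mathfrak m_{X,x}^2$; lifting a completion to a basis of $\mathfrak m_{X_k,x}/\mathfrak m_{X_k,x}^2$ furnishes the remaining parameters $t_{m+1},\ldots,t_n$.

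For the isomorphism step, set $A=\sO[X_0,\ldots,X_n]^{\hh}/(X_0\cdots X_m-\pi)$ and define $\varphi\colon A\to \sO_{X,x}^{\hh}$ by $X_i\mapsto t_i$. The ring $A$ is regular local of dimension $n+1$ with residue field $k$, and $\varphi$ is a local $\sO$-algebra map carrying a regular system of parameters to a regular system of parameters. I expect the main technical obstacle to be surjectivity of $\varphi$, which I would deduce by first invoking the finiteness criterion for local homomorphisms of henselian local rings---the quotient $\sO_{X,x}^{\hh}/\mathfrak m_A\sO_{X,x}^{\hh}$ is one-dimensional over $k$ because the $t_i$ together with $\pi=t_0\cdots t_m$ already generate $\mathfrak m_{X,x}$---and then applying the classical Nakayama lemma to the resulting finite $A$-module. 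Injectivity is then immediate, since the kernel of $\varphi$ is a prime ideal of the regular local domain $A$ whose quotient has the same Krull dimension as $A$, hence must be the zero ideal.
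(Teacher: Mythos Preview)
The paper does not prove this lemma---it is stated as a standard structural fact with the words ``Recall the local structure of semistable schemes.'' Your overall strategy (choose local equations $t_i$ for the branches of $X_k$, normalize so that $t_0\cdots t_m=\pi$, extend to a regular system of parameters, and compare with the model ring $A$) is the standard one and is sound up to the last step.

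The gap is your ``finiteness criterion for local homomorphisms of henselian local rings.'' No such criterion holds in general: take $A=k[t]^{\hh}_{(t)}$ and $B=k\llbracket t\rrbracket$ with the completion map $A\hookrightarrow B$. Both are henselian Noetherian local rings, $B/\mathfrak m_A B=k$ is one-dimensional, yet $B$ is far from finite over $A$. The statement you have in mind is valid for \emph{complete} local rings (via the topological Nakayama lemma), but henselization is too weak a completeness hypothesis for it.

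The repair is to realize $\varphi$ as coming from a map of finite type $\sO$-schemes and then check \'etaleness directly. Your elements $t_i\in\sO_{X,x}^{\hh}$ already live in some \'etale neighborhood $(U,x')$ of $(X,x)$, so they define an $\sO$-morphism
\[
g\colon U\longrightarrow Y:=V(X_0\cdots X_m-\pi)\subset\mathbb A^{n+1}_{\sO}
\]
with $g(x')=y$ the origin. Both $U$ and $Y$ are regular of dimension $n+1$ at $x'$ and $y$, with residue field $k$, and your cotangent-space computation says precisely that $g$ induces an isomorphism $\mathfrak m_y/\mathfrak m_y^2\to\mathfrak m_{x'}/\mathfrak m_{x'}^2$. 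Hence $g$ is \'etale at $x'$ (flat by miracle flatness for a map between regular local rings of the same dimension with zero-dimensional closed fiber, unramified since $\mathfrak m_y\sO_{U,x'}=\mathfrak m_{x'}$), and the induced map on henselian local rings $\varphi\colon A=\sO_{Y,y}^{\hh}\to\sO_{U,x'}^{\hh}=\sO_{X,x}^{\hh}$ is therefore an isomorphism. This replaces both your surjectivity and injectivity arguments at once.
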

Here the henselization $^{\rm h}$ is with respect to the ideal $ ( \pi, X_0, \ldots, X_n) $.

Our main theorem about stratified  Morse functions for semistable schemes is the following.

\begin{thm}\label{thm:mainpropermorse}
 Assume that $X$ is proper over $\sO$. Let $\phi\colon X\to D$ be an
$\sO$-morphism such that $\phi_k\colon X_k\to D_k$ is a Morse function. Then
\begin{itemize}
  \item[(i)]
    $\phi_K\colon X_K\to D_K$ is a Morse function;
  \item[(ii)] the specialization map $\mathrm{sp}\colon |X_K|\to |X_k|$ on closed points
    induces a bijection between the critical points $\{x_K\}$ of
    $\phi_K$ and the critical points $\{x_k\}$ of $\phi_k$;
  \item[(iii)] for a critical point $x_K$ of $\phi_K$,  let $m$ be
$\mathrm{codim}_{X_k}(Z)$, where $Z\subset X_k$ is the stratum of the point
$x_k=\mathrm{sp}(x_K)$. Then the schematic closure $S$ of $x_K$ in $X$ is a trait of ramification degree $m+1$ over $\sO$;
\item[(iv)]  $\phi|_S\colon S\to D$ is a closed immersion if  $\mathrm{ch}(k) \nmid m+1$.
\end{itemize}
\end{thm}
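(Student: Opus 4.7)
The plan is to work henselian-locally around each critical point $x_k$ of $\phi_k$ and reduce the theorem to an explicit computation in a normal form for $\phi$. Passing to the strict henselisation of $\sO$ if necessary, I assume $k$ algebraically closed; all four assertions descend. By the local structure of semistable schemes together with Proposition~\ref{prop:normcrmorsefield}, one chooses henselian local coordinates so that
\[
  \sO_{X,x_k}^{\rm h} \cong \sO[X_0,\ldots,X_n]^{\rm h}/(X_0\cdots X_m - \pi), \qquad \sO_{D,\phi(x_k)}^{\rm h}\cong \sO[T]^{\rm h},
\]
with $\phi_k^{\rm h}(T)\equiv G:=X_0+\ldots+X_m+X_{m+1}^2+\ldots+X_n^2\pmod \pi$. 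Lifting gives $\phi^{\rm h}(T)=F=G+\pi H$ for some $H\in \sO_{X,x_k}^{\rm h}$.

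The heart of the proof is to identify scheme-theoretically the closure $S\subset X$ of the $\phi_K$-critical locus near $x_k$. A direct application of the Jacobian criterion on the smooth open stratum $X_K$, on which $X_0=\pi/(X_1\cdots X_m)$, shows that the critical equations are
\[
  X_0 u_0 = X_1 u_1 = \ldots = X_m u_m,\qquad 2X_j+\pi\,\partial_j H=0\quad(j>m),
\]
with $u_i:=1+\pi\,\partial_i H$ units. At the origin the Jacobian of this system with respect to $(X_1,\ldots,X_n)$ is block-diagonal with invertible blocks (here $\mathrm{ch}(k)\ne 2$ is crucial), so Hensel's lemma solves the system uniquely as $X_i=s\cdot u_0/u_i$ for $i\le m$ and $X_j\in(\pi)$ for $j>m$, where $s:=X_0$. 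Feeding these back into $X_0\cdots X_m=\pi$ yields
\[
  s^{m+1}\cdot W=\pi,\qquad W=u_1\cdots u_m/u_0^{m}\in (\sO_{S,x_k}^{\rm h})^\times.
\]
As $W$ is a unit, $\pi\in(s^{m+1})$, so the maximal ideal of $\sO_{S,x_k}^{\rm h}$ is $(s)$ and $\sO_{S,x_k}^{\rm h}$ is a henselian DVR with $v_S(\pi)=m+1$, proving~(iii).

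The remaining parts follow from this explicit description. For~(i), at a geometric point $\tilde x_K$ of $S_K$ (for which $\tilde s\ne 0$), the Hessian of $\phi_K$ in the coordinates $X_1,\ldots,X_n$ on $X_K$ has block structure: the upper $m\times m$ block equals $(1/\tilde s)(I_m+J_m)$ with $J_m$ the all-ones matrix, the lower $(n-m)\times(n-m)$ block equals $2 I_{n-m}$, and all corrections are of strictly larger $s$-adic valuation. Its determinant is then $(m+1)\cdot 2^{n-m}/\tilde s^m$ to leading order, which is nonzero in characteristic zero, so $\phi_K$ is Morse at $x_K$. For~(ii), by Lemma~\ref{lem:crtilocclos} the stratified critical locus of $\phi$ is closed in the proper scheme $X/\sO$, and the local computation shows it decomposes as a finite disjoint union of such traits $S$, one per critical $x_k$ of $\phi_k$, whose generic points are exactly the critical points of $\phi_K$. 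For~(iv), using $X_i=s(1+O(s^{m+1}))$ for $i\le m$, $X_j\in(s^{m+1})$ for $j>m$, and $\pi H\in(s^{m+1})$, one obtains $F|_S=(m+1)\,s+O(s^{m+1})$; thus $\phi|_S\colon\sO[T]^{\rm h}\to \sO_{S,x_k}^{\rm h}$ sends $T$ to a uniformiser of $S$ if and only if $m+1\in\sO^\times$, which by Nakayama is the condition for $\phi|_S$ to be a closed immersion.

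The main technical obstacle is the scheme-theoretic analysis in the second paragraph: one must distinguish the stratified critical locus from the (strictly larger) non-smooth locus of $\phi$, which contains the entire crossing locus of $X_k$, and then apply Hensel's lemma while carefully tracking $s$-adic orders through the perturbation $\pi H$ in order to identify the uniformiser $s$ and the unit $W$. The Hessian and closed-immersion verifications for~(i) and~(iv) are then routine.
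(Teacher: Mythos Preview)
Your approach is essentially the paper's: localize at a critical point $x_k$, put $\phi$ into a normal form, compute the $\pi$-saturated Jacobian ideal, and parametrize the critical locus by $s=X_0$ to obtain an Eisenstein relation $s^{m+1}W=\pi$; parts (i)--(iv) then drop out exactly as you indicate (the paper reduces first to $n=m$ and deduces (i) from the reducedness of the critical scheme rather than your direct Hessian computation, but these are cosmetic differences). One small point: the simultaneous normal form you invoke---$X_0\cdots X_m=\pi$ together with $\phi_k\equiv G$ in the \emph{same} coordinates---is not a consequence of Proposition~\ref{prop:normcrmorsefield} alone, since lifting the special-fibre coordinate change compatibly with the semistable equation is exactly the content of the semistable Morse lemma (Proposition~\ref{prop:semistmorselem}); citing that instead, your perturbation $\pi H$ can even be absorbed (at the cost of a unit $u$ in the product $X_0\cdots X_m=u\pi$), which is how the paper runs the computation.
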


Here we use the  word {\it trait} for the spectrum of a henselian discrete valuation ring.

The proof of the theorem relies on the following semistable version of the Morse lemma
 the   proof of which is almost verbatim the same as the proof of Proposition~\ref{prop:normcrmorsefield}.

\begin{prop}[semistable Morse lemma]\label{prop:semistmorselem}
Assume that $k$ is algebraically closed and that $\phi\colon X\to D$ is a $\sO$-morphism
such that $\phi_k\colon X_k \to D_k$ is a Morse function. Let $x\in X_k$ be a closed point.
Then there exist isomorphisms
of $\sO$-algebras
\[
\sO_{D,\phi(x)}^{\rm h}\cong \sO[T]^{\rm h},\quad \sO_{X,x}^{\rm h} \cong \sO[X_0,\ldots ,
X_n]^\hh/(X_0 \cdots X_m- u \pi),
\]
where $u$ is a unit in $ \sO[X_0,\ldots , X_m]^\hh$ such that
\begin{itemize}
\item[(i)] if $x\in X_k$ is non-critical,
then $\phi^{\hh}_x\colon
\sO_{D,\phi(x)}^{\rm h} \to  \sO_{X,x}^{\rm h} $ sends $T$ to
$ X_{m+1}$,
in particular $n>m$;
\item[(ii)] if $x\in X_k$ is critical
then  $\phi^{\hh}_x\colon
\sO_{D,\phi(x)}^{\rm h} \to  \sO_{X,x}^{\rm h} $ sends $T$ to
\[
X_0 + \ldots + X_m + X_{m+1}^2 + \ldots + X_n^2,
\]
where $m=n$ is allowed.
\end{itemize}
\end{prop}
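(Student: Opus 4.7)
The plan is to adapt the proof of the normal crossings Morse lemma (Proposition~\ref{prop:normcrmorsefield}) to the semistable setting by replacing the local model $k[X_0,\ldots,X_n]^{\rm h}/(X_0\cdots X_m)$ with $\sO[X_0,\ldots,X_n]^{\rm h}/(X_0\cdots X_m - \pi)$ and working relatively over $\sO$ throughout. Using the local structure lemma stated immediately before the proposition, fix coordinates so that $\sO_{X,x}^{\rm h} \cong \sO[X_0,\ldots,X_n]^{\rm h}/(X_0\cdots X_m - \pi)$. After shrinking \'etale-locally around $x$, define a smooth $\sO$-morphism $h\colon X \to Y := \Spec \sO[X_0,\ldots,X_m]^{\rm h}/(X_0\cdots X_m - \pi)$ by $X_i \mapsto X_i$ for $i \le m$, and set $g := (h,\phi)\colon X \to Y \times_\sO D$; the target has local ring $B := \sO[X_0,\ldots,X_m,T]^{\rm h}/(X_0\cdots X_m - \pi)$ at $g(x)$. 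I treat first the case $n>m$; the case $n=m$ in (ii), where the Morse quadratic part is empty, is handled by an analogous but easier argument as in Proposition~\ref{prop:normcrmorsefield}.

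Flatness of $g$ at $x$ follows from the fiberwise criterion applied over $Y$: both $h$ and the projection $Y\times_\sO D \to Y$ are smooth, and the restriction of $g$ to the fiber over $y_0 := h(x)$ is $\phi|_Z\colon Z \to D$ with $Z$ the stratum of $x$, which is flat because its fiber over $\phi(x)$ is Morse over $k$. In case (i), $\phi|_Z$ is smooth at $x$ (since $x$ is non-critical), so the fiber of $g$ at $(y_0,\phi(x))$ is smooth at $x$; combined with flatness, $g$ is smooth at $x$ of relative dimension $n-m-1>0$. The structure theorem for smooth morphisms then yields $\sO_{X,x}^{\rm h} \cong B[Y_1,\ldots,Y_{n-m-1}]^{\rm h}$, and renaming $T$ as $X_{m+1}$ and $Y_j$ as $X_{m+1+j}$ produces the desired form with $u=1$.

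In case (ii), $\phi|_Z$ has a non-degenerate quadratic singularity at $x$ by Corollary~\ref{sec:morse_ml}, so the fiber of $g$ at $(y_0,\phi(x))$ is Morse with a non-degenerate quadratic singularity at $x$. Proposition~\ref{prop1:struc_almsm} applied to $g$ then provides
\[
\sO_{X,x}^{\rm h} \;\cong\; B[W_1,\ldots,W_{n-m}]^{\rm h}/(W_1^2 + \ldots + W_{n-m}^2 - \alpha)
\]
for some $\alpha \in \mathfrak m_B$. Using that $B$ is the henselization of $A_0[T]$ along $T=0$, where $A_0 := \sO[X_0,\ldots,X_m]^{\rm h}/(X_0\cdots X_m - \pi)$, decompose $\alpha = \sum_{j=0}^m c_j X_j + bT$ with $c_j \in A_0$ and $b \in B$. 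Regularity of the stratum's local ring $\sO_{X,x}^{\rm h}/(X_0,\ldots,X_m) \cong k[T,W_1,\ldots,W_{n-m}]^{\rm h}/(\sum W_i^2 - \bar b T)$ forces $b \in B^\times$, while the smoothness condition in Definition~\ref{dfn:nondegcp} applied to the $m+1$ strata $Z'$ with $Z \subset \overline{Z'}$ of $\mathrm{codim}_{X_k}\overline{Z'} = m$ forces $c_j \in A_0^\times$ for each $0 \le j \le m$. Reparameterize $W_i \leadsto b^{-1/2} W_i$ (square roots in $B$ exist because $k$ is algebraically closed) to arrange $b=1$, and then $X_j \leadsto -c_j^{-1} X_j$ for $0 \le j \le m$ to arrange $c_j = -1$. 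The relation rearranges to $T = X_0 + \cdots + X_m + W_1^2 + \cdots + W_{n-m}^2$, while the defining relation $X_0\cdots X_m = \pi$ becomes $X_0\cdots X_m = u\pi$ with $u = (-1)^{m+1} c_0 \cdots c_m \in A_0^\times$. Setting $X_{m+i} := W_i$ completes the proof.

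The main obstacle is ensuring that the unit $u$ lies in $\sO[X_0,\ldots,X_m]^{\rm h}$, and not merely in the larger ring $B$. This rests on picking the coefficients $c_j$ of the linear part of $\alpha$ inside $A_0 \subset B$, which is possible precisely because $B$ is a henselization of $A_0[T]$ along $T=0$: the $T$-independent component of $\alpha$ is canonically an element of $\mathfrak m_{A_0} = (X_0,\ldots,X_m)$, and its coefficients on the $X_j$ can therefore be chosen in $A_0$ from the start.
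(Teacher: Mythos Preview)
Your approach is exactly the paper's: the paper merely notes that the proof ``is almost verbatim the same as the proof of Proposition~\ref{prop:normcrmorsefield}'', and your write-up is a faithful adaptation of that argument to the semistable setting, including the added treatment of the non-critical case~(i).

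There is, however, a gap in your argument that $u\in\sO[X_0,\ldots,X_m]^{\rm h}$. You correctly decompose $\alpha=\sum c_jX_j+bT$ with $c_j\in A_0$ via the section $B\to A_0$, $T\mapsto 0$. But the rescaling $W_i\leadsto b^{-1/2}W_i$ replaces each $c_j$ by $b^{-1}c_j$, and $b^{-1}\in B$ need not lie in $A_0$. The unit $u=(-1)^{m+1}\prod c_j$ you compute is built from these \emph{post-rescaling} coefficients, so it lies only in $B$; after eliminating $T$ via $T=\sum X_j+\sum W_i^2$ one obtains $u\in\sO[X_0,\ldots,X_n]^{\rm h}$, not obviously in the smaller ring. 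Reversing the order of the two reparametrizations does not help, since rescaling $W_i$ then reintroduces a $B$-coefficient in front of the $X_j$. Note that the paper's own proof of Proposition~\ref{prop:normcrmorsefield} takes $c_j\in B$ throughout, so a literal ``verbatim'' transcription has the same issue; in the snc case there is no $u$ to worry about. In the paper's applications (Proposition~\ref{prop:mainthmmorse}, Lemma~\ref{main_thmmorse:lem}) one immediately reduces to $n=m$, where the distinction vanishes, so the imprecision is harmless there---but the sharper claim on $u$ as stated needs an additional argument.

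Two minor slips: the strata $Z'$ you invoke should have $\codim_{X_k}\overline{Z'}=m-1$, not $m$ (there are indeed $m{+}1$ of them, corresponding to the $m$-element subsets of $\{0,\ldots,m\}$); and in case~(i) the relative dimension of $g$ is $n-m-1\ge 0$, not $>0$, since $n=m+1$ is allowed.
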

 Here the henselization  $^{\rm h}$ of $\sO[T]$  is with respect to the ideal $ ( \pi, T)$
and that of $\sO[X_0,\ldots , X_n]$ with
respect to the ideal $(\pi, X_0,\ldots, X_n)$.

\subsection{Local version and proof}

Theorem~\ref{thm:mainpropermorse} follows immediately from the following local version.

\begin{prop}\label{prop:mainthmmorse}
  Let $\phi\colon X\to D$ be an
$\sO$-morphism such that $\phi_k\colon X_k\to D_k$ is a Morse function with only one
critical point $x_k \in X_k$. Let $m$ be the codimension in $X_k$ of the stratum
containing $x_k$.
Then after  replacing $X$ by a small Zariski neighborhood  of
$x_k$,  the morphism $\phi_K\colon X_K \to D_K$ has precisely one critical point $x_K\in
X_K$, and it satisfies the following properties:
\begin{itemize}
\item[(i)] $x_K$ is non-degenerate;
\item[(ii)] the schematic closure $S$ of $x_K$ is a trait which is finite of ramification
  index $m+1$ over $\sO$ with $S_k=\{x_k\}$;
  \item[(iii)] the
    morphism $\phi|_S\colon S\to D$ is a closed immersion if $\mathrm{ch}(k) \nmid m+1$.
\end{itemize}
\end{prop}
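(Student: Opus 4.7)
The plan is to reduce the proof to an explicit computation in the local model supplied by Proposition~\ref{prop:semistmorselem} and then verify the three assertions one by one. Since $k$ is perfect, passing from $\sO$ to its strict henselization $\sO^{\rm sh}$ is an \'etale base change, and the critical locus of $\phi$ on $X_K$ is canonically cut out by the Jacobian ideal, so all conclusions may be checked after replacing $\sO$ by $\sO^{\rm sh}$. I thus assume $k$ algebraically closed and fix a presentation
\[
\sO_{X,x_k}^{\rm h}\cong \sO[X_0,\ldots,X_n]^{\rm h}/(G),\qquad G=X_0\cdots X_m-u\pi,
\]
with $u\in\sO[X_0,\ldots,X_m]^{\rm h}$ a unit and $\phi^*T=F=X_0+\cdots+X_m+X_{m+1}^2+\cdots+X_n^2$.

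First, I determine the critical locus of $\phi_K$ by requiring the $2\times 2$ minors of the Jacobian matrix of $(F,G)$ to vanish. Since $\nabla G$ is nonzero on $X_K$ (because $X_0\cdots X_m\ne 0$ there) and $\mathrm{ch}(K)=0$, the minors involving the last $n-m$ coordinates force $X_i=0$ for $i>m$ at any critical point. The remaining minors give $\partial_{X_0}G=\cdots=\partial_{X_m}G$; multiplying the equation $\partial_{X_0}G=\partial_{X_i}G$ by $X_0X_i$, using $G=0$, and dividing by the units $\pi$ and $u$, each reads $(X_i-X_0)=X_0X_i u^{-1}(\partial_{X_0}u-\partial_{X_i}u)$ for $1\le i\le m$. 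Substituting $X_i=X_0(1+Y_i)$ and cancelling the nonzero-divisor $X_0$ yields the system
\[
Y_i=X_0(1+Y_i)u^{-1}(\partial_{X_0}u-\partial_{X_i}u),\qquad 1\le i\le m,
\]
whose Jacobian in $(Y_1,\ldots,Y_m)$ at the origin is the identity. Hensel's lemma supplies unique solutions $Y_i=Y_i(X_0)\in(X_0)$, and substituting into $G=0$ yields $X_0^{m+1}=v(X_0)\pi$ with $v$ a unit. Hence the critical locus in the henselian neighborhood is $\Spec\sO^{\rm sh}[X_0]^{\rm h}/(X_0^{m+1}-v(X_0)\pi)$, a trait of ramification index $m+1$ over $\sO^{\rm sh}$ with closed point $x_k$. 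Descending to $\sO$ and shrinking $X$ Zariski-locally so as to exclude any further critical points of $\phi_K$ at points not specializing to $x_k$, one obtains the trait $S$ of (ii) and the uniqueness of $x_K$.

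Second, I verify (i) and (iii). Eliminating $X_0=u\pi/(X_1\cdots X_m)$ on $X_K$, the variables $(X_1,\ldots,X_n)$ form local coordinates and $F$ becomes a regular function. A direct Hessian computation at a geometric critical point $p=(s_0,\ldots,s_0,0,\ldots,0)$ with $s_0^{m+1}=u(s_0,\ldots,s_0)\pi$ shows that the Hessian is block-diagonal: an $m\times m$ block of the form $s_0^{-1}(\mathrm{Id}+\mathbf{1}\mathbf{1}^\top)$, with eigenvalues $(m+1)/s_0$ (once) and $1/s_0$ (multiplicity $m-1$), together with $2\cdot\mathrm{Id}$ on the last $n-m$ coordinates. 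All eigenvalues are invertible in $K^{\rm sh}$ since $\mathrm{ch}(K)=0$, so $\phi_K^{-1}(\phi(x_K))$ has a non-degenerate quadratic singularity at $x_K$, i.e.\ $\phi_K$ is Morse around $x_K$, proving (i). For (iii), $\phi|_S\colon S\to D$ sends a local parameter $T$ of $D$ at $\phi(x_k)$ to $F|_S=\sum_{i=0}^m X_i|_S=s\bigl((m+1)+O(s)\bigr)$, with $s=X_0|_S$ a uniformizer of $S$; this is a uniformizer of $S$ if and only if $m+1\in k^\times$, i.e.\ $\mathrm{ch}(k)\nmid m+1$, and in that case the induced map $\sO_{D,\phi(x_k)}\to\sO_S$ is surjective, so $\phi|_S$ is a closed immersion.

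The main obstacle is the Hensel step in the second paragraph: one must check that after the substitution $X_i=X_0(1+Y_i)$ the critical equations assemble into a system whose Jacobian in $(Y_1,\ldots,Y_m)$ at the origin is the identity. This is precisely where the fact that $u$ is a unit in $\sO[X_0,\ldots,X_m]^{\rm h}$ enters, and it is this fact that yields the ramification index $m+1$. Once this step is established, the ramification count, the non-degeneracy of $x_K$, and the closed immersion criterion all follow from elementary differentiation in the standard local model.
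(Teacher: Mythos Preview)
Your argument is essentially correct and follows the same strategy as the paper: reduce to the henselian local model from the semistable Morse lemma, compute the critical locus via the Jacobian, and show it is a trait of ramification index $m+1$ over $\sO$ by an implicit-function/Weierstrass-type argument. The paper packages this through the saturated Jacobian ideal $I=(J_\phi:\pi^\infty)$ in the completed local ring and proves directly that $\widehat\sO_{X,x_k}/I$ is a discrete valuation ring (its Lemma~\ref{main_thmmorse:lem} and Claim~\ref{claim:main_morse}); you instead write down the minor equations on $X_K$, multiply through by $X_0X_i$, and apply Hensel to $Y_i=X_0(1+Y_i)u^{-1}(\partial_{X_0}u-\partial_{X_i}u)$. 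These are the same computation organised slightly differently, and your treatment of~(iii) is identical to the paper's.

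The one substantive difference is your handling of~(i). The paper deduces non-degeneracy from the fact that the Jacobian locus over $K$ is a single \emph{reduced} point, invoking the criterion that $V(J_\phi)$ being a finite disjoint union of copies of $\Spec k$ characterises Morse functions. Your direct Hessian computation is more hands-on but, as written, is only exact when $u$ is constant: for general $u$ the critical point is $(s_0,s_0(1+Y_1(s_0)),\ldots)$ rather than literally $(s_0,\ldots,s_0)$, and the $m\times m$ block of the Hessian equals $s_0^{-1}(\mathrm{Id}+\mathbf 1\mathbf 1^\top)$ only up to a correction of non-negative valuation. Since $s_0$ has positive valuation these corrections are dominated by the leading term and the determinant $s_0^{-m}\bigl((m+1)+O(s_0)\bigr)$ is still nonzero in characteristic zero, so your conclusion stands; you should just make the perturbative nature of the computation explicit. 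The paper's route via reducedness of the Jacobian scheme avoids this bookkeeping entirely.
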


\begin{proof}
  {\em Case $m=0$}. This is classical and follows from Proposition~\ref{prop1:struc_almsm}.

  \smallskip

  {\em Case $m>0$}.
After base change by $\sO \to \sO^{\rm sh}$ we can assume that $k$ is algebraically
closed.    We can check the uniqueness of $x_K$ and the required properties of the map
  $S\to \Spec \sO$ after formally completing $X$ at $x_k$. Note that the map $S\to \Spec
  \sO$ is finite, because $\sO$ is henselian.     We can also assume without loss of generality that $\sO$ is complete.

Let $J_\phi$ be the Jacobian ideal sheaf of $\phi$ and set $J=J_{\phi, x_k}\widehat{\sO}_{X,x_k}$.  Set $I=(J : \pi^\infty
)$, i.e.\ $I=\{ \alpha \in \widehat{\sO}_{X,x_k} \, |\, \pi^N \alpha \in J \text{ for
  some } N\ge 0\}$. Set
  $$A=\widehat{\sO}_{X,x_k}/ I.$$

 Take a
presentation as in the semistable Morse lemma, Proposition~\ref{prop:semistmorselem}. By
abuse of notation we will identify $I$ and $J$ with their preimage in $\sO\llbracket
X_0,\ldots , X_n \rrbracket$.   We can assume
without loss of generality that $n=m$ by using that the external sum of Morse functions  is a Morse function and using case $m=0$ above.

\begin{lem}\label{main_thmmorse:lem}
There exist elements $u_1,\ldots , u_m\in \sO\llbracket X_0 , \ldots ,X_m \rrbracket $
such that the ideal $I$ is generated by the power series
\begin{align}
\label{lem:maineq1}  &X_0-X_i - u_i X_0 X_i \quad (1\le i\le m),\\
\label{lem:maineq2}  & X_0 \cdots  X_m -u\pi.
\end{align}
Here $u\in  \sO\llbracket X_0 , \ldots ,X_m \rrbracket^\times$ is the unit from the
semistable Morse lemma (Proposition~ \ref{prop:semistmorselem}). In particular $A\ne 0$ is finite flat over $\sO$.
\end{lem}

Before proving Lemma~\ref{main_thmmorse:lem} we observe

\begin{claim}\label{claim:main_morse}
Let $I'\subset \sO\llbracket X_0 , \ldots ,X_m \rrbracket$ be the ideal generated by the
elements~\eqref{lem:maineq1} and~\eqref{lem:maineq2} for arbitrary $u_1,\ldots, u_m \in \sO\llbracket X_0 , \ldots ,X_m \rrbracket $ and for an arbitrary unit $u\in  \sO\llbracket X_0 , \ldots ,X_m \rrbracket$.
Then  $A'= \sO\llbracket X_0 , \ldots
,X_m \rrbracket/I'$ is a discrete valuation ring  which is  totally ramified of degree
$m+1$ over $\sO$.
\end{claim}

\begin{proof}
The  implicit function theorem implies the existence of formal power series
\[ f_1,\ldots, f_n\in \sO\llbracket X\rrbracket \]
with the property $f_i\in X + X^2  \sO\llbracket
X\rrbracket $,  such that $X_i=f_i(X_0) \in A'$. Then the formal power series $$h=X f_1(X)
\cdots f_m(X) -u(X,f_1(X), \ldots , f_m(X)) \pi\in  \sO\llbracket X\rrbracket $$ satisfies
$$h\in X^{m+1} + \pi  \sO\llbracket X\rrbracket  + X^{m+2}  \sO\llbracket X\rrbracket $$
with constant coefficient indivisible by $\pi^2$. So by
the Weierstra\ss\ preparation theorem $h= v \tilde h$  with $v\in  \sO\llbracket
X\rrbracket^\times$ and $\tilde h\in \sO[X]$ an Eisenstein polynomial of degree $m+1$.

As $\tilde h(X_0) = v^{-1}(X_0) h(X_0)=0 \in A'$,  we obtain a surjective $\sO$-algebra
homomorphism
\[
  \theta \colon \sO[X]/(\tilde h) \to A' , \quad X\mapsto X_0,
\]
This implies that $A'$ is finite over $\sO$, but then by~\cite[Theorem~22.6]{Mat86} $A'$ is also flat over
$\sO$.
As $K[X]/(\tilde h)$ is a field by the Eisenstein irreducibility criterion, and  as
$A'\otimes_\sO K\ne 0$, we deduce that $\theta$ is an isomorphism.
\end{proof}

\begin{proof}[Proof of Lemma~\ref{main_thmmorse:lem}]
  Note that $J$ is the  (continuous) Jacobian ideal of the local ring homomorphism
  \[
\sO\llbracket T \rrbracket \to \sO\llbracket T,X_1,\ldots, X_m\rrbracket /(g),
\]
where  $g=X_1\cdots X_m(T-X_1-\ldots - X_m) - u\pi$ is the element from the semistable
Morse lemma in which we made the substitution $X_0 =T-X_1-\ldots -X_m$. So $J$ is the
ideal generated by the $m+1$ elements
\ml{}{
  (1\le i\le m): \
\partial_{X_i} g= -2 X_1 \cdots X_m +  X_1 \cdots\widehat X_i \cdots X_m(T-\sum_{i \ne
    j\ge 1}X_j) - \pi  u u_i
  =\\(X_0-X_i) X_1 \cdots\widehat X_i \cdots X_m- \pi u u_i , \text{ and }
   X_0 \cdots  X_m -u\pi}
in the coordinates $X_0, \ldots , X_m$.
Here the partial derivative $\partial_{X_i}$ is the one killing $T,X_j$ for $ i\ne j\ge
1$ and $u_i= u^{-1}\partial_{X_i} u$.
Multiplying the expression on the right  side  of the equation for $\partial_{X_i}
g$ by $X_0X_i$ and substituting $u \pi$ for $X_0\cdots X_m$  we  obtain the
element~\eqref{lem:maineq1} times $u\pi$.

Let $I'\subset \sO\llbracket X_0 , \ldots ,X_m \rrbracket$ be the ideal generated by the
elements~\eqref{lem:maineq1} and~\eqref{lem:maineq2} with the $u_i$ as above.
We have just   shown $\pi I'\subset J$. A  simple calculation shows that $J\subset I'$.  Therefore, $I=(I':\pi)=I'$, where the
second equality follows from Claim~\ref{claim:main_morse}.  We have shown Lemma~\ref{main_thmmorse:lem}.
\end{proof}

Now part (i) and part (ii) of Proposition~\ref{prop:mainthmmorse} follow by combining
Claim~\ref{claim:main_morse} and Lemma~\ref{main_thmmorse:lem}.

For part (iii) observe that $\phi|_S\colon S\to D$ is given locally by the ring
homomorphism $\sO\llbracket T \rrbracket \to A$ which sends $T$ to
\[
  X_0 +  f_1(X_0)  +
  \ldots + f_m(X_0)  \in (m+1)X_0 +  X_0^2 A
\]
with the notation of the proof of Claim~\ref{claim:main_morse}.
So this ring homomorphism is surjective if $m+1$ is invertible in $\sO$.
\end{proof}

\section{Geometry of Lefschetz pencils}\label{sec:geolefschetz}

\subsection{Stratified Lefschetz pencils}

Let $X$ be a projective scheme over $k$ together with a regular stratification.
Fix a closed immersion $\iota\colon X\hookrightarrow \mathbb
P^N_k$.

A $k$-rational point $V$ of the Grassmannian $\mathrm{Gr}(2,H^0(\mathbb P^N_k,\sO(d))$ is called a {\em pencil
  of hypersurfaces of degree $d$}. The {\em base } $A$ of the pencil is the intersection of
two different hypersurfaces of the pencil. The {\em pencil map} is the canonical map
$\mathbb P^N_k\setminus A \to  \mathbb P (V)\cong \mathbb P^1_k$ sending a point to the
unique hypersurface of the pencil containing it. The {\em compactified pencil map} (or for
short just {\it pencil map}) is the induced morphism $\mathrm{Bl}_A (\mathbb P^N_k) \to \mathbb P^1_k$.

\begin{defn}[Lefschetz pencil]\label{def:stratLefp}
  A pencil of degree $d$ hypersurfaces is called a {\it Lefschetz pencil} for $X$ if
  \begin{itemize}
    \item[(i)] the base $A$
      of the pencil intersects each stratum $Z\hookrightarrow X$ transversally, i.e.\
      $A\cap X\to X$ is a stratified regular immersion of codimension two;
    \item[(ii)] the pencil map $X\setminus( A \cap X) \to \mathbb P^1_k$ is a stratified
      Morse function and has at most one critical
      point per geometric fiber.
    \end{itemize}
\end{defn}

If for a Lefschetz pencil for $X$ we set $\tilde X = \iota^* \mathrm{Bl}_A (\mathbb P^N_k)$ then we obtain
the pencil map $\phi\colon \tilde X\to \mathbb P^1_k$. Note that as $A\hookrightarrow
\P^N_k$ and $A\cap X\hookrightarrow X$ are regular closed immersions of the same codimension ($=2$)
we have an isomorphism $  \mathrm{Bl}_{A\cap
  X}(X)\cong  \iota^* \mathrm{Bl}_A (\mathbb P^N_k)$ \cite[Chapitre I, Th\'eor\`eme~1]{Mic64}.
   The pullback of the
stratification of $X$ to $\tilde X$ is a regular
stratification by Lemma~\ref{lem:stratlci}.
The compactified  pencil map  $\phi \colon \tilde X\to \mathbb P^1_k$ is a Morse function which has no critical
points over $A\cap X$.

\begin{prop}\label{prop:exlefpen}
For $d$ large there is an open dense subset of Lefschetz pencils of $X$ in $\mathrm{Gr}(2,H^0(\mathbb P^N_k,\sO(d))$.
\end{prop}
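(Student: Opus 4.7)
The plan is to use the $d$-uple Veronese embedding to reduce to pencils of hyperplanes, verify that both conditions of Definition~\ref{def:stratLefp} are open on the Grassmannian, and then exhibit a single Lefschetz pencil using Bertini together with the classical Lefschetz pencil theory applied stratum-by-stratum.

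First, I would use the Veronese embedding $v_d\colon \mathbb{P}^N_k \hookrightarrow \mathbb{P}^{N_d}_k$ with $N_d+1 = \binom{N+d}{d}$, under which pencils of degree-$d$ hypersurfaces in $\mathbb{P}^N_k$ correspond bijectively to pencils of hyperplanes in $\mathbb{P}^{N_d}_k$, i.e.\ to codimension-two linear subspaces $A \subset \mathbb{P}^{N_d}_k$. After replacing $X$ by $v_d(X)$ with the transported regular stratification, the Grassmannian $\mathrm{Gr}(2, H^0(\mathbb{P}^N_k,\sO(d)))$ is identified with the Grassmannian of codimension-two subspaces of $\mathbb{P}^{N_d}_k$, and both conditions of Definition~\ref{def:stratLefp} are open in it. It therefore suffices to produce one Lefschetz pencil for $d$ sufficiently large.

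For condition (i), applying Proposition~\ref{prop:bertini}(i) twice produces generic hyperplanes $H_1, H_2 \subset \mathbb{P}^{N_d}_k$ such that $H_1 \cap X \hookrightarrow X$ and $H_2 \cap (H_1 \cap X) \hookrightarrow H_1 \cap X$ are stratified regular immersions of codimension one, where $H_1 \cap X$ carries the regular stratification given by Lemma~\ref{lem:strregpull}. The parameter-system criterion of Lemma~\ref{lem:stratregtransv} then yields that $A = H_1 \cap H_2$ gives a stratified regular immersion $A \cap X \hookrightarrow X$ of codimension two. For condition (ii), I would argue stratum-by-stratum: for each stratum $Z$ of $X$, the closure $\overline Z$ is a smooth projective variety over $k$, and for a generic pencil of hyperplanes the restricted pencil map $\phi|_{\overline{\tilde Z}}$ is a classical Lefschetz pencil on $\overline Z$ (by \cite[Expos\'e XVII]{SGA7.2}), giving condition~(1) of Definition~\ref{dfn:nondegcp} for critical points lying in $\tilde Z$. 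Intersecting over the finitely many strata produces a non-empty open subset of the Grassmannian on which condition~(1) holds simultaneously on all strata. Condition~(2) of Definition~\ref{dfn:nondegcp} and the ``at most one critical point per geometric fiber'' requirement are then further generic conditions, each amounting to the avoidance of a proper closed subset of the Grassmannian.

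The main obstacle is the codimension estimate for condition~(2) of Definition~\ref{dfn:nondegcp}. For a nested pair of strata $Z \subsetneq \overline{Z'}$ with $\dim Z = a < b = \dim \overline{Z'}$, the locus of hyperplanes in $(\mathbb{P}^{N_d}_k)^\vee$ passing through some $x\in Z$ and simultaneously tangent to $\overline Z$ and to $\overline{Z'}$ at $x$ imposes $1 + a + (b-a) = 1 + b$ linear conditions on the hyperplane, while $x$ varies in an $a$-dimensional family. Provided $d$ is large enough that these linear conditions are independent (i.e.\ the Veronese embedding of degree $d$ separates the relevant $1$-jets uniformly on all pairs of strata), the bad locus has codimension at least $1+b-a \geq 2$ in $(\mathbb{P}^{N_d}_k)^\vee$, so a generic line (pencil) in $(\mathbb{P}^{N_d}_k)^\vee$ avoids it. A similar but easier codimension-one count handles distinctness of critical values across strata, yielding the single-critical-point-per-fiber condition.
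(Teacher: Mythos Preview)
Your overall strategy is sound, but it diverges from the paper's argument and leaves the hardest step underspecified. After the same Veronese reduction, the paper organizes everything through the dual varieties $\check{\overline Z}\subset\check{\P}^{N_d}_k$: for $d>1$ each $\check{\overline Z}$ is a hypersurface and, by \cite[Expos\'e~XVII, Proposition~3.5]{SGA7.2}, a line $\P(V)$ is a Lefschetz pencil for $\overline Z$ iff the axis is transverse to $\overline Z$ and $\P(V)\cap\check{\overline Z}^{\,\rm sing}=\varnothing$. The paper then shows, via Proposition~\ref{prop:bertini}, that for $d\gg 0$ the hypersurfaces $\check{\overline Z}$ for distinct strata $Z$ are \emph{pairwise distinct}. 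Once this holds, the singular locus of the union $\bigcup_Z\check{\overline Z}$ has codimension $\ge 2$, and any line avoiding it is automatically a stratified Lefschetz pencil: both condition~(2) of Definition~\ref{dfn:nondegcp} and the cross-strata single-critical-point requirement amount to $\P(V)$ missing the pairwise intersections $\check{\overline Z}\cap\check{\overline{Z'}}$.

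Your dimension count for condition~(2) is correct, but you have the difficulty backwards: the ``similar but easier codimension-one count'' you invoke for distinct critical values across non-nested strata is in fact the crux. What you need is that a generic line avoids $\check{\overline{Z_1}}\cap\check{\overline{Z_2}}$, which requires this intersection to have codimension $\ge 2$ in the dual space---equivalently, that the two dual hypersurfaces are distinct. A direct parameter count would need the embedded tangent spaces $T_{x_1}v_d(\overline{Z_1})$ and $T_{x_2}v_d(\overline{Z_2})$ at distinct points $x_1\ne x_2$ to span a subspace of the expected dimension, a jet-separation statement for $\sO(d)$ that you have not supplied. This is exactly the non-trivial input the paper isolates and proves via the distinctness of the $\check{\overline Z}$.
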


\begin{proof}
  For simplicity of notation we replace $\iota$ by its composition with the Veronese
  embedding
  \[
    \P^N_k\hookrightarrow \P^{\binom{N+d}{d}-1}_k
  \]
  of degree $d$, so now $\P(V)$ is a line in the
  dual space $\check \P^N_k$. We also assume for simplicity that $k$ is algebraically closed.

    For a smooth closed subscheme $Z\subset X$ let
 $\check Z\subset \check \P^N_k$ be its dual variety,  i.e.\ the closure of the set
 of hyperplanes $H$ with $H\cap Z$ singular.  For $d>1$ the reference \cite[Expos\'e~XVII, Proposition~3.5]{SGA7.2}
 implies
 that $\check Z$ is a  hypersurface and a line $\P(V) $ in $\check \P^N_k$ with associated
 base $A\subset \P^N_k$ is a
 Lefschetz pencil for $\overline Z$ if and only if
 \begin{itemize}
\item[(I)] $A$ intersects $\overline Z$ transversally and
\item[(II)] $\P(V)$ does not intersect $\check Z^{\rm sing}$.
\end{itemize}

 The critical values are $\P(V)\cap \check Z$.   By Proposition~\ref{prop:bertini} we obtain that
 for $d \gg 0$ the dual varieties $\check{\overline Z}$ for strata
 $Z\in \mathbf Z$ are different.  Then any pencil $V$ which satisfies (I) and such that
 $$(\cup_{Z\in\mathbf Z}
 \check Z)^{\rm sing}\cap \P(V)=\varnothing$$
  is   a stratified Lefschetz pencil. The set of those $V$ is clearly
 open and non-empty by the theorem of Bertini.
\end{proof}

\subsection{Semistable  Lefschetz pencils}

Let $X$ be a  projective semistable scheme over $\sO$ of relative dimension $n$. Fix a closed immersion
$\iota\colon X\hookrightarrow \mathbb P^N_\sO$. Recall that we endow $X$ with the standard
stratification as in Section~\ref{subsec:semistabmor}.

An $\sO$-point $V$ of the Grassmannian $\mathrm{Gr}(2,H^0(\mathbb P^N_\sO,\sO(d)) )$ is called a {\it pencil of
hypersurfaces of degree}  $d$. This is the same as a  rank $2$ free $\sO$-submodule of $H^0(\mathbb
P^N_\sO,\sO(d))$ which has a free complementary submodule.

\begin{defn}\label{def:sestLefpen}
If $V_k$ defines a stratified Lefschetz pencil for $X_k$ and  $V_K$ defines a Lefschetz
pencil for $X_K$,  then we say that $V$
defines a {\it semistable Lefschetz pencil for} $X$. The set of {\it critical points} resp.\ {\it critical
values} is the union of the set of critical points resp.\ critical values of  the pencil
maps $\phi_K$ and $\phi_k$.
\end{defn}

Note that if $V_k$ defines a stratified Lefschetz pencil, the base $A$ has the property that
$A\cap X\to X$ is a stratified regular immersion by Lemma~\ref{lem:stratregtransv}. So if
under this assumption we set $\tilde X =\mathrm{Bl}_{A\cap X}(X)= \iota^* \mathrm{Bl}_A (\mathbb P^N_\sO)$ then we get
the  pencil map $\phi\colon \tilde X\to \mathbb P^1_\sO$ which is stratified lci by Lemma~\ref{lem:stratlci}.

Let us recall the properties we showed in Theorem~\ref{thm:mainpropermorse} in the
case  of large residue characteristic.

\begin{thm}\label{thmmain:lefpen}
If ${\rm ch}(k)>n+1$ or $\mathrm{ch}(k)=0$ and $V_k$ defines a Lefschetz pencil for $X_k$ then $V$ defines a Lefschetz
pencil for $X$. Moreover, the set of critical points on $X$ is closed and maps isomorphically (as a
scheme) onto
the set of critical values. Each connected component of the critical points is a trait
which is finite and of ramification index over $\sO$ equal to the number of
irreducible components of $X_k$ it meets.
\end{thm}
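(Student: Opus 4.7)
The plan is to apply Theorem~\ref{thm:mainpropermorse} to the pencil map $\phi\colon \tilde X \to \P^1_\sO$, where $\tilde X=\mathrm{Bl}_{A\cap X}(X)$, and then to translate the resulting local data into the global form stated.

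First I verify the hypotheses. The assumption that $V_k$ defines a stratified Lefschetz pencil for $X_k$, together with Lemma~\ref{lem:stratregtransv} and the $\sO$-flatness of the linear subscheme $A$, implies that $A\cap X\hookrightarrow X$ is a stratified regular immersion of codimension two. Hence by Lemma~\ref{lem:stratlci}(iii) the blow-up $\tilde X\to X$ is stratified lci, so $\tilde X$ is regular and carries a regular pullback stratification; a local computation in semistable coordinates shows that $\tilde X_k$ remains an snc variety, so $\tilde X$ is again semistable. Since $\phi_k$ is a stratified Morse function by assumption, Theorem~\ref{thm:mainpropermorse} applies to $\phi$.

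From that theorem, part (i) gives that $\phi_K$ is a Morse function, while (ii) provides a bijection between critical points of $\phi_K$ and $\phi_k$ via specialization. The compatibility $\phi_k\circ\mathrm{sp}=\mathrm{sp}\circ\phi_K$ then forces distinct critical points of $\phi_K$ to have distinct critical values (since the same holds for $\phi_k$ by the ``at most one critical point per fiber'' condition in the stratified Lefschetz pencil for $X_k$), so $V_K$ defines a Lefschetz pencil for $X_K$. Parts (iii)--(iv) give that for each critical point $x_K$ the schematic closure $S\subset \tilde X$ is a trait finite over $\sO$ of ramification index $m+1$, where $m=\mathrm{codim}_{\tilde X_k}(Z)$ for $Z$ the stratum through $x_k=\mathrm{sp}(x_K)$; moreover $\mathrm{ch}(k)>n+1\ge m+1$ (or $\mathrm{ch}(k)=0$) forces $\mathrm{ch}(k)\nmid m+1$, so $\phi|_S\colon S\to \P^1_\sO$ is a closed immersion.

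It remains to identify $m+1$ with the number of irreducible components of $X_k$ meeting $S$. Since $\phi$ has no critical points over $A\cap X$, every critical point lies in the locus where $\tilde X\to X$ is an isomorphism, so the stratum codimensions in $\tilde X_k$ and in $X_k$ coincide there; from the local snc description in Section~\ref{subsec:morselem}, a codimension-$m$ stratum consists of points lying on exactly $m+1$ irreducible components of $X_k$. The main obstacle is the verification that $\tilde X$ remains semistable after the blow-up, a local coordinate check relying on the stratified transversality of $A\cap X$; once this is in hand, the remaining assertions follow cleanly from Theorem~\ref{thm:mainpropermorse}, and one assembles the closed critical locus in $X$ as a finite disjoint union of traits mapping isomorphically onto the critical values in $\P^1_\sO$.
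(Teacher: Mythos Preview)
Your proposal is correct and matches the paper's approach exactly: the paper itself offers no separate proof, merely stating that Theorem~\ref{thmmain:lefpen} recalls the properties shown in Theorem~\ref{thm:mainpropermorse} under the large residue characteristic hypothesis, and you have unpacked precisely that deduction. One small point of logical order: the condition ``at most one critical point per \emph{geometric} fiber'' for $\phi_K$ is most cleanly obtained \emph{after} you have established via (iv) that each $\phi|_S$ is a closed immersion (so that $k(x_K)=k(\phi_K(x_K))$) and that the images of the various traits are disjoint in $\P^1_\sO$---your final paragraph gives exactly this, so the argument is complete once reordered.
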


In case $0<\mathrm{ch}(k)\le n+1$ we cannot combine Proposition~\ref{prop:exlefpen} and
Theorem~\ref{thm:mainpropermorse} to deduce the existence of a  Lefschetz pencil over
$\sO$, since using them it is not clear whether $\phi_K$ has at most one critical point per
geometric fiber.
However, one easily proves the following proposition.

\begin{prop}
For $d$ large there is an open dense subset in $\mathrm{Gr}(2,H^0(\mathbb P^N_k,\sO(d)))$
such that if $V_k$ lies in this subset,  then $V$ defines a Lefschetz pencils for $X$.
\end{prop}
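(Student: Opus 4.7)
The plan is to combine Proposition~\ref{prop:exlefpen} applied over each of the two fibers with a dimension-counting specialization argument on the Grassmannian $G=\mathrm{Gr}(2,H^0(\P^N_\sO,\sO(d)))$ over $\sO$, whose generic and special fibers we denote $G_K$ and $G_k$. By Proposition~\ref{prop:exlefpen} applied to the snc variety $X_k$ (passing to an algebraic closure to accommodate the Bertini hypothesis and then descending the resulting open condition, which is Galois invariant, to $k$), for $d\gg 0$ there is an open dense $U_k\subset G_k$ parametrizing stratified Lefschetz pencils for $X_k$. Similarly, applied to the smooth projective $K$-scheme $X_K$, stratified by its connected components, one obtains an open dense $U_K\subset G_K$ parametrizing Lefschetz pencils for $X_K$.

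Next, let $\bar Z\subset G$ denote the schematic closure of the proper closed subset $Z_K=G_K\setminus U_K$. Then $\bar Z$ is flat over $\sO$ with generic fiber $Z_K$, so its special fiber $\bar Z\cap G_k$ has dimension at most $\dim Z_K<\dim G_K=\dim G_k$ and is therefore a proper closed subset of $G_k$. Define the open subscheme
\[
U\;=\;\bigl(G\setminus\bar Z\bigr)\,\cap\,\bigl(G\setminus(G_k\setminus U_k)\bigr)\;\subset\;G,
\]
whose special fiber $U\cap G_k=U_k\setminus(\bar Z\cap G_k)$ is open and dense in $G_k$.

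Now take any $\sO$-point $V\in G(\sO)$ with $V_k\in U\cap G_k$. Since $U$ is open in $G$ and $\mathrm{Spec}\,\sO$ is local, the preimage $V^{-1}(U)\subset\mathrm{Spec}\,\sO$ is an open neighborhood of the closed point, hence all of $\mathrm{Spec}\,\sO$. In particular $V_K\in U_K$, so $V_K$ defines a Lefschetz pencil for $X_K$ while $V_k\in U_k$ defines a stratified Lefschetz pencil for $X_k$; by Definition~\ref{def:sestLefpen}, $V$ is a semistable Lefschetz pencil for $X$. The only mildly nontrivial ingredient is the dimension count forcing $\bar Z\cap G_k\subsetneq G_k$, which follows at once from flatness of the schematic closure over the DVR~$\sO$; beyond that, the argument is a formal combination of generic existence in each fiber with the fact that specialization of $\sO$-points is controlled by openness in the total space.
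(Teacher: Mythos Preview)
Your proof is correct and follows essentially the same approach as the paper's: both apply Proposition~\ref{prop:exlefpen} on each fiber to get open dense loci $U_K\subset G_K$ and $U_k\subset G_k$, take the schematic closure $\overline{Z_K}$ (the paper's $\overline{B_K}$) of the bad locus over $K$ inside $G$, and use that its special fiber is a proper closed subset of $G_k$ so that $U_k\setminus(\overline{Z_K}\cap G_k)$ is the desired open dense set. Your additional remark that an $\sO$-point landing in an open $U\subset G$ at the closed point must land in $U$ entirely (since $\Spec\sO$ is local) is exactly the mechanism, implicit in the paper, that forces $V_K\in U_K$.
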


\begin{proof}
Let $B_F\subset \mathrm{Gr}(2,H^0(\mathbb P^N_F,\sO(d))))$ be   the Zariski closure of all
   non-Lefschetz  pencils over $F$ for $F=K$ or $F=k$. Then
 $B_F$  is not dense in  $\mathrm{Gr}(2,H^0(\mathbb P^N_F,\sO(d)))$ for $F=K$ and $F=k$.
Thus
$\overline{B_K}\otimes_\sO k$
is a proper closed subset of $\mathrm{Gr}(2,H^0(\mathbb P^N_k,\sO(d))))$, so the open subset
\[
\mathrm{Gr}(2,H^0(\mathbb P^N_k,\sO(d))) \setminus (  \overline{B_K}\otimes_\sO k\cup B_k)
\]
has the required properties.
\end{proof}

\section{Reminder on the cohomology of Lefschetz pencils}\label{sec:coholefcl}

In this section we reformulate the classical results about the cohomology of
Lefschetz pencils in  terms of perverse sheaves. This has the advantage that one gets rid
of the unpleasant dichotomy of cases (A) and (B) in~\cite[Expos\'e~XVII]{SGA7.2}.

\subsection{Picard-Lefschetz theory}\label{subsec:pltheocl}

  Let $\ell$ be a prime number
invertible in $k$ and let $\Lambda=\Z/\ell^\nu\Z$ or let $\Lambda$  be an
algebraic field extension of $\Q_\ell$.
Let $f\colon X\to \Spec k$ be a smooth projective  morphism of schemes,   $n=\dim X$. In this section we use the perversity associated to
$\delta_X\colon X\to \Z$, $\delta_X(x)=\dim \overline{\{ x\} }$, see~\ref{app:subsecperv}.

For $\Lambda=\QQl$, a perverse sheaf $\sF\in D^b_c(X,\Lambda)$ is called {\em geometrically semisimple} if
$\sF_{\overline k} \in D^b_c(X_{\overline k},\Lambda)$ is semisimple. For $X$
geometrically connected a geometrically
semisimple perverse sheaf $\sF\in  D^b_c(X,\Lambda)$ has a canonical decomposition
$$\sF= \sF^{\rm c}\oplus \sF^{\rm nc}$$
 into a {\em geometrically constant} part $\sF^{\rm c}= f^* (\sF')[n]$, where
$\sF'\in D^b_c(\Spec k,\Lambda)$ is situated in degree $0$, and a part $\sF^{\rm nc}$
which has no non-trivial geometrically constant subsheaves, see~\cite[Corollaire~4.2.6.2 and following comment]{BBD83}.
 We say that a geometrically semisimple perverse sheaf $\sF$ on $X$
satisfies {\em multiplicity one} if $\sF_{\overline k}$ is a direct sum of pairwise non-isomorphic irreducible
 perverse sheaves.
Let $\sC =\Lambda [n]$ be the  constant
perverse sheaf on $X$.

Fix an immersion $X\hookrightarrow \mathbb P^N_k$. Consider a Lefschetz
pencil with center $A$ as above and $\tilde X= \mathrm{Bl}_{X\cap A}(X)$. Let
$\phi\colon \tilde X\to \P^1_k$ be the pencil map. We call the perverse sheaf
$$\sL =\per
R^0\phi_* \sC  \ {\rm on} \ \P^1_k$$
 the associated
{\em Picard-Lefschetz sheaf}.

\begin{thm}\label{thm:glcllefschetz}
  Assume $\Lambda=\QQl$. Then the following properties are verified.
  \begin{itemize}
\item[(i)]  $\per R^i\phi_* \sC$ is a geometrically semisimple perverse sheaf for all $i\in \Z$;
\item[(ii)]  $\per R^i\phi_* \sC$ is geometrically constant for $i\ne 0$;
\item[(iii)]
$\sL^{\rm nc} = (\per R^0\phi_* \sC)^{\rm nc}$ satisfies multiplicity one.
\end{itemize}
More precisely, if $X$ is geometrically connected then either
\begin{itemize}
\item[(A)]   there
is no skyscraper in $\sL$ and $\sL^{\rm nc}$ is geometrically irreducible
or
\item[(B)]  $\sL^{\rm nc}$ is a sum of one-dimensional
  skyscrapers at the critical values of $\phi$.
  \end{itemize}
\end{thm}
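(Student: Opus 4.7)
The plan is to treat the three assertions in sequence, each time reducing to the classical Picard-Lefschetz analysis now framed in the perverse-sheaf formalism.

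For (i), I would apply the BBD decomposition theorem. Since $\tilde X$ is smooth projective over $k$ and $\phi$ is proper, $\sC=\Lambda[n]$ is a simple perverse sheaf pure of weight $n$, hence $R\phi_*\sC$ is pure of weight $n$ on $\P^1_k$. By~\cite[Th\'eor\`eme~5.4.5]{BBD83}, after base change to $\overline k$ it is a direct sum of shifts of simple perverse sheaves, so every $\per R^i\phi_*\sC$ is geometrically semisimple.

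For (ii), let $j\colon U \hookrightarrow \P^1_k$ be the open locus of smooth fibers of $\phi$, with complement the finite critical set $S$. Over $U$ the fiber $\phi^{-1}(t)$ is isomorphic to the smooth hyperplane section $X\cap H_t$ (the strict transform of $X\cap H_t$ equals $X\cap H_t$ itself since $X\cap A$ is a smooth Cartier divisor in it), so the weak Lefschetz theorem~\cite[Th\'eor\`eme~4.1.1]{BBD83} gives $H^q(X,\Lambda)\xrightarrow{\sim} H^q(X\cap H_t,\Lambda)$ for $q<n-1$ and dually for $q>n-1$. Thus $R^q\phi_*\Lambda|_U$ is a constant local system for $q\ne n-1$, which in the perverse normalization translates into $\per R^i\phi_*\sC|_U$ being a constant perverse sheaf for $i\ne 0$. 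Combined with (i), a simple summand of $\per R^i\phi_*\sC$ is either a skyscraper or the IC-extension of a simple lisse sheaf on $U$; constancy on $U$ forces the lisse summands to be constant extensions, and the local Picard-Lefschetz computation at each $s\in S$ (using the normal form of Proposition~\ref{prop:normcrmorsefield}) shows that the stalk of $R\phi_*\sC$ at $s$ differs from the generic stalk only in the middle perverse degree, so no skyscraper summand occurs for $i\ne 0$.

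The main content is (iii). Decompose $R^{n-1}\phi_*\Lambda|_U = \mathcal E \oplus \mathcal V$ into $\pi_1(U,\overline t)$-stable constant and non-constant parts, so that $\sL^{nc}|_U = \mathcal V[1]$. By~\cite[Expos\'e~XV, Th\'eor\`eme~3.4]{SGA7.2}, the monodromy of $\mathcal V_{\overline t}$ is generated by the Picard-Lefschetz reflections $T_s$ at the critical values, and genericity of the pencil implies the discriminant is geometrically irreducible, whence by Deligne's argument~\cite[Expos\'e~XVIII, Th\'eor\`eme~4.4]{SGA7.2} all vanishing cycles $\delta_s\in\mathcal V_{\overline t}$ are $\pi_1(U,\overline t)$-conjugate up to sign. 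Two cases arise. In type (A), where some, equivalently all, $\delta_s$ are nonzero in $\mathcal V_{\overline t}$, this conjugacy forces $\mathcal V$ to be absolutely irreducible, and the Picard-Lefschetz formula gives $\dim\mathcal V_{\overline t}^{T_s} = \dim\mathcal V_{\overline t}-1$ at each $s$, so $\sL^{nc} = j_{!*}\mathcal V[1]$ is an irreducible perverse sheaf with no skyscraper part. In type (B), where all $\delta_s$ vanish in $\mathcal V_{\overline t}$, one has $\mathcal V=0$, and $\sL^{nc}$ is supported on $S$; the local Picard-Lefschetz contribution at each $s$ is a one-dimensional cokernel, yielding a direct sum of pairwise non-isomorphic one-dimensional skyscrapers.

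The main obstacle is the case-(B) analysis: verifying that $\mathcal V$ vanishes and that the local contribution at each critical value is precisely a one-dimensional skyscraper rather than a higher-dimensional one. This requires carefully matching the local Picard-Lefschetz computation of the vanishing-cycle stalk at each $s$ with the global semisimple decomposition from (i), and ruling out larger skyscraper summands by a local dimension count using Proposition~\ref{prop:normcrmorsefield}.
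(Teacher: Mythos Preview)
Your approach is essentially the same as the paper's: it explicitly attributes (i) to the decomposition theorem~\cite[Th\'eor\`eme~6.2.5]{BBD83}, (ii) to the Weak Lefschetz theorem, and (iii) to the local Picard-Lefschetz formula from~\cite[Expos\'e~XV]{SGA7.2}, without spelling out further details. One small correction: in (ii) and at the end you invoke Proposition~\ref{prop:normcrmorsefield}, but that is the snc Morse lemma, whereas here $X$ is smooth over $k$, so you should instead cite the classical Morse lemma (Corollary~\ref{sec:morse_ml}) or directly the Picard-Lefschetz computation in~\cite[Expos\'e~XV]{SGA7.2}.
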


\begin{rmk}\label{rmk:caseAkatz}
By \cite[Th\'eor\`eme~6.3]{SGA7.2} the Lefschetz pencil is of type (A) if the degree of the pencil is
sufficiently large. For $n$ odd type (B) does not occur.
\end{rmk}

From a modern point of view Theorem~\ref{thm:glcllefschetz}(i) is simply the decomposition
theorem for proper morphisms~\cite[Th\'eor\`eme~6.2.5]{BBD83}, while Theorem~\ref{thm:glcllefschetz}(ii) is a simple
consequence of the Weak Lefschetz theorem. The proof of Theorem~\ref{thm:glcllefschetz}(iii) uses  the
local Picard-Lefschetz formula, see~\cite[Expos\'e XV]{SGA7.2}. In the following we recall the part of the local theory
which we use explicitly in this paper.

\medskip

 Consider a point $x\in \P^1_k$. Fix a geometric point $\overline \eta_x$ over the
generic point $\eta_x\in \Spec \sO_{\P^1_k,x}^{\hh}$. Set
$G_x=\mathrm{Gal}(\overline \eta_x /\eta_x)$. Consider
$\overline\jmath_x \colon \overline \eta_x \to \P^1_k$, $i_x\colon \Spec k(x) \to \P^1_k$.
The $\Lambda$-module of pre-vanishing cycles at $x$ is defined by
\[
  V_x := H^{-1}(x, \mathrm{cone}(i^*_x \sL\to  i^*_x  \overline\jmath_{x,*} \overline \jmath^*_x
  \sL )).
\]
Note that all other cohomology groups in degree $\ne -1$  vanish  as $\sL$ is
  perverse. One can deduce this directly from the definition of a  perverse sheaf or
  interpret it as a special case of \cite[Corollaire 4.6]{Ill94} applied to the identity morphism.

  The following proposition is a consequence of the Picard-Lefschetz
  formula~\cite[Expos\'e XV]{SGA7.2}.

\begin{prop}\label{prop:picardlefschetzform}
  Let $x$ be a critical value of $\phi$.
  \begin{itemize}
  \item[$\bullet$]
    If $n$ is even there is a canonical isomorphism $V_x\cong \Lambda (-\frac n 2)$ up to sign.
  \item[$\bullet$]
If $n$ is odd there is a canonical isomorphism $V_x\cong \Lambda(-\frac{n-1}{2})$ up to
sign.
\end{itemize}
\end{prop}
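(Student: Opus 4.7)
The plan is to reduce the computation of $V_x$ to the classical local Picard-Lefschetz formula of SGA 7.2, Expos\'e~XV, applied at the unique critical point $y\in\tilde X$ of $\phi$ with $\phi(y)=x$; uniqueness of $y$ is built into Definition~\ref{def:stratLefp}. By proper base change applied over the henselization of $\P^1_k$ at $x$, together with the basic triangle relating specialization, nearby and vanishing cycles, the cone defining $V_x$ is naturally identified with the vanishing cycles cohomology $R\Gamma(\tilde X_x, R\Phi_\phi(R\phi_*\sC))$ of $R\phi_*\sC$ at $x$.

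Next I would use Theorem~\ref{thm:glcllefschetz}(ii) to discard the contributions of $\per R^i\phi_* \sC$ for $i\ne 0$: since these are geometrically constant they have trivial vanishing cycles at $x$, so the cone above only detects $\sL = \per R^0 \phi_* \sC$. Combined with the fact that $\phi$ is smooth on $\tilde X_x \setminus\{y\}$, which forces $R\Phi_\phi \sC$ to be supported at $y$, and with the shift $\sC=\Lambda[n]$, this should yield a canonical isomorphism
\[
V_x \;\cong\; H^{n-1}\bigl(y,\, R\Phi_\phi \Lambda\bigr).
\]

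The Morse lemma (Corollary~\ref{sec:morse_ml}) then lets me put $\phi$ into the standard form $T \mapsto X_1^2 + \cdots + X_n^2$ in suitable henselian coordinates around $y$. For this local model the classical Picard-Lefschetz formula (SGA 7.2, Expos\'e~XV) computes $R\Phi\Lambda$ explicitly: it is concentrated in degree $n-1$, of rank one, isomorphic to $\Lambda(-\tfrac{n-1}{2})$ when $n$ is odd and to $\Lambda(-\tfrac{n}{2})$ when $n$ is even, and the isomorphism is well-defined only up to multiplication by $\pm 1$, coming from the choice of a square root of the discriminant of the quadratic form.

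The main obstacle is the first step: carefully unwinding the definition of the pre-vanishing cycles $V_x$ attached to the perverse sheaf $\sL = \per R^0\phi_*\sC$ and verifying that it coincides with the local vanishing cycles of $\Lambda$ at $y$ in the expected cohomological degree. This requires combining proper base change, the decomposition of $R\phi_*\sC$ into its perverse pieces, and the vanishing of the contributions of the geometrically constant summands. Once this identification is in place, the rest is a translation of the well-known local Picard-Lefschetz formula into the perverse framework, together with the bookkeeping of the Tate twist according to the parity of $n$.
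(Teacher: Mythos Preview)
Your proposal is correct and follows the only natural route: the paper itself gives no proof beyond the sentence ``The following proposition is a consequence of the Picard--Lefschetz formula~\cite[Expos\'e~XV]{SGA7.2}'', and your outline is precisely the standard unpacking of that reference. One small notational slip: the expression $R\Gamma(\tilde X_x, R\Phi_\phi(R\phi_*\sC))$ is ill-formed, since $R\phi_*\sC$ lives on $\P^1$ while $R\Phi_\phi$ acts on sheaves on $\tilde X$; you mean either the stalk $(R\Phi_x R\phi_*\sC)_x$ on $\P^1$ or, equivalently via proper base change for vanishing cycles, $R\Gamma(\tilde X_x, R\Phi_\phi\sC)$---and indeed you need Theorem~\ref{thm:glcllefschetz}(ii) exactly to pass between $R\Phi_x(\sL)$ and $R\Phi_x(R\phi_*\sC)$, not the other way round.
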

The proposition means that the   homomorphism of $G_x\to
  \mathrm{Aut}(V_x)/\{ \pm \mathrm{id} \}$ is given in terms of a
power of the cyclotomic character. In particular the order of
$\im(G_x)\subset \mathrm{Aut}(V_x)$ divides $2$ if $G_x$ acts trivially on $\Lambda(1)$.

\subsection{The middle primitive cohomology}

The results of this section are due to   \cite[Expos\'e XVIII, Th\'eor\`eme~5.7]{SGA7.2}  for Lefschetz
pencils of type (A). In Proposition~\ref{prop:primmidco} we formulate our result,
and in Remark~\ref{rmk:casebprim} we explain what this means in the novel case of Lefschetz pencils of
type (B).

Let $\Lambda$  be an algebraic field extension of $\Q_\ell$, $\sC=
\Lambda_{X}[n]$, $n=\dim (X)$.
Consider the Lefschetz operator \[ L\colon H^i(X_{\overline k},\sC)\to
  H^{i+2}(X_{\overline k}, \sC (1))\]
associated to the given polarization.
By the Hard
Lefschetz  theorem \cite[Th\'eor\`eme~6.2.13]{Del80} the canonical map
from the kernel of $L$ to its (Tate twisted) cokernel is an isomorphism in degree $0$. We write this
group
\[
  H_{\rm prim}= H^0(X_{\overline k},\sC)^L \xrightarrow{\sim}  H^0(X_{\overline k},\sC)_L  .
  \]
Let $\sL= \per R^0 \phi_* \sC$ be a Picard-Lefschetz sheaf as in Theorem~\ref{thm:glcllefschetz}.

\begin{prop}\label{prop:primmidco}
The $\mathrm{Gal}(\overline k /k)$-module $H_{\rm prim}$ is canonically a direct summand
of $H^0(\P^1_{\overline k}, \sL)$.
\end{prop}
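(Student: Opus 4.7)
The plan is to construct a canonical Galois-equivariant embedding $H_{\rm prim}\hookrightarrow H^0(\P^1_{\overline k},\sL)$ via the blow-up $\pi\colon\tilde X\to X$, together with a canonical retraction. By the decomposition theorem (Theorem~\ref{thm:glcllefschetz}(i)) the perverse Leray spectral sequence for $\phi$ degenerates at $E_2$; combined with Theorem~\ref{thm:glcllefschetz}(ii), which makes $\per R^q\phi_*\sC_{\tilde X}$ geometrically constant for $q\ne 0$, and with the fact that the cohomology of a shifted constant perverse sheaf $V\otimes\Lambda_{\P^1_{\overline k}}[1]$ on $\P^1_{\overline k}$ is concentrated in degrees $\pm 1$, this yields a canonical $\Gal(\overline k/k)$-equivariant decomposition
\[
H^n(\tilde X_{\overline k},\Lambda) \;=\; H^0(\P^1_{\overline k},\sL)\oplus V_1\oplus V_{-1}(-1),
\]
where $V_{\pm 1}$ are the Galois representations identifiable with the cohomologies $H^{n\pm 1}(X_t,\Lambda)$ of a generic fiber $X_t$ of $\phi$. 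Composing the blow-up pullback $\pi^*\colon H^n(X_{\overline k},\Lambda)\hookrightarrow H^n(\tilde X_{\overline k},\Lambda)$, which is split by $\pi_*$ since $\pi_*\pi^*=\mathrm{id}$, with the canonical projection onto the $H^0(\P^1_{\overline k},\sL)$-summand gives a Galois-equivariant map $\psi\colon H^n(X_{\overline k},\Lambda)\to H^0(\P^1_{\overline k},\sL)$.

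The candidate retraction is
\[
r\colon H^0(\P^1_{\overline k},\sL)\hookrightarrow H^n(\tilde X_{\overline k},\Lambda)\xrightarrow{\pi_*}H^n(X_{\overline k},\Lambda)\xrightarrow{P_{\rm prim}}H_{\rm prim},
\]
where $P_{\rm prim}$ is the hard Lefschetz projector for the decomposition $H^n(X)=H_{\rm prim}\oplus L\cdot H^{n-2}(X)$. Both the injectivity of $\psi|_{H_{\rm prim}}$ and the retraction identity $r\circ\psi|_{H_{\rm prim}}=\mathrm{id}$ will follow from two key facts: (a) for every $x\in H_{\rm prim}$ the $V_1$-component of $\pi^*x$ vanishes, and (b) $\pi_*V_{-1}(-1)\subset L\cdot H^{n-2}(X)$. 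Indeed, if $\psi(x)=0$ for $x\in H_{\rm prim}$ then (a) forces $\pi^*x=v_{-1}\in V_{-1}(-1)$, so by (b) we find $x=\pi_*v_{-1}\in L\cdot H^{n-2}(X)\cap H_{\rm prim}=0$; and for the retraction one writes $\psi(x)=\pi^*x-v_{-1}$ for the $V_{-1}(-1)$-component $v_{-1}$ of $\pi^*x$ and uses $\pi_*\pi^*x=x$ together with $P_{\rm prim}(\pi_*v_{-1})=0$ from (b).

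To establish (a), observe that the canonical projection $H^n(\tilde X_{\overline k},\Lambda)\twoheadrightarrow V_1$ is the restriction $y\mapsto y|_{F_t}$ to a generic fiber $F_t$ of $\phi$; since $\pi$ restricts to an isomorphism $F_t\xrightarrow{\sim}X_t$, we get $\pi^*x|_{F_t}=x|_{X_t}$. Combining weak Lefschetz $H^{n-2}(X)\xrightarrow{\sim}H^{n-2}(X_t)$ with hard Lefschetz $L_{X_t}\colon H^{n-2}(X_t)\xrightarrow{\sim}H^n(X_t)(1)$ on $X_t$ of dimension $n-1$ shows that $L\cdot H^{n-2}(X)$ maps isomorphically onto $H^n(X_t)$ under restriction, so the hard Lefschetz decomposition of $H^n(X)$ identifies $H_{\rm prim}$ precisely as the kernel of the restriction $H^n(X)\to H^n(X_t)$, proving (a). For (b), identify $V_{-1}(-1)$ inside $H^n(\tilde X_{\overline k},\Lambda)$ via the perverse Leray edge map as the classes $[F_t]\cup\pi^*\beta$ for $\beta\in H^{n-2}(X)$; writing $[F_t]=\pi^*h-[E]$ with $E$ the exceptional divisor and using $\pi_*[E]=0$, one obtains $\pi_*([F_t]\cup\pi^*\beta)=L\cdot\beta\in L\cdot H^{n-2}(X)$.

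The main obstacle will be to rigorously match the canonical splitting $V_{-1}(-1)\hookrightarrow H^n(\tilde X_{\overline k},\Lambda)$ from the decomposition theorem with the edge-map description used in step (b); once this identification is in place, everything else is formal from the blow-up formula and hard Lefschetz.
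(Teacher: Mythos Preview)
Your approach is sound and genuinely different from the paper's, but a few points need correction or tightening.

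First, a minor slip: the identification $V_{\pm 1}=H^{n\pm 1}(X_t,\Lambda)$ is off by one.  Over the smooth locus $\per R^q\phi_*\sC=R^{n+q-1}\phi_*\Lambda[1]$, so $V_1=H^{n}(X_t,\Lambda)$ and $V_{-1}=H^{n-2}(X_t,\Lambda)$.  Your arguments for (a) and (b) in fact use these correct identifications, so this is only a cosmetic error.

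More substantially, you invoke a \emph{canonical} splitting from the decomposition theorem, but the decomposition theorem only gives a non-canonical one; all three graded pieces are pure of weight $n$, so weight arguments do not separate them.  The fix is easy and you essentially have it: work with the canonical perverse Leray \emph{filtration} $0=F_{-2}\subset F_{-1}\subset F_0\subset F_1=H^n(\tilde X_{\overline k},\Lambda)$.  Claim~(a) shows $\pi^*H_{\rm prim}\subset F_0$, giving a canonical map $\psi\colon H_{\rm prim}\to F_0/F_{-1}=H^0(\P^1_{\overline k},\sL)$.  For the retraction, note that $P_{\rm prim}\circ\pi_*|_{F_0}$ kills $F_{-1}$ by claim~(b), hence descends to $H^0(\P^1_{\overline k},\sL)$.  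The identity $r\circ\psi=\id$ then follows from $\pi_*\pi^*=\id$ exactly as you wrote.  No splitting is needed.

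For claim~(b) you still owe the identification $F_{-1}=\{[F_t]\cup\pi^*\beta:\beta\in H^{n-2}(X)\}$.  This is correct: since $\per R^q\phi_*\sC$ is a shifted constant sheaf for $q\le -1$, one checks $\per\tau_{\le -1}R\phi_*\sC=\tau_{\le -2}R\phi_*\sC$, so $F_{-1}$ coincides with the classical Leray piece $L^2H^n$; then the projection formula shows $[F_t]\cup(-)=\phi^*[\text{pt}]\cup(-)$ sends $H^{n-2}(\tilde X)$ onto $L^2H^n$, and weak Lefschetz lets you replace $H^{n-2}(\tilde X)$ by $\pi^*H^{n-2}(X)$.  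An even slicker route is to deduce (b) from (a) by Poincar\'e duality: the perverse Leray filtration satisfies $F_0^{\perp}=F_{-1}$, and $H_{\rm prim}^{\perp}=L\cdot H^{n-2}(X)$ under the Lefschetz decomposition, so $\pi^*H_{\rm prim}\subset F_0$ dualizes to $\pi_*F_{-1}\subset L\cdot H^{n-2}(X)$.

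\textbf{Comparison with the paper.}  The paper avoids the blow-up entirely and works with the open variety $X^\circ=X\setminus(A\cap X)$ and the open pencil $\phi^\circ$.  It builds a diagram linking $H^0_c(X^\circ,\sC)^L$ and $H^0(X^\circ,\sC)_L$ to $H^0(\P^1,\per R^0\phi^\circ_!\sC)$ and $H^0(\P^1,\per R^0\phi^\circ_*\sC)$, with $\sL$ sandwiched between the latter two; the splitting comes from the injection $\Xi$ and surjection $\Delta$ in that diagram, proved via Weak and Hard Lefschetz through a sequence of claims (I)--(VI).  Your route via the perverse Leray filtration of $\phi$ on $\tilde X$ is more structural and arguably more transparent once the filtration is handled correctly; the paper's route is more elementary in that it never needs to discuss the canonicity of any decomposition, only the canonical maps $\phi^\circ_!\to\phi_*\to\phi^\circ_*$.
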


\begin{proof}
Set $X^\circ=X\setminus (X\cap A)$, $\phi^\circ= \phi|_{X^\circ}$. Let $Y\hookrightarrow
X$ be a generic hypersurface section in the fixed Lefschetz pencil, in particular $Y$ is smooth over
$k$. Observe that $Y^\circ=Y\setminus (Y\cap A)$ and $X^\circ\setminus Y^\circ=X\setminus Y$ are
affine. We study  the commutative diagram
\eq{eq:lefprimco}{
\begin{gathered}
  \xymatrix{
H^0(\P^1_{\overline k},\per R^0 \phi^\circ_! \sC) \ar@{^{(}->}[d]_\Xi \ar[r]^-{\stackrel{\Upsilon_c}{\sim}} &  H^0_c(X^\circ_{\overline{k}}, \sC)^L
\ar[r]^-{\stackrel{\Omega_c}{\sim}} & H^0(X_{\overline{k}}, \sC)^L  \ar[dd]^\wr \\
H^0(\P^1_{\overline k},\sL)   \ar@{->>}[d]_\Delta   &   &   \\
H^0(\P^1_{\overline k},\per R^0 \phi^\circ_* \sC) & \ar[l]_-{\stackrel{\Upsilon}{\sim}}
H^0(X^\circ_{\overline{k}}, \sC)_L & \ar[l]_-{\stackrel{\Omega}{\sim}}
H^0(X_{\overline{k}}, \sC)_L
}
\end{gathered}
}
where $\Upsilon$ and $\Upsilon_c$ are induced by the Leray spectral. Below we will show that the horizontal maps are isomorphisms.
We deduce that  $\Xi$ is injective and $\Delta$ is
surjective.
So  we see that the upper part of the diagram induces an injection $H_{\rm prim}\to
H^0(\P^1_{\overline k}, \sL)$ with splitting  $ H^0(\P^1_{\overline k}, \sL) \to H_{\rm prim}$
induced by the lower part of the diagram.

In order to show  that the upper and the lower  horizontal  maps  in~\eqref{eq:lefprimco}  are isomorphisms,  it suffices  by duality to consider the upper part of the diagram.
In the sequel the cohomology groups are meant to be with coefficients in $\sC$,
and we assume that $k=\overline k$.  We now proceed by a series of claims (I)--(VI).

\medskip

(I) {\em $ H^0_c(X^\circ) \to  H^0(X)$ is injective}.

\medskip

This follows from a diagram chase in  the following commutative diagram with exact rows
\[
  \xymatrix{
    H^{-1}(X) \ar[r]  & H^{-1}(A\cap X) \ar[r] & H^0_c(X^\circ) \ar[r] & H^0(X) \\
   H^{-3}(X)  \ar[r] \ar[u]_L  &   H^{-3}(A\cap X) \ar[r] \ar[u]_L^\wr &
   H^{-2}_c(X^\circ)=0 . &
  }
\]
The right vertical Lefschetz arrow is an isomorphism by the Hard Lefschetz theorem. The
vanishing of the right bottom group follows from the exact sequence
\[
 0= H^{-2}_c(X\setminus Y_1)\oplus H^{-2}_c(X\setminus Y_2) \to  H^{-2}_c(X^\circ) \to
  H^{-1}_c(X\setminus (Y_1\cup Y_2)) =0
\]
where $Y_1$ and $Y_2$ are two different hypersurfaces of the fixed Lefschetz pencil.
Indeed, the Weak Lefschetz  theorem implies that the groups on the right  and on the left vanish in
this exact sequence.

\medskip

(II) {\em It holds }
\ga{}{  H^0_c(X^\circ)^L \supset \ker( H^0_c(X^\circ) \to H^0_c(Y^\circ)),   \ \
   H^0(X)^L =\im( H^0_c(X\setminus Y) \to H^0(X)). \notag}

   \medskip

For the first inclusion use that
 $L\colon  H^0_c(X^\circ) \to  H^2_c(X^\circ)(1)$ is the
 composition of
 \[
   H^0_c(X^\circ)\to H^0_c(Y^\circ) \to H^2_c(X^\circ)(1).
 \]
 For the second equality use the exact sequence
 \[
 H^0_c(X\setminus Y)  \to  H^0(X)\to  H^0(Y)
 \]
and the description of $L\colon  H^0(X) \to
 H^0(X)$ as the composition
 \[
   H^0(X)\to H^0(Y) \to H^2(X)(1).
 \]
 in which the second arrow is
injective by  the Weak Lefschetz   theorem as $H^{1}(X\setminus Y)=0$.

\medskip

(III)  {\em The maps  $ \ker ( H^0_c(X^\circ) \to  H^0_c(Y^\circ)) \xrightarrow{\sim}  H^0_c(X^\circ)^L \xrightarrow{\sim} H^0(X)^L  $ are isomorphisms}.

\medskip

The first (injective) map is well-defined by (II).
The second map is injective by (I). From (II) we get a surjection  $  H^0_c(X\setminus Y)
\to H^0(X)^L $, which
factors through these maps as $X\setminus Y= X^\circ\setminus Y^\circ$.

In particular (III)   implies that $\Omega_c$ is an isomorphism.

\medskip

(IV) {\em $0\to H^0(\P^1_k, \per R^0 \phi^\circ_! \sC) \to H^0_c(X^\circ)  \xrightarrow{\beta} H^{-1}
    (\P^1_k , \per R^1 \phi^\circ_! \sC ) $ is exact}.

\medskip

This sequence is induced by the perverse Leray spectral sequence, and the exactness follows
since $H^a(\P^1_k,\sG)=0$ for any perverse sheaf $\sG$ and $a<-1$ and since $\per R^a
\phi^\circ_! \sC =0$ for $a<0$ by the Weak Lefschetz theorem.

\medskip

(V) {\em $ \per R^1 \phi^\circ_! \sC  $ is smooth}.

\medskip

Let $E\subset \tilde X$ be the exceptional divisor of the blow-up $\tilde X\to X$. Then
$E\cong (A\cap X) \times \P^1_k$ and $\phi^E:=\phi|_E\colon E\to \P^1_k$ is given by the
projection, see~\cite[Expos\'e XVIII, 2.]{SGA7.2}.
We consider the exact sequence
\[
\per R^0\phi^E_* \sC|_E   \to \per R^1\phi^\circ_! \sC \to \per R^1\phi_* \sC.
\]
Then the perverse sheaf on the left is smooth by smooth base change and the one on the
right is smooth by Theorem~\ref{thm:glcllefschetz}.

\medskip

(VI) {\em It holds  $\ker (\beta) = \ker ( H^0_c(X^\circ) \to  H^0_c(Y^\circ)) $}.

\medskip

By (V) and proper base change the restriction map
\[
  H^{-1}(\P^1_k,\per R^{1} \phi^\circ_! \sC) \to    H^0_c(Y^\circ)
\]
in injective.

The combination of  (III), (IV)  and (VI) gives the isomorphism $\Upsilon_c$.
\end{proof}

\begin{rmk}\label{rmk:casebprim}
If a geometrically connected $X$ has a Lefschetz pencil of type (B),  Proposition~\ref{prop:primmidco} combined with
Proposition~\ref{prop:picardlefschetzform} yields an isomorphism
$$H_{\rm prim }\cong
\Lambda(-\frac{n}{2}  )^{\oplus r}$$
 where $r$ is the number of critical points, which is canonical up to signs on each
 summand on the right.
 For Lefschetz pencils of type (A) the complement in the direct sum decomposition in
 Proposition~\ref{prop:primmidco} is calculated in~\cite[Expos\'e XVIII,
 Th\'eor\`eme~5.7]{SGA7.2}.
\end{rmk}

\section{Monodromy filtration and weight filtration}\label{subsec:mofil}

\subsection{Monodromy filtration for perverse sheaves}\label{subsec:monfilperv}

Let $k$ be a field. Let $ X,Y$
be separated schemes of finite type over $k$. Let $\Lambda=\Z/\ell^n\Z$ or let
$\Lambda$ be an algebraic field extension of $\Q_\ell$. We use the notation of~\ref{app_subsec_unip}.

Consider a perverse sheaf $\sF \in D^\nil(X,\Lambda)$ with its nilpotent endomorphism $$N\colon \sF
\to \sF(-1)^\Iw.$$ Note that if $\Q_\ell\subset \Lambda$ there is a canonical isomorphism
between the
Iwasawa twist and the Tate twist
$\sF(-1)^\Iw\cong \sF(-1)$ by~\ref{subsec_iwasawa}, so in this case one can also write $N\colon \sF
\to \sF(-1)$.

We use the perverse t-structure from Section~\ref{sec:coholefcl}.
 Consider the two filtrations by perverse subsheaves of $\sF$
 \begin{align*}
   \fil_a \sF&=\ker [ N^{a+1}\colon \sF \to \sF(-a-1)^\Iw ], \\
   \fil^a \sF & = \im[ (N(1)^\Iw)^a \colon \sF(a)^\Iw\to \sF ]
 \end{align*}
 for $a\in \Z$.  We denote by  $\gr_a \sF, \gr^a \sF$ the corresponding graded subquotients.
 Set
 \[
   \gr^b_a \sF=\fil_a\sF\cap \fil^b \sF / ( \fil_{a-1} \sF + \fil^{b+1} \sF)\cap \fil_a\sF\cap \fil^b\sF.
 \]
 The {\em
   monodromy filtration} is the  increasing convolution
 \eq{eq:convmofil}{
   \fil^{\rm M}_a \sF= \sum_{b-c=a} \fil_b \sF \cap \fil^c \sF,
}
 filtration.
See~\cite[Section.~2.1]{Sai03}, where one finds a proof of the following lemma.

\begin{lem}\label{lem:monfiltr}
  \begin{itemize}
    \item[(i)] The canonical map
    \[
\bigoplus_{b-c=a} \gr^c_b \sF \xrightarrow{\sim} \gr^{\rm M}_a \sF
\]
is an isomorphism.
\item[(ii)] $N$ sends $\fil^{\rm M}_a \sF$ to $\fil^{\rm M}_{a-2}\sF(-a)^\Iw$ for all
  $a\in \Z$, and it induces an
  isomorphism
  \[
    \gr^{\rm M}_a \sF \xrightarrow{N^a} \gr^{\rm M}_{-a} \sF(-a)^\Iw
  \]
  for all $a\ge 0$.
  \item[(iii)]
The monodromy filtration is the only finite filtration  of $\sF$ by perverse subsheaves which
satisfies  $\rm (ii)$.
\end{itemize}
\end{lem}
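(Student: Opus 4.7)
The plan is to recognize that this lemma is a purely formal statement about a nilpotent endomorphism of an object in an abelian category, applied here to the abelian category of perverse sheaves on $X$ (\cite[Th\'eor\`eme~1.3.6]{BBD83}). The Iwasawa twist $(-k)^\Iw$ is an exact autoequivalence and serves only as bookkeeping for the Galois action; it does not affect the underlying categorical argument. Thus it suffices to prove: for any nilpotent endomorphism $N$ of an object $\sF$ in an abelian category, with kernel filtration $\fil_a\sF=\ker N^{a+1}$ and image filtration $\fil^a\sF=\im N^a$, the convolution filtration defined by~\eqref{eq:convmofil} satisfies (i), (ii) and (iii). The easy half of (ii), namely that $N$ maps $\fil^{\rm M}_a\sF$ into $\fil^{\rm M}_{a-2}\sF(-1)^\Iw$, is immediate from $N(\fil_b)\subset \fil_{b-1}(-1)^\Iw$ and $N(\fil^c)\subset \fil^{c+1}(-1)^\Iw$, which give $N(\fil_b\cap \fil^c)\subset \fil_{b-1}\cap\fil^{c+1}(-1)^\Iw$.

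I would prove existence together with (i) and the isomorphism part of (ii) by induction on the smallest integer $n\ge 0$ with $N^{n+1}=0$. The case $n=0$ is trivial. For the inductive step, introduce the top-weight primitive subobject
\[
  P_n \;=\; \ker\bigl(N\colon \sF\to \sF(-1)^\Iw\bigr)\;\cap\; \im\bigl(N^n\colon \sF(n)^\Iw\to \sF\bigr),
\]
together with its iterates $N^j P_n$ for $0\le j\le n$, which together assemble into a subobject $U\subset\sF$ on which $N$ acts as an $\mathfrak{sl}_2$-string of length $n+1$ in weights $n,n-2,\dots,-n$. The quotient $\sF/U$ has strictly smaller nilpotency index, so by the inductive hypothesis it carries its own monodromy filtration, which lifts back to $\sF$ and combines with the weight decomposition on $U$ to yield the full monodromy filtration; the bigrading in~(i) is read off directly from this construction, and the $\mathfrak{sl}_2$-structure on each graded piece gives the isomorphism in~(ii). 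For uniqueness~(iii), given any finite filtration $\fil'_\bullet \sF$ by perverse subsheaves satisfying the isomorphism property of~(ii), one recovers its primitive parts as kernels of explicit morphisms built from $N$ and $\fil'$, and these must coincide with the primitive pieces constructed above; inducting on the nilpotency index then forces $\fil'=\fil^{\rm M}$.

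The main obstacle is verifying the compatibility between the kernel filtration $\fil_a$ and the image filtration $\fil^c$: while $N$ individually lowers $\fil_\bullet$ and raises $\fil^\bullet$ in the evident manner, producing from their convolution a filtration whose graded pieces decompose bigradedly and on which $N$ acts as an $\mathfrak{sl}_2$-lowering operator is the essential content of the theorem. Since we work in an abstract abelian category without access to linear-algebraic devices such as Jordan decomposition or an ambient vector-space structure, the inductive splitting of the primitive pieces has to be verified by careful diagram chasing, ensuring that the splittings produced at successive inductive steps glue coherently into a filtration of all of $\sF$. For a complete treatment in the perverse-sheaf setting, one may refer to~\cite[Section~2.1]{Sai03}.
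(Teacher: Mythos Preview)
The paper does not supply its own proof; it simply defers to \cite[Section~2.1]{Sai03}, which you also cite at the end. So your final reference is the right one and matches the paper.

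However, the inductive argument you sketch has a concrete error. You define $P_n=\ker N\cap \im N^n$; since $P_n\subset\ker N$, one has $N^jP_n=0$ for all $j\ge 1$, so the ``iterates $N^jP_n$'' do not form an $\mathfrak{sl}_2$-string --- they are all zero. What you presumably intend is to choose a subobject $Q\subset\sF$ mapping isomorphically onto $\sF/\ker N^n$ under the quotient, and then take $U=\sum_{j=0}^n N^jQ$. But the existence of such a $Q$, i.e.\ a splitting of $0\to\ker N^n\to\sF\to\sF/\ker N^n\to 0$, is precisely the Jordan decomposition you yourself note is unavailable in a general abelian category. Without that splitting there is no canonical subobject $U$ on which $N$ acts as a full string, and the induction as written does not go through.

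The arguments in Deligne \cite[(1.6.1)]{Del80} and Saito \cite[Section~2.1]{Sai03} avoid any such splitting: they work directly with the two filtrations $\fil_\bullet$ and $\fil^\bullet$, use that $N$ induces an injection $\gr_a\hookrightarrow\gr_{a-1}(-1)^\Iw$ and a surjection $\gr^a\twoheadrightarrow\gr^{a+1}(-1)^\Iw$, and prove (i) and the isomorphism in (ii) by a Zassenhaus-type computation on the bigraded pieces $\gr^c_b$ together with induction on the nilpotency index applied to $\sF/\im N^n$ (a quotient, not a complement). Uniqueness (iii) is then a short separate argument. If you want to keep an inductive proof, replace your subobject $U$ by the passage to the quotient $\sF/\im N^n$ and track how the convolution filtration behaves under this quotient; otherwise, simply invoking \cite[Section~2.1]{Sai03} as the paper does is adequate.
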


\begin{defn}\label{def:mwproperty}
For $f\colon X\to Y$ a proper $k$-morphism  and for $\sF\in D^\nil(X,\Lambda)$ perverse we say that the  {\em monodromy
  property}  holds (for $\sF$ and $f$) if
\begin{equation}\label{eq:mwproper}
  \fil^{\rm M}_a\, \per R^if_*\sF = \im [  \per R^if_* \fil_a^{\rm M} \sF \to  \per
  R^i f_*\sF   ]
\end{equation}
 in the category of perverse sheaves for all $a,i\in \Z$.
\end{defn}

\begin{rmk}\label{rmk:kashiw}
A variant of a conjecture of Kashiwara~\cite{Kas98} says that for $k$ separably closed and
$\Lambda$ an algebraic extension of $\QQl$,  any   $\sF=\uppsi(\sG)\in  D^\nil(\psi^{-1}(0),\Lambda)\subset  D^\nil(X,\Lambda)$  produced by the unipotent nearby
cycle functor $\uppsi$ for a morphism $X\to \A^1_k$ and a semisimple perverse $\sG\in D^b_c(X,\Lambda)$
satisfies the monodromy property for any proper morphism $f\colon X\to Y$.  If
$\mathrm{ch}(k)=0$, one can deduce this conjecture from the work by T.\
Mochizuki~\cite{Moc07} or by applying the method of  Drinfeld~\cite{Dri01}.  If  $\mathrm{ch}(k)>0$ and $\sG$ is arithmetic in the sense of~\cite[Definition~1.4]{EK20},  one can deduce it from the work of~\cite{Laf02}  and   Gabber~\cite[Section~5]{BB93}.

\end{rmk}

\subsection{mw pure sheaves}

Assume now that $k$ is a finite field, that $\Lambda$ is an algebraic extension of $\Q_\ell$   and that $\sF\in
D^\nil(X,\Lambda)$ is mixed and perverse. Then
there is a canonical weight filtration $\fil^{\rm W}_a \sF \subset \sF$, see \cite[Th\'eor\`eme~5.3.5]{BBD83}.

\begin{defn}\label{def:mwpure}
We call the mixed perverse sheaf $\sF$ {\em monodromy-weight pure} (or {\em
  mw pure}) of weight $w\in \Z$ if $$\gr^{\rm M}_a \sF  \ {\rm is \ pure \ of \ weight} \  w+a \ {\rm for \   all} \
a \in \Z$$ i.e.\ if
$$\fil^{\rm M}_a \sF = \fil^{\rm W}_{a+w} \sF.$$
\end{defn}

Recall the following well-known result, see e.g.  \cite[Theorem~2.49]{FO22} for the case of $X={\rm Spec}(\F_q)$.

\begin{prop}\label{prop:mwpureabelian}
For fixed $w\in \Z$ the mw pure perverse sheaves $\sF\in D^\nil(X,\Lambda)$ of weight $w$
form an abelian  subcategory of all perverse sheaves closed
under extensions.
\end{prop}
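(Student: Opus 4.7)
The plan is to reduce mw purity to a condition on the weight-graded pieces of $\sF$ using the uniqueness of the monodromy filtration (Lemma~\ref{lem:monfiltr}(iii)), and then to finish via standard diagram chases that exploit the strict compatibility of the weight filtration with morphisms of mixed perverse sheaves \cite[Th\'eor\`eme~5.3.5]{BBD83}.

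First I would establish the reformulation that a mixed perverse sheaf $\sF \in D^\nil(X,\Lambda)$ is mw pure of weight $w$ if and only if $N^a\colon \gr^W_{w+a}\sF \to \gr^W_{w-a}\sF(-a)$ is an isomorphism for every $a\ge 0$. The forward direction is Lemma~\ref{lem:monfiltr}(ii) applied to $\fil^M_\bullet = \fil^W_{\bullet+w}$. For the converse, set $F_a=\fil^W_{a+w}\sF$: since $N\colon \sF\to \sF(-1)^\Iw$ is a morphism of mixed perverse sheaves it is strict with respect to the weight filtration, which gives $N(F_a)\subset F_{a-2}(-1)^\Iw$, and the hypothesis supplies the required Lefschetz isomorphism on the gradeds; Lemma~\ref{lem:monfiltr}(iii) then forces $F_\bullet=\fil^M_\bullet$.

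With this reformulation in hand, closure under extensions is a five-lemma argument: a short exact sequence $0\to \sF' \to \sF \to \sF''\to 0$ with $\sF',\sF''$ mw pure of weight $w$ produces short exact sequences on each weight-graded piece by strictness, and for each $a\ge 0$ the operator $N^a$ gives a morphism between the weight-$(w+a)$ sequence and the $(-a)$-twisted weight-$(w-a)$ sequence whose outer vertical arrows are isomorphisms by hypothesis; the five lemma promotes the middle arrow to an isomorphism, so $\sF$ is mw pure of weight $w$.

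For closure under kernels and cokernels inside the ambient category, I would take a morphism $f\colon \sF\to \sG$ in the subcategory, set $K=\ker f$, $I=\im f$, $C=\sG/I$, and consider the two short exact sequences $0\to K\to \sF\to I\to 0$ and $0\to I\to \sG\to C\to 0$. Strictness again produces the corresponding short exact sequences on weight-graded pieces, and $N^a$ induces morphisms to the $(-a)$-twisted weight-$(w-a)$ versions whose middle verticals (the $N^a$ on $\sF$, resp.~on $\sG$) are isomorphisms by assumption. The snake lemma applied to the first sequence yields that $N^a_I$ is surjective and $N^a_K$ injective; applied to the second it yields that $N^a_I$ is injective and $N^a_C$ surjective. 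Combining, $N^a_I$ is an isomorphism, after which the six-term exact sequences identify $\mathrm{coker}\,N^a_K$ and $\ker N^a_C$ with $0$, so $N^a_K$ and $N^a_C$ are isomorphisms as well. By the reformulation, $K$, $I$, $C$ are mw pure of weight $w$. The one nontrivial ingredient throughout is the reformulation itself, whose key input is strictness of $N$ for the weight filtration; once that is in place everything else is a formal snake/five lemma chase.
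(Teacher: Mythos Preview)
Your proof is correct and rests on the same input as the paper's---the strictness of the weight filtration from \cite[Th\'eor\`eme~5.3.5]{BBD83}---but you package the argument differently. The paper observes directly that for $\sF,\sG$ mw pure of weight $w$ the weight and monodromy filtrations coincide (up to shift), so any morphism $\sF\to\sG$, being weight-strict, is automatically monodromy-strict; it then invokes the general fact that for a monodromy-strict morphism the induced filtrations on kernel and cokernel are again the monodromy filtrations, and argues similarly for extensions. Your route via the Lefschetz-type reformulation ($N^a$ an isomorphism on weight-gradeds) followed by explicit five- and snake-lemma chases is precisely an unfolding of that general fact, and has the advantage of being self-contained; the paper's version is terser but leaves that step to the reader.
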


\begin{proof}
  Mixed perverse sheaves form an abelian category for which every morphism is strict with
  respect to the weight filtration~\cite[Th\'eor\`eme~5.3.5]{BBD83}. On the other hand for
  a morphism of perverse sheaves $\sF\to \sG$ in $ D^\nil(X,\Lambda)$ which is strict with
  respect to the monodromy filtration, the  monodromy filtration on the
    kernel and cokernel is the induced subspace and quotient filtration. This shows that the mw pure
  perverse sheaves in $ D^\nil(X,\Lambda)$ form an abelian subcategory. Similarly, one
  sees that this category is closed under extensions.
\end{proof}

 Assume in the following that $f\colon X\to Y$ is a proper $k$-morphism.

\begin{prop}\label{prop:degmonspecse}
  If $\sF$ is mw pure
 the monodromy spectral sequence
    \[
E_1^{p ,q} =  \per R^{p+q} f_* \gr^{\rm M}_{-p} \sF  \Rightarrow  \per R^{p+q} f_*  \sF
\]
degenerates at the $E_2$-page.
\end{prop}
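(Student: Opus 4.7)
The plan is a standard weight-degeneration argument on the monodromy spectral sequence. First, by the mw purity hypothesis (Definition~\ref{def:mwpure}), each graded piece $\gr^{\rm M}_{-p}\sF$ is pure of weight $w-p$. Since $f$ is proper, I would then invoke Deligne's theorem on proper direct images of pure perverse sheaves \cite[Th\'eor\`eme~5.4.1]{BBD83} to conclude that
\[
E_1^{p,q} = \per R^{p+q} f_* \gr^{\rm M}_{-p} \sF
\]
is pure of weight $(w-p) + (p+q) = w+q$.

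Next, I would observe that $d_1 \colon E_1^{p,q} \to E_1^{p+1,q}$ connects two pure perverse sheaves of the same weight $w+q$. Kernels, images and cokernels of morphisms between pure perverse sheaves of a fixed weight are again pure of that weight, as a consequence of strictness of morphisms of mixed perverse sheaves with respect to the weight filtration \cite[Th\'eor\`eme~5.3.5]{BBD83}. Hence $E_2^{p,q}$ is again pure of weight $w+q$, and the same conclusion propagates inductively to each $E_r^{p,q}$ for $r \geq 1$, so long as every preceding differential connects terms of equal weight.

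For $r \geq 2$, the differential $d_r \colon E_r^{p,q} \to E_r^{p+r, q-r+1}$ has a pure source of weight $w+q$ and a pure target of weight $w+q-r+1$; these weights differ by $r-1 \geq 1$. Vanishing of morphisms between pure perverse sheaves of distinct weights \cite[Corollaire~5.3.2]{BBD83} then forces $d_r = 0$, so $E_2 = E_\infty$ and the spectral sequence degenerates at the $E_2$-page.

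The only substantive ingredient beyond formal weight bookkeeping is Deligne's proper direct image theorem in the first step; once the $E_1$-terms are identified as pure of weight $w+q$, the remainder of the argument is automatic and I do not foresee any serious obstacle.
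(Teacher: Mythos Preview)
Your proof is correct and follows essentially the same approach as the paper: the authors simply note that $E_1^{p,q}$ is pure of weight $w+q$ by \cite[Corollaire~5.4.2]{BBD83} and that the differentials $d_r$ for $r\ge 2$ then vanish by weight reasons. You spell out in slightly more detail why $E_2^{p,q}$ remains pure of weight $w+q$, but this is the same argument.
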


\begin{proof}
The perverse sheaf  $\per R^{p+q} f_* \gr^{\rm M}_{-p} \sF$ is pure of weight $w+q$ if  $\sF$ is
mw pure of weight $w$, see~\cite[Corollaire~5.4.2]{BBD83}. So the differentials $d_2,d_3,\ldots$  vanish by
weight reasons.
\end{proof}

\begin{prop}\label{prop:critpwprop}
 If $\sF$ is mw pure of weight $w$ the following are equivalent:
  \begin{itemize}
  \item[(i)] The monodromy property holds for $\sF$ and $f$.
\item[(ii)]  $  \per R^{i}f_*  \sF$ is
  mw pure of weight $w+i$ for all $i$.
\item[(iii)] The map
  \[
N^a\colon E_2^{-a,i+a} \to E_2^{a,i-a}(-a)
\]
is an isomorphism for all $a\ge 0$ and $i\in \Z$.
  \end{itemize}
\end{prop}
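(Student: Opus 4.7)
Set $\sH := \per R^i f_* \sF$ and consider on it two filtrations: the abutment filtration of the monodromy spectral sequence
\[
G_a := \im[\per R^i f_* \fil^{\rm M}_a \sF \to \sH],
\]
and the monodromy filtration $M_a := \fil^{\rm M}_a \sH$. Condition (i) is precisely the equality $G_\bullet = M_\bullet$ for every $i\in \Z$. Both filtrations are by perverse subsheaves of $\sH$, and both are preserved by $N$ in the sense of Lemma~\ref{lem:monfiltr}(ii): for $M_\bullet$ by that lemma, and for $G_\bullet$ by applying $\per R^i f_*$ to the corresponding property of $(\sF,\fil^{\rm M})$.

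The first step is to identify $G_\bullet$ with a shift of the weight filtration on $\sH$. Since $\sF$ is mw pure of weight $w$, the perverse sheaf $E_1^{-a,i+a} = \per R^i f_* \gr^{\rm M}_a \sF$ is pure of weight $w+i+a$ by \cite[Corollaire~5.4.2]{BBD83}. Proposition~\ref{prop:degmonspecse} gives $E_2 = E_\infty$, and the abutment isomorphism identifies $\gr^G_a := G_a/G_{a-1}$ with $E_2^{-a,i+a}$, a subquotient of $E_1^{-a,i+a}$ and therefore still pure of weight $w+i+a$. Uniqueness of the weight filtration on the mixed perverse sheaf $\sH$ then forces
\[
G_a = \fil^{\rm W}_{a+w+i}\sH.
\]
Since by Definition~\ref{def:mwpure} the condition that $\sH$ is mw pure of weight $w+i$ is exactly $M_a = \fil^{\rm W}_{a+w+i}\sH$, the equivalence (i)$\Leftrightarrow$(ii) follows immediately.

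For (i)$\Leftrightarrow$(iii) the plan is to apply the uniqueness characterization Lemma~\ref{lem:monfiltr}(iii) to $G_\bullet$. Under the identification $\gr^G_a = E_2^{-a,i+a}$, the map $N^a\colon E_2^{-a,i+a} \to E_2^{a,i-a}(-a)$ in (iii) coincides with the associated graded of the $N$-action on $\sH$. If (i) holds, then $G_\bullet = M_\bullet$ and (iii) follows from the isomorphism clause in Lemma~\ref{lem:monfiltr}(ii). Conversely, if (iii) holds, then $G_\bullet$ is a finite filtration of $\sH$ by perverse subsheaves compatible with $N$ for which $N^a$ induces an isomorphism $\gr^G_a \xrightarrow{\sim} \gr^G_{-a}(-a)$ for every $a\ge 0$; the uniqueness clause Lemma~\ref{lem:monfiltr}(iii) then forces $G_\bullet = M_\bullet$, i.e.\ (i).

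The delicate point, and essentially the only compatibility that is not purely formal, is the identification of the $E_2$-level operator $N^a$ of (iii) with the associated graded of $N$ on the abutment $\sH$; this is built into the functoriality of the monodromy spectral sequence in the pair $(\sF,N)$, but really depends on $E_2 = E_\infty$, which rules out ambiguities from higher differentials.
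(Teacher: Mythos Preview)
Your proof is correct and follows essentially the same approach as the paper. Both arguments hinge on the identification of the abutment filtration $G_a$ with the shifted weight filtration (using purity of $E_1$ and $E_2=E_\infty$), and then invoke Lemma~\ref{lem:monfiltr}(ii),(iii) to link this with condition~(iii); the only cosmetic difference is that the paper organizes the equivalences as a cycle (i)$\Rightarrow$(ii)$\Rightarrow$(iii)$\Rightarrow$(i), whereas you prove (i)$\Leftrightarrow$(ii) and (i)$\Leftrightarrow$(iii) directly after first establishing $G_a=\fil^{\rm W}_{a+w+i}\sH$.
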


\begin{proof}
  (i) $\Rightarrow$ (ii): By  \cite[Corollaire~5.4.2]{BBD83}   the filtration on the right of~\eqref{eq:mwproper} is
  the shifted weight filtration. \\
  (ii) $\Rightarrow$ (iii): As in the proof of Proposition~\ref{prop:degmonspecse} the
  perverse sheaf $ E_2^{-a,i+a}$ is the weight $w+i+a$ graded piece of  $  \per R^{i}f_*
  \sF$, so if the weight filtration agrees up to shift with the monodromy filtration on  $
  \per R^{i}f_*  \sF$ we obtain part (iii) by Lemma~\ref{lem:monfiltr}(ii).\\
  (iii)  $\Rightarrow$ (i): Similarly, this follows from Lemma~\ref{lem:monfiltr}(iii).
\end{proof}

\section{Rapoport-Zink sheaves}\label{sec:rapzi}

The study of the nearby cycle functor for the constant sheaf on a semistable scheme
originated from \cite[Expos\'e~I]{SGA7.1}. In the complex analytic framework a more precise calculation was
envisioned in \cite{Ste76}, which was made precise in the \'etale setting by
\cite[Abschnitt~2]{RZ82}. However the latter approach is not formulated in terms of perverse
sheaves, which was done  later  in \cite[Theorem~3.3]{Sai90} in the setting of $\mathcal D$-modules, and by
\cite{Sai03},  \cite{Car12} in the \'etale setting.
In this section we give a summary of the theory based on the duality of the nearby cycle functor.

We make systematic use of the notion of Iwasawa twist developed by Beilinson, which is
necessary in order to give a
coordinate free description, see Appendix~\ref{sec:appendix}.

\subsection{Constant sheaf}
\label{subsec:rzconst}

Let $k$ be a field. Let $f\colon X\to \Spec k$
be a simple normal crossings variety of dimension $n$. Let $\Lambda$ be $\Z/\ell^\nu\Z$ or
an algebraic extension of $\Q_\ell$.   Set  $\varpi=f^! \Lambda$. We use the notation
introduced in~\ref{subsec:morselem}.
We consider the perverse t-structure on $D^b_c(X,\Lambda )$ as in~\ref{subsec:pltheocl}.

For a  perverse sheaf $\sF\in D^b_c(X,\Lambda)$ we denote by $\fil_{\rm S}^a \sF$ the
largest perverse subsheaf of $\sF$ supported in codimension $\ge a$. Set $\sC =
\Lambda_X[n]$. Note that $\sC$ is perverse as $f$ is a local complete intersection  (lci) \cite[Lemma~6.5]{KW01}.
 In this
subsection we recall the description of its support filtration $\fil_{\rm S} \sC$ from
\cite[Section~1.1]{Sai03}. We refer to Section~\ref{subsec:morselem} for the notation used
below. In particular,   $X^{(a)}$ denotes the union of all codimension $\ge
a$ strata in $X$ and for a set of irreducible components $I$ of $X$ we denote the
intersection of the irreducible components in $I$ by  $X(I)$. These  are closed subschemes of $X$.

\smallskip

 Let $\pi \colon \tilde X\to X$ be the normalization and set
 $$ \Lambda^{(0)}_X= \pi_*
 \Lambda_{\tilde X} \ {\rm and} \ \Lambda^{(a)}_X=\bigwedge^{a+1}_\Lambda
\Lambda^{(0)} \  {\rm for} \  a>0.$$
 Note that if one chooses an ordering $X(1),\ldots ,X(r)$ of the
irreducible components of $X$ then
\[
\Lambda^{(a)}_X \cong \bigoplus_{1\le i_0< \cdots < i_a\le r} \Lambda_{X({i_0}) \cap \cdots \cap
  X({i_a})} ,
\]
so $\Lambda^{(a)}_X[n-a]  $ is perverse.

 The canonical map $\Lambda_X \to  \pi_* \pi^*
\Lambda_X = \Lambda^{(0)}_X$ defines a Koszul complex
\[
 \mathsf{Ko}_X =[  \Lambda^{(0)}_X \to  \Lambda^{(1)}_X \to
    \Lambda^{(2)}_X \to \cdots ],
  \]
  where  $\Lambda^{(0)}_X$ is placed  in degree $0$, which resolves $\Lambda_X$,
  i.e.\ we have a quasi-isomorphism \[\sC \xrightarrow{\sim}\mathsf{Ko}_X[n].\]
  We consider the
 truncation filtration
\[
\fil^a  \mathsf{Ko}_X  = [ 0\to  \Lambda^{(a)}_X  \to  \Lambda^{(a+1)}_X \to
  \cdots ].
\]
By descending induction on $a$ and the exact triangle
\ga{8.1}{
\fil^{a+1}  \mathsf{Ko}_X  \to \fil^{a}  \mathsf{Ko}_X \to \Lambda^{(a)}_X[-a]  \to \fil^{a+1}  \mathsf{Ko}_X [1]
}
we deduce
\begin{lem}\label{lem:constko}
  \begin{itemize}
    \item[(i)]
  $ \fil^{a} \mathsf{Ko}_X[n]$ is perverse for all $a\in \mathbb Z$.
\item[(ii)] The isomorphism $\sC\xrightarrow{\sim} \mathsf{Ko}_X[n]$ induces an isomorphism of perverse subsheaves
  \[ \fil_{\rm S}^a \sC \cong\fil^a \mathsf{Ko}_X [n] \] for all $a\in \Z$.
\item[(iii)]
   There is a canonical isomorphism
\[
 \kappa \colon \Lambda^{(a)}_X[n-a] \xrightarrow{\sim}  \gr^a_{\rm S}\sC
\]
which induces a canonical perfect pairing
\[
\mathsf c^a\colon \gr_{\rm S}^a \sC \otimes \gr_{\rm S}^a \sC(n-a)  \to \varpi.
\]
\end{itemize}
\end{lem}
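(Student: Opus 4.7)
My plan is to prove the three parts in order, using descending induction on $a$ for parts (i) and (ii), and Verdier duality on each component for the pairing in (iii). For part~(i), I apply the triangle~(8.1) shifted by $[n]$,
\[
\fil^{a+1}\mathsf{Ko}_X[n] \to \fil^a\mathsf{Ko}_X[n] \to \Lambda^{(a)}_X[n-a]\xrightarrow{+1}.
\]
For $a>n$, $\fil^a\mathsf{Ko}_X=0$, so the outer terms vanish. The right-hand term is perverse because $\Lambda^{(a)}_X[n-a] \cong \bigoplus_{|I|=a+1}(\iota_I)_*\Lambda_{X(I)}[n-a]$ is a direct sum of shifted constant sheaves on closed subschemes $X(I)$ that are smooth and equidimensional of dimension $n-a$. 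The inductive step then presents $\fil^a\mathsf{Ko}_X[n]$ as an extension of perverse sheaves, which is again perverse.

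For part~(ii) I check both inclusions. The inclusion $\fil^a\mathsf{Ko}_X[n]\subseteq\fil^a_{\rm S}\sC$ is immediate, since $\fil^a\mathsf{Ko}_X[n]$ has a filtration whose graded pieces $\Lambda^{(b)}_X[n-b]$ with $b\ge a$ are supported in codimension $\ge a$. For the reverse inclusion I show that $\sC/\fil^a\mathsf{Ko}_X[n]$ admits no nonzero perverse subsheaf supported in codimension $\ge a$. This quotient carries a finite filtration by perverse subsheaves with graded pieces $\Lambda^{(b)}_X[n-b]$ for $0\le b<a$, each a direct sum of simple perverse sheaves whose supports are irreducible components of $X(I)$ with $|I|=b+1$, hence of codimension exactly $b<a$. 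Therefore every composition factor of $\sC/\fil^a\mathsf{Ko}_X[n]$—and hence of any perverse subsheaf of it—has support of codimension $<a$; but a perverse sheaf supported in codimension $\ge a$ has all composition factors of support in codimension $\ge a$, forcing such a subsheaf to be zero.

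For part~(iii), the isomorphism $\kappa$ is tautological from the truncation filtration: $\fil^a\mathsf{Ko}_X/\fil^{a+1}\mathsf{Ko}_X\cong\Lambda^{(a)}_X[-a]$, so shifting by $[n]$ and invoking~(ii) yields $\kappa\colon\Lambda^{(a)}_X[n-a]\xrightarrow{\sim}\gr^a_{\rm S}\sC$. The canonical perfect pairing then follows from Verdier duality summand by summand: each closed immersion $\iota_I\colon X(I)\hookrightarrow X$ is proper and $X(I)$ is smooth of dimension $n-a$, so
\[
\D_X\bigl((\iota_I)_*\Lambda_{X(I)}[n-a]\bigr)\cong(\iota_I)_*\Lambda_{X(I)}[n-a](n-a).
\]
Summing over $I$ with $|I|=a+1$ gives a canonical isomorphism $\D(\gr^a_{\rm S}\sC)\cong\gr^a_{\rm S}\sC(n-a)$, which by adjunction is the same as the claimed perfect pairing $\mathsf c^a$. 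The main obstacle is part~(ii), where one must carefully track the composition factors of the Koszul graded pieces in order to identify the ``formal'' truncation filtration with the intrinsic support filtration.
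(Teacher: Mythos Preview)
Your proof is correct and follows exactly the approach the paper indicates: descending induction on $a$ via the exact triangle~(8.1), with the perversity of $\Lambda^{(a)}_X[n-a]$ coming from the smoothness of the $X(I)$. The paper states the lemma as an immediate consequence of this triangle without spelling out the details; your treatment of part~(ii) via the composition factors of the quotient is the natural way to fill in the identification of the truncation filtration with the support filtration, and your part~(iii) is the intended argument.
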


Following the sign convention of M.~Saito in~\cite[5.4]{Sai88} one should incorporate the sign
$(-1)^{b(b-1)/2}$  in the perfect pairing
\[
 \Lambda_Y[b] \otimes \Lambda_Y[b](b) \to g^! \Lambda
\]
for a smooth scheme $g\colon Y\to \Spec k$ of dimension $b$.
The reason is that we have a shift
$-1$ in the unipotent nearby cycle functor $\uppsi$, so  we need this sign for
Proposition~\ref{prop:RZconstr} below to hold.

\begin{lem} \label{lem:ext}
For any point $x\in X^{(a+1)}$ with $a\ge 0$  it holds
\ga{}{ Hom_{\Lambda}( \Lambda_X^{(a)}[n-a], \sC)_{\overline{ x}}=0.\notag}

\end{lem}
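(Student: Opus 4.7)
The plan is to interpret $Hom_\Lambda$ as the sheaf $\mathcal Hom$, so the statement becomes $\mathcal Ext^a_\Lambda(\Lambda^{(a)}_X,\Lambda_X)_{\overline x}=0$. Passing to the strict henselization at $\overline x$, let $S$ denote the set of irreducible components of $X$ through $x$; the hypothesis $x\in X^{(a+1)}$ says $|S|\ge a+2$. The key local computation is the following: for $I,J\subset S$ with $|I|=a+1$ and $|J|=p+1$, both $X(I)$ and $X(J)$ are smooth over $k$ and $X(I\cup J)\hookrightarrow X(J)$ is a regular closed immersion of smooth $k$-varieties of codimension $|I\setminus J|$, so adjunction for closed immersions combined with Gabber's absolute purity yields
\[
R\mathcal Hom(\Lambda_{X(I)},\Lambda_{X(J)}) \cong \Lambda_{X(I\cup J)}(-|I\setminus J|)[-2|I\setminus J|].
\]

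Using Lemma~\ref{lem:constko}(ii) to resolve $\Lambda_X$ by the Koszul complex $\mathsf{Ko}_X$, I would invoke the spectral sequence of the stupid filtration $\fil^\bullet\mathsf{Ko}_X$,
\[
E_1^{p,q}=R^q\mathcal Hom(\Lambda^{(a)}_X,\Lambda^{(p)}_X)_{\overline x} \;\Longrightarrow\; R^{p+q}\mathcal Hom(\Lambda^{(a)}_X,\Lambda_X)_{\overline x}.
\]
The purity formula shows that each summand of $E_1^{p,q}$ is concentrated in the even degree $q=2|I\setminus J|$. On the diagonal $p+q=a$ the inequality $|I\cap J|\le |J|$, which unwinds to $a+1-q/2\le a+1-q$, forces $q\le 0$; hence only $q=0$ contributes, and then $I=J$. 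Thus the only a priori non-trivial term on the diagonal is
\[
E_1^{a,0}\cong \bigoplus_{\substack{I\subset S\\ |I|=a+1}}\Lambda\cdot 1_I.
\]

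It remains to verify that $d_1\colon E_1^{a,0}\to E_1^{a+1,0}$ is injective. The Koszul differential $\Lambda^{(a)}_X\to\Lambda^{(a+1)}_X$ is the alternating sum of restrictions to codimension-one intersections, so after applying $\mathcal Hom(\Lambda^{(a)}_X,-)$ the element $1_I$ maps to $\sum_{k\in S\setminus I}\pm 1_{I,I\cup\{k\}}$, living in the summand of $E_1^{a+1,0}$ labeled by pairs $(I, I\cup\{k\})$. Since distinct indices $I$ contribute to disjoint families of target summands, injectivity reduces to the fact that $S\setminus I\ne\varnothing$ for every admissible $I$, which holds because $|S|\ge a+2>|I|$. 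Hence $E_\infty^{a,0}=0$, proving the vanishing. The main technical point is arranging the purity computation for a \emph{pair} of smooth subschemes of the singular ambient scheme $X$; reducing everything to the smooth intersection $X(I\cup J)$ before invoking absolute purity is the clean way around this.
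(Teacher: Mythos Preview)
Your proof is correct, and it takes a genuinely different route from the paper's. The paper argues by reducing to the two–step quotient $\fil^a_{\rm S}\sC/\fil^{a+2}_{\rm S}\sC$: any local map $\sigma\colon \Lambda_X^{(a)}[n-a]\to \sC$ must, after projecting to $\Lambda_X^{(a)}[n-a]$, be given by idempotents on the summands $\Lambda_{X(I)}[n-a]$, and on any summand in the image the Koszul extension would have to split; since the Koszul boundary is nonzero on every $\Lambda_{X(I)}$ at a point of $X^{(a+1)}$, the image is zero and $\sigma=0$. You instead run the hypercohomology spectral sequence of the full Koszul resolution, compute all $E_1^{p,q}$ on the line $p+q=a$ by purity (reducing to $R\mathcal{Hom}$ between constant sheaves on the smooth $X(J)$), and verify directly that $d_1$ is injective on $E_1^{a,0}$. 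Both arguments ultimately rest on the same combinatorial fact that $|S|>a+1$ forces $S\setminus I\ne\varnothing$ for every $I$ with $|I|=a+1$; the paper packages this via perverse simplicity and extension classes, while your approach is a cleaner mechanical computation that also makes the purity input explicit and bypasses any appeal to the perverse $t$-structure.
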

\begin{proof}
We reduce to the case when $\Lambda = \F_\ell$.   We consider the exact sequence of perverse sheaves
\ga{}{0\to \Lambda_X^{(a+1)}[n-a-1]\to {\rm fil}^a_{\rm S} \sC/{\rm fil}^{a+2}_{\rm S}  \sC\xrightarrow{\alpha}  \Lambda_X^{(a)}[n-a]\to 0. \notag}
As $End(\Lambda_X^{(a)}[n-a])=\Lambda_X^{(a)}$,
a    homomorphism  $\sigma \colon  \Lambda_X^{(a)}[n-a]\to {\rm fil}^a\sC/{\rm fil}^{a+2}
\sC$ defined
\'etale locally around  $x$  forces the boundary   map $\Lambda_X^{(a)}[n-a]\to \Lambda_X^{(a+1)}[n-a]$ to vanish on the image of $\alpha \circ \sigma$.
 This  image is a direct sum of $\Lambda_{X(I)}[n-a]$ with $\# I= a+1$. Therefore, on those
 summands, the boundary map has to vanish, but from its description in the Koszul complex
 it does not vanish on any summand, thus there is no such summand and $\sigma$ can only be zero.
On the other hand,
\ga{}{ Hom_{\Lambda}( \Lambda_X^{(a)}[n-a], {\rm fil}^{a+2}_{\rm S} \sC)= 0, \ Hom_{\Lambda}(
  \Lambda_X^{(a)}[n-a], \sC/{\rm fil}^a_{\rm S} \sC)=0,\notag}
as $ \Lambda_X^{(a)}[n-a]$ is a simple perverse sheaf with support of dimension $n-a$ and
there are no such subquotients in the perverse sheaves on the right sides.
Therefore,
\ga{}{ Hom_{\Lambda}( \Lambda_X^{(a)}[n-a] ,  \sC)\cong Hom_{\Lambda}( \Lambda_X^{(a)}[n-a], {\rm fil}^a\sC/{\rm fil}^{a+2} \sC).\notag}
This finishes the proof.
\end{proof}
\begin{prop}\label{prop:endextconst}
 There are canonical isomorphisms of \'etale sheaves on $X$
  \[
    \mathit{Hom}_\Lambda (\fil^a_{\rm S} \sC , \fil^b_{\rm S} \sC ) \cong \begin{cases}
   0 &   \text{ for } b > a \ge 0\\
        \Lambda_{X^{(a)}}  & \text{ for } a\ge b \ge 0
      \end{cases}
  \]
and
  \[
    \mathit{Ext}^1_\Lambda  ( \Lambda_{X(I)}[n-a ] ,  \Lambda_{X(J)}[n-b] ) \cong \begin{cases}
   0 &   \text{ for } |a-b|\ne 1 \\
   \Lambda_{X(J)}  & \text{ for } a+1= b, \ a \ge 0 \\
    \Lambda_{X(I)}(-1)  & \text{ for } a= b+1, \ b\ge 0
 \end{cases}
\]
for $I$ and $J$ sets of irreducible components of $X$ and  $a= \# I -1$, $b=\#J-1$.
\end{prop}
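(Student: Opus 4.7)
The plan is to use the Koszul presentation $\fil^a_{\rm S}\sC \cong \fil^a\mathsf{Ko}_X[n]$ of Lemma~\ref{lem:constko}(ii) together with Gabber's absolute purity theorem to reduce both formulas to a single building-block calculation; after strict Henselization we may work in the local snc model of~\eqref{sec:sncvar_eq1}.

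The building block is the identity, valid for arbitrary subsets $I,J$ of irreducible components,
\[
R\mathit{Hom}(\Lambda_{X(I)},\Lambda_{X(J)}) \;\cong\; \Lambda_{X(I\cup J)}(-c)[-2c], \qquad c=|I\setminus J|.
\]
To prove it I would write $i_J\colon X(J)\hookrightarrow X$ and use the adjunction $R\mathit{Hom}_X(F,i_{J,*}G)\cong i_{J,*}R\mathit{Hom}_{X(J)}(i_J^*F,G)$ to transfer the computation to $X(J)$. The snc structure gives $i_J^* i_{I,*}\Lambda_{X(I)} \cong \Lambda_{X(I\cup J)}$, since the schematic intersection of $X(I)$ and $X(J)$ is the smooth subscheme $X(I\cup J)$; Gabber's absolute purity applied to the smooth closed immersion $X(I\cup J)\hookrightarrow X(J)$ of codimension $c$ then finishes the calculation. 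Incorporating the shifts $[n-a]$ and $[n-b]$, the $\mathit{Ext}^1$ sheaf of the proposition is non-zero precisely when $2c=a-b+1$, forcing either $c=0$, $b=a+1$, $I\subset J$ (yielding $\Lambda_{X(J)}$), or $c=1$, $a=b+1$, $J\subset I$ with $|I\setminus J|=1$ (yielding $\Lambda_{X(I)}(-1)$); all other pairs $(a,b)$ are ruled out by parity or by the constraint $|I\cap J|\le|J|$.

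For the Hom formula I would argue by induction on $a$ using the exact triangle $\fil^{a+1}_{\rm S}\sC \to \fil^a_{\rm S}\sC \to \Lambda^{(a)}_X[n-a]$ from Lemma~\ref{lem:constko}. Applying $R\mathit{Hom}(-,\fil^b_{\rm S}\sC)$ yields a long exact sequence; the crucial input is that $\mathit{Hom}(\Lambda^{(a)}_X[n-a],\fil^b_{\rm S}\sC)$ vanishes stalkwise at every point of $X^{(a+1)}$, which follows from Lemma~\ref{lem:ext} applied to $\sC$ together with left-exactness of the inclusion $\fil^b_{\rm S}\sC\hookrightarrow\sC$. For $a\ge b$ the inclusion $\fil^a_{\rm S}\sC\subset\fil^b_{\rm S}\sC$ furnishes a canonical global section of the Hom sheaf, which vanishes off $X^{(a)}$ and generates the stalks of $\Lambda_{X^{(a)}}$ on each stratum of $X^{(a)}$; this identifies the Hom sheaf with $\Lambda_{X^{(a)}}$. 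For $b>a$ no such canonical section exists and the same inductive analysis shows the Hom sheaf vanishes.

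The main technical difficulty lies in the inductive bookkeeping: one must verify simultaneously that the connecting homomorphisms introduce no spurious contributions and that the local stratum-wise pieces, which involve potentially many irreducible intersections $X(I)$ with $|I|=a+1$, assemble via the Koszul differentials into the \emph{constant} sheaf $\Lambda_{X^{(a)}}$ rather than some twisted or enlarged variant.
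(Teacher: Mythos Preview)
Your proposal is correct and follows essentially the same approach as the paper: absolute purity for the $\mathit{Ext}^1$ computation (your building-block formula $R\mathit{Hom}(\Lambda_{X(I)},\Lambda_{X(J)})\cong\Lambda_{X(I\cup J)}(-c)[-2c]$ is exactly the purity input the paper uses, just packaged more explicitly), and Lemma~\ref{lem:ext} together with descending induction along the triangle $\fil^{a+1}\to\fil^a\to\Lambda^{(a)}_X[n-a]$ for the $\mathit{Hom}$ sheaf.

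One small point: your treatment of the case $b>a$ (``the same inductive analysis shows the Hom sheaf vanishes'') does not quite close on its own---at the step $a=b-1$ the induction only gives an injection into $\mathit{Hom}(\fil^b_{\rm S}\sC,\fil^b_{\rm S}\sC)=\Lambda_{X^{(b)}}$, not vanishing. The paper handles $b>a$ by a direct degree argument instead: since the Koszul resolution is exact, each $\fil^a_{\rm S}\sC$ is concentrated in the single cohomological degree $a-n$, so $\mathcal H^0 R\mathit{Hom}(\fil^a_{\rm S}\sC,\fil^b_{\rm S}\sC)$ is an $\mathit{Ext}^{a-b}$ of honest sheaves, hence zero for $a<b$. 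This observation (or an equivalent one) is what you need to plug in.
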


\begin{proof}
We prove the first case in the first isomorphism.
The  complex  $\fil^a_{\rm S} \sC$ is supported  on $X^{(a)}$ in degree $a-n$ by the exactness of the
Koszul complex above and therefore $\mathit{Hom}_\Lambda (\fil^a_{\rm S} \sC , \fil^b_{\rm S} \sC )=0$ for $a<b$.

For the second case in the first isomorphism one has to show that the canonical map
\[
 \Lambda_{X^{(a)}} \to \mathit{Hom}_\Lambda (\fil^a_{\rm S} \sC , \fil^a_{\rm S} \sC )
\]
is an isomorphism. This is clear away from $X^{(a+1)}$ as there  $\fil^a_{\rm S} \sC  =
\Lambda^{(a)}_X$. We prove isomorphy at the stalk of a fixed geometric point $\overline x$
over $x\in X$   by descending induction on $a$ using the exact sequence of sheaves
\[
  \mathit{Hom}_\Lambda (\Lambda^{(a)}_X[n-a] ,\fil^a_{\rm S} \sC ) \to
  \mathit{Hom}_\Lambda ( \fil^a_{\rm S} \sC ,\fil^a_{\rm S} \sC ) \to
  \mathit{Hom}_\Lambda (\fil^{a+1}_{\rm S} \sC ,\fil^a_{\rm S} \sC )
\]
induced by~\eqref{8.1}.
For $x\in X^{(a+1)}$ the stalk at $\overline x$ of the sheaf on the left vanishes by Lemma~\ref{lem:ext} and
the canonical map
\[
  \Lambda\to
\mathit{Hom}_\Lambda (\fil^{a+1}_{\rm S} \sC ,\fil^a_{\rm S} \sC )_{\overline x} =
\mathit{Hom}_\Lambda (\fil^{a+1}_{\rm S} \sC ,\fil^{a+1}_{\rm S} \sC )_{\overline x}
\]
is an isomorphism by the induction assumption, so $\Lambda \to  \mathit{Hom}_\Lambda (
\fil^a_{\rm S} \sC ,\fil^a_{\rm S} \sC )_{\overline x}$ is an isomorphism.

\smallskip

The second isomorphism is immediate if $I=J$, so assume $I\ne J$.
Write $I=K\sqcup I'$ and $J=K\sqcup J'$ and observe
\ga{}{ 1+a-b=1+ \# I'- \# J'  \le 2 \,  \# I' = 2 \, {\rm codim}_{X_J}(X(I)\cap X(J)).  \notag}
If the inequality is strict, then
  $\mathit{Ext}^1_{\Lambda} (\Lambda_{X(I)}[n-a], \Lambda_{X(J)}[n-b])= \mathcal
  H^{1+a-b}_{X(I)\cap X(J)}(\Lambda_{X(J)})=0 $ by purity.
 Else
 $ 1=\# I'+\# J'$.
 If $\# I'=1$ and $\# J'=0$, then by purity
 \[
   \mathit{Ext}^1_{\Lambda} (\Lambda_{X(I)}[n-a], \Lambda_{X(J)}[n-b])\cong \Lambda_{X(I)}(-1).
 \]
  If $\# I'=0$ and $  \# J'=1$, then $\mathit{Ext}^1_{\Lambda} (\Lambda_{X(I)}[n-a], \Lambda_{X(J)}[n-b])=\Lambda_{X(J)}$.
 \end{proof}

\subsection{Rapoport-Zink sheaves}

We use the notation of   \ref{subsec:monfilperv}, \ref{subsec:rzconst} and
of~\ref{app_subsec_unip}.
We consider the corresponding derived category of
$\Lambda$-sheaves with a continuous unipotent $\Z_\ell(1)$-action $D^\nil(X,\Lambda)$, see~\ref{subsec_iwasawa}. Any
object $\sF $ in  $D^\nil(X,\Lambda)$ has a canonical $\Lambda$-linear nilpotent morphism
$N^\Iw\colon \sF \to \sF(-1)^\Iw$. For simplicity of notation we also write $N$ for
$N^\Iw$.

We define the category ${\RZ}^{\rm pre}(X,\Lambda)$ of {\it pre-Rapoport-Zink sheaves}.
Its objects are pairs $(\sRZ , \iota)$, where $\sRZ\in D^\nil(X,\Lambda)$ is a perverse
sheaf and
\[
  \iota\colon
\sC\xrightarrow{\sim} \ker[\sRZ \xrightarrow{N} \sRZ(-1)^\Iw]
\]
is an isomorphism. A morphism $\phi\colon (\sRZ, \iota_\sRZ)\to (\sRZ', \iota_{\sRZ'})$ in ${\RZ}^{\rm pre}(X,\Lambda)$ is a
morphism $\phi\colon \sRZ\to \sRZ'$ in $D^\nil(X,\Lambda)$ such that $\phi\circ \iota_\sRZ =
\iota_{\sRZ'}$. Note that ${\RZ}^{\rm pre}(X,\Lambda)$ is a {\it groupoid} by Lemma~\ref{app:lem:derivednaka}.

The category of {\it Rapoport-Zink sheaves}  ${\RZ}(X,\Lambda)$ is now defined as a full subcategory of  ${\RZ}^{\rm pre}(X,\Lambda)$.

\begin{defn}\label{defn_RZ-sheaf}
We call $(\sRZ,\iota)\in {\RZ}^{\rm pre}(X,\Lambda)$ a {\em Rapoport-Zink sheaf} ({\em
  RZ-sheaf}) if
\begin{itemize}
\item[(i)]{\rm [compatibility of filtrations]} $\iota^{-1}(\fil^a \sRZ)=\fil^a_{\rm S} \sC$
  for all $a\in \Z$;
\item[(ii)]{\rm [polarizability]} there exists
a perfect pairing
  \[
\mathsf p \colon \sRZ \otimes_{\Lambda^\Iw} \sRZ^- \to \varpi(-n)
\]
in $D^\nil(X,\Lambda)$ such that the induced pairing
\[
\gr^a_0 \sRZ \otimes \gr^0_a \sRZ^-  \to \varpi(-n)
\]
coincides with the pairing
\ml{}{
\gr^a_0 \sRZ \otimes \gr^0_a \sRZ^-   \xrightarrow{\mathrm{id} \otimes N^a } \gr^a_0 \sRZ
\otimes \gr^a_0 \sRZ^-(-a)^\Iw \\  \xrightarrow{\stackrel{\iota^{-1}\otimes \iota^{-1}}{\sim}}   \gr^a_{\rm S} \sC
\otimes \gr^a_{\rm S} \sC (-a)   \xrightarrow{\mathsf c^a} \varpi(-n)
\notag}
for all $a>0$,  where $c^a$ in defined in Lemma~\ref{lem:constko} (iii).
\end{itemize}
\end{defn}

Here we use the notation as in~\ref{subsec_iwasawa} for Iwasawa module sheaves and $\varpi=
f^! \Lambda$.

\begin{rmk}\label{rmk:rapzink}
 (i) For a RZ-sheaf $(\sRZ, \iota )$  we have canonical isomorphisms
  \begin{align*}
 & \gr^a_b \sRZ \xrightarrow{\stackrel{N^b}{  \sim}} \gr^{a+b}_0 \sRZ(-b)
   \xleftarrow{\stackrel{\iota}{\sim}} \gr_{\rm S}^{a+b} \sC(-b) ,\\
    &  \gr^{\rm M}_a \sRZ \cong  \bigoplus_{p-q=a} \Lambda^{(p+q)}_X(-p)[n-p-q]  ,  \\
    &  \gr_a \sRZ   \xrightarrow{\stackrel{N^a}{  \sim}} \fil^a\sRZ \cap \fil_0 \sRZ(-1)^a
      \xleftarrow{\stackrel{\iota}{\sim}} \fil^a_{\mathrm{S}} \sC(-a).
  \end{align*}
 (ii) The  local extension class of $ \gr^{a}_b \sRZ $ by
 $\gr^a_{b-1} \sRZ$, resp.  the local extension class of $ \gr^{a}_b \sRZ $ by
 $\gr^{a+1}_{b} \sRZ$  are, using the last identification in (i),  given by   $(1,1,\ldots , 1) $
 on the $\oplus \Lambda_{X(I)}$ factors, resp. on the $\oplus \Lambda_{X(J)}$ factors (forgetting the Tate twists)  where the summands come from
 the formula
  Proposition~\ref{prop:endextconst}.
  The latter is clear by comparing with the extensions
  in $\mathsf C$ and  one deduces the former from
  the polarizability in Definition~\ref{defn_RZ-sheaf}. All other contributions in the
  extension of  $\gr^{\rm M}_a \sRZ$ by $ \gr^{\rm M}_{a-1} \sRZ$ are trivial by Proposition~\ref{prop:endextconst}.
\end{rmk}

\begin{prop}\label{prop:rzpureunique} For $\Lambda$ an algebraic extension of $\Q_\ell$
  the following properties hold.
\begin{itemize}
 \item[(i)]
  For $k$ a finite field a RZ-sheaf in  $\mathrm{RZ}(X,\Lambda)$ is mw pure of weight $n$.
\item[(ii)]  If additionally $X$ is proper over $k$,  then the
  groupoid $\mathrm{RZ}(X,\Lambda)$ is either empty or contractible.
  So in this case our axioms uniquely determine an $\rm{RZ}$-sheaf if it exists.
  \end{itemize}
\end{prop}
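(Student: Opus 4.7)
For part (i), I would invoke Remark~\ref{rmk:rapzink}(i), which provides the canonical decomposition
\[
\gr^{\rm M}_a \sRZ \;\cong\; \bigoplus_{p-q=a} \Lambda^{(p+q)}_X(-p)[n-p-q].
\]
Each summand is a Tate twist and shift of the constant sheaf on a disjoint union of smooth proper closed subschemes $X(I)$ (with $\#I=p+q+1$) of dimension $n-(p+q)$. By Deligne's purity, $\Lambda^{(p+q)}_X[n-p-q]$ is pure of weight $n-p-q$, and the Tate twist by $(-p)$ yields weight $n-p-q+2p=n+a$. Hence $\gr^{\rm M}_a \sRZ$ is pure of weight $n+a$, so $\sRZ$ is mw pure of weight $n$.

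For part (ii), Lemma~\ref{app:lem:derivednaka} implies that $\mathrm{RZ}^{\rm pre}(X,\Lambda)$, and hence its full subcategory $\mathrm{RZ}(X,\Lambda)$, is a groupoid. Assuming it is non-empty, one must show (a) $\mathrm{Aut}(\sRZ,\iota)=\{\id\}$ for any object, and (b) any two objects are isomorphic. For (a), take $\phi\in\mathrm{Aut}(\sRZ,\iota)$ and set $\psi=\phi-\id$. The relation $\phi\circ\iota=\iota$ gives $\psi|_{\fil_0\sRZ}=0$, and commutation with $N$ forces, for $x\in\fil_a\sRZ$, $N^a\psi(x)=\psi(N^ax)\in\psi(\fil_0\sRZ)=0$, so $\psi(\fil_a\sRZ)\subset\fil_{a-1}\sRZ$. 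Via the identification $\gr_a\sRZ\cong\fil^a_{\rm S}\sC(-a)$ of Remark~\ref{rmk:rapzink}(i) and the Hom-computation in Proposition~\ref{prop:endextconst}, the induced morphism $\gr_a\sRZ\to\gr_{a-j}\sRZ$ (for $j\geq 1$) is globally an element of $H^0(X^{(a)},\Lambda(j))$. Over $k=\mathbb{F}_q$, Frobenius acts on this group by $q^{-j}$ times a permutation on $\pi_0$, so its invariants vanish for every $j\geq 1$. Iterating, $\psi$ decreases the $\fil_\bullet$-degree arbitrarily, and since the monodromy filtration is finite we conclude $\psi=0$.

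For (b), given two RZ-sheaves $(\sRZ_1,\iota_1)$ and $(\sRZ_2,\iota_2)$, I would build the isomorphism $\phi\colon\sRZ_1\to\sRZ_2$ inductively along $\fil_\bullet$, starting with $\phi_0=\iota_2\iota_1^{-1}$ on $\fil_0$. At step $a$, the obstruction to lifting $\phi_{a-1}$ lies in an $\mathrm{Ext}^1$-group between graded pieces; by Remark~\ref{rmk:rapzink}(ii) the local extension classes of $\sRZ_1$ and $\sRZ_2$ coincide with the canonical $(1,\ldots,1)$ data, while the polarizability axiom of Definition~\ref{defn_RZ-sheaf}(ii) rigidifies the global data. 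The main obstacle will be making the global matching of extensions precise: translating the local agreement together with polarization into agreement of global $\mathrm{Ext}^1$-classes, which should reduce via a local-to-global spectral sequence to the same Frobenius-invariance vanishing used in (a), now applied to the relevant $\mathrm{Ext}^1$-sheaves computed from Proposition~\ref{prop:endextconst}.
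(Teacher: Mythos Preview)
Your arguments for part~(i) and for part~(ii)(a) are correct and essentially match the paper. For~(i) the paper invokes exactly the decomposition from Remark~\ref{rmk:rapzink}(i) and the purity of each summand (note: the $X(I)$ need not be proper in~(i) --- purity of $\Lambda_{X(I)}[\dim X(I)]$ is pointwise and does not require properness, so your parenthetical ``proper'' should be dropped). For~(ii)(a) the paper likewise shows that any $N$-equivariant endomorphism $\psi$ vanishing on $\fil_0\sRZ$ must be zero by combining the identification $\gr_a\sRZ\cong \fil^a_{\mathrm S}\sC(-a)$ with the Hom computation of Proposition~\ref{prop:endextconst} and a weight argument over the finite field; your iteration ``$\psi$ decreases $\fil_\bullet$-degree arbitrarily'' is exactly this.

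For part~(ii)(b) you take a genuinely different route: you induct along the kernel filtration $\fil_\bullet$, whereas the paper inducts along the monodromy filtration $\fil^{\mathrm M}_\bullet$. The paper's choice is cleaner for the following reason: the graded pieces $\gr^{\mathrm M}_a\sRZ$ are direct sums of (Tate-twisted, shifted) constant sheaves on the smooth $X(I)$'s, so the relevant $\mathrm{Ext}^1$ groups are directly computed by the second half of Proposition~\ref{prop:endextconst}. One then shows, by the local-to-global spectral sequence and the vanishing $H^1(X,\mathit{Hom}(\gr^{\mathrm M}_a,\gr^{\mathrm M}_{a-i}))=0$ for $i\ge 1$ (a pure weight argument using properness), that $\mathrm{Ext}^1(\gr^{\mathrm M}_a,\fil^{\mathrm M}_{a-1})\hookrightarrow H^0(X,\mathit{Ext}^1(\gr^{\mathrm M}_a,\gr^{\mathrm M}_{a-1}))$, and the image class is pinned down by Remark~\ref{rmk:rapzink}(ii). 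Your $\gr_a\sRZ\cong\fil^a_{\mathrm S}\sC(-a)$ are \emph{not} semisimple, so the Ext computation is less transparent, and --- more seriously --- you must keep track of $N$-equivariance when lifting $\phi_{a-1}$ to $\phi_a$ (the lift must satisfy $N\phi_a=\phi_{a-1}N$, which is not a plain $\mathrm{Ext}^1$ obstruction in $D^b_c$). You flag this as ``the main obstacle'' but do not resolve it. Note also that polarizability is not used separately in the uniqueness step: it is already encoded in Remark~\ref{rmk:rapzink}(ii), and the passage from local to global extension data is handled entirely by the weight/Frobenius argument, not by the pairing $\mathsf p$.
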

 Recall that a groupoid (or to be precise its nerve) is contractible if it
  is non-empty and there is exactly one morphism
  between any two objects.

\begin{proof}
For part (i) observe that    $  \Lambda_{X}^{(a+b)}[n-a-b](-1)^b$
is pure of weight
$n+b-a$. Then the statement follows from
Remark~\ref{rmk:rapzink} and Lemma~\ref{lem:monfiltr}.

\smallskip

For part (ii)   let $(\sRZ,\iota)$ be in $\mathrm{RZ}(X,\Lambda)$. We first show that the
only automorphism of this RZ-sheaf is  the identity by showing that a
$\Lambda^\Iw$-linear $\phi\colon \sRZ \to \sRZ$ which vanishes on the image of $\iota$ has
to be zero. By Remark~\ref{rmk:rapzink} and
Proposition~\ref{prop:endextconst}   we obtain an isomorphism of sheaves
\[
  \mathit{Hom}_{\Lambda} (   \gr_a \sRZ, \gr_b \sRZ ) \cong \begin{cases}
    \Lambda_{X^{(a)}}(a-b) &  \text{ for } a\ge b \ge 0 \\ 0 & \text{ for } 0\le a<b  , \end{cases}
\]
which induces an isomorphism
\[
\mathrm{Hom}_\Lambda (   \gr_a \sRZ, \gr_b \sRZ ) \cong   \begin{cases}
    H^0(X,\Lambda_{X^{(a)}}) &  \text{ for } a= b \ge 0 \\ 0 & \text{ else }   \end{cases}
\]
by weight reasons.
So $\phi$ is strict with respect to the filtration $\fil_a \sRZ$, in other words it
suffices to show that $\gr_a \phi$ vanishes for all $a\in \Z$. One checks this   by
ascending induction using the commutative
diagram
\[
  \xymatrix{
    \gr_a \sRZ  \ar[r]^{\gr_a \phi} \ar[d]_N &  \gr_a \sRZ \ar[d]_N  \\
    \gr_{a-1} \sRZ(-1)  \ar[r]^{\gr_{a-1} \phi} & \gr_{a-1} \sRZ(-1)
  }
\]
in which the vertical maps are injective.

\smallskip

Now we show that up to isomorphism there is at most one RZ-sheaf $(\sRZ,\iota)$ on $X$.
One can build the RZ-sheaf successively along  the monodromy filtration via the extensions
\ga{eq:extrzcl}{
0\to \fil^{\rm M}_{a-1} \sRZ \to \fil^{\rm M}_{a} \sRZ \to \gr^{\rm M}_{a} \sRZ \to 0,
}
where the graded piece on the right is described by Remark~\ref{rmk:rapzink}. We will use
a weight argument to show that this extension class is uniquely characterized by the
description of RZ-sheaves
in Remark~\ref{rmk:rapzink}.

As $\mathit{Hom}_\Lambda ( \gr^{\rm M}_a \sRZ , \gr^{\rm M}_{a-i} \sRZ ) $  has weight
$\le - 2i$ for $i\ge 1$ by Remark~\ref{rmk:rapzink}  and $X$ proper, we obtain
$H^1(X, \mathit{Hom}_\Lambda ( \gr^{\rm M}_a \sRZ , \gr^{\rm M}_{a-i} \sRZ ) )=0$ for
$i\ge 1$  by applying the Hochshild-Serre spectral sequence. So
\[
\mathrm{Ext}^1_\Lambda (  \gr^{\rm M}_a \sRZ,   \gr^{\rm M}_{a-i} \sRZ ) \to H^0(X,
\mathit{Ext}^1_\Lambda (  \gr^{\rm M}_a \sRZ,   \gr^{\rm M}_{a-i} \sRZ )  )
\]
is injective for $i\ge 1$ and the group on the right vanishes for $i>1$ by Proposition~\ref{prop:endextconst} and weight reasons.
This argument demonstrates that the  extension class of $\gr_{a}^{\rm M}\sRZ $ by  $\gr_{a-1}^{\rm M}\sRZ$ is
determined by Remark~\ref{rmk:rapzink} and that the map
\[
\mathrm{Ext}^1_\Lambda (  \gr^{\rm M}_a \sRZ,   \fil^{\rm M}_{a-1} \sRZ ) \to \mathrm{Ext}^1_\Lambda (  \gr^{\rm M}_a \sRZ,   \gr^{\rm M}_{a-1} \sRZ )
\]
is injective. So the extension class~\eqref{eq:extrzcl} is uniquely determined by the properties of
a RZ-sheaf.
\end{proof}

Let $f\colon X\to Y$ be a proper $k$-morphism.  For a RZ-sheaf $(\sRZ,\iota)$ we can write the monodromy spectral sequence
 from Proposition~\ref{prop:degmonspecse} explicitly   as
\eq{eq:rzspecseq}{
  E_1^{p,q} = \bigoplus_{i\ge \max(0,-p)} \per R^{p+q}f_* (\Lambda^{(p+2i)}_X(-i)[n-p-2i] )  \Rightarrow
  \per R^{p+q}f_*(\sRZ)
}
Note that for $\Lambda$ an algebraic extension of $\Q_\ell$ and $k$ finite, $E_1^{p,q}$ is a pure perverse sheaf of
weight $n+q$ by \cite[Corollaire~5.4.2]{BBD83}.

\subsection{Construction via nearby cycles}\label{sec:rzconstnearby}
Let $X$ be a semistable scheme over $\sO$ with generic fiber $j\colon
X_K\to X$ and  special fiber $i\colon X_k\to X$, $n=\dim(X_K)$.
Note that \[\per \mathcal H^{-1}(i^* j_* \Lambda[n+1])=\Lambda[n]\] is the
constant perverse sheaf which we also denote $\sC$, see Remark~\ref{rmk:pervcontsp}.
Set $\sRZ=\uppsi(\Lambda[n+1])$ and let $\iota\colon \sC\to \sRZ^N$ be the isomorphism induced
by the fundamental exact triangle \eqref{eq_app1:fundncf}. Let $\mathsf p\colon \sRZ
\otimes_{\Lambda^\Iw} \sRZ^- \to \varpi(-n)$ be the perfect pairing  defined in~\ref{app:verdierdu}.

The following proposition is essentially shown in~\cite{Sai03}, so we provide only a
sketch of a proof. Another argument in the setting of $\mathcal D$-modules is given in~\cite[Theorem~3.3]{Sai90}.

\begin{prop}\label{prop:RZconstr}
The above $(\sRZ, \iota)$ together with the pairing $\mathsf p$ satisfy the properties of a
Rapoport-Zink sheaf from Definition~\ref{defn_RZ-sheaf}.
\end{prop}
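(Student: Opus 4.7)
The proof follows the strategy of~\cite{RZ82} and~\cite{Sai03}, verifying both axioms of Definition~\ref{defn_RZ-sheaf} by an explicit local computation followed by tracking Verdier autoduality through the monodromy filtration. The plan is first to reduce to the local setting. By Proposition~\ref{sec:bc:mainprop} the unipotent nearby cycles functor commutes with stratified-lci base change, as does the formation of the pairing $\mathsf p$ from~\ref{app:verdierdu}, so \'etale-locally around a closed point of $X_k$ at which $m+1$ components of $X_k$ meet, one may assume $\sO_{X,x}^{\mathrm h}\cong\sO[X_0,\ldots,X_n]^{\mathrm h}/(X_0\cdots X_m-\pi)$. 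On this model the classical Steenbrink-type Kummer--Koszul computation identifies $\sRZ$ explicitly: its monodromy graded pieces match those listed in Remark~\ref{rmk:rapzink}, and $N$ corresponds to cup-product with the Kummer class of $\pi$. In particular the fundamental triangle produces $\iota$ as the identification of $\sC$ with $\ker N$, which locally is the subsheaf concentrated along the intersection of all local components.

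For axiom (i), I would show that the support filtration on $\sC$ matches $\iota^{-1}(\fil^a\sRZ)$. The inclusion $\iota^{-1}(\fil^a\sRZ)\subseteq\fil^a_{\rm S}\sC$ is immediate from the local description, since $\fil^a\sRZ=\im(N^a)$ is locally supported on $X^{(a)}$. Equality then follows by descending induction on $a$, comparing the graded pieces $\gr^a_0\sRZ$ from Remark~\ref{rmk:rapzink} with $\gr^a_{\rm S}\sC\cong\Lambda^{(a)}_X[n-a]$ from Lemma~\ref{lem:constko}(iii), and using Proposition~\ref{prop:endextconst} to force the map induced by $\iota$ and $N^a$ between them to be the canonical isomorphism.

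For axiom (ii), the pairing $\mathsf p$ arises from Verdier autoduality of the nearby cycle functor, and $N$ is skew-adjoint with respect to $\mathsf p$ up to Iwasawa twist, so $\mathsf p$ descends to a pairing $\gr^a_0\sRZ\otimes\gr^0_a\sRZ^-\to\varpi(-n)$ for every $a\ge 0$. Using that $N^a\colon\gr^0_a\sRZ^-\to\gr^a_0\sRZ^-(-a)^\Iw$ is an isomorphism and that $\mathsf p$ intertwines the monodromy on both factors, the polarizability condition for general $a$ reduces to the single case $a=0$: one must match the restriction of $\mathsf p$ to $\sC\otimes\sC(n)\to\varpi$ with the Koszul pairing $\mathsf c^0$ coming from Poincar\'e duality on the normalization $\tilde X_k$.

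This case $a=0$ is the main obstacle. It requires following the signs and Tate twists through the Iwasawa-twist formalism of Appendix~\ref{sec:appendix} and the sign convention of~\cite[5.4]{Sai88} to confirm that the Verdier duality pairing on $\sRZ$, when restricted to $\gr^0_0\sRZ\otimes\gr^0_0\sRZ^-$, coincides on the nose with the pairing $\mathsf c^0$ induced from pushforward along the normalization. Once this baseline matching is verified, the polarizability axiom propagates to all higher graded pieces automatically via the isomorphisms of Remark~\ref{rmk:rapzink}(i).
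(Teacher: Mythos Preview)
Your overall strategy is plausible but differs from the paper's argument in both halves, and the reduction you propose for axiom~(ii) has a gap.

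The paper does \emph{not} localize. For axiom~(i) it works globally: the key structural fact (from \cite[Lemma~2.5]{Sai03} and \cite[Expos\'e~I]{SGA7.1}) is that $N$ acts trivially on each standard cohomology sheaf $\sH^a(\sRZ)$, and that the fundamental triangle~\eqref{eq_app1:fundncf} identifies $\sH^a(\sRZ)[-a](-1)$ with the complex $[\sH^a(i^*j_*\sC)\xrightarrow{\{\pi\}}\sH^{a+1}(i^*j_*\sC(1))\xrightarrow{\{\pi\}}\cdots]$. Together with the purity isomorphism $\sH^{a-n}(i^*j_*\sC)(1)\cong\Lambda^{(a)}_X(-a)$ this yields directly $\fil_a\sRZ=\tau^{\le a-n}\sRZ$, and the comparison with the Koszul resolution of $\sC$ then gives $\iota^{-1}(\fil^a\sRZ)=\fil^a_{\rm S}\sC$. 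Your local approach via base change could in principle reach the same conclusion, but the paper's route avoids the explicit Steenbrink computation and the appeal to Proposition~\ref{prop:endextconst}.

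For axiom~(ii) your reduction to the single case $a=0$ is not justified. The pairings $\mathsf c^a$ for different $a$ are Poincar\'e dualities on \emph{different} smooth strata of different dimensions; there is no a~priori mechanism by which the matching on $\gr^0_0$ (duality on the normalization) propagates to $\gr^a_0$ (duality on the codimension-$a$ intersections). The paper instead handles all $a$ simultaneously and uniformly: the self-duality diagram~\eqref{app:eq1comdual} of the fundamental triangle shows that the identification $\gr^0_a\sRZ\cong\Lambda^{(a)}_X(-a)[n-a]$ obtained from $i^*j_*\sC$ and purity is exactly the $\mathsf p$-dual of the identification $\gr^a_0\sRZ\cong\Lambda^{(a)}_X[n-a]$ obtained from $\iota$ and the Koszul complex. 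Since the purity isomorphism is \emph{defined} via the cycle class, this duality is by construction the canonical Poincar\'e pairing on each $\Lambda^{(a)}_X[n-a]$, which is $\mathsf c^a$. So no baseline case is needed; the polarizability for each $a$ is read off directly from~\eqref{app:eq1comdual} and the definition of purity.
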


\begin{proof}[Proof sketch]
It is shown in  \cite[Expos\'e~I]{SGA7.1}  that $N$ acts trivially on $\sH^a(\sRZ)$ for all $a\in \Z$. In
\cite[Lemma~2.5]{Sai03}
 it is shown that  the fundamental exact
triangle~\eqref{eq_app1:fundncf} induces the horizontal quasi-isomorphisms in the
commutative diagram for $a\ge -n$
\[
  \xymatrix{
    \sH^{a+1}(\sRZ)[-a-1] \ar[r]^-\sim \ar[d]^N & [  0 \ar[r] \ar[d] &  \sH^{a+1}(i^*j_* \sC(1
 ))   \ar[r]^-{\{ \pi \} } \ar[d]_{\rm id} &  \cdots]    \\
 \sH^a(\sRZ)[-a](-1) \ar[r]^-\sim & [  \sH^a(i^*j_* \sC) \ar[r]^-{\{ \pi \} } &  \sH^{a+1}(i^*j_* \sC(1
 ))   \ar[r]^-{\{ \pi \} } &  \cdots]
  }
\]
Here $\{ \pi \}\in H^1(\Spec K ,\Lambda(1))$ is the tame symbol of $\pi$. Note that by the
purity isomorphism
\eq{eq:purisoproof}{ \sH^{a-n}(i^* j_* \sC)(1) \cong
  \Lambda^{(a)}_X(-a) }
the objects in the diagram are perverse sheaves and the right
vertical map is a monomorphism. So by definition of the filtration $\fil_a$ we obtain that $\fil_a \sRZ= \tau^{\le a-n} \sRZ$.

The
 isomorphisms
\[
  \gr^0_a \sRZ\xrightarrow\sim \sH^{a-n}(i^*j_* \sC)(1)[n-a]
  \stackrel{\eqref{eq:purisoproof}}{\cong}  \Lambda^{(a)}_X(-a)[n-a]
\]
are dual to the
isomorphisms
\[
 \Lambda^{(a)}_X(n)[n-a]  \stackrel{\kappa}{\cong}  \gr_{\mathrm S}^a \sC(n) \xrightarrow{\stackrel{\iota}{\sim}} \gr^a_0 \sRZ(n)
\]
 in view of~\eqref{app:eq1comdual} and the definition of the purity isomorphism.

 Finally, the purity isomorphism~\eqref{eq:purisoproof} also induces the right vertical
isomorphism of complexes in the commutative diagram in $D^b_c(X_k,\Lambda)$
\[
  \xymatrix{
  \sH^{-n}(\sRZ)[n] \ar@{=}[d] &   \ar[l]^-{\iota}_-\sim [ & \mathsf{Ko}_X[n] \ar[d]^\wr && ] \\
    \sH^{-n}(\sRZ)[n] \ar[r]^-\sim & [  0 \ar[r] & \sH^{-n}(i^*j_* \sC(1)) \ar[r]^-{\{ \pi \} } &  \sH^{1-n}(i^*j_* \sC(2
 ))   \ar[r]^-{\{ \pi \} } &  \cdots ]
  }
\]
This proves part (i) and (ii) of Definition~\ref{defn_RZ-sheaf} for $(\sRZ,\iota)$ and the
polarization $\mathsf p$.
\end{proof}

\begin{prop}\label{prop:grorzuni}
The nearby cycles $R\Uppsi_{X/\sO} (\Lambda)$ are unipotent and in particular tame, i.e.\
the canonical map $\uppsi( \Lambda) \to R\Uppsi_{X/\sO} (\Lambda)[-1]$ is an isomorphism.
\end{prop}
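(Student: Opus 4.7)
The strategy is to reduce to a local computation via étale descent and then invoke the classical computation of Rapoport--Zink for the standard semistable model. Since the formation of nearby cycles commutes with étale base change and since the claim is about the inertia action, we may replace $X$ by its strict henselization at a closed geometric point $\overline x \in X_k$, in which case
\[
\sO_{X,\overline x}^{\rm h} \cong \sO[X_0,\ldots,X_n]^{\rm h}/(X_0 \cdots X_m - \pi)
\]
for some $0 \le m \le n$. Once unipotence of $R\Uppsi_{X/\sO}(\Lambda)$ is established in this local situation, the second assertion, namely that the canonical map $\uppsi(\Lambda) \to R\Uppsi_{X/\sO}(\Lambda)[-1]$ is an isomorphism, will follow formally from the very definition of $\uppsi$ as the unipotent part of nearby cycles (see~\ref{app_subsec_unip}): if $R\Uppsi$ has unipotent inertia action, then the inclusion of its unipotent part is an equality.

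For the standard semistable model above, I would invoke Rapoport--Zink's explicit computation in \cite[Abschnitt~2]{RZ82} of the cohomology sheaves $\mathcal H^a(R\Uppsi_{X/\sO}(\Lambda))$: they are expressible as wedge powers of constant sheaves on (normalizations of) intersections of irreducible components of $X_k$, Tate-twisted, with inertia acting through the tame quotient via the tame symbol $\{\pi\}$. In particular the inertia action is tame, and on each cohomology sheaf it factors through the pro-$\ell$ tame quotient $\Z_\ell(1)$ by a unipotent (in fact nilpotent in positive filtration) operator. This in turn lifts to unipotence of $R\Uppsi_{X/\sO}(\Lambda)$ as an object of the derived category with continuous inertia action, because on the standard model $R\Uppsi(\Lambda)$ is formal (its Postnikov filtration splits up to the Iwasawa twist), a fact also encoded in the fundamental triangle~\eqref{eq_app1:fundncf} together with the purity identification~\eqref{eq:purisoproof} appearing in the proof of Proposition~\ref{prop:RZconstr}.

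An equivalent, and perhaps cleaner, route is the following. Proposition~\ref{prop:RZconstr} has already exhibited the unipotent nearby cycle $\uppsi(\Lambda[n+1])$ as a Rapoport--Zink sheaf, so in particular its cohomology sheaves on each stratum have the explicit ranks predicted by Rapoport--Zink. On the other hand, the rank of $\mathcal H^a(R\Uppsi(\Lambda))$ over a geometric point of a stratum is the same, by direct computation on the standard model (or by the reference to \cite[Expos\'e~I]{SGA7.1}). The canonical map $\uppsi(\Lambda)[-1] \hookrightarrow R\Uppsi(\Lambda)$ is therefore a generic injection between constructible sheaves of the same finite rank on each stratum, hence an isomorphism; tameness and unipotence follow.

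The main obstacle is bookkeeping rather than substance: one must reconcile the conventions of the appendix (Iwasawa twist, perverse shift by $-1$ built into $\uppsi$, unipotent truncation of the Galois action) with the classical Rapoport--Zink formulas, and check that the map $\uppsi(\Lambda) \to R\Uppsi_{X/\sO}(\Lambda)[-1]$ set up via the fundamental triangle~\eqref{eq_app1:fundncf} is the one realized on cohomology by the purity isomorphism~\eqref{eq:purisoproof}. Once these identifications are in place, both assertions are immediate from the local computation.
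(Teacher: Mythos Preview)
The paper does not give a self-contained proof of this proposition; it simply records references: tameness is \cite[Satz~2.23]{RZ82} (or \cite[Theorem~1.2]{Ill04}), and unipotence is Grothendieck's argument in \cite[Expos\'e~I]{SGA7.1} once one has tameness and the relevant purity statement \cite[Korollar~3.7]{RZ82}. Your first route is an elaboration of exactly these ingredients, so it is aligned with the paper.

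Two corrections are in order. First, you do not need formality of $R\Uppsi(\Lambda)$ to pass from unipotence on cohomology sheaves to unipotence in the derived sense. Once tameness places $R\Uppsi(\Lambda)[-1]$ in $D^{\Iw}$, the truncation argument proving Lemma~\ref{lemapp:nilinvdec} shows that nilpotence of $t$ on every $\mathcal H^a$ forces the object into $D^{\rm nil}$; no splitting of the Postnikov tower is required. In fact Grothendieck's computation gives more: the inertia action on each $\mathcal H^a(R\Uppsi\,\Lambda)$ is \emph{trivial}, not merely unipotent, which is what the paper records at the start of the proof of Proposition~\ref{prop:RZconstr}.

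Second, your alternative route via Proposition~\ref{prop:RZconstr} and a rank comparison is not circular, but the final step is not correct as written. The canonical map $\uppsi(\Lambda)\to R\Uppsi(\Lambda)[-1]$ is not an inclusion of a subsheaf into $R\Uppsi$; by construction $\uppsi$ is a direct summand only of the $W$-invariants $(R\Uppsi)^W$. So matching ranks on strata does not by itself force the map to be an isomorphism unless you already know $W$ acts trivially, i.e.\ tameness. Once you grant tameness you are back to the first route, and the rank comparison is redundant.
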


 Tameness is shown in \cite[Satz~2.23]{RZ82}, for a simple proof see \cite[Theorem~1.2]{Ill04}. Under the assumption of
tameness and a semistable case of absolute purity~\cite[Korollar 3.7]{RZ82},  the unipotence was shown much earlier by Grothendieck, see \cite[Expos\'e~I]{SGA7.1}.

\section{Cohomology of semistable Lefschetz pencils}\label{sec:cohosemistlef}

 In this  section we establish a close connection between the monodromy-weight conjecture
 and properties of the cohomology of semistable Lefschetz pencils. We assume that
 $k$ is a finite field.

\subsection{The monodromy-weight conjecture}

For $X$ a projective semistable scheme over $\sO$ we know  by
Proposition~\ref{prop:grorzuni} that the inertia subgroup of $\mathrm{Gal}(\overline K
/K)$ acts unipotently on
\[
  H^i(X_{\overline K} , \Q_\ell) \cong H^i(X_{\overline k}, R\Uppsi_{X/\sO} \Q_\ell )
\]
for all $i\in \Z$, which means that it  is given
in terms of a nilpotent operator
\[
  N\colon H^i(X_{\overline K} , \Q_\ell)\to   H^i(X_{\overline K} , \Q_\ell(-1)),
\]
see~\cite[Expos\'e~I]{SGA7.1}  and~\ref{subsec_iwasawa}.

The operator $N$ induces the monodromy filtration $\fil^{\rm M}
H^i(X_{\overline K} , \Q_\ell)$ as in~\eqref{eq:convmofil}.
We say that $H^i(X_{\overline K} , \Q_\ell)$ is {\em mw pure of weight $i$} if $\gr^{\rm
  M}_a H^i(X_{\overline K} , \Q_\ell)$ is pure of weight $a+i$ as a
$\mathrm{Gal}(\overline k/k)$-module.  It is equivalent to request that its associated
perverse sheaf in $D^\nil(\Spec k,\Q_\ell)$ is mw pure of weight $i$ in the sense of Definition~\ref{def:mwpure}.

Consider the following property depending on an integer $n\ge 0$.

\medskip

 $ ({\bf mw} )_n\;$ For all projective, semistable $X$ over $\sO$ with  $\dim(X_K)\le n$ the
cohomology groups  $H^i(X_{\overline K} , \Q_\ell)$ are mw pure of weight $i$ for all
$i\in \Z$.

\medskip

\begin{rmk}
  \begin{itemize}
   \item
  Deligne conjectures that $ ({\bf mw} )_n$ holds for all $n$  \cite[Section~8.5]{Del70}   (so called
 {\em  monodromy-weight conjecture} in the semistable case). In fact the general
 monodromy-weight conjecture follows from the semistable case by de Jong's alteration theorem. For a general exposition, see ~\cite[Section~3]{Ill94}.
  \item
Grothendieck's degeneration theory of abelian varieties essentially implies $ ({\bf mw} )_1$, a
simplified argument was given by Deligne~\cite[Expos\'e I, Section 6, Appendice]{SGA7.1}.
\item
Using their spectral
sequence~\eqref{eq:rzspecseq} Rapoport--Zink showed  $ ({\bf mw} )_2$
\cite[Satz~2.13]{RZ82}.
\item
In  equal
positive characteristic the conjecture is a theorem which is essentially due to  Deligne,
see  \cite[Th\'eor\`eme~(1.8.4)]{Del80} and~\cite{Ito05}.

\item
  The monodromy-weight conjecture was shown  by Scholze  for set theoretic complete
  intersections in $\P^N_K$ or more generally in a projective toric variety in~\cite[Theorem~9.6]{Sch12}.
See \cite[after Conjecture~1.13]{Sch12} for  a summary of further results for special varieties.
\end{itemize}
\end{rmk}

\subsection{A Lefschetz pencil approach} \label{sec:LP}

Set $\Lambda=\QQl $.
For schemes of finite type over $\sO$ we use the  perversity as in~\ref{app:subsecperv}.
In particular for $X$ regular and flat over $\sO$ with $\dim(X_K)=n$ the sheaf
$\sC =\Lambda_{X}[n+1]$ is perverse. Recall that the unipotent
nearby cycle functor \[\uppsi\colon
D^b_c(X_K,\Lambda) \to D^\nil(X_k, \Lambda)\] maps  perverse sheaves to perverse sheaves, \ref{app_subsec_unip}.

Let $\phi \colon \tilde X\to \P^1_\sO$ be a semistable Lefschetz pencil as in
Definition~\ref{def:sestLefpen}. In this section we  study the cohomological
degeneration over $\sO$ of the perverse Picard-Lefschetz sheaf
$\sL_K= \per R^0 \phi_{K, *}\sC \in D^b_c(\P^1_K,\Lambda)$. In Section~\ref{sec:tamepl} we
study the tameness of $\sL_K$. Let  $\sL_k:= \uppsi(\sL_K)$ be the unipotent nearby cycles
of $\sL_K$, see~\ref{app_subsec_unip}.

\begin{lem}\label{lem:unipopl}
The perverse sheaf $\sL_K$ is unipotent along $\P^1_k$, i.e.\  \[\uppsi(\sL_K)
\xrightarrow \sim   R\Uppsi_{\P^1_\sO / \sO}(\sL_K)[-1]  \] is an isomorphism.
\end{lem}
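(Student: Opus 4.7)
The idea is to transfer the known unipotence of nearby cycles on $\tilde X$ to $\P^1_\sO$ via proper base change along the Lefschetz pencil map $\phi$.

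First I would verify that $\tilde X = \mathrm{Bl}_{A\cap X}(X)$ is semistable over $\sO$. Since $A \cap X \hookrightarrow X$ is a stratified regular immersion of codimension two in a semistable scheme, this is a local calculation in the coordinates provided by the semistable Morse lemma (Proposition~\ref{prop:semistmorselem}): writing $X$ locally as $\sO[X_0,\ldots,X_n]^{\mathrm h}/(X_0\cdots X_m - \pi)$ with the stratum at the point given by $(X_0 = \cdots = X_m = 0)$, the center $A\cap X$ is locally $(X_{m+1}=X_{m+2}=0)$ (up to change of coordinates), and the two standard charts of the blow-up are each of the form $\sO[Y_0,\ldots,Y_n]^{\mathrm h}/(Y_0\cdots Y_m-\pi)$, so $\tilde X$ is semistable. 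Consequently Proposition~\ref{prop:grorzuni} applies, and $R\Uppsi_{\tilde X/\sO}(\Lambda)$ has unipotent inertia action.

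Next I would invoke proper base change for nearby cycles along the proper morphism $\phi$ (the Lefschetz pencil map is projective and in particular proper) to obtain a canonical isomorphism
\[
R\Uppsi_{\P^1_\sO/\sO}(R\phi_{K,*}\sC) \;\cong\; R\phi_{k,*}\,R\Uppsi_{\tilde X/\sO}(\sC).
\]
The right-hand side has unipotent inertia action, because the inertia action is functorial and a nilpotent operator remains nilpotent after applying the exact functor $R\phi_{k,*}$. Therefore the left-hand side has unipotent inertia action as well.

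Since the shifted nearby cycle functor $R\Uppsi[-1]$ is t-exact for the perverse t-structure (see~\cite[Corollaire~4.5]{Ill94}), taking perverse cohomology sheaf-by-sheaf yields that for every $i$ the perverse sheaf $R\Uppsi_{\P^1_\sO/\sO}(\per R^i\phi_{K,*}\sC)[-1]$ inherits the unipotent inertia action, in particular this holds for $\sL_K = \per R^0\phi_{K,*}\sC$. Because the unipotent nearby cycle functor $\uppsi$ extracts precisely the unipotent part of $R\Uppsi[-1]$ (see~\ref{app_subsec_unip}), the canonical map $\uppsi(\sL_K)\to R\Uppsi_{\P^1_\sO/\sO}(\sL_K)[-1]$ is then an isomorphism, as required. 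The main technical step is verifying semistability of $\tilde X$; once that is in hand, proper base change and the t-exactness of $R\Uppsi[-1]$ do the rest.
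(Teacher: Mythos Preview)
Your proof is correct and takes essentially the same route as the paper: apply Proposition~\ref{prop:grorzuni} on $\tilde X$ and transfer unipotence to $\P^1_\sO$ via proper base change along $\phi$. You supply two details the paper leaves implicit---the semistability of $\tilde X$ and the use of perverse $t$-exactness of $R\Uppsi[-1]$ to pass from $R\phi_{K,*}\sC$ to its perverse constituent $\sL_K$.
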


\begin{proof}
Consider the RZ-sheaf $(\sRZ,\iota)$  on $\tilde X_k$ as
in~\ref{sec:rzconstnearby} with \[\sRZ= \uppsi (\sC)\xrightarrow \sim R\Uppsi_{\tilde
    X/\sO}(\sC)[-1],\]
where the
last isomorphism is Proposition~\ref{prop:grorzuni}.
So  by proper base change
\eq{eq:h0pl}{
 R\Uppsi_{\P^1_\sO / \sO}(\sL_K)[-1]   \cong
   R\phi_{k, *} \sRZ
}
has a unipotent inertia action.
\end{proof}

In \cite{Del74}  Deligne uses Lefschetz theory to prove the Weil conjectures over a finite field, so it is
natural to try to use the following proposition to  proceed analogously over the local field $K$.

\begin{prop}\label{prop:redmwlef}
Assume that $n$ is fixed and that $ ({\bf mw} )_{n-1}$ holds. Let $X$ be a projective,
semistable scheme over $\sO$ with $\dim (X_K)=n$. Then $H^a(X_{\overline K},\Lambda)$ is mw pure of weight
$a$ for all $a\in \Z$ if there exists a semistable Lefschetz pencil $\phi\colon \tilde X\to
\P^1_\sO$ such that
\eq{eq:h0plll}{
H^{-1}(\P^1_{\overline K}, \sL_K) \cong H^0(\P^1_{\overline k},   \sL_k)
}
is mw pure of weight $n$ for the Picard-Lefschetz sheaves $\sL_K= \per R^0 \phi_{K, *}
\sC$ on $\P^1_K$ and $\sL_k = \uppsi(  \sL_K)$ on $\P^1_k$.
\end{prop}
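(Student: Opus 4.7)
The strategy is to reduce, via the blow-up formula and the perverse Leray spectral sequence for $\phi_K\colon\tilde X_K\to\P^1_K$, the mw purity of $H^a(X_{\overline K},\Lambda)$ to the hypothesis on $H^{-1}(\P^1_{\overline K},\sL_K)$ together with $({\bf mw})_{n-1}$ applied to lower-dimensional subvarieties. First, the blow-up formula applied to $\tilde X_K=\mathrm{Bl}_{(A\cap X)_K}(X_K)$ yields a $\mathrm{Gal}(\overline K/K)$-equivariant decomposition
\[
H^a(\tilde X_{\overline K},\Lambda)\cong H^a(X_{\overline K},\Lambda)\oplus H^{a-2}((A\cap X)_{\overline K},\Lambda)(-1).
\]
The semistable Lefschetz pencil structure forces $A\cap X$ to be projective semistable over $\sO$ of relative dimension $n-2$, so $({\bf mw})_{n-1}$ gives mw purity of weight $a$ for the second summand; by closure of mw pure perverse sheaves under subobjects (Proposition~\ref{prop:mwpureabelian}), the problem reduces to showing mw purity of weight $a$ for $H^a(\tilde X_{\overline K},\Lambda)$.

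Next, consider the perverse Leray spectral sequence
\[
E_2^{p,q}=H^p(\P^1_{\overline K},\per R^q\phi_{K,*}\sC)\Rightarrow H^{p+q+n+1}(\tilde X_{\overline K},\Lambda),\qquad \sC=\Lambda_{\tilde X}[n+1],
\]
which degenerates at $E_2$ over $\overline K$ by the BBD decomposition theorem applied to the proper morphism $\phi_K$ with input the pure complex $\Lambda[n+1]$. By Theorem~\ref{thm:glcllefschetz}, $\per R^q\phi_{K,*}\sC$ is geometrically constant for $q\neq 0$, of the form $L_q[1]$ with $L_q$ a direct summand of the cohomology of a smooth fiber $\phi_K^{-1}(t)$ at a non-critical $\sO$-point $t$ (existing after at most an unramified extension of $\sO$, since by Theorem~\ref{thmmain:lefpen} the critical locus is a finite disjoint union of traits). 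Such a fiber has good reduction over $\sO$, so $({\bf mw})_{n-1}$ makes each $L_q$ mw pure of weight $q+n$, and hence $H^p(\P^1_{\overline K},L_q[1])$ (equal to $L_q$ for $p=-1$, $L_q(-1)$ for $p=1$, zero for $p=0$) is mw pure of the correct weight $p+q+n+1$. For $q=0$, the three terms $H^p(\P^1_{\overline K},\sL_K)$ with $p\in\{-1,0,1\}$ contribute to $H^n, H^{n+1}, H^{n+2}$ of $\tilde X_{\overline K}$ respectively; the hypothesis controls $p=-1$, giving mw purity of $H^n(\tilde X_{\overline K},\Lambda)$ via closure under extensions in Proposition~\ref{prop:mwpureabelian}.

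For $a<n$ only $L_q$ pieces appear, so mw purity of $H^a(\tilde X_{\overline K},\Lambda)$ follows directly from $({\bf mw})_{n-1}$; for $a>n$, Poincar\'e duality on the smooth projective $\tilde X_K$ gives $H^a(\tilde X_{\overline K},\Lambda)\cong H^{2n-a}(\tilde X_{\overline K},\Lambda)^\vee(-n)$, which is mw pure by the previous case. The still-uncontrolled pieces $H^0(\P^1_{\overline K},\sL_K)$ and $H^1(\P^1_{\overline K},\sL_K)$ are then direct summands of the mw pure $H^{n+1}(\tilde X_{\overline K},\Lambda)$ and $H^{n+2}(\tilde X_{\overline K},\Lambda)$ respectively, hence themselves mw pure by the abelian subcategory property. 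The main subtlety is the identification of each $L_q$ with a summand of cohomology of a good-reduction fiber of $\phi_K$ so that $({\bf mw})_{n-1}$ is directly applicable, which is a bookkeeping exercise using the Lefschetz pencil theory of Section~\ref{sec:coholefcl}; the Galois-equivariant degeneration of the perverse Leray SS over $\overline K$ is standard but requires tracking Tate twists and the compatibility of monodromy filtrations.
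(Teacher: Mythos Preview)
Your argument is correct and takes a genuinely different route from the paper's. The paper works directly on $X$ rather than $\tilde X$: it uses Weak and Hard Lefschetz to realize $H^a(X_{\overline K},\Lambda)$ for $a<n$ as a direct summand of the cohomology of a smooth hyperplane section $Y_K$ (taken over a non-critical $\sO$-point, so $Y$ is semistable and $({\bf mw})_{n-1}$ applies), treats $a>n$ by duality, and for the primitive middle cohomology invokes Proposition~\ref{prop:primmidco} to exhibit $H_{\rm prim}$ as a direct summand of $H^{-1}(\P^1_{\overline K},\sL_K)$. Your approach instead passes to $\tilde X$ via the blow-up formula and runs the perverse Leray spectral sequence for $\phi_K$, identifying the constant pieces $L_q$ with cohomology of a semistable fibre. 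Your route bypasses the somewhat delicate Proposition~\ref{prop:primmidco} (a page of diagram chases) at the price of the blow-up step and more spectral-sequence bookkeeping; the paper's route is more classical and stays on $X$. Both rest on the same two inputs: semistability of a non-critical fibre, and closure of mw purity under extensions and subobjects (Proposition~\ref{prop:mwpureabelian}). One small sharpening: your $L_q$ is not merely a summand but is \emph{equal} to $H^{q+n}(\phi_K^{-1}(t)_{\overline K},\Lambda)$ as a Galois module, since on the smooth locus the perverse and ordinary cohomology sheaves of $R\phi_{K,*}\sC$ agree up to shift.
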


\begin{proof} Recall that  a direct summand of  a  mw pure
  structure is again mw pure, Proposition~\ref{prop:mwpureabelian}.
  The isomorphism~\eqref{eq:h0plll} follows from proper base change and
Lemma~\ref{lem:unipopl}.
   Fix a semistable Lefschetz pencil as in the proposition with associated Lefschetz
   operator \[L\colon H^i(X_{\overline K},\sC ) \to H^{i+2}(X_{\overline K},\sC(1) ).\]
   Let
   $Y_K\hookrightarrow X_K$ be a smooth fiber of the pencil over $K$. By the Hard and Weak Lefschetz theorems the restriction map
  \[
H^i(X_{\overline K},\sC)\to H^i(Y_{\overline K},\sC)
\]
is an isomorphism for $i<-2$ and a split injection for $i=-2$ with inverse the composition of
\[
H^{-2}(Y_{\overline K},\sC) \to H^0(X_{\overline K},\sC(1)) \xleftarrow{\stackrel{L}{\sim}}   H^{-2}(X_{\overline K},\sC).
\]
So we conclude by the Lefschetz decomposition and by Proposition~\ref{prop:primmidco}
($\sC$ has a different shift there).
\end{proof}

\subsection{Main cohomological theorem} \label{ss:mct}

By Proposition~\ref{prop:rzpureunique} the Rapoport-Zink sheaf $\sRZ$ as in the proof of Lemma~\ref{lem:unipopl} is mw pure of weight $n$, so in order to check
whether~\eqref{eq:h0plll} is mw pure of weight $n$ it would suffice to answer positively  the
following two questions  in view of Proposition~\ref{prop:critpwprop}. Recall that the monodromy property is defined in Definition~\ref{def:mwproperty}.

\begin{itemize}
\item[(I)] Is $\sL_k$ mw pure of weight $n$?
\item[(II)]  Does the monodromy property hold for
$\sL_{\overline k}$ and $\P^1_{\overline k}\to  \Spec \overline k $?
\end{itemize}

In Theorem~\ref{thm:mainmwind} we give a positive answer to (I) assuming the monodromy-weight
conjecture is known in smaller dimensions. Question (II) can be seen as an arithmetic
variant of a conjecture of Kashiwara~\cite{Kas98}, see~\ref{subsec:arithkashi}, about the topology of complex varieties.
The complex version  has been proved by T.\ Mochizuki~\cite{Moc07}. We defer the  study of the arithmetic variant  to a forthcoming work.

\begin{thm}\label{thm:mainmwind}
  Assume  $ ({\bf mw} )_{n-1}$ holds. Then the following properties are satisfied.
  \begin{itemize}
\item[(i)]  The sheaf $\sRZ$ satisfies the monodromy property for $\phi_k$ in the sense of
  Definition~\ref{def:mwproperty}.
 In particular $\sL_k$ as in  Proposition~\ref{prop:redmwlef} is mw pure of weight $n$ and the
 monodromy graded pieces $\gr^{\rm M}_a \sL_{\overline k}$ are  semisimple for all
$a\in  \Z$.
\item[(ii)] The non-constant part of $\gr^{\rm M}_a \sL_{\overline k}$ satisfies multiplicity one in the sense
of~\ref{subsec:pltheocl} for all $a\in \Z$.
\end{itemize}
\end{thm}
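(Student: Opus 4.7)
The plan is to verify (i) via Proposition~\ref{prop:critpwprop}: since $\sRZ$ is mw pure of weight $n$ by Proposition~\ref{prop:rzpureunique}, the monodromy property for $\sRZ$ and $\phi_k$ is equivalent to each $\per R^i\phi_{k,*}\sRZ$ being mw pure of weight $n+i$. Proper base change together with the compatibility of $\uppsi$ with proper direct images identifies
\[
\per R^i\phi_{k,*}\sRZ \;\cong\; \uppsi\bigl(\per R^i\phi_{K,*}\sC\bigr),
\]
and Theorem~\ref{thm:glcllefschetz} decomposes $\per R^i\phi_{K,*}\sC$ (over $\overline K$) into a geometrically constant summand plus, for $i=0$, the non-constant Picard--Lefschetz piece $\sL_K^{\rm nc}$. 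I would then treat each contribution separately.

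The geometrically constant pieces are controlled directly by $({\bf mw})_{n-1}$. The semistable Morse lemma (Proposition~\ref{prop:semistmorselem}(i)) shows that a fibre of $\phi$ over any non-critical point of $\P^1_\sO$ is a projective semistable scheme over $\sO$ of relative dimension $n-1$; Weak Lefschetz then identifies the geometrically constant $\per R^i\phi_{K,*}\sC$ ($i\ne 0$) and the constant part $\sL_K^c$ in terms of the cohomology of such a fibre, which is mw pure by the inductive hypothesis. Since pullback from $\Spec K$ is compatible with monodromy and weight filtrations, the nearby cycles of these constant pieces are mw pure of the required weights on $\P^1_k$.

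For the non-constant piece $\sL_K^{\rm nc}$, its restriction to the open complement $U_K\subset\P^1_K$ of the critical values is smooth, and at each non-critical stalk it realises the primitive middle cohomology of a semistable $(n-1)$-fold over $\sO$, which by $({\bf mw})_{n-1}$ is mw pure of weight $n-1$; hence $\uppsi(\sL_K^{\rm nc})|_{U_k}$ is a mw pure smooth perverse sheaf on $U_k$ of weight $n$. At each critical value, the local Picard--Lefschetz formula (Proposition~\ref{prop:picardlefschetzform}) controls the pre-vanishing cycles, which are one-dimensional with a Tate twist making them pure of weight $n$. Gluing the smooth part over $U_k$ with these pure skyscraper contributions at the specialisations of critical values (which by Theorem~\ref{thmmain:lefpen} are precisely the critical values of $\phi_k$ in the good residue characteristic case, with a slight extension for general characteristic) produces a mw pure perverse sheaf on $\P^1_k$. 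The symmetry $N^a$-isomorphism demanded by Proposition~\ref{prop:critpwprop}(iii) is then forced by the self-duality of $\sRZ$ under the Verdier pairing constructed in Proposition~\ref{prop:RZconstr}, which descends to a self-duality of $\per R^i\phi_{k,*}\sRZ$ and hence to symmetry of its monodromy filtration about the middle weight.

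For part (ii), the multiplicity one property of $\sL_{\overline K}^{\rm nc}$ from Theorem~\ref{thm:glcllefschetz}(iii) transfers to $\sL_{\overline k}^{\rm nc}$ through the same nearby cycles analysis: in case (A) the smooth intermediate extension from $U_k$ contributes a single simple summand per monodromy graded piece, while in case (B) the skyscrapers at distinct critical values of $\phi_k$ are pairwise non-isomorphic because their supports differ. The main obstacle will be carrying out the nearby cycles calculation for $\sL_K^{\rm nc}$ at critical values of $\phi_k$ with enough precision to extract both the monodromy filtration structure on $\gr^{\rm M}_a \sL_{\overline k}$ and the multiplicity information; this requires combining the local normal form of the semistable Morse lemma (Proposition~\ref{prop:semistmorselem}(ii)) at critical points of $\phi_k$, the tameness of nearby cycles (Proposition~\ref{prop:grorzuni}), and the Picard--Lefschetz formula, and then appealing to the self-duality of $\sRZ$ to promote the resulting purity on the smooth part to mw purity on all of $\P^1_k$.
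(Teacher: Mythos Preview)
Your overall organization---use Proposition~\ref{prop:critpwprop}, split into constant and non-constant parts, and handle the constant parts via a semistable fibre and $({\bf mw})_{n-1}$---is sound, and the identification $\per R^i\phi_{k,*}\sRZ\cong\uppsi(\per R^i\phi_{K,*}\sC)$ is correct since $\uppsi$ is perverse $t$-exact. The treatment of the constant parts is essentially the paper's argument.

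The gap is in the non-constant part. First, your ``gluing'' of the mw pure smooth sheaf on $U_k$ with punctual contributions at critical values is not a proof: knowing that a perverse sheaf in $D^\nil$ is mw pure on a dense open and has pure vanishing-cycle fibres at finitely many points does not by itself yield mw purity on all of $\P^1_k$. The interaction between the horizontal monodromy (around the critical value in $\P^1$) and the vertical monodromy (in the $\sO$-direction) at those points is exactly what is delicate, and the local normal form from Proposition~\ref{prop:semistmorselem}(ii) does not by itself resolve it. Second, your final sentence is wrong: self-duality of $\sRZ$ makes both the weight filtration and the monodromy filtration on $\per R^i\phi_{k,*}\sRZ$ symmetric about the centre, but it does \emph{not} force them to coincide---that coincidence is precisely the content of mw purity, and it is what remains open for the full push-forward to a point (this is Question~(II) of \S\ref{ss:mct}).

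The paper sidesteps this entirely. Rather than analysing $\uppsi(\sL_K^{\rm nc})$ directly, it works with the monodromy spectral sequence for $\sRZ$ and $\phi_k$ and uses the explicit Rapoport--Zink description $\gr^{\rm M}_a\sRZ\cong\bigoplus\Lambda^{(p+q)}_{\tilde X_k}(-p)[n-p-q]$ from Remark~\ref{rmk:rapzink}. Each summand is supported on a stratum closure, on which $\phi_k$ is a \emph{classical} Lefschetz pencil; so by Theorem~\ref{thm:glcllefschetz} the $E_1^{p,q}$ are geometrically constant for $p+q\ne 0$. Hence the $d_1$-differential kills nothing on the non-constant part, giving $(E_2^{p,q})^{\rm nc}=(E_1^{p,q})^{\rm nc}$, on which $N^a$ is an isomorphism by construction. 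The constant part is then checked pointwise at a non-critical $x\in\P^1_k$ via the stratified base change of Proposition~\ref{sec:bc:mainprop}, where it becomes the Rapoport--Zink spectral sequence for a semistable $(n-1)$-fold and $({\bf mw})_{n-1}$ applies. This replaces your hard local analysis of $\uppsi(\sL_K^{\rm nc})$ with a structural observation about which $E_1$ terms can be non-constant.

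For part~(ii) your transfer of multiplicity one through $\uppsi$ is unjustified: nearby cycles need not preserve irreducibility, so multiplicity one for $\sL_{\overline K}^{\rm nc}$ says nothing directly about $\gr^{\rm M}_a\sL_{\overline k}$. The paper's argument is again via $(E_2)^{\rm nc}=(E_1)^{\rm nc}$: the non-constant part of $\gr^{\rm M}_a\sL_{\overline k}$ is a direct sum of non-constant Picard--Lefschetz sheaves for the pencils $\phi_k|_{\overline Z}$ on the various strata $\overline Z$, each of which has multiplicity one by Theorem~\ref{thm:glcllefschetz}; since the critical values coming from different strata are disjoint, so does the sum.
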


\begin{cor}\label{cor:mainredmw}
 Assume  $ ({\bf mw} )_{n-1}$ holds. In order to show   $ ({\bf mw} )_{n}$ it would
 ``suffice''  to show the monodromy property for $\sL_{\overline k}$ and the morphism $\P^1_{\overline
   k}\to \Spec \overline k$.
\end{cor}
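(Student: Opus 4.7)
The plan is to reduce the corollary to a linear assembly of previously established results. Fix a projective semistable $X$ over $\sO$ with $\dim(X_K)=n$; we want to verify that $H^a(X_{\overline K},\Lambda)$ is mw pure of weight $a$ for all $a$. The backbone is Proposition~\ref{prop:redmwlef}: it reduces, under $(\mathbf{mw})_{n-1}$, the purity of the cohomology of $X$ to the purity of $H^{-1}(\P^1_{\overline K},\sL_K)$ for \emph{some} semistable Lefschetz pencil $\phi\colon \tilde X\to\P^1_\sO$ associated to $X$. So the first step is to exhibit such a pencil: by the proposition preceding Section~\ref{sec:coholefcl} (existence of semistable Lefschetz pencils after a sufficiently high Veronese re-embedding), such a $\phi$ exists regardless of the residue characteristic.

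Next, I would invoke Theorem~\ref{thm:mainmwind}(i) for this pencil, which under $(\mathbf{mw})_{n-1}$ gives that the Picard--Lefschetz perverse sheaf $\sL_k=\uppsi(\sL_K)$ on $\P^1_k$ is mw pure of weight $n$. Geometrizing along $\overline k/k$ preserves mw purity, so $\sL_{\overline k}$ is mw pure of weight $n$ as a perverse sheaf in $D^\nil(\P^1_{\overline k},\Lambda)$. At this point the input hypothesis enters: by assumption, the monodromy property of Definition~\ref{def:mwproperty} holds for $\sL_{\overline k}$ and the structure morphism $f\colon \P^1_{\overline k}\to\Spec\overline k$. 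Applying the equivalence (i)$\Leftrightarrow$(ii) in Proposition~\ref{prop:critpwprop} yields that $\per R^i f_*\sL_{\overline k}$ is mw pure of weight $n+i$ for every $i$; in particular $H^0(\P^1_{\overline k},\sL_{\overline k})=\per R^0 f_*\sL_{\overline k}$ is mw pure of weight $n$.

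To convert this into the desired statement on the generic fiber, I would use Lemma~\ref{lem:unipopl} combined with proper base change: the canonical isomorphism $\uppsi(\sL_K)\xrightarrow{\sim} R\Uppsi_{\P^1_\sO/\sO}(\sL_K)[-1]$ gives
\[
H^{-1}(\P^1_{\overline K},\sL_K)\cong H^0(\P^1_{\overline k},\sL_{\overline k})
\]
compatibly with the monodromy operator $N$ (the identification of Iwasawa and Tate twists via $\Lambda=\QQl$ is in~\ref{subsec_iwasawa}). Consequently the left-hand side is mw pure of weight $n$. Feeding this back into Proposition~\ref{prop:redmwlef} concludes that $H^a(X_{\overline K},\Lambda)$ is mw pure of weight $a$ for all $a$, which is $(\mathbf{mw})_n$ since $X$ was arbitrary.

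In this chain, no single step is truly hard: the substantive content is already packaged in Theorem~\ref{thm:mainmwind}(i), Proposition~\ref{prop:critpwprop}, and Proposition~\ref{prop:redmwlef}. If there is any residual difficulty, it is purely bookkeeping: one must verify that the compatibility of filtrations produced by unipotent nearby cycles matches the monodromy filtration on $H^{-1}(\P^1_{\overline K},\sL_K)$ literally, not only up to a shift; this is what justifies transporting mw purity across the specialization isomorphism. This explains the quotation marks around ``suffice'': the real open content of the corollary is the monodromy property itself, which the authors package as the arithmetic Kashiwara conjecture and defer.
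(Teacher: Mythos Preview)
Your proof is correct and follows essentially the same route as the paper: invoke Theorem~\ref{thm:mainmwind}(i) to get mw purity of $\sL_k$, feed the assumed monodromy property into Proposition~\ref{prop:critpwprop} to obtain mw purity of $H^0(\P^1_{\overline k},\sL_{\overline k})$, and conclude via Proposition~\ref{prop:redmwlef}. One small remark: the phrase ``geometrizing along $\overline k/k$ preserves mw purity'' is slightly off, since mw purity is a notion over finite $k$; what you actually need (and implicitly use) is that the monodromy property for $\sL_{\overline k}$ over $\overline k$ descends to the monodromy property for $\sL_k$ over $k$, whereupon Proposition~\ref{prop:critpwprop} applies directly to $\sL_k$.
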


\begin{proof}[Proof of Corollary~\ref{cor:mainredmw}]
By Theorem~\ref{thm:mainmwind} the perverse sheaf $\sL_k$ is mw pure of weight $n$. Then
by Proposition~\ref{prop:critpwprop} the monodromy property for  $\sL_{\overline k}$ and for  the morphism $\P^1_{\overline
   k}\to \Spec \overline k$ implies that $H^0(\P^1_{\overline k}, \sL_{\overline k})$ is
 mw pure of weight $n$. We finish the proof using Proposition~\ref{prop:redmwlef}.
\end{proof}

\begin{proof}[Proof of Theorem~\ref{thm:mainmwind}]
We check the mw property
  for $\sRZ$ and $\phi_k$ via the criterion of Proposition~\ref{prop:critpwprop}(iii) for
  $f=\phi_k$ and $\sF=\sRZ$.
  By the characterizing property of the monodromy filtration, Lemma~\ref{lem:monfiltr}(ii),
  \eq{eq:NE_1}{
N^a\colon E_1^{-a,i+a} \xrightarrow{\sim} E_1^{a,i-a}(-a)
  }
is an isomorphism  for $a>0$ in view of the definition of the spectral sequence. By Remark~\ref{rmk:rapzink} and by
Theorem~\ref{thm:glcllefschetz} $E_1^{p,q} $ is geometrically constant for $p+q\ne 0$. So
the non-constant part of the $d_1$-differential vanishes, which means
\eq{eq:mainthmncc}{
E_2^{p,q} = (E_1^{p,q})^{\rm nc} \oplus H\left(   ( E_1^{p-1,q})^{\rm c}\xrightarrow{d_1}  (
E_1^{p,q})^{\rm c}  \xrightarrow{d_1}  ( E_1^{p+1,q})^{\rm c} \right).
}
Recall that the upper indices `c' and `nc' stand for the geometrically constant part and
the geometrically  non-constant part as in~\ref{subsec:pltheocl}.

On the first summand on the right  of~\eqref{eq:mainthmncc} the map $N^a$ is an isomorphism in the bidegrees as in~\eqref{eq:NE_1} as
observed above. The second summand is geometrically constant, so we only need to check
that the map of perverse sheaves in $D^\nil(x,\Lambda)$
\eq{eq:NE_2}{
N^a\colon  E_2^{-a,i+a} |_x [-1] \to E_2^{a,i-a}(-a)^\Iw|_x [-1]
}
is an isomorphism for $a>0$, where $x\in\P^1_k$ is a closed non-critical value, which we
will assume to be $k$-rational after possibly
replacing $k$ by a finite extension.

Lift $x$ arbitrarily to an $\sO$-point $s\colon \Spec\sO \to \P^1_\sO$.
By Lemma~\ref{lem:stratregtransv}
\[Y:=\phi^{-1}(s)\hookrightarrow \tilde X\] is a stratified regular
immersion of semistable schemes, see~\ref{subsec:stratregim}, where $\tilde X$ and $Y$ have the standard
stratification as in Section~\ref{subsec:semistabmor}. Then by Proposition~\ref{sec:stratbciso} we get the
base change isomorphism of perverse sheaves
\[
  \sRZ|_{Y_k}[-1]\cong\uppsi
  (\sC|_{Y_K}[-1])
\]
Note that
\[
(\fil^{\rm M}_a \sRZ)|_{Y_k}[-1] = \fil^{\rm M}_a (\sRZ|_{Y_k}[-1]),
\]
because no irreducible perverse constituent of $\sRZ$ is supported on $Y_k$.
In particular the spectral sequence in~\eqref{eq:NE_2} is nothing but the
monodromy spectral sequence for $H^*(Y_{\overline K},\Lambda)$ with respect to the model $Y$.
Finally,
by our assumption  $ ({\bf mw} )_{n-1}$  we get the isomorphy of~\eqref{eq:NE_2}. This proves part (i).

\smallskip

Part (ii) is then clear as the critical values of $\phi_k$ coming from different  strata $Z$ of $\tilde
X_k$ are disjoint, but these critical values are just the non-smooth loci of  $\per R^0 (\phi_k|_{\overline Z})_* \Lambda
[\dim Z]$. As the non-constant parts of these perverse sheaves satisfy multiplicity one by
Theorem~\ref{thm:glcllefschetz} individually, so does their direct sum.
\end{proof}

\begin{ex}
If case (B) in Theorem~\ref{thm:glcllefschetz} holds for the Lefschetz pencil
$\phi_{\overline K}\colon X_{\overline K}\to \P^1_{\overline K}$,  the
monodromy property for   $\sL_{\overline k}$ is clear, because this perverse sheaf  is then a direct sum of a constant
perverse sheaf and of  skyscraper sheaves.
\end{ex}

\subsection{Arithmetic Kashiwara conjecture}\label{subsec:arithkashi}

Motivated by the  Kashiwara conjecture in complex geometry~\cite{Kas98} we suggest that
the following arithmetic Kashiwara conjecture  holds.

Let $\sO$ be a strictly henselian discrete valuation ring. We do not have to assume that
$K=\mathrm{frac}(\sO)$ has characteristic zero or that the residue field $k$ is perfect in
this subsection. Let $X$ be a proper scheme over $\sO$.

\begin{conj}\label{conj:arkash}
For a perverse sheaf $\sF_K\in D^b_c(X_K,\QQl)$ such that its base change $\sF_{\overline
  K}$ to a separable
closure $\overline K$ of $K$ is semisimple and arithmetic (in the sense
of~\cite[Definition~1.4]{EK20}) the following holds for the perverse sheaf $\sF_k=\uppsi (\sF_K)$
\begin{itemize}
\item[(i)] $\gr^{\rm M}_a \sF_k$ is semisimple for all $a\in \Z$,
\item[(ii)] $\sF_k$ satisfies the monodromy property  with respect to the morphism $X_k\to
  \Spec k$, see~Definition~\ref{def:mwproperty}.
\end{itemize}
\end{conj}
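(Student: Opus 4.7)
The plan is to attempt a reduction of this conjecture to statements that are either proved or conditional on the monodromy-weight conjecture, while being upfront that the second part is genuinely open (the authors themselves defer it). First, I would reduce to the case in which $\sF_K$ is a direct summand of a pushforward of a constant sheaf from a semistable scheme. By de Jong's alteration theorem applied to each irreducible constituent of the support of $\sF_K$, after a finite extension of $\sO$ one obtains a proper surjective generically finite map $g\colon Y\to \mathrm{supp}(\sF_K)$ with $Y$ projective and semistable over $\sO$. Since $\sF_{\overline K}$ is semisimple and arithmetic, the decomposition theorem in the arithmetic setting (Lafforgue plus Gabber, Remark~\ref{rmk:kashiw}, if $\mathrm{ch}(k)>0$; Mochizuki or Drinfeld's method if $\mathrm{ch}(k)=0$) realizes $\sF_K$ as a direct summand of $\per\!R g_{K,*}(\Lambda_{Y_K}[\dim Y_K])$ after a shift. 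By Proposition~\ref{prop:mwpureabelian} and the corresponding stability properties under direct summands, it suffices to prove (i) and (ii) for the full pushforward sheaves instead of $\sF_K$.

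For part (i), after the reduction, proper base change for unipotent nearby cycles together with the RZ-sheaf formalism of Section~\ref{sec:rapzi} identify $\sF_k$ as a direct summand of $\per\!R g_{k,*}(\sRZ_Y)$, where $\sRZ_Y=\uppsi(\Lambda_{Y_K}[\dim Y_K])$ is the Rapoport-Zink sheaf of Proposition~\ref{prop:RZconstr}. Granting $({\bf mw})_m$ in the relevant range, Proposition~\ref{prop:rzpureunique} shows that $\sRZ_Y$ is mw pure, hence Proposition~\ref{prop:degmonspecse} and \cite[Corollaire~5.4.2]{BBD83} give that each $\per\!R^i g_{k,*}(\sRZ_Y)$ is pure; then Proposition~\ref{prop:critpwprop}(ii) says it is mw pure, so each $\gr^{\rm M}_a$ is pure and therefore geometrically semisimple by \cite[Corollaire~5.4.6]{BBD83}, which is what (i) asserts. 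So part (i) is essentially conditional on $({\bf mw})$ in smaller dimensions.

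Part (ii) is the deep part and is the true analog of Kashiwara's original conjecture. By Proposition~\ref{prop:critpwprop}(iii), once mw purity is granted, the monodromy property is equivalent to the isomorphism $N^a\colon E_2^{-a,i+a}\xrightarrow{\sim}E_2^{a,i-a}(-a)$ on the monodromy spectral sequence for $\sF_k$ and $X_k\to\Spec k$. A natural approach, alluded to in the introduction, is a tilting from mixed characteristic to equal characteristic zero, where the analog is known through Saito's Hodge modules~\cite{Sai88} or Mochizuki's twistor $\mathcal D$-modules~\cite{Moc07}. Such a tilting requires precisely the tameness of $\sL_K$ along models of $\P^1$ established in Theorem~\ref{thmint:3} of the paper, which explains why that tameness result is prepared here.

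The hard part, and the main obstacle, is carrying out this tilting in a way that respects the perverse structure and the monodromy filtration simultaneously: no direct arithmetic analog of Mochizuki's or Drinfeld's complex-analytic arguments is known, and the absence of a Hodge-theoretic ambient category in mixed characteristic means that one cannot simply transport the complex proof. I would therefore expect that a full proof of the conjecture requires genuinely new input — either a robust $p$-adic Hodge-theoretic enhancement of arithmetic perverse sheaves, or a direct argument via the monodromy property for the Picard-Lefschetz sheaf as in Corollary~\ref{cor:mainredmw} combined with an induction on dimension.
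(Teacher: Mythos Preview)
The statement is a \emph{conjecture} in the paper; no proof is given, and the authors explicitly defer its study to future work (only noting that in equal characteristic a spreading argument yields many cases). So there is no paper's proof to compare against, and the question is whether your sketch constitutes genuine progress. It does not, for three reasons.

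First, the reduction step fails. A simple arithmetic perverse sheaf $\sF_K$ is the intermediate extension of some local system $\mathcal L$ on a locally closed smooth subscheme, and $\mathcal L$ is in general not constant. An alteration $g\colon Y\to \overline{\mathrm{supp}(\sF_K)}$ exhibits the IC sheaf with \emph{constant} coefficients as a summand of $\per Rg_{K,*}\Lambda[\dim Y_K]$, but there is no reason the IC sheaf with coefficients $\mathcal L$ appears. ``Arithmetic'' in the sense of~\cite{EK20} is a finiteness condition on the field of definition of traces; it does not force $\mathcal L$ to be geometrically trivial or of geometric origin.

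Second, and more basically, the conjecture assumes $\sO$ strictly henselian, so $k$ is separably closed. Every weight argument you invoke---Propositions~\ref{prop:rzpureunique}, \ref{prop:degmonspecse}, \ref{prop:critpwprop}, and~\cite[5.4.2, 5.4.6]{BBD83}---requires $k$ finite for ``pure of weight $w$'' to even be defined. Over a separably closed residue field there is no Frobenius, hence no weights, and the chain ``mw pure $\Rightarrow$ $\gr^{\rm M}_a$ pure $\Rightarrow$ semisimple'' is unavailable. The intrinsic, weight-free formulation of the conjecture is precisely what makes it hard.

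Third, even if one ignores the above, your deduction of part~(i) is circular. Proposition~\ref{prop:degmonspecse} gives only $E_2$-degeneration of the monodromy spectral sequence, not purity of the abutment; and Proposition~\ref{prop:critpwprop} says that mw purity of $\per R^i g_{k,*}\sRZ_Y$ is \emph{equivalent} to the monodromy property for $\sRZ_Y$ and $g_k$---an instance of the very phenomenon in part~(ii). So you have assumed what you want to prove.

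Your final paragraph, conceding that part~(ii) requires genuinely new input (tilting, an arithmetic Hodge-type category, or the inductive scheme of Corollary~\ref{cor:mainredmw}), is accurate and matches the paper's own stance.
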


The condition of being arithmetic in the conjecture cannot be omitted by a counterexample
of T.\ Mochizuki. In equal characteristic one can establish many cases of the conjecture
by a spreading argument and the techniques mentioned in Remark~\ref{rmk:kashiw}. We defer
the study to a   forthcoming work, a special case of this spreading is  presented in~\cite{Zoc24}.

As a corollary to the results in~\ref{ss:mct} we obtain

\begin{cor}
The arithmetic Kashiwara conjecture for $X=\P^1_{\sO}$ and for mixed characteristic $\sO$ would imply the monodromy-weight conjecture.
\end{cor}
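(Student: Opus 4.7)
The strategy is an induction on $n$, showing that the arithmetic Kashiwara conjecture for $X=\P^1_\sO$ together with $(\mathbf{mw})_{n-1}$ yields $(\mathbf{mw})_n$. The base cases $(\mathbf{mw})_n$ for $n\le 2$ are already known (Grothendieck for $n\le 1$, Rapoport--Zink for $n=2$), so the content is entirely in the inductive step.

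For the inductive step, fix $n\ge 3$ and assume $(\mathbf{mw})_{n-1}$. Given a projective semistable scheme $X$ over $\sO$ with $\dim(X_K)=n$, I would base change to $\sO^{\rm sh}$; this does not affect mw purity because the monodromy filtration, the weights, and the formation of unipotent nearby cycles are all compatible with this base change. After a sufficiently large Veronese re-embedding of $X$, the Proposition at the end of Section~\ref{sec:geolefschetz} produces a semistable Lefschetz pencil $\phi\colon\tilde X\to\P^1_\sO$, with Picard-Lefschetz sheaves $\sL_K=\per R^0\phi_{K,*}\sC$ on $\P^1_K$ and $\sL_k=\uppsi(\sL_K)$ on $\P^1_k$. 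By Theorem~\ref{thm:mainmwind}(i) the inductive hypothesis forces $\sL_k$ to be mw pure of weight $n$. By Theorem~\ref{thm:glcllefschetz}(i), the base change $\sL_{\overline K}$ is semisimple, and $\sL_K$ is arithmetic in the sense of~\cite[Definition~1.4]{EK20} because it is a perverse direct summand of $R\phi_{K,*}\sC$ for the constant perverse sheaf on $\tilde X_K$. The arithmetic Kashiwara conjecture for $\P^1_\sO$ then applies to $\sL_K$ and delivers the monodromy property for $\sL_{\overline k}$ with respect to $\P^1_{\overline k}\to\Spec\overline k$. A direct appeal to Corollary~\ref{cor:mainredmw} now gives $(\mathbf{mw})_n$ for our $X$, completing the induction.

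The hard part is not the architecture of the argument, which is essentially forced by Corollary~\ref{cor:mainredmw} and Theorem~\ref{thm:mainmwind}, but the verification of the arithmeticity hypothesis in Conjecture~\ref{conj:arkash} for $\sL_K$: one has to check that ``of geometric origin'' as embodied in the construction $\sL_K=\per R^0\phi_{K,*}\sC$ actually meets the arithmeticity condition of~\cite[Definition~1.4]{EK20} over the mixed characteristic base. This should amount to spreading out $\tilde X\to \P^1_\sO$ to a proper morphism over a finitely generated subring of $\sO$ and invoking proper base change for the perverse direct image, but it is the one step where care is needed to match conventions. A minor technical point, easily checked, is that the base change to $\sO^{\rm sh}$ at the outset preserves the status of being semisimple and arithmetic so that the conjecture may be invoked in its strictly henselian form.
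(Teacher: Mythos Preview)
Your architecture is exactly the one the paper intends (the paper gives no explicit proof, merely pointing to the results of Section~\ref{ss:mct}), and your identification of the arithmeticity check as the only nontrivial verification is on the mark.

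There is, however, one genuine slip in the order of operations. You base change to $\sO^{\rm sh}$ at the outset and then invoke Theorem~\ref{thm:mainmwind}(i) and Corollary~\ref{cor:mainredmw}. But both of those results are formulated over $\sO$ with \emph{finite} residue field $k$: mw purity (Definition~\ref{def:mwpure}) is a statement about Frobenius weights and is meaningless once $k$ is replaced by $\overline k$. Your sentence ``this does not affect mw purity because \ldots\ the weights \ldots\ are compatible with this base change'' is therefore not right as written. The fix is simple: keep the original $\sO$ with $k$ finite throughout, construct the pencil and apply Theorem~\ref{thm:mainmwind} and Corollary~\ref{cor:mainredmw} there. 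The only place $\sO^{\rm sh}$ enters is when you invoke Conjecture~\ref{conj:arkash}: apply it to the pullback of $\sL_K$ along $\sO\to\sO^{\rm sh}$ to obtain the monodromy property for $\uppsi(\sL_{K^{\rm nr}})$ on $\P^1_{\overline k}$. Since unipotent nearby cycles are computed after passage to $\overline K$ anyway, this agrees with the $\sL_{\overline k}$ appearing in Corollary~\ref{cor:mainredmw}, and you are done. Your closing paragraph already hints that you see this compatibility issue; just move the base change to the correct spot.
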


  \begin{rmk}
There exists a version of the monodromy-weight conjecture for the operator $N$ on the
monodromy spectral sequence over any strictly henselian discrete valuation ring $\sO$,
see~\cite[Conjecture 1.2]{Ito05}. By~\cite[Remark 6.2]{Ito05} one can reduce this conjecture
to the  case in which the residue field $k$ of $\sO$ is a perfection of a finitely
generated field. An immediate generalization of our argument  shows that   the arithmetic Kashiwara
conjecture would also imply this version of the monodromy-weight conjecture.
  \end{rmk}

\section{Tameness of the cohomology  of semistable  Lefschetz pencils}\label{sec:tamepl}

\subsection{Reminder on  log structures}

For a background on  log geometry see~\cite{Ogu18}.
In this subsection we do not assume that the residue field of $\sO$ is perfect.
  Let $X$ be a scheme flat and of finite type over
 $\sO$.
Let $j\colon X_K\to X$ be the open immersion of the generic fiber.
Let $M\to \sO_X$ be a log structure.
We always endow $\Spec \sO$ with the {\it canonical log structure} defined by  $\N\to \sO$, $1\mapsto \pi$.
If the fine and saturated (fs)  log scheme $(X,M)$ is log smooth over $\sO$,  then it is log regular, in particular $X$ is normal.
If additionally the trivial locus of the log structure is $X_K$ then $M= \sO_X \cap j_*
\sO_{X_K}^\times$, so the log structure $M$ is not an extra datum. If no explicit log
structure is given on  $X$ we say that it is  {\it log smooth} over
$\sO$ if it is fs and log smooth over $\sO$  with respect to
the log structure  $M= \sO_X \cap j_*
\sO_{X_K}^\times$.
By abuse of notation we also call a scheme log smooth if it is pro \'etale
over a log smooth scheme over $\sO$.

Our basic source of log smooth schemes over $\sO$ stems from the following   lemma, which
is a consequence of~\cite[Theorem~III.2.5.5,  Example~IV.3.1.17]{Ogu18}.

\begin{lem}\label{lem:logslogs}
The morphism of fs log schemes $\Spec \Z[\N^m]  \to \Spec \Z [\N ]$ induced by the monoid homomorphism $1\mapsto (1,\ldots ,
1)$ is log smooth and saturated.
\end{lem}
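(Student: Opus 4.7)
The plan is to apply the standard combinatorial criteria for log smoothness and for saturatedness of a morphism of fs log schemes induced by a monoid homomorphism $\theta$ of fs monoids, in our case the diagonal $\theta\colon \N \to \N^m$, $1\mapsto (1,\ldots,1)$. Both conditions reduce to statements about $\theta$ that can be read off directly, and the substance of the proof is to identify which elementary facts one has to feed into the cited theorems from Ogus.

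For log smoothness, the chart criterion of \cite[Theorem~III.2.5.5]{Ogu18} reduces the claim to showing that $\theta^{\rm gp}\colon \Z\to \Z^m$ is injective with torsion-free cokernel. The map is the diagonal, so injectivity is trivial, and the cokernel is identified with $\Z^{m-1}$ (for instance with basis given by the classes of $e_1-e_m,\ldots,e_{m-1}-e_m$), hence torsion-free. No invertibility hypothesis on $\Spec\Z$ enters, because the relevant kernel and torsion subgroup are zero.

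For saturatedness in the sense used in \cite[Example~IV.3.1.17]{Ogu18}, the criterion is that $\theta$ is integral and that $(\N^m)^{\rm gp}/\theta^{\rm gp}(\N^{\rm gp})$ is torsion-free; the latter was already verified above. For integrality, I would recall that by Kato's criterion it is equivalent to flatness of the ring map $\Z[\N]\to \Z[\N^m]$, i.e.\ of $\Z[y]\to \Z[x_1,\ldots,x_m]$ sending $y\mapsto x_1\cdots x_m$. I would exhibit an explicit $\Z[y]$-basis to see this: every monomial $x_1^{a_1}\cdots x_m^{a_m}$ admits a unique decomposition $x^a = y^r\cdot x^b$ with $r=\min_i a_i$ and $b = a - r\cdot(1,\ldots,1)$ satisfying $\min_i b_i = 0$, so the set of monomials with at least one vanishing exponent is a free $\Z[y]$-basis of $\Z[x_1,\ldots,x_m]$, and flatness is immediate.

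Since the assertion is purely combinatorial once the criteria are in place, there is no real obstacle; the ``hard'' step is just making sure that the two Ogus references are being invoked with exactly the correct hypotheses verified above.
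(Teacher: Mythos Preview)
Your proposal is correct and matches the paper's approach exactly: the paper gives no proof beyond stating that the lemma ``is a consequence of~\cite[Theorem~III.2.5.5, Example~IV.3.1.17]{Ogu18}'', and you have simply unpacked those two references and verified their hypotheses for the diagonal $\N\to\N^m$. Your verification of integrality via an explicit $\Z[y]$-basis is a bit more than needed (any morphism out of the valuative monoid $\N$ is integral), but it is certainly correct.
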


Recall that for  a saturated morphism of fs log schemes the  base change in the category of
log schemes and in the category of fs log schemes coincide~\cite[Proposition 2.5.3]{Ogu18}.

\begin{ex}\label{tame:exlogsm1}
  Let $X=\Spec \sO [X_1,\ldots ,X_n]/(X_1 \cdots X_m -\pi^e)$ with $1\le m\le n$ and
  $e>0$. Then $X$ is log smooth over $\sO$. This follows from Lemma~\ref{lem:logslogs}
  since $X$ is smooth and strict over the log smooth  schemes
  $ \Spec (\Z[\N^m ] \otimes_{\Z[\N]} \sO )$, where we use from
  left to right the monoid homomorphisms $\mathbb N \to \N^m $, $1\mapsto (1,\ldots , 1)$,   $\N\to
  \sO$, $1\mapsto \pi^e$.
\end{ex}

\begin{ex}\label{tame:exlogsm2}
Let \[X=\Spec \sO [X_1,\ldots ,X_n,T_1,T_2]/(T_1T_2-\pi,  X_1 \cdots X_m - T_1^dT_2^e)\] with $1\le m\le n$ and
$e,d>0$.
Then $X$ is log smooth over $\sO$. This follows from Lemma~\ref{lem:logslogs}
since $X$ is smooth and strict over the log smooth scheme
\[
\Spec (  \Z[\N^m] \otimes_{\Z[\N]}    ( \Z[\N^2] \otimes_{\Z[\N]} \sO ) ).
\]
where we use from left to right the monoid homomorphisms  $\mathbb N \to \N^m $, $1\mapsto (1,\ldots , 1)$, $\N\to \mathbb N^2$, $1\mapsto
(d,e) $,  $\N\to \mathbb N^2$, $1\mapsto (1,1) $ and $\mathbb
N\to \sO$, $1\mapsto \pi$.
\end{ex}

\subsection{Reminder on  tame coverings}

In this subsection we do not assume that the residue field of $\sO$ is perfect.
Tame \'etale coverings have been studied systematically for the first time in~\cite[Expos\'e XIII]{SGA1} in the
form of the so called {\em Grothendieck-Murre tameness}.
The notion of {\em curve tameness} was first
studied by \cite{Wie08} and later in \cite{KS10}.

Consider an excellent  discrete valuation ring $A$, $K=\mathrm{frac}(A)$, and a
finite field extension $K\subset L$. Let $B\subset L$ be the integral closure of $A$. Then
$L$ is a {\em tame extension} of the discretely valued field $K$ if the residue field
extensions of $A\subset B$
are separable and the ramification indices at all maximal ideals of $B$ are coprime to the residue characteristic.

Let now $X$ be a regular, separated scheme of finite type over  $K=\mathrm{frac}(\sO)$.
Consider a proper normal scheme $\overline X$ over $\sO$ which contains $X$ as an open
subscheme.
 Consider an \'etale covering $f\colon X'\to X$. For a
point $x\in \overline X$ of codimension one  we call $X'\to X$ tame over $\overline{\{ x\}}$ if
the normalization $\overline X'$ of $\overline X$ in $X'$ is tame in the above sense with respect to the
discrete valuation defined by $x$.

We say that $ X'\to \overline X$ is {\em curve tame} (or for short just {\em tame}) if for any closed point
$x\in  X$ the finite morphism $( X'\times_X x)\to x$ is  tame  with respect to the unique extension of the discrete valuation from
$K$ to $k(x)$. The following lemmas are a consequence of~\cite[Theorem.~4.4]{KS10}.

\begin{lem}\label{lem:tamepullb}
  If $\sO\subset \sO'$ is a finite extension of henselian discrete valuation rings and
  $U\subset X$ is a dense open subscheme then
  (i) $\Leftrightarrow $ (ii)
  $\Rightarrow$ (iii) with
  \begin{itemize}
  \item[(i)] $ X'\to  X$ is curve tame;
    \item[(ii)] $ U'=X'\times_X U\to U$ is curve tame;
    \item[(iii)] the base change $X'_{\sO'}\to X_{\sO'}$ is curve tame.
  \end{itemize}
  If moreover $\sO'$ is a tame extension of $\sO$ then all conditions are equivalent.
\end{lem}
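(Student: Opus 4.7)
The plan is to reduce all three conditions to properties of curve tameness collected in Kerz--Schmidt \cite[Theorem~4.4]{KS10}. Recall that by that theorem, curve tameness of an \'etale covering $X'\to X$ is equivalent to divisorial tameness at the boundary of any (equivalently, some) good compactification, and is stable under a number of natural geometric operations such as base change and passage to dense opens.

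For (i) $\Leftrightarrow$ (ii), the implication (i) $\Rightarrow$ (ii) is immediate from the definition, since every closed point of $U$ is a closed point of $X$ with the same fiber. Conversely, I would argue that at a closed point $x\in X\setminus U$, the covering $X'\to X$ is \'etale at $x$, so $X'\times_X x\to x$ is an \'etale cover of the trait $x$ and hence trivially tame. Thus requiring tameness only at closed points of $U$ forces tameness at all closed points of $X$.

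For (ii) $\Rightarrow$ (iii), I would invoke base change compatibility from \cite[Theorem~4.4]{KS10}. Concretely, any closed point $y$ of $X_{\sO'}$ lies over a closed point $u$ of $U$, and its residue field $k(y)$ is a finite extension of $k(u)$; the fiber $X'_{\sO'}\times_{X_{\sO'}} y \to y$ is obtained from $U'\times_U u \to u$ by base change along $k(u)\hookrightarrow k(y)$, and tameness of a finite extension of discretely valued henselian fields is preserved under arbitrary base change of the base DVR. For the converse (iii) $\Rightarrow$ (i) under the assumption that $\sO'/\sO$ is tame, I would pick, for each closed point $x\in X$, a closed point $y$ of $X_{\sO'}$ lying over it. The composite extension of DVRs $k(x)\subset k(y)$ is tame because $\sO'/\sO$ is tame and the unique valuation extensions from $K$ resp.\ $K'$ to $k(x)$ resp.\ $k(y)$ are compatible in the relevant tower. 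Combining tameness of the fiber at $y$ with tameness of $k(y)/k(x)$ via the multiplicativity of ramification indices and transitivity of separable residue field extensions then gives tameness of the fiber at $x$.

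The main obstacle is the last step: verifying that tameness of $\sO'/\sO$ propagates to tameness of $k(y)/k(x)$ for the unique valuation extensions. This is where one genuinely needs the full multiplicativity of tame ramification indices in the tower $K\subset K'\subset k(y)$ versus $K\subset k(x)\subset k(y)$, together with preservation of separability of residue fields; the cleanest way to package this is via Abhyankar's lemma, which is also the technical heart of Kerz--Schmidt's treatment and so the reference to \cite[Theorem~4.4]{KS10} already absorbs this difficulty.
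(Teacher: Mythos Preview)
Your overall plan---deduce everything from \cite[Theorem~4.4]{KS10}---matches the paper, which simply records the lemma as a consequence of that reference. The problem is your attempted unpacking of (ii) $\Rightarrow$ (i), which contains a genuine error.

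You write that for a closed point $x\in X\setminus U$ the covering $X'\to X$ is \'etale at $x$, so ``$X'\times_X x\to x$ is an \'etale cover of the trait $x$ and hence trivially tame.'' But $x$ is not a trait: it is $\Spec k(x)$ for a finite extension $k(x)/K$, and curve tameness asks whether the finite \'etale $k(x)$-algebra $X'\times_X x$ is tame with respect to the valuation on $k(x)$ extending that of $K$. \'Etaleness over $\Spec k(x)$ says nothing about this. Concretely, take $X=\P^1_K$ and $X'\to X$ the base change of a wildly ramified $\Z/p$-extension $L/K$ along $\pi_1(\P^1_K)\cong G_K$; at every $K$-rational point the fibre is $\Spec L$, which is wild. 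Your argument would declare such fibres tame, which is false. The point is that (ii) $\Rightarrow$ (i) is \emph{not} elementary from the definition: it needs the equivalence of curve tameness with a notion that is visibly birational (e.g.\ valuation tameness or divisor tameness on a compactification), and this equivalence is precisely the content of \cite[Theorem~4.4]{KS10}. So you should drop the direct argument for (ii) $\Rightarrow$ (i) and simply invoke the reference, as you correctly do in your opening paragraph. Your sketches for (i) $\Rightarrow$ (ii), (ii) $\Rightarrow$ (iii), and (iii) $\Rightarrow$ (i) under the tameness hypothesis on $\sO'/\sO$ are fine in outline.
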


Assume moreover that $\overline X\setminus X$ is
a simple normal crossings divisor.
We say that $X'\to X$ is
{\em Grothendieck-Murre tame} if it is tame over all irreducible components of $\overline
X\setminus X$.

\begin{lem}\label{lem:tamegmcurve}
Under the above assumptions an \'etale covering $X'\to X$ is curve tame if and only if it
is Grothendieck-Murre tame.
\end{lem}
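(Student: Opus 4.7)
The plan is to derive this equivalence from Kerz--Schmidt's Theorem~4.4 in~\cite{KS10}, which establishes precisely the equivalence of curve tameness and numerical/Grothendieck--Murre tameness for regular schemes admitting a good compactification with snc boundary. Both sides of the equivalence are local on the boundary, and the argument reduces to a local analysis at boundary points.

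For the forward implication (curve tame $\Rightarrow$ GM tame), I would fix a codimension one component $D$ of $\overline X\setminus X$ and show tameness at $\eta_D$ by exhibiting a suitable curve in $\overline X$. I choose a closed point $p$ of $D$ in the smooth locus of $D_{\rm red}$, lying on no other boundary component; by a Bertini-type argument, cutting $\overline X$ by sufficiently general hyperplane sections through $p$ produces a curve $C\subset \overline X$ regular at $p$, intersecting $\overline X\setminus X$ only at $p$ and meeting $D$ transversally. The generic point of $C\cap X$ is then a closed point $x\in X$, and the transversality at $p$ implies that the induced extension of DVRs $\sO_{\overline X,\eta_D}\hookrightarrow \sO_{C,p}$ has ramification index one. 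Hence curve tameness of the cover at $x$ transfers to tameness of the cover at $\eta_D$.

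For the reverse implication (GM tame $\Rightarrow$ curve tame), given a closed point $x\in X$, since $\sO$ is henselian and $k(x)/K$ is finite, the normalization of the closure $\overline{\{x\}}\subset \overline X$ is isomorphic to $\Spec R$, where $R$ is the henselian DVR obtained as the integral closure of $\sO$ in $k(x)$; it has a unique closed point $y$ which maps to some point of $\overline X\setminus X$. \'Etale locally near $y$, the snc condition provides coordinates $t_1,\ldots,t_s$ cutting out the boundary components through $y$. GM tameness means the covering is tame along each of these components; Abhyankar's lemma (as exploited in~\cite[Section~4]{KS10}) then shows that, after a Kummer tame extension ramified along the $t_i$, the cover becomes trivial near $y$. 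Restricting this Kummer tower to $\Spec R\hookrightarrow \overline X$ at $y$ preserves tameness, and we conclude that the cover $X'\times_X k(x)\to \Spec k(x)$ is tame with respect to the DVR $R$.

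The main obstacle is the Abhyankar step in the reverse direction, which requires one to control the tame fundamental group locally at an snc intersection of the boundary divisor when the curve $\overline{\{x\}}$ specializes to a deep stratum of $\overline X\setminus X$. This local structure theorem for tame covers along an snc divisor is the technical heart of~\cite[Theorem~4.4]{KS10}, and a fully self-contained proof would need to reproduce that analysis of the local tame fundamental group.
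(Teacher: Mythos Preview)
Your proposal is correct and matches the paper's approach: the paper simply states that this lemma is a consequence of \cite[Theorem~4.4]{KS10} and gives no further argument, while you invoke the same reference and additionally sketch the two directions of that theorem's proof. Your outline of the forward direction via a transversal curve and the reverse direction via Abhyankar's lemma at an snc boundary point is essentially the content of the Kerz--Schmidt argument, so nothing further is needed.
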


 A constructible sheaf $\sF$ on $X$ is called  tame  if for any closed point $x\in X$
  the $\mathrm{Gal}(\overline x /x)$-representation $\sF_{\overline x}$ is tame.

\subsection{Tameness of Picard-Lefschetz sheaves}

Consider a semistable Lefschetz
pencil of $X$ and let $\phi\colon \tilde X\to \P^1_\sO$ be the pencil map. Let   $U\subset
\P^1_K$  be the set of regular values over $K$.
Consider the
constructible sheaves $\mathsf
L^{i}=R^{i}\phi_{K,*} \Lambda $ on $\P^1_K$  for $i\in \Z$ and set $\sL = \oplus_i \sL^i$.

Note that $\mathsf L$ is tame over the prime divisor $\P^1_k$ of $\P^1_{\sO}$ by
Proposition~\ref{prop:grorzuni},
since $X$ has semistable reduction over the maximal point of $\P^1_k$.
We do not know whether $\mathsf L$ is tame over $\sO$ . The main result
of this section is the potential tameness of  $\mathsf L$.
In order to formulate it let $K'/K$ be a finite splitting field of all the field
extensions $k(x)/K$, for $x\in \mathbb P^1_K\setminus U$ and let $\sO'\subset K'$ be its
ring of integers. Recall that we can bound $K'$ by Theorem~\ref{thm:mainpropermorse}.
In the following $p=\mathrm{ch}(k)$.

\begin{thm}\label{tame:tamethm}
The sheaf $\mathsf L_{K'} $ on $\P^1 _{K'}$ is tame   if $p\ne 2$.
\end{thm}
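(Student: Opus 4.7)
I would verify tameness of $\sL_{K'}$ at each closed point $x \in \P^1_{K'}$ directly; by the choice of $K'$, every point of $\P^1_{K'} \setminus U_{K'}$ is $K'$-rational, so the argument splits into a regular-value case ($x \in U_{K'}$) and a critical-value case ($x$ with $k(x) = K'$).

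For $x \in U_{K'}$, let $\sO_x$ denote the ring of integers of $k(x)$ and set $Y := \tilde X \times_{\P^1_\sO} \Spec \sO_x$; proper base change gives $\sL^i_{\bar x} \cong H^i(Y_{\overline{k(x)}}, \Lambda)$. Using Proposition~\ref{prop:semistmorselem}, the scheme $Y$ is already log smooth over $\sO_x$ around non-critical points of $\phi_k$ lying over the specialisation of $x$, in the sense of Example~\ref{tame:exlogsm1}. Around a critical point of $\phi_k$, $Y$ is cut out locally by
\[
(T_0 - X_1 - \cdots - X_m - X_{m+1}^2 - \cdots - X_n^2)\, X_1 \cdots X_m \;=\; u\,\pi_{\sO_x}^e,
\]
whose non-monomial factor is the obstruction to log smoothness. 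I would construct a blow-up $Y' \to Y$ centred in the special fiber (so generic fibers are preserved) that trivialises this factor and presents $Y'$ locally in the log smooth form of Example~\ref{tame:exlogsm2}. Applying Nakayama's log-smooth generalisation of Rapoport-Zink~\cite[Theorem~0.1]{Nak98} to $Y' \to \Spec \sO_x$ then yields that $R\Uppsi_{Y'/\sO_x}\Lambda$ is tame, so $\sL^i_{\bar x}$ is a tame $\Gal(\overline{k(x)}/k(x))$-module.

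For $x$ a critical value, I would combine the previous case with the Picard-Lefschetz description of Section~\ref{subsec:pltheocl}. The fundamental triangle relates $i_x^* \sL$ to $i_x^* \overline\jmath_{x,*} \overline\jmath_x^* \sL$ and the pre-vanishing cycles $V_x$ up to a shift. Proposition~\ref{prop:picardlefschetzform} identifies $V_x$ canonically with $\Lambda(-\lfloor n/2\rfloor)$ up to sign, so the $\Gal(\overline{K'}/K')$-action on it factors through the cyclotomic character and $\{\pm 1\}$ and is tame. The middle term is assembled from stalks of $\sL$ at nearby regular values (tame by the previous case) together with the Picard-Lefschetz monodromy around $x$ (tame again by Proposition~\ref{prop:picardlefschetzform}), hence is tame; therefore $\sL^i_{\bar x}$ is tame.

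The hard part will be the construction of the log-smooth blow-up $Y' \to Y$. The non-monomial factor $T_0 - X_1 - \cdots - X_m - X_{m+1}^2 - \cdots - X_n^2$ must be replaced by a product of monomials after blow-up, which requires carefully choosing the centre so as to preserve the generic fiber. Using $p \ne 2$ one can first Hensel-diagonalise the quadratic summand; blowing up along an ideal generated by the non-monomial factor together with a suitable coordinate $X_i$ should then reduce the equation to a monomial form of type $X_1 \cdots X_m = T_1^d T_2^e$ from Example~\ref{tame:exlogsm2}, at the cost of introducing extra components in the special fiber. Globalising the construction consistently along $\phi^{-1}(\mathrm{sp}(x))$ and tracking the resulting log structure through the pullback to $\sO_x$ is the technical core of the argument.
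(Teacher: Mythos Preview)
Your overall architecture matches the paper's: split into regular and critical values, invoke the semistable Morse lemma and Nakayama's log-smooth tameness for the former, and the Picard--Lefschetz formula for the latter. However, there is a genuine gap in the regular-value step.

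You verify tameness pointwise at closed points $x\in U_{K'}$, pulling back along $\Spec\sO_x\to\P^1_\sO$. When $x$ specialises to a critical value $w$ of $\phi_k$, the image $\tau\in\sO_x$ of the local coordinate $T$ at $w$ can have \emph{arbitrary} valuation $v_x(\tau)\ge 1$. Your single blow-up aimed at Example~\ref{tame:exlogsm2} is only adequate when $\tau$ is a uniformizer; for higher $v_x(\tau)$ one would need an iterated procedure whose combinatorics you have not set up. You also do not address the condition $e>m+1$ that is needed for the paper's nearly-semistable analysis (Lemma~\ref{lem:nearylogsm}).

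The paper sidesteps this by \emph{not} working at closed points. It invokes Lemma~\ref{lem:tamegmcurve} to replace curve tameness of $\sL|_{U_{K'}}$ by Grothendieck--Murre tameness over a semistable model $Z\to\P^1_{\sO'}$ obtained by blowing up closed points until the strict transform of the critical locus $S$ is a disjoint union of sections. This reduces the check to the finitely many maximal points $z$ of $Z_k$; for each of these the map $\sO^{\hh}_{\P^1_{\sO'},\theta(z)}\to R=\sO^{\hh}_{Z,z}$ sends $T$ to a \emph{uniformizer}, forcing $v(\tau)=1$ and placing one exactly in the nearly-semistable situation of~\ref{subsec:nearlysemist}. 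After a preliminary tame enlargement of $K'$ guaranteeing $e>n+1$, a single blow-up of $\Spec B$ at its closed point then suffices (Proposition~\ref{prop.nearlysemist}).

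There is a secondary gap in your critical-value case: tameness of the $G_x$-action on $H^*(\tilde X_{\overline\eta_x})$ does not follow just from tameness of $\sL_{\bar y}$ at individual closed regular points $y$. One needs the monodromy representation of $\sL|_{U_{K'}}$ to factor through a prime-to-$p$ quotient of the local fundamental group of $\Spec(\sO^{\hh}_{Z,v}[1/\pi])\setminus S^{\rm st}$ at the specialisation $v\in Z_k$ of $x$; the paper obtains this from the Grothendieck--Murre tameness just established together with the Abhyankar lemma applied over the model $Z$.
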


From Theorem~\ref{tame:tamethm} and Lemma~\ref{lem:tamepullb} we deduce the following corollary.
In fact by Theorem~\ref{thm:mainpropermorse}(iii) we can choose $K'/K$ to be a tame extension under the assumption $p>n+1$.

\begin{cor}\label{cor:tamecor}
For $p>n+1$ or $p=0$ the sheaf $\mathsf L$ is tame.
\end{cor}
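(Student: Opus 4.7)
The plan is to reduce tameness of $\sL$ over $K$ to the already-known tameness of $\sL_{K'}$ provided by Theorem~\ref{tame:tamethm}, exploiting that $K'/K$ can be chosen tame and then descending via Lemma~\ref{lem:tamepullb}. The case $p=0$ requires no argument, since residue characteristic zero makes every local Galois representation tame, so only $p>n+1$ needs treatment.

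The first step is to arrange that $K'/K$ itself is tame. Theorem~\ref{thm:mainpropermorse}(iii) attaches to each critical value $x\in \P^1_K\setminus U$ a trait of ramification index $m+1$ over $\sO$, where $m\le n$ is the codimension in $\tilde X_k$ of the stratum through the specialization of the critical point. Part (iv) of the same theorem, applicable because $p>n+1\ge m+1$ gives $p\nmid m+1$, identifies this trait with the closure of $x$ in $\P^1_\sO$, so $k(x)/K$ is a tame extension of ramification index $m+1$. The compositum of these finitely many tame extensions gives a splitting field $K'$ which can be chosen tame over $K$, so that $\sO'\subset K'$ is a tame extension of $\sO$.

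The second step is an application of Theorem~\ref{tame:tamethm}, which applies because $p>n+1\ge 2$ gives $p\ne 2$; it yields that $\sL_{K'}$ is tame on $\P^1_{K'}$. The last step is the descent along $\sO'/\sO$ through Lemma~\ref{lem:tamepullb}. For a closed point $x\in \P^1_K$ and any closed point $x'\in \P^1_{K'}$ lying over it, the residue extension $k(x')/k(x)$ arises as a factor of $k(x)\otimes_K K'$ and is tame, since tameness of local field extensions is preserved under base change. Consequently, the wild inertia subgroups $P_{k(x)}$ and $P_{k(x')}$ of $\mathrm{Gal}(\overline{k(x)}/k(x))$ coincide; since $P_{k(x')}$ acts trivially on the common stalk $\sL_{\overline{x'}}=\sL_{\overline x}$ by the tameness of $\sL_{K'}$ at $x'$, the same holds for $P_{k(x)}$, so that $\sL$ is tame at $x$.

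I expect no real obstacle inside this corollary: the substantive work is already done by Theorems~\ref{thm:mainpropermorse} and~\ref{tame:tamethm}. The only point of minor care is the standard fact that tameness of local field extensions is stable under base change (equivalently, that compositing with a tame extension does not enlarge the wild inertia subgroup), which underlies the identification $P_{k(x)}=P_{k(x')}$.
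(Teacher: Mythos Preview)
Your proof is correct and follows the same approach as the paper: show that $K'/K$ can be taken tame using the ramification bound from Theorem~\ref{thm:mainpropermorse}, then descend the tameness of $\sL_{K'}$ via Lemma~\ref{lem:tamepullb}. The paper's one-line argument only cites part~(iii) of Theorem~\ref{thm:mainpropermorse} (the inclusion $k(x)\subset k(x_K)$ already forces $k(x)/K$ to be tame), whereas you additionally invoke part~(iv) and spell out the descent by hand rather than quoting the ``moreover'' clause of Lemma~\ref{lem:tamepullb}; both routes are fine.
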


\begin{proof}[Proof of Theorem~\ref{tame:tamethm}]
  We will assume without loss of generality that $\Lambda=\Z/\ell \Z$ and that $\sO$ is
  strictly henselian.
  The major part of the proof consists in showing the following claim.

\begin{claim}\label{claim:redtameregval}
$\sL_{U_{K'}}$ is a tame local system.
\end{claim}

Assuming Claim~\ref{claim:redtameregval} let us prove Theorem~\ref{tame:tamethm}. The
sheaf $\sL^i $ is unramified over $\P^1_K$ for $i\notin \{n-1, n\}$ by
Theorem~\ref{thm:glcllefschetz}, so Lemma~\ref{lem:tamepullb} and
Claim~\ref{claim:redtameregval} imply that $\sL^i$ is tame in this case. With the notation
of~\ref{subsec:pltheocl} consider a critical value  $x\in \P^1_{K'}$. We have
the exact sequence \eq{eq:lefontcoh}{ 0\to H^{n-1}(\tilde X_{\overline x}) \to
  H^{n-1}(\tilde X_{\overline \eta_x}) \to V_{\overline x} \to H^{n}(\tilde X_{\overline
    x}) \to H^{n}(\tilde X_{\overline \eta_x}) \to 0 } where we omit the coefficients
$\Lambda$ for simplicity. We consider the action of
$G_x =\mathrm{Gal}( \overline \eta_x / \eta_x)$ on this sequence, where
  $\eta_x\in \Spec \sO^\hh_{\P^1_{K'} , x}$ is the generic point.  The order of
\[\im (G_x)\subset \mathrm{Aut}(H^{*}(\tilde X_{\overline \eta_x}))\] is coprime to $p$ by
Lemma~\ref{lem:tamegmcurve}, Claim~\ref{claim:redtameregval} and the Abhyankar
lemma~\cite[Expos\'e XIII, Proposition 5.2]{SGA7.1}  applied to a semistable model $Z$ of
$\P^1_{K'}$ over $\sO'$ as in the proof of Claim~\ref{claim:redtameregval} below. Here we
use the Abhyankar lemma in order to see that for the specialization  $v\in Z_k$ of $x$
\[
 \im(  \pi_1(\Spec( \sO^\hh_{Z,v}[1/\pi])\setminus S^{\rm st} , \overline \eta_x ) ) \subset
 \mathrm{Aut}(\sL_{\overline \eta_x})
\]
is prime to $p$, where $S^{\rm st}$ is as in the proof of Claim~\ref{claim:redtameregval}.
 The order of  $\im (G_x)\subset
\mathrm{Aut}(V_{\overline x})$ is coprime to $p$ by
Proposition~\ref{prop:picardlefschetzform}.

So from the exact sequence~\eqref{eq:lefontcoh} we deduce that the order of
$\im (G_x)\subset
\mathrm{Aut}( H^{i}(\tilde X_{\overline x}))$ is coprime to $p$ for $i\in\{n-1, n\}$. This
means that the
action of $\mathrm{Gal}(\overline x/x)$ on $ \sL^i_{\overline x}=H^{i}(\tilde X_{\overline x})$ is tame.
\end{proof}

\begin{proof}[Proof of Claim~\ref{claim:redtameregval}]
  By replacing $\sO'$ by a further tame extension we can in the following assume that the
 ramification index $e$ of $\sO'/\sO$ satisfies $e>n+1$, see Lemma~\ref{lem:tamepullb}.
 (We use this assumption in the proof of Lemma~\ref{lem:nearylogsm} below.)
 Let $\pi'\in
 \sO'$ be an uniformizer.
 Let $S\hookrightarrow \P^1_{\sO'}$ be the set of critical values.
 We construct a semistable model of
$\P^1_{K'}$ and check Grothendieck-Murre tameness over this model, which is sufficient by
Lemma~\ref{lem:tamegmcurve}.

 By our assumption on
 $\sO'$ we deduce that $S$ is the union of the images of finitely many sections of
 $\P^1_{\sO'}\to \Spec \sO'$. There is an iterated blow-up $\theta\colon Z\to \P^1_{\sO'}$ of closed points in the smooth locus
 over $\sO'$ such that the strict transform $S^{\rm st}\hookrightarrow Z$ of $S$ is a
 finite disjoint union of sections of $Z\to \Spec \sO'$; this is a special case of N\'eron
 desingularization, see~\cite[Corollary~4.6]{Art69}.
As in the process we only blow up smooth closed points the scheme $Z$ is semistable over
$\sO'$.

Let $z$ be a maximal point of $Z_k$  such that $\theta  (z)$ is a closed point of $\P^1_{\sO'}$.
Consider the henselian discrete valuation ring
$R=\sO^\hh_{Z,z}$.  Then $\theta_{z}\colon
\sO^\hh_{\P^1_{\sO'}, \theta( z )}\to R$ sends both
$\pi'$ and  $T$ to uniformizers, where $\pi',T \in \sO^\hh_{\P^1_{\sO'}, \theta( z )} $
is a regular parameter system. Say $T=-v \pi'$ with $v\in R^\times$.

Let $x\in X_{k} (\hookrightarrow X)$ be a non-critical point of $\phi_k$ and denote by abuse of notation its unique  preimage in $X_R $ under the
morphism $X_R\to X$
 by
the same symbol.  Then by
Proposition~\ref{prop:semistmorselem} we obtain an isomorphism of $R$-algebras
\ga{}{
\sO_{X_R,x}^\hh \cong R[X_0,\ldots ,
              X_n]^\hh/(X_0 \cdots X_m- u \pi, X_{m+1}+v \pi' ) \notag}
 which after a coordinate transformation  can be rewritten as
\ga{}{ \sO_{X_R,x}^\hh \cong R [X_0,\ldots ,
              X_{n-1}]^\hh /(X_0 \cdots X_m-  \pi'^e).\notag}
Here the henselization  $^\hh$  is with respect to the maximal ideal generated by  $\pi'$
and the $X_i$.

In particular $X_R$ is log smooth over $R$ around $x$ by Example~\ref{tame:exlogsm1}.
By a similar calculation one sees that at a critical point $x,$ the scheme $X_R$ is at
least nearly semistable over $R$ as defined in~\ref{subsec:nearlysemist} below.
By Proposition~\ref{prop.nearlysemist} applied over $R$ we see that $\mathsf L_{K'}$ is
tame over the divisor
$\overline{\{ z \}}$ for all $i\in \Z$.
 So $\sL_{K'}$ is Grothendieck-Murre tame over $\sO'$.
 \end{proof}

\subsection{Nearly semistable reduction}\label{subsec:nearlysemist}

In this subsection we do not assume that the residue field $k$ of $\sO$ is perfect, but we
assume that $k$ is separably closed for simplicity.
A scheme $X$ which is flat and of finite type over $\sO$ is said to have {\em nearly
semistable reduction} at $x\in X$ if either it is log smooth at $x$ over $\sO$  with the standard log structure $1\mapsto \pi$, a uniformizer of $\sO$,   or if there is an
isomorphism of $\sO$-algebras $\sO^\hh_{X,x}\cong A$. Here $A$ is of the following form:
consider an $\sO$-algebra
\[
  B=\sO[X_0, \ldots ,
   X_m]^\hh/(X_0 \cdots  X_m - u \pi^e)
 \]
 where $m\ge 0$, $u\in ( \sO[X_0,\ldots , X_m]^\hh)^\times$ and where  $e>m+1$.
 Here the henselization  $^\hh$  is with respect to the ideal $(\pi, X_0,\ldots, X_m)$.
Set $\alpha=X_0 + \ldots + X_m
+\pi\in B$. Then $A$ is of the form
\[
  A= B[X_{m+1} , \ldots , X_n]^\hh/(  X_{m+1}^2 + \ldots + X^2_n - \alpha)
\]
for some $n\ge m$.

\begin{prop} \label{prop.nearlysemist}
  Assume that $X$ is nearly semistable at all points of its closed fiber. Then the following properties are satisfied.
  \begin{itemize}
  \item[(i)]  If $X$ is proper over $\sO$ then the action of $\mathrm{Gal}(\bar K/ K)$ on $H^*(X_{\bar
  K}, \Lambda)$ is tame.
\item[(ii)] The action  of $\mathrm{Gal}(\bar K/ K)$ on $R\Uppsi_{X/\sO} (\Lambda)$ is tame.
\end{itemize}
 \end{prop}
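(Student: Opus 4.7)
Part (i) is an immediate consequence of part (ii) combined with proper base change: since $X$ is proper over $\sO$, one has
\[
  H^i(X_{\overline K}, \Lambda) \cong H^i(X_{\overline k}, R\Uppsi_{X/\sO}(\Lambda))
\]
as $\Gal(\overline K/K)$-modules, and tameness of $R\Uppsi_{X/\sO}(\Lambda)$ as a complex of sheaves with Galois action implies tameness of its hypercohomology, because the wild inertia acts trivially on each stalk and hence on every cohomology group.

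For part (ii) tameness is \'etale local on $X_k$, so we verify it at each geometric point $\overline x \to X_k$ using the henselian local ring $\sO^\hh_{X,x}$ separately in the two cases of the definition of nearly semistable reduction. In the log smooth case, the plan is to appeal directly to Nakayama's generalization \cite[Theorem~0.1]{Nak98} of the Rapoport--Zink semistable reduction theorem to log smooth schemes over $\sO$, which yields that the inertia acts unipotently, in particular tamely, on $R\Uppsi_{X/\sO}(\Lambda)$ at $x$.

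In the remaining quadratic case $\sO^\hh_{X,x}\cong A$, set $Y=\Spec A$ and factor the structure morphism to $\Spec \sO$ as $Y \xrightarrow{\psi} \Spec B \to \Spec \sO$, where $\Spec B$ is log smooth over $\sO$ by Example~\ref{tame:exlogsm1}, and where $\psi$ is a sum-of-squares Lefschetz-type family of relative dimension $n-m$ parametrized by $\alpha=X_0+\cdots+X_m+\pi$, singular exactly along $\Sigma = V(X_{m+1},\ldots,X_n,\alpha)\subset Y$. I would construct a proper birational $\rho\colon Y'\to Y$ by blowing up $\Sigma$ (or an iteration of such blow-ups) and verify, via explicit charts and Examples~\ref{tame:exlogsm1}--\ref{tame:exlogsm2}, that $Y'$ becomes log smooth over $\sO$; the inequality $e>m+1$ is what guarantees that the resulting fs log structure is log smooth. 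Nakayama's theorem then delivers tameness of $R\Uppsi_{Y'/\sO}(\Lambda)$, and proper base change
\[
  R\rho_{k,*} R\Uppsi_{Y'/\sO}(\Lambda)\cong R\Uppsi_{Y/\sO}(R\rho_{K,*}\Lambda)
\]
transfers this tameness to $R\Uppsi_{Y/\sO}(R\rho_{K,*}\Lambda)$. To conclude, use the distinguished triangle relating $\Lambda_{Y_K}$ to $R\rho_{K,*}\Lambda$: its cone is supported on the exceptional fibers of $\rho_K$, which are quadrics, with cohomology that is a direct sum of Tate twists carrying only tame Galois action. The latter is precisely the content of the Picard-Lefschetz formula (Proposition~\ref{prop:picardlefschetzform}), whose tameness statement requires $\mathrm{ch}(k)\ne 2$ since the monodromy on the vanishing cycles is then at worst a sign character. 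Combining the two pieces through the distinguished triangle yields tameness of $R\Uppsi_{Y/\sO}(\Lambda)$.

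The main obstacle is the construction of the blow-up and the verification that $Y'$ is log smooth over $\sO$: one must pick the center (possibly iteratively) so that Examples~\ref{tame:exlogsm1} and~\ref{tame:exlogsm2} cover all charts, exploiting the hypothesis $e>m+1$ to rule out degenerate monoid charts. A secondary delicate point is the explicit description of $R\rho_{K,*}\Lambda$ as a sum of Tate twists on the exceptional locus, where the assumption $\mathrm{ch}(k)\ne 2$ enters via Picard-Lefschetz; without it the sign character arising from the quadratic vanishing cycles would already be wildly ramified.
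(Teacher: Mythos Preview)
Your overall architecture matches the paper's: reduce (i) to (ii) by proper base change, handle the log smooth points by Nakayama, and at the quadratic points combine a blow-up, Nakayama, and the Picard--Lefschetz description of the vanishing cycles. The real divergence is in \emph{what} you blow up. You propose to blow up the total space $\Spec A$ along the singular locus $\Sigma$ and hope the result $Y'$ is log smooth over $\sO$. The paper instead blows up the \emph{base} $\Spec B$ at its closed point, pulls back, and then studies the cone $\mathsf F=\mathrm{cone}(\Lambda\to R\tilde g_*\Lambda)$ of the relative quadric family separately.

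This difference is not cosmetic; your version has a genuine gap. In a typical chart the blow-up of $\Spec A$ along $\Sigma$ looks like $B[X_{m+1},Y_\bullet]/(X_{m+1}^2-v\alpha)$ with $\alpha=X_0+\cdots+X_m+\pi$, i.e.\ a double cover of (something smooth over) $B$ branched along $V(\alpha)$. But $\alpha$ is a \emph{sum} of the toric coordinates and the uniformizer, not a monomial for the log structure on $B$, so there is no evident fs chart making this log smooth over $\sO$; Examples~\ref{tame:exlogsm1}--\ref{tame:exlogsm2} do not cover it. The paper's key maneuver (Lemma~\ref{lem:nearylogsm}) is precisely to blow up the closed point of $\Spec B$ so that $\sigma^{-1}(\alpha)$ factors as $\alpha^{\rm st}\cdot\beta$, with $\beta$ supported on the exceptional divisor and $\alpha^{\rm st}$ either a unit or a fresh smooth coordinate; only then do both $\tilde Y$ and $V(\alpha^{\rm st})$ fit the toric Examples. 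After that, $\mathsf F$ is identified (via the local Picard--Lefschetz computation from SGA~7, not Proposition~\ref{prop:picardlefschetzform}) with $j_!\mathsf L$ for a rank-one local system of order dividing $2$, and one finishes either by the excision triangle on $\tilde Y$ (if $\mathsf L$ is trivial) or by a further Kummer covering adjoining $\sqrt{\alpha^{\rm st}}$ and $\sqrt{\beta}$ (if not). In short, the missing idea in your sketch is this base blow-up that linearizes $\alpha$ relative to the log structure; without it the log smoothness claim for $Y'$ is unjustified, and the argument does not close.
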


 \begin{proof}
   Without loss of generality $\Lambda=\Z/\ell\Z$.
 Part (i) follows from part (ii) and proper base change.
 If $X$ is log smooth around  $x\in X_k$ then part (ii) follows from~\cite[Theorem~0.1]{Nak98}. So for
 part (ii) we have to
 consider a closed point $x\in X_k$ with $\sO^\hh_{X,x}\cong A$. Write $g\colon X_x\to Y_y$
 for $\Spec(A)\to \Spec (B)$, $y=g(x)$.
Observe that  the critical values of $g$  are given by $Y^{\rm crit} = V(\alpha)$. Set $\tilde Y = \mathrm{Bl}_{\{ y \}}(Y_y) \xrightarrow{\sigma} Y_y$ and $\tilde X = X_x\times_{Y_y} \tilde
Y\xrightarrow{\sigma} X_x$. Let $\tilde x\in \tilde X$ be a closed point over $x$ with image
$\tilde y\in \tilde Y$.

Consider  the
morphisms of henselian local schemes $\tilde g_{\tilde x}\colon \tilde X_{ \tilde x} \to \tilde Y_{\tilde y}$ and
$\tilde h_{\tilde y}\colon  \tilde Y_{\tilde y} \to \Spec\sO$.
We consider the object
\[
\mathsf F =\mathrm{cone} [ \Lambda_{ \tilde Y_{\tilde y}} \to R  \tilde g_{\tilde x, *}
\Lambda_{ \tilde X_{ \tilde x} } ]
\]
of $D^b_c(\tilde Y_{\tilde y},\Lambda)$ which is constructible
by~\cite[Corollaire~6.2]{Org06}  in which we do not have to perform the modification by~\cite[Proposition~4.1]{Org06}.
We need to show that \[  R\Uppsi_{\tilde X/\sO}(\Lambda)_{\tilde x} = [R \tilde h_{\tilde y,*}  R \tilde  g_{\tilde x, *}
\Lambda_{ \tilde X_{ \tilde x} }]_{\bar K} \] is tame, since then by proper base change
\[
 ( R\sigma_*  R\Uppsi_{\tilde X/\sO}(\Lambda))_{ x} \cong R\Uppsi_{ X/\sO}(\Lambda)_{x}
\]
is also tame.
By \cite[Theorem~0.1]{Nak98} and
Lemma~\ref{lem:nearylogsm} we obtain that \[
  R\Uppsi_{\tilde Y/\sO }(\Lambda)_{\tilde y}= R
  \tilde h_{\tilde y,*} (\Lambda )_{\bar K}
\]
is tame. So it remains to show that
$R\Uppsi_{\tilde Y/\sO }(\mathsf F)_{\tilde y}= R
\tilde h_{\tilde y,*} (\mathsf F )_{\bar K}$ is tame.

\begin{lem}
  $\sH^i(\mathsf F)$ vanishes for $i\ne n-m-1$ and
   $\sH^{n-m-1}(\mathsf F)=j_! \mathsf L$, where $j\colon \tilde Y_{\tilde y} \setminus
   \sigma^{-1} Y^{\rm crit} \to \tilde Y_{\tilde y}$. Here $\mathsf L$ is a $\Lambda$-rank
   one local system of order $1$ or $2$. In the latter case $\mathsf L$ becomes trivial after taking
   a square root of $\sigma^{-1}(\alpha)$.
 \end{lem}

 \begin{proof}
Combine~\cite[Expos\'e XV, Section 2.2]{SGA7.2} and  \cite[Proposition~4.1]{Org06}.
   \end{proof}

We can locally  factor  $\sigma^{-1}(\alpha)$  in the ring $\sO_{\tilde
  Y, \tilde y}$ as $\alpha^{\rm st} \beta$, where the vanishing locus of $\beta$ is
contained in the exceptional
divisor of the blow-up, i.e.\ $\beta$ is invertible over $K$ while
  $\alpha^{\rm st} $ does not vanish on the exceptional divisor.

\begin{lem}\label{lem:nearylogsm}
\'Etale locally on the exceptional divisor, $\tilde Y$ looks like
Example~\ref{tame:exlogsm1}
or Example~\ref{tame:exlogsm2} with $\alpha^{\rm st}$ invertible or corresponding to the variable
$X_{m+1}$. In particular, $\tilde Y_{\tilde y}$ and its closed subscheme $V(\alpha^{\rm
  st})$ are essentially  log smooth over $\sO$ and the immersion is strict.
\end{lem}

\begin{proof}
It suffices to consider without loss of generality the following two blow-up charts of
$\tilde Y$ over $Y$.  For simplicity of notation we assume that $k$ is algebraically closed.

\smallskip

{\em 1st chart.} \\
$\tilde Y$ contains as an open subscheme the spectrum of the ring
\[
B[\tilde X_0 ,\ldots , \tilde X_m] /  (\tilde X_i \pi -X_i , \tilde X_0 \cdots\tilde X_m -u \pi^{e-m-1})
\]
where $1\le i\le m$. On this chart $\alpha^{\rm st}= \tilde X_0 + \ldots + \tilde X_m +1$.
Consider a closed point $\tilde y$ of the exceptional
divisor in
this chart which satisfies without loss of generality \[\tilde X_0(\tilde y)= \cdots =
\tilde X_{\tilde m}(\tilde y)=0 \text{ and } \tilde X_{\tilde m+1}(\tilde y), \ldots , \tilde
X_m(\tilde y)\ne 0.\] We obtain
\[
\sO^\hh_{\tilde Y,\tilde y} \cong  \sO[X_0 , \ldots ,X_{m} ]^\hh / (  X_0 \cdots X_{\tilde m}
- \pi^{e-m-1})
\]
in which $\alpha^{\rm st}$ is a unit or $\alpha^{\rm st}=X_{\tilde m+1}$. In this case
$\sO^\hh_{\tilde Y,\tilde y}$ is as in Example~\ref{tame:exlogsm1}.

\smallskip

{\em 2nd chart.} \\
$\tilde Y$ contains as an open subscheme the spectrum of the ring
\[
B[\tilde X_1, \ldots , \tilde X_m,\tilde \pi]/(\tilde X_i X_0 -X_i , \tilde \pi X_0- \pi ,
\tilde X_1 \cdots \tilde X_m - u \tilde \pi^{m+1}  \pi^{e-m-1} )
\]
where $1\le i\le m$. On this chart $\alpha^{\rm st}=  1+ \tilde X_1 + \ldots + \tilde X_m
+ \tilde \pi $. Consider a closed point $\tilde y$ of the exceptional divisor in this
chart. We can assume without loss of generality $\tilde \pi(\tilde y)=0$ as other points
are already in the 1st chart above. Say for simplicity that $\tilde X_1(\tilde y) = \cdots = \tilde X_{\tilde
  m}(\tilde y)=0$ and $\tilde X_{\tilde m+1}(\tilde y), \ldots ,  \tilde X_{m}(\tilde
y)\ne 0$. Then we get
\[
\sO^\hh_{\tilde Y,\tilde y} \cong  \sO[X_0 , \ldots ,X_{m},\tilde \pi ]^\hh / (  \tilde
\pi X_0 - \pi,    X_1 \cdots X_{\tilde m} - \tilde \pi^{m+1} X_0^{e-m-1}).
\]
in which $\alpha^{\rm st}$ is a unit or $\alpha^{\rm st}= X_{\tilde m +1}$.
In this case
$\sO^\hh_{\tilde Y,\tilde y}$ is as in Example~\ref{tame:exlogsm2}.
\end{proof}

We resume  the proof of Proposition~\ref{prop.nearlysemist}. We argue case by case.

 {\em 1st case: $\mathsf L$ is constant.}\\
 Consider the exact sequence of sheaves on $ \tilde Y_{\tilde y}$
 \[
0\to j_! j^*\Lambda  \to \Lambda \to i_* i^*  \Lambda \to 0
\]
where $i\colon  \sigma^{-1}(Y^{\rm crit}) \to \tilde Y_{\tilde y}$ is the closed immersion
and $j$ the complementary open immersion.
The corresponding long exact sequence for $ R\tilde h_{\tilde y,*}  ( - )_{\bar K}$, \cite[Theorem~0.1]{Nak98}
and Lemma~\ref{lem:nearylogsm} imply the requested tameness for $  R\tilde
h_{\tilde y,*}  (  j_!\mathsf L )_{\bar K}$.

\medskip

 {\em 2nd case: $\mathsf L$ is non-constant (in particular $\ell\ne 2$).}\\
 Let $ \hat Y_{\tilde y}\to \tilde Y_{\tilde y}$ be the Kummer covering corresponding to
 adjoining the square root of $\alpha^{\rm st}$. Then by  Lemma~\ref{lem:nearylogsm}
 $\hat Y_{\tilde y}$ and its closed subscheme $V(\alpha^{\rm st})$ are
   essentially  log smooth over
 $\sO$ and the immersion is strict,    as Kummer coverings are log smooth.

 Consider the morphism of log schemes
\[\hat Y_{\tilde y}\to \Spec \Z[T], \quad T\mapsto \beta .
  \]
  Consider the Kummer log \'etale covering
  \[
\Spec  \Z[T'] \to  \Spec  \Z[T],\quad T\mapsto (T')^2
\]
and the corresponding Kummer log \'etale covering of fs log schemes \[    Y_{\tilde y}^{\mathsf L}:= \hat Y_{\tilde y} \otimes^{\rm
  fs}_{\sO[T]} \sO[T'] \to \hat Y_{\tilde y} . \]
Note that this fs log base change agrees with the ordinary base change over $K$.
Then $ Y_{\tilde y}^{\mathsf L}$ and its closed subscheme $V(\alpha^{\rm st})$ are log
  smooth over $\sO$.

Consider the finite morphism
\[
  \mu\colon  Y_{\tilde y}^{\mathsf L} \to \tilde Y_{\tilde y}
\]
of degree $4$.
Then the pullback of the local system $\mathsf L$ along $\mu$ is trivial and $\sF$ is a direct summand of
$R\mu_* \mu^* \sF $ as $\ell \ne 2$ and as $\sF$ vanishes on the ramification locus of
$\mu$. We can now argue for $\mu^*(\mathsf F)$ on $ Y_{\tilde y}^{\mathsf L}$ as we
did in the first case for $\sF$ on
$\tilde Y_{\tilde y}$.
\end{proof}

\appendix

\section{Nearby cycle functor}\label{sec:appendix}

In this appendix we describe basic properties of the \'etale nearby cycle functor in a
coordinate free fashion using so called Iwasawa twists.  We hope that this coordinate free
presentation makes the Verdier duality theory   and the discussion  of Rapoport-Zink sheaves in Section~\ref{sec:rapzi} more
transparent. Nothing we present was not known to people in the early 1980s. We  recast the theory
using the pro-\'etale topology.

\subsection{Reminder on constructible sheaves}\label{sec:constrsh}
Let $Y$ be a noetherian scheme. For any ring $\Lambda,$ let $\mathrm{Sh}(Y,\Lambda)$ be the
category of pro-\'etale sheaves of $\Lambda$-modules and $D(Y,\Lambda)$ be  its derived
category, see~\cite{BS15}  or  \cite{Ker16}. Assume in the following that $\Lambda$ is a
complete local noetherian ring with maximal ideal $\mathfrak m$. We
define a {\it  constructible $\Lambda$-sheaf} $\sF$ as a pro-\'etale sheaf of $\Lambda$-modules
such that
$\sF = \lim_n \sF /\mathfrak m^n \sF $ and such
that for all $n>0$ the pro-\'etale sheaf of $\Lambda/\mathfrak m^n$-modules
$\sF /\mathfrak m^n \sF$ comes from an \'etale constructible
sheaf of $\Lambda/\mathfrak m^n$-modules, see~\cite[09BS]{StPr}.
Let $\mathrm{Sh}_c(Y,\Lambda)$ be the category of constructible $\Lambda$-sheaves.

One can show~\cite{Kri23}
that $\mathrm{Sh}_c(Y,\Lambda)$  forms a noetherian abelian subcategory closed under
extensions of the category of all pro-\'etale sheaves of $\Lambda$-modules. One also
shows~\cite[09BS]{StPr}
that for $\sF\in \mathrm{Sh}_c(Y,\Lambda)$  there exists a
stratification $\mathbf Z$ of $X$ such that $\sF|_Z$ is smooth for all $Z\in \mathbf Z$.

Let $\Lambda_\circ$ be a localization of $\Lambda$. We set
$\mathrm{Sh}_c(Y,\Lambda_\circ)= \mathrm{Sh}_c(Y,\Lambda) \otimes_\Lambda \Lambda_\circ$.
Let $t$ be an endomorphism of $\sF \in \mathrm{Sh}_c(Y,\Lambda_\circ)$ and assume that
$\Lambda_\circ$ is artinian. Then there exists a unique decomposition
$\sF= \sF^{\rm nil} \oplus \sF^{\rm inv}$ stable under $t$ such that $t$ is nilpotent on
$\sF^{\rm nil} $ and invertible on $\sF^{\rm inv}$. Uniqueness is clear while for
existence we define
$\sF^{\rm nil}=  \ker (t^n \colon \sF\to \sF)$ and
$\sF^{\rm inv}= \im (t^n \colon \sF\to \sF)$ for $n\gg 0$.  One can check  the property on
a stratification as above on which it reduces to the case of finite
$\Lambda_\circ$-modules
where it is the classical
Weyr-Fitting decomposition~\cite[Proposition 2.2]{Bou22}.

\smallskip

Let $D_c(Y,\Lambda)$ be the triangulated subcategory of complexes with constructible cohomology sheaves inside the derived category of pro-\'etale sheaves of
$\Lambda$-modules $D(Y,\Lambda)$. Let $D_c^b(Y,\Lambda)$ be the triangulated
subcategory which additionally has bounded cohomology sheaves. Define
$D_c^b(Y,\Lambda_\circ)$ as the Verdier localization ``up to isogeny''
$D_c^b(Y,\Lambda) \otimes_\Lambda \Lambda_\circ$, for a ring $\Lambda_\circ$ which  is a localization
of $\Lambda$.

For a morphism of noetherian schemes $f\colon Y_1\to Y_2$ there are natural functors
\[f^*\colon \mathrm{Sh}_c(Y_2,\Lambda)\to  \mathrm{Sh}_c(Y_1,\Lambda)\] and $f^*\colon
D^b_c(Y_2,\Lambda) \to D^b_c(Y_1,\Lambda) $.
From this $f^*$ one   deduces the derived pushforwards \[Rf_!,Rf_* \colon
D^b_c(Y_1,\Lambda) \to  D^b_c(Y_2,\Lambda)\] by the usual
adjunctions whenever they have a chance to exist~\cite[Introduction, Th\'eor\`eme~1]{ILO14}. One also gets the
exceptional pullback $f^!\colon D^b_c(Y_2,\Lambda)  \to D^b_c(Y_1,\Lambda) $ by adjunction.

If $\Lambda$ is more generally  a filtered colimit  $\Lambda=\mathrm{colim}_{j} \Lambda_j$
of complete local noetherian rings $\Lambda_j$ with flat, finite transition homomorphisms we set
\[
D_c(X,\Lambda)=\mathop{\rm colim}_j D_c(X,\Lambda_j)
\]
and similarly for a localization $\Lambda_\circ$ of $\Lambda$. This might depend on the
system of the $\Lambda_j$.

\subsection{Iwasawa twists}\label{subsec_iwasawa}

In this subsection we collect some results from~\cite{Bei87},  \cite{LZ19}.

Let $G$ be a profinite group which is isomorphic to $\Z_\ell$. We do not fix an isomorphism.
Let $\Lambda$ be a noetherian complete local ring with residue characteristic $\ell$.   Consider the ``Iwasawa algebra''
$\Lambda^\Iw=\Lambda\llbracket G \rrbracket$. The augmentation ideal
$\mathfrak I =\mathrm{ker} (\Lambda^\Iw\to \Lambda)$ is generated by a non-zero divisor $[\xi]-1$, where
$\xi\in G$ is a topological generator. Indeed,  such a choice induces an isomorphism
\[
  \Lambda \llbracket t \rrbracket \xrightarrow{\sim}\ \Lambda^\Iw ,\quad t\mapsto [\xi ]-1.
\]

Let $Y$ be a noetherian scheme.
Consider the derived category of sheaves of torsion Iwasawa modules ``with vanishing $\mu$-invariant'' up to isogeny
\[
  D^{\Iw }(Y,\Lambda_\circ) :=[ D(Y,\Lambda^\Iw)\cap R^{-1} ( D^b_c(Y,\Lambda))]\otimes_\Lambda \Lambda_\circ.
\]
Here $R\colon D(Y,\Lambda^\Iw) \to D(Y,\Lambda)$ is induced by the homomorphism
$\Lambda\to \Lambda^\Iw$.
Let $D^{\rm nil}(Y,\Lambda_\circ)$ resp.\ $D^{\rm inv}(Y,\Lambda_\circ)$ be the full
subcategory of $ D^{\Iw }(Y,\Lambda_\circ)$ on which $t$ is nilpotent resp.\ invertible.
 For the rest of this subsection we assume that $\Lambda_\circ$ is artinian.

\begin{lem} \label{lemapp:nilinvdec}
 We have the decomposition
  \[ D^{\Iw }(Y,\Lambda_\circ) = D^{\rm nil}(Y,\Lambda_\circ) \oplus D^{\rm inv}(Y,\Lambda_\circ)
    \]
\end{lem}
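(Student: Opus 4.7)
The plan is to first establish orthogonality between the two candidate subcategories, and then, for any object in $D^\Iw(Y,\Lambda_\circ)$, to build a fiber triangle whose outer terms lie respectively in $D^{\rm nil}$ and $D^{\rm inv}$; the orthogonality will then force this triangle to split.

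For the orthogonality, given $\sF\in D^{\rm nil}(Y,\Lambda_\circ)$ and $\sG\in D^{\rm inv}(Y,\Lambda_\circ)$, the element $t$ acts on $R\Hom(\sF,\sG)$ in two a priori distinct ways (through $\sF$ and through $\sG$), but these coincide by $\Lambda^\Iw$-linearity. The action through $\sF$ is nilpotent and the action through $\sG$ is invertible, so the common action on $R\Hom(\sF,\sG)$ is simultaneously nilpotent and invertible; hence $R\Hom(\sF,\sG)=0$. In particular both $\Hom$ and $\mathrm{Ext}^1$ between objects of the two subcategories vanish.

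For the decomposition, let $\sF\in D^\Iw(Y,\Lambda_\circ)$. Its cohomology sheaves $\sH^i(\sF)$ are constructible with only finitely many nonzero. By the noetherian property of $\mathrm{Sh}_c(Y,\Lambda)$ recalled in \S\ref{sec:constrsh}, the chains $\ker(t^n|_{\sH^i(\sF)})$ and $\im(t^n|_{\sH^i(\sF)})$ eventually stabilize, and by boundedness I may choose a single $N\gg 0$ uniform in $i$. The Weyr-Fitting decomposition from \S\ref{sec:constrsh} then yields $\sH^i(\sF)=\sH^i(\sF)^{\rm nil}\oplus \sH^i(\sF)^{\rm inv}$, with $t^N$ zero on the first summand and invertible on the second. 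I would then set
\[
\sF^{\rm inv}:=\mathrm{colim}\bigl(\sF\xrightarrow{t^N}\sF\xrightarrow{t^N}\sF\xrightarrow{t^N}\cdots\bigr),
\]
computed in $D(Y,\Lambda^\Iw)$. Since filtered colimits of sheaves are exact, the cohomology sheaves of $\sF^{\rm inv}$ are the localizations $\sH^i(\sF)[1/t^N]\cong \sH^i(\sF)^{\rm inv}$, which are constructible subsheaves of $\sH^i(\sF)$, bounded in degree, and carry an invertible $t$-action, so $\sF^{\rm inv}\in D^{\rm inv}(Y,\Lambda_\circ)$. Letting $\sF^{\rm nil}$ be the fiber of the canonical map $\sF\to \sF^{\rm inv}$, the long exact sequence gives $\sH^i(\sF^{\rm nil})\cong \sH^i(\sF)^{\rm nil}$, hence $\sF^{\rm nil}\in D^{\rm nil}(Y,\Lambda_\circ)$. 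The fiber triangle $\sF^{\rm nil}\to \sF\to \sF^{\rm inv}$ is classified by an element of $\mathrm{Ext}^1(\sF^{\rm inv},\sF^{\rm nil})$, which vanishes by the orthogonality above; the triangle therefore splits and yields the desired $\sF\cong \sF^{\rm nil}\oplus \sF^{\rm inv}$.

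The main technical obstacle is to verify that the colimit construction stays inside $D^\Iw(Y,\Lambda_\circ)$, i.e.\ that its cohomology sheaves remain constructible and the cohomological amplitude stays bounded. This reduces to the uniform stabilization of the $t^N$-filtration across the finitely many cohomology sheaves of $\sF$, which follows from boundedness together with the noetherian property of $\mathrm{Sh}_c(Y,\Lambda)$ recalled in \S\ref{sec:constrsh}.
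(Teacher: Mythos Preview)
Your proof is correct and rests on the same two ingredients as the paper's: the orthogonality of $D^{\rm nil}$ and $D^{\rm inv}$ (which the paper just attributes to the classical Weyr--Fitting argument, and which you spell out via the commuting nilpotent/invertible action of $t$ on $R\Hom$), and the Weyr--Fitting decomposition of the individual cohomology sheaves $\sH^a(\sF)$.

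Where you differ is in how the decomposition of $\sF$ itself is assembled from the decompositions of its cohomology sheaves. The paper proceeds by induction along the truncation triangles $\tau^{<a}\sF\to\tau^{\le a}\sF\to\sH^a(\sF)[-a]$: orthogonality forces the boundary map to split into nil and inv parts, so the homotopy fibers split. You instead build $\sF^{\rm inv}$ in one step as the telescope $\mathrm{colim}(\sF\xrightarrow{t^N}\sF\xrightarrow{t^N}\cdots)$ and take $\sF^{\rm nil}$ as its fiber, then split the resulting triangle using orthogonality. Your route is a bit more direct and yields the summands by an explicit formula; the price is that you need filtered homotopy colimits in $D(Y,\Lambda^\Iw)$ commuting with cohomology sheaves, which is unproblematic in the pro-\'etale setting but is an extra appeal the paper's truncation argument avoids. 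One small point you leave implicit: knowing that $t^N$ kills each $\sH^i(\sF^{\rm nil})$ only gives nilpotence of $t$ on $\sF^{\rm nil}$ after raising the exponent by the cohomological amplitude (the standard ``a map that is zero on cohomology is nilpotent of order the length of the complex'' argument); this is routine but worth a sentence.
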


\begin{proof}
The two subcategories $D^{\rm nil}(Y,\Lambda_\circ)$ and $D^{\rm inv}(Y,\Lambda_\circ)$ of $ D^{\Iw }(Y,\Lambda_\circ) $ are orthogonal, as in the classical Weyr-Fitting decomposition \cite[Proposition~2.2]{Bou22}. In particular, if an object has a decomposition, then it is unique.
In order to show that an object decomposes,
 we use the exact
triangle
\[
\tau^{<a} \sF \to \tau^{\le a } \sF \to \mathcal H^a(\sF)[-a]\to \tau^{<a} \sF[1],
\]
induction on $a$ and the above decomposition
\[
  \mathcal H^a(\sF) = \mathcal H^a(\sF)^{\rm nil}
\oplus \mathcal H^a(\sF)^{\rm inv}.
\]
In fact this induces the required decomposition
\[
 \tau^{\le a } \sF \cong \mathrm{hofib}(   \mathcal H^a(\sF)^{\rm nil}[-a] \to  \tau^{<a}
 \sF^{\rm nil}[1]  ) \oplus  \mathrm{hofib}(   \mathcal H^a(\sF)^{\rm inv}[-a] \to  \tau^{<a}
 \sF^{\rm inv}[1]  ).
\]
\end{proof}

If we work with an arbitrary profinite group $H$  in place of $G \cong \Z_\ell$,  we have to  slightly reformulate.
Let $\mathrm{Sh}_c(Y \times B H,\Lambda)$, $D^b_c(Y\times B H,\Lambda)$ etc.\ be the $H$-equivariant
versions of the definitions from~\ref{sec:constrsh}, for example for $\sF \in
\mathrm{Sh}_c(Y,\Lambda)$ the $H$-action factors through a finite discrete quotient of $H$
on $\sF/\mathfrak m^n$ for all $n>0$.
We obviously have an equivalence $D^b_c(Y \times BG,\Lambda_\circ)= D^\Iw(Y,\Lambda_\circ)$.

Consider an epimorphism $H\to G$ of pro-finite groups whose kernel $W$ has pro-order coprime
to $\ell$ then  we denote by
\[
\mathrm{Nil}\colon D^b_c(Y\times BH,\Lambda_\circ) \to D^\nil(Y,\Lambda_\circ),\quad \mathsf F
\mapsto (\mathsf F^W)^{\rm nil}
\]
 the projection to the nilpotent part of the $W$-invariants, see Lemma~\ref{lemapp:nilinvdec}.

For $\mathsf F \in D^\Iw(Y,\Lambda_\circ)$ and $a\in \Z$ we call
\[
  \mathsf F (a)^\Iw = \mathsf F \otimes_{\Lambda^\Iw} \mathfrak I^a
\]
the {\em Iwasawa twist} of $\mathsf F$. Note that $\mathfrak I\subset \Lambda^\Iw$ is an
invertible ideal.
Clearly, if $G$ acts trivially on $\sF$ there is a canonical isomorphism between
$\sF(a)^\Iw$ and the {\em Tate twist}
\[
   \sF(a) =  \sF\otimes_{\Z_\ell} G^{\otimes a},
\]
since  $\mathfrak I^a/\mathfrak
I^{a+1}\cong \Lambda\otimes_{\mathbb Z_\ell}  G^{\otimes a}$.

\smallskip

For $\sF\in D^\Iw(Y,\Lambda_\circ)$ let $\sF^G\in D^b_c(Y,\Lambda_\circ)$ be the derived
$G$-invariants. The canonical action map $\sF \otimes_{\Lambda^\Iw} \mathfrak I\to \sF$
can be written as $N^\Iw\colon \sF\to \sF(-1)^\Iw$. Then we obtain the fundamental exact triangle
\begin{equation}\label{eq:app1:fundext}
\sF^G  \to \sF \xrightarrow{N^\Iw} \sF(-1)^\Iw \to \sF^G[1]
\end{equation}
For the derived $G$-coinvariants $\sF_G$ we have a canonical isomorphism  $ \sF_G
\cong\sF^G(1)[1]$.

Any $\sF\in D^{\rm nil}(Y,\Lambda_\circ)$ is derived $t$-complete and
there is an isomorphism $\sF\otimes_{\Lambda^\Iw} \Lambda= \sF_G$. Together with the
derived Nakayama Lemma~\cite[ Lemma~0G1U]{StPr}  we obtain the following lemma.

\begin{lem}\label{app:lem:derivednaka}
A morphism $\sF\to \sG$ in $D^\nil(Y,\Lambda_\circ)$ is an isomorphism if $\sF^G \to \sG^G$ is an isomorphism.
\end{lem}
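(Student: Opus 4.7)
The plan is to form the cone $\sC := \mathrm{cone}(\sF \to \sG)$, which lies in $D^\nil(Y, \Lambda_\circ)$ since this is a triangulated subcategory of $D^\Iw(Y,\Lambda_\circ)$ (stability under cones follows from the Weyr--Fitting decomposition in Lemma~\ref{lemapp:nilinvdec}, as the nilpotent part is the kernel of the projection onto the invertible part). The goal is then to show $\sC = 0$.

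First I would apply the $G$-invariants functor $(-)^G$ to the distinguished triangle $\sF \to \sG \to \sC \to \sF[1]$, yielding the exact triangle
\[
\sF^G \to \sG^G \to \sC^G \to \sF^G[1].
\]
By hypothesis the first map is an isomorphism, so $\sC^G = 0$.

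Next, I would invoke the canonical isomorphism $\sC_G \cong \sC^G(1)[1]$ between the derived $G$-coinvariants and the Tate-twisted shift of the derived $G$-invariants, which was recorded just before the statement (coming from the fact that the augmentation ideal $\mathfrak I \subset \Lambda^\Iw$ is an invertible $\Lambda^\Iw$-module with $\mathfrak I / \mathfrak I^2 \cong \Lambda(1)$, so that $\Lambda$ has the two-term projective resolution $0 \to \Lambda^\Iw \xrightarrow{t} \Lambda^\Iw \to \Lambda \to 0$). This gives $\sC_G = 0$, equivalently $\sC \otimes^L_{\Lambda^\Iw} \Lambda = 0$.

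Finally, since $\sC \in D^\nil(Y,\Lambda_\circ)$, the preceding paragraph asserts that $\sC$ is derived $t$-complete with respect to the element $t = [\xi] - 1$ generating $\mathfrak I$. Applying the derived Nakayama lemma \cite[Lemma~0G1U]{StPr} to the (principal) ideal $(t) \subset \Lambda^\Iw$, the vanishing $\sC \otimes^L_{\Lambda^\Iw} \Lambda = 0$ forces $\sC = 0$, whence $\sF \to \sG$ is an isomorphism. The only subtle point — which I would not expect to cause real trouble but would need to verify carefully — is that the pro-\'etale formalism of \cite{BS15,Ker16} used to define $D^\nil(Y,\Lambda_\circ)$ is compatible with derived $t$-completeness in the sense required by \cite[Lemma~0G1U]{StPr}; this is essentially built into the definition of $D^\Iw$ via complete noetherian coefficient rings, since $\Lambda^\Iw$ itself is $t$-adically complete.
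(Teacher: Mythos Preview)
Your argument is correct and follows essentially the same route as the paper: the paper records just before the lemma that objects of $D^\nil$ are derived $t$-complete with $\sF\otimes_{\Lambda^\Iw}\Lambda=\sF_G\cong \sF^G(1)[1]$, and then invokes the derived Nakayama lemma \cite[Lemma~0G1U]{StPr}. You have simply made the cone step and the passage from $(-)^G$ to $(-)_G$ explicit, which is exactly what is needed.
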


The group inverse $G\to G, g\mapsto -g$ induces an involutive ring isomorphism
$\mathrm{inv}\colon \Lambda^\Iw\to \Lambda^\Iw$ preserving $\mathfrak I$.
We denote the
sheaf $\sF\in D^\Iw(Y,\Lambda)$ with the $\mathrm{inv}$-twisted action by $F^-$.
The involution applied to the Iwasawa twist induces an isomorphism in $D^\Iw(Y,\Lambda)$
\begin{equation}\label{eq:app1_invo}
\mathrm{inv}(a) = \mathrm{id}_\sF\otimes \mathrm{inv}^{\otimes a}\colon (\sF(a)^\Iw)^- \xrightarrow\sim (\sF^-)(a)^\Iw.
\end{equation}

 Let $\varpi\in D^b_c(Y,\Lambda)$ be a dualizing sheaf and let
\[
\mathrm D (\sF) = \mathit{Hom}_{\Lambda}(\sF^- , \varpi)
\]
be the Verdier dual of $\sF\in D^\Iw(Y,\Lambda)$, where the action of $\Lambda^\Iw$ on $\varpi$ is trivial.

We obtain a commutative diagram, which clarifies in which sense $N^\Iw$ is compatible with
duality
\begin{equation}\label{eq:app1:compiwdu}
  \xymatrix{
    \mathrm D (F(-1)^\Iw )  \ar[r]^{\mathrm D(N^\Iw)} \ar[r]  & \mathrm D(F) \\
    \mathit{Hom}(F^-(-1)^\Iw , \varpi ) \ar@{=}[r] \ar[u]^{\mathrm{inv}(-1)} &  \mathrm D (F)(1)^\Iw\ar[u]_{N^\Iw (1)^\Iw}
  }
\end{equation}
Note that in this duality statement the strength of Iwasawa twists becomes obvious, as previous attempts to write
down this compatibility looked quite ad hoc, see Deligne's letter to MacPherson~\cite{Del82}.

\smallskip

 For $\Q\subset \Lambda_\circ$ and $\sF \in D^\nil(Y,\Lambda_\circ )$  there is a canonical isomorphism
 between Tate and Iwasawa twists $c_{\rm TIw}\colon \sF(a)\xrightarrow{\sim}
 \sF(a)^\Iw$ for $a\in \Z$, which for $a=1$ is induced by the homomorphism
 \[
G\to ( \mathfrak I/\mathfrak I^\nu) \otimes_{\Z} \Q,\quad g \mapsto \log ([g]),
\]
where $\nu >0$ is chosen such that $\mathfrak I^\nu$ acts trivially on $\sF$.
This gives us a canonical nilpotent map $N\colon \sF\to \sF(-1)$ such
that
 \[
   \xymatrix{
     & \sF(-1)  \ar[dd]_{\wr}^{c_{\rm TIw}} \\
   \sF \ar[ru]^-{N} \ar[rd]_-{N^\Iw}  & \\
  & \sF(-1)^\Iw
   }
 \]
commutes.
This means that
$g\in G$ acts by $\exp(g N)\colon \sF\to \sF$, where $gN\colon \sF\to \sF$ is the
contraction of the Tate twist. So we see that in this case the fundamental exact triangle reads
\[
\sF^G \to \sF \xrightarrow{N} \sF(-1) \to  \sF^G[1] .
\]
Via the identification $c_{\rm TIw}$ the isomorphism~\eqref{eq:app1_invo} becomes
multiplication by $(-1)^a$, so by abuse of notation we can write the commutativity of~\eqref{eq:app1:compiwdu} simply as $\mathrm D
(N)=-N(1)$.

\subsection{Unipotent nearby cycles}\label{app_subsec_unip}

Let $\sO$ be a strictly henselian discrete valuation  ring  such that the prime number $\ell$ is
invertible in $\sO$. Let $K$ be the fraction field of $\sO$ and $k$ be the residue field.
In this appendix we do not need to assume that $k$ is perfect or that $\mathrm{ch}(K)=0$. Let
$\overline K$ be a separable algebraic closure of $K$. For
simplicity of notation we just write $\Lambda$ for the coefficient ring, which could be a
localization of a noetherian complete local ring or a filtered colimit of such as in~\ref{sec:constrsh}, and we drop the
notation $\Lambda_\circ$ used above.

Let $f\colon X\to \Spec\sO$ be a
scheme of finite type. Set $H=\mathrm{Gal}(\overline K /K)$ and let   $H\to
\Z_\ell(1)_{\overline k}$  be the  tame character of $H$. We set $G=\Z_\ell(1)_{\overline k}$.
In
the following we use the notation of~\ref{subsec_iwasawa}.
Let $j\colon X_K\to X$ and $i\colon X_k\to X$ be the immersions of fibers and consider the
morphism of topoi
$\bar\jmath\colon X_{\overline K}\to X \times BH$ and $\bar\jmath_K \colon X_{\overline K}\to X_K$.

Recall~\cite[Expos\'e XIII]{SGA7.2} that the {\em nearby cycle functor} is defined as
\[
R\Uppsi_{X/\sO}\colon D(X_K,\Lambda)\to D(X_k\times BH,\Lambda),\quad \sF\mapsto  i^*
R\bar\jmath_* \bar\jmath^*_K (\sF).
\]
Deligne showed~\cite[Chapitre~7]{SGA4.5} that $R\Uppsi_{X/\sO}$ preserves bounded constructible
complexes. Note that we have the identity
\begin{equation}\label{app:lnearbyc}
  (R\Uppsi_{X/\sO})^W = i^* R \tilde\jmath_*  \tilde\jmath^*_K \colon D(X_K,\Lambda) \to D^\Iw(X_k,\Lambda),
\end{equation}
with the morphisms of topoi $\tilde\jmath \colon X_{\overline K^W}\to X\times BG $ and
$\tilde\jmath_K \colon X_{\overline K^W}\to X_K$, which is useful for the general residue
field case in~\ref{subsec:appnonsepcl}.

From now on we assume that $\Lambda$ is artinian.
We
write
\[
\uppsi= \uppsi_{X/S}= \mathrm{Nil}\, R\Uppsi_{X/\sO}[-1]\colon D^b_c(X_K, \Lambda)\to D^\nil(X_k,\Lambda)
\]
for the shifted unipotent nearby cycle functor, see Lemma~\ref{lemapp:nilinvdec}.

The fundamental exact triangle~\eqref{eq:app1:fundext} applied to $\uppsi(\sF)$ now reads
\begin{equation}\label{eq_app1:fundncf}
\uppsi(\sF)\xrightarrow{N^\Iw} \uppsi(\sF)(-1)^\Iw \to i^* j_* \sF \to \uppsi(\sF)[1]
\end{equation}
as $[R\bar\jmath_* \bar\jmath^*(\sF)]^H=j_* (\sF)$.
Here we omit the right derived sign for
$j_*$  as $j_*$ is perverse t-exact for the t-structure in~\ref{app:subsecperv}.
This fundamental triangle gives a d\'evissage for
$\uppsi(\sF)$ in terms of $i^* j_*(\sF)$. A further d\'evissage of the latter is
accomplished by the exact triangle
\begin{equation}\label{eq_app1:fundncf1}
i^* \sG \to i^*j_* j^* \sG \to i^! \sG [1] \to i^*\sG[1]
\end{equation}
for $\sG\in D^b_c(X,\Lambda)$.

\subsection{Perverse sheaves}\label{app:subsecperv}

Assume in the following that $\Lambda=\Z/\ell^\nu \Z$ or that $\Lambda$  is an algebraic
field extension of $\Q_\ell$.
For $f\colon X\to \sO$ separated and of finite type \[\varpi= f^! \Lambda(1)[2] \in D^b_c(X,\Lambda) \] is a
dualizing sheaf, i.e.\ for $\sF\in D^b_c(X,\Lambda)$ and $\mathrm D(\sF):=
R\mathit{Hom}_X(\sF,\varpi)\in D^b_c(X,\Lambda )$ the canonical map $\sF\xrightarrow{\sim }  \mathrm D
(\mathrm D (\sF))$ is an isomorphism.

We always work with the (middle) perverse t-structure on $D^b_c(X,\Lambda)$ induced by the
dimension function \[ \delta_X(x)=\mathrm{trdeg}(k(x)/k(f(x))) + \dim_{\Spec \sO}(\overline{ \{ f(x)
    \}})
\]
as in~\cite{Gab04}. This means that
\[ \sF \in \per D^{\le 0}_c(X,\Lambda)\; \Leftrightarrow\;
  \mathcal H^i( i^*_{\overline x} \sF)= 0\]
for all $x\in X$ and $i>\delta_X(x)$ and $\sF
\in \per D^{\ge 0}_c(X,\Lambda) \Leftrightarrow  \mathrm D(F) \in   \per D^{\le 0}_c(X,\Lambda)$.

Concretely, this means that for $X_K$  regular, connected of dimension $n$ the
sheaf \[\Lambda[n+1]\in D^b_c(X_K,\Lambda)\] is perverse and for $X_k$ regular connected of
dimension  $n$ the sheaf $\Lambda[n]\in D^b_c(X_k,\Lambda)$ is  perverse.

Gabber showed, see also
\cite{Bei87}, \cite{Del82}, that $R \Uppsi [-1]$ and therefore $\uppsi$  are perverse t-exact with respect to this
perverse t-structure, see
\cite[Corollaire~4.5]{Ill94}.

\begin{rmk}\label{rmk:pervcontsp}
If $\sG\in D^b_c(X,\Lambda)$ is perverse with no non-trivial
subobjects or quotient objects supported on $X_k$, i.e.\ $\sG=j_{!*}j^* \sG $, then
\[ i^*\sG[-1]={}^p\sH^{-1}(i^* j_* j^*\sG) \] and $i^!\sG[1] = {}^p\sH^{0}(i^* j_* j^* \sG)$
are the (shifted) perverse constituents of $i^* j_* j^*\sG\in {}^p \!
D^{[-1,0]}_c(X_s,\Lambda)$ and~\eqref{eq_app1:fundncf1} is the corresponding truncation
exact triangle.
\end{rmk}

\subsection{Base change}

Let the notation be as in~\ref{app_subsec_unip} and let
$f\colon Y\to X$ be a morphism of finite type.

For $f$ proper and $\mathsf K$ in $D^b_c(Y_\eta)$. The canonical proper base change map
\[
\uppsi\circ f_*  \xrightarrow{\sim} f_*\circ \uppsi
\]
is an equivalence of functors from $D^b_c(Y_\eta,\Lambda)$ to $D^{\rm nil}(X_s,\Lambda)$.

There exists also a canonical base change map
\[
f^* \circ  \uppsi \to \uppsi \circ f^*
\]
of functors from $D^b_c(X_\eta,\Lambda)$ to $D^{\rm nil}(Y_s,\Lambda)$ which is an equivalence
if $f$ is smooth.
Note that we get an induced morphism of
exact triangles~\eqref{eq_app1:fundncf}
\begin{equation}\label{app:funceq1}
  \begin{gathered}
    \xymatrix{
f^* \uppsi(\sF)\ar[r]^-{N^\Iw} \ar[d] & f^* \uppsi(\sF)(-1)^\Iw \ar[r]  \ar[d] & f^* i^* j_*
\sF \ar[r]  \ar[d] &
f^* \uppsi(\sF)[1]  \ar[d]\\
\uppsi(f^*\sF)\ar[r]^-{N^\Iw}  & \uppsi(f^*\sF)(-1)^\Iw \ar[r] &  i^* j_* f^*\sF \ar[r] &
\uppsi(f^* \sF)[1]
}
 \end{gathered}
\end{equation}
for $\sF\in D^b_c(X_\eta,\Lambda)$. Similarly, we have a morphism of exact triangles~\eqref{eq_app1:fundncf1}
\begin{equation}\label{app:funceq2}
  \begin{gathered}
    \xymatrix{
f^* i^* \sG \ar[r] \ar[d] & f^* i^*j_* j^* \sG \ar[r] \ar[d] & f^*  i^! \sG [1] \ar[r]
\ar[d] & f^*  i^*\sG[1]  \ar[d]\\
i^* f^* \sG \ar[r] & i^*j_* j^*f^*  \sG \ar[r] &  i^!f^*  \sG [1] \ar[r] & i^*f^* \sG[1]
}
 \end{gathered}
\end{equation}
for $\sG \in D^b_c(X,\Lambda)$.

\subsection{Compatibility with Verdier duality}\label{app:verdierdu}

Write $\varpi_{X_s}=f^!_s (\Lambda)$, $\varpi_X=f^!(\Lambda(1)[2])$ and $\varpi_{X_\eta}=\varpi_X|_{X_\eta}$.
Consider $\sF,\sF'\in D^b_c(X_\eta,\Lambda)$ and assume they are endowed with  a pairing
\[
\mathsf{p}\colon \sF \otimes_\Lambda \sF' \to \varpi_{X_\eta}.
\]
As $\uppsi$ is lax symmetric monoidal up to a shift,  the composition of
\ml{}{
\uppsi(\sF)\otimes \uppsi(\sF') \to \uppsi(\sF\otimes \sF')[-1] \xrightarrow p \uppsi(f^!\Lambda(1)[1]) \to
  \\
   f_s^! \uppsi(\Lambda(1)[1]) \cong f_s^! \Lambda(1) = \varpi_{X_s}(1)
}
induces a pairing
\[
  \uppsi(\mathsf p)\colon \uppsi(\sF) \otimes_{\Lambda^\Iw} \uppsi(\sF')^- \to \varpi_{X_s}(1)
\]
in $D^\Iw(X,\Lambda)$, where $G$ acts trivially on the codomain.

By an analogous construction one gets a pairing
\begin{equation}\label{app:eq:pair2}
i^* j_*  \sF  \otimes_{\Lambda} i^* j_*  \sF'  \to \varpi_{X_s}[1].
\end{equation}

Via these pairing the fundamental exact triangle~\eqref{eq_app1:fundncf} becomes self-dual
in the sense that we get a commutative diagram
\begin{equation}\label{app:eq1comdual}
  \begin{gathered}
 \small \xymatrix{
\uppsi(\sF)\ar[r]^-{N^\Iw} \ar[d]  &  \uppsi(\sF)(-1)^\Iw \ar[d]\ar[r]  &  i^* j_* \sF
\ar[d] \ar[r] &  \uppsi(\sF)[1] \ar[d]   \\
\mathrm D(\uppsi(\sF'))(1)^\Iw \ar[r]^-{\mathrm D(N^\Iw)} & \mathrm  D( \uppsi(  \sF'))    \ar[r] & \mathrm  D(i^* j_* \sF')[1] \ar[r] &
\mathrm  D(\uppsi(\sF'))(1)^\Iw[1]
}
\end{gathered}
\end{equation}
in $D^b_c(X_s,\Lambda)$.

We say that $\mathsf p$ is a perfect pairing if it induces
an isomorphism $\sF \xrightarrow\sim \mathrm D (\sF')$.
Gabber~\cite[Th\'eor\`eme~4.2]{Ill94} showed the following property.

\begin{lem}
   If $\mathsf p$ is a perfect pairing then
  \begin{itemize}
  \item[(i)] the pairing $\uppsi(\mathsf p)$ is perfect and
\item[(ii)] the pairing \eqref{app:eq:pair2} is perfect.
\end{itemize}
\end{lem}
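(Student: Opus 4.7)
The plan is to deduce both parts simultaneously from the self-dual structure of the morphism of distinguished triangles~\eqref{app:eq1comdual}, using as main input Gabber's theorem~\cite[Th\'eor\`eme~4.2]{Ill94} that $R\Uppsi_{X/\sO}$ commutes with Verdier duality. First I would unravel what the three vertical arrows of~\eqref{app:eq1comdual} are: call them $A$ (top), $B$ (middle), $C$ (bottom). By construction they are precisely the morphisms adjoint to the pairings $\uppsi(\mathsf p)$, its Iwasawa twist, and the pairing~\eqref{app:eq:pair2}, respectively. Therefore (i) is equivalent to $A$ being an isomorphism and (ii) is equivalent to $C$ being one; moreover $B$ is simply the $(-1)^\Iw$-twist of $A$, so $B$ is an isomorphism iff $A$ is.

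Next, to prove (i), I would apply $\uppsi$ to the isomorphism $\sF \xrightarrow{\sim} \mathrm D_{X_\eta}(\sF')$ adjoint to $\mathsf p$ and compose with Gabber's canonical duality commutation. After extracting the unipotent part of $R\Uppsi$, accounting for the shift $-1$ in the definition $\uppsi = \mathrm{Nil}\,R\Uppsi[-1]$, and for the normalization $\varpi_{X_\eta} = f_\eta^!\Lambda(1)[2]$ versus $\varpi_{X_s} = f_s^!\Lambda$, this commutation takes the form of a canonical natural isomorphism
\[
\uppsi(\mathrm D_{X_\eta}(\sG)) \xrightarrow{\sim} \mathrm D_{X_s}(\uppsi(\sG))(1)^\Iw
\]
for all $\sG \in D^b_c(X_\eta, \Lambda)$, where the Iwasawa twist $(1)^\Iw$ appears through the involution~\eqref{eq:app1_invo} that exchanges the $G$-action and its inverse upon dualizing. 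Specialising to $\sG=\sF'$ and composing with $\uppsi$ of the adjoint of $\mathsf p$ exhibits $A$ as an isomorphism; naturality of Gabber's construction ensures the resulting isomorphism is exactly the adjoint of $\uppsi(\mathsf p)$, proving (i).

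Finally, knowing that $A$ and its Iwasawa twist $B$ are isomorphisms, (ii) follows immediately from the triangulated five lemma applied to the morphism of distinguished triangles~\eqref{app:eq1comdual}: the third vertical arrow $C$ is then necessarily an isomorphism. In particular, the resulting isomorphism $i^*j_*\sF \xrightarrow{\sim} \mathrm D_{X_s}(i^*j_*\sF')[1]$ is the adjoint of the pairing~\eqref{app:eq:pair2}, giving perfectness.

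The main obstacle is not the overall strategy but the careful verification that Gabber's canonical duality isomorphism, once transported through all the normalizations, coincides with the specific map adjoint to $\uppsi(\mathsf p)$, rather than merely being some isomorphism between the same objects. This requires tracking the involution~\eqref{eq:app1_invo} on Iwasawa modules, the dictionary between Tate and Iwasawa twists, and the lax symmetric monoidality used to define $\uppsi(\mathsf p)$. The coordinate-free formalism of the appendix, with its compatibility diagram~\eqref{eq:app1:compiwdu}, is designed precisely to make this bookkeeping transparent and to reduce the check to a built-in commutativity.
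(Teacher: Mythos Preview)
The paper does not give a proof of this lemma; it simply attributes the result to Gabber via the citation~\cite[Th\'eor\`eme~4.2]{Ill94} in the sentence immediately preceding the statement. Your proposal is therefore not so much an alternative proof as an explicit unpacking of that citation, and the route you outline---Gabber's commutation of $R\Uppsi$ with Verdier duality for part~(i), then the triangulated five-lemma on the morphism of triangles~\eqref{app:eq1comdual} for part~(ii)---is correct and is the standard way to extract the two statements from Gabber's theorem.

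One small presentational point: you label the vertical arrows of~\eqref{app:eq1comdual} as ``top, middle, bottom,'' but that diagram is a morphism of two horizontal triangles, so the vertical arrows are indexed left to right (four of them, the fourth being the shift of the first). This does not affect the mathematics. Your closing paragraph correctly identifies the only genuine subtlety, namely that Gabber's canonical isomorphism must coincide with the adjoint of $\uppsi(\mathsf p)$ rather than merely being \emph{some} isomorphism between the same objects; the Iwasawa-twist formalism of the appendix is indeed set up so that this compatibility is built in.
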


\subsection{Non-separably closed residue field}\label{subsec:appnonsepcl}

If the residue field $k$ of the henselian discrete valuation ring $\sO$ is not assumed to
be separably closed we proceed as follows.  For $k$ finite the theory is laid out in
 detail in~\cite{HZ23},  see also~\cite[Expos\'e XIII]{SGA7.2} for background on the
Deligne topos.

Let $\tilde K\subset \overline K$ be the subfield generated by all $\ell^n$-th roots of unity and  by all  $ \pi^{1/\ell^n}$ for $n>0$. Let $\tilde k$ be the residue field of
$\tilde K$.
Fix a splitting $\sigma$ of the exact sequence of profinite groups
\[
 0\to  G= \Z_\ell(1)  \to  \tilde G \to \tilde g \to 0
\]
where $ \tilde G = \mathrm{Gal}(\tilde K / K ) $, $\tilde g=  \mathrm{Gal}(\tilde k/k)$.
Such a splitting is for example induced by a choice of compatible $\ell^n$-th roots of
$\pi$ for $n>0$.
   Consider the morphisms of topoi $\tilde \jmath\colon X_{\tilde K} \to X\times_{B\tilde
     g} B\tilde G$ and $\tilde \jmath_K\colon X_{\tilde K}  \to X_K  $. The
   splitting $\sigma $ induces an isomorphism
   \eq{eq:idnonsepclres}{
     D^\Iw(X_k,\Lambda )  \cong
     D^b_c(X_k\times_{B\tilde g} B\tilde G , \Lambda ) ,
   }
   where Iwasawa module sheaves on
   $X_k$ are defined as before with $\Lambda^\Iw$ now  a ring in the category of pro-\'etale
   sheaves on $X_k$.
We define the unipotent nearby cycle functor
\[
\uppsi=   \mathrm{Nil}\,  i^* \tilde\jmath_* \tilde\jmath^*_K [-1]\colon D^b_c(X_K,\Lambda)\to
D^{\rm nil}(X_k,\Lambda)
\]
using the identification~\eqref{eq:idnonsepclres}.

This construction clearly depends  on the splitting $\sigma$.
However, for $\Lambda=\Z/\ell^\nu\Z$ or for $\Lambda $  an algebraic extension of $ \Q_\ell$ and for $\sF\in
D^b_c(X_K,\Lambda)$ perverse, the monodromy graded pieces $\gr^{\rm M}_a \uppsi (\sF)$, see Section~\ref{subsec:mofil}, do
not depend on the splitting $\sigma$ in view of the proof of~\cite[Proposition 5.1.2]{BBD83}.

\end{document}